\newtheorem*{rep@theorem}{\Rep_k@title}
\newcommand{\newreptheorem}[2]{%
	\newenvironment{rep#1}[1]{%
		\def\Rep_k@title{#2 \ref{##1}}%
		\begin{rep@theorem}}%
		{\end{rep@theorem}}}
\numberwithin{equation}{section}
\newtheorem{theorem}{Theorem}[section]
\newtheorem{lemma}[theorem]{Lemma}
\newtheorem{proposition}[theorem]{Proposition}
\newtheorem{corollary}[theorem]{Corollary}
\theoremstyle{definition}
\newtheorem{definition}[theorem]{Definition}
\newtheorem{example}[theorem]{Example}
\newtheorem{remark}[theorem]{Remark}
\newcommand{\id}{{\rm id}}
\newcommand{\Ker}{\text{Ker\,}}
\newcommand{\Fun}{{\rm Fun}}
\newcommand{\Irr}{\text{\rm Irr}}
\newcommand{\FPdim}{\text{\rm FPdim}}
\newcommand{\Hom}{{\rm Hom}}
\newcommand{\Rep}{{\rm Rep}}
\newcommand{\Vect}{{\rm Vec}}
\renewcommand{\O}{\mathscr{O}}
\newcommand{\h}{\mathfrak{h}}
\newcommand{\ot}{\otimes}
\newcommand{\ben}{\begin{enumerate}}
\newcommand{\een}{\end{enumerate}}
\newcommand{\Lie}{{\rm Lie}}
\newcommand{\C}{{\mathscr C}}
\newcommand{\M}{{\mathscr M}}
\newcommand{\Z}{{\mathscr{Z}}}
\begin{document}

\title[On finite group scheme-theoretical categories, I]{On finite group scheme-theoretical categories, I}

\author{Shlomo Gelaki}
\address{Department of Mathematics,
Iowa State University, Ames, IA 50100, USA.}
\email{gelaki@iastate.edu}

\author{Guillermo Sanmarco}
\address{Department of Mathematics,
Iowa State University, Ames, IA 50100, USA.} 
\email{sanmarco@uw.edu}

\date{\today}

\keywords{finite group schemes; equivariant sheaves; finite tensor categories;
exact module categories}

\dedicatory{In memory of Earl J. Taft}

\begin{abstract}
Let $\C(G,H,\psi)$ be a finite group scheme-theoretical category over an algebraically closed field of characteristic $p\ge 0$ \cite{G}. For any indecomposable exact $\C(G,H,\psi)$-module category, we classify its simple objects and provide an expression for their projective covers, in terms of double cosets and projective representations of certain closed subgroup schemes of $G$, which upgrades a result by Ostrik \cite{O} for group-theoretical fusion categories. As a byproduct, we describe the simples and indecomposable projectives of $\C(G,H,\psi)$, and parametrize the Brauer-Picard group of ${\rm Coh}(G)$ for any finite connected group scheme $G$. Finally, we apply our results to describe the blocks of the center of ${\rm Coh}(G)$.
\end{abstract}

\maketitle

\tableofcontents

\section{Introduction}

The main purpose of this paper is to study the structure of indecomposable exact module categories over finite group scheme-theoretical categories \cite{G} with focus on classifying their simple objects and describing their projective covers. Our study in particular sheds some light on the structure of group scheme-theoretical categories themselves, and applies to the representation category of the twisted double of a finite group scheme, for which a more precise description is obtained. Throughout, we work over an algebraically closed field $k$ of characteristic $p\ge 0$. All schemes in this paper are assumed to be finite over $k$, unless otherwise stated. 

Group-theoretical fusion categories \cite{O} play a fundamental role in the theory of fusion categories in characteristic $0$. These categories can be explicitly constructed from finite group data, which makes them treatable enough to serve as a common testing ground for several questions within the theory \cite{EKW,G-e,Na}. At the same time, this class of fusion categories encompasses enough to play a part in general classification efforts as explained for example in \cite[\S 9.13]{EGNO}. 

From an abstract point of view, group-theoretical fusion categories are characterized as those that are dual to a pointed fusion category with respect to an indecomposable module category.
For a concrete construction, one starts with a finite group $G$ and a normalized $3$-cocycle $\omega \in Z^3(G,k^{\times})$, and considers the pointed fusion category $\Vect_G^\omega$ of finite dimensional $G$-graded vector spaces with associativity constraint given by $\omega$. By \cite{Na,O}, indecomposable module categories over $\Vect_G^\omega$ are classified, up to equivalence, by certain conjugacy classes of pairs $(H, \psi)$, where $H\subset G$ is a subgroup and $\psi\in C^2(H,k^\times)$ is a normalized $2$-cochain such that $d \psi = \omega_{\vert H}$. Given such a pair $(H,\psi)$, the twisted group algebra $k^\psi[H]$ becomes an algebra in $\Vect_G^\omega$, and the group-theoretical category $\C(G, \omega, H, \psi)$ associated to this data is defined as the category of $k^\psi[H]$-bimodules in $\Vect_G^\omega$.

By \cite{O} (see also \cite{M+}), equivalence classes of indecomposable module categories over $\C (G, \omega, H, \psi)$ are classified by certain conjugacy classes of pairs $(K, \eta)$, where $K\subset G$ is a subgroup and $\eta\in C^2(K,k^\times)$ such that $d \eta = \omega_{\vert K}$. Ostrik also provided a formula that computes the rank of the module category associated to $(K,\eta)$ in terms of $(H,K)$-double cosets in $G$ and certain subgroups built from them. Ostrik developed this theory mainly to classify indecomposable module categories, particularly fiber functors, over the group-theoretical modular category $\mathscr{Z}\left(\Vect_G^\omega\right)$.

The problem of classifying indecomposable exact module categories over the finite tensor category $\Rep_k(G)$ of finite dimensional representations over $k$ of a finite group $G$ had been previously solved by Ostrik \cite{O2} when $p=0$, and then extended by Etingof-Ostrik \cite{EO} to  $p>0$. However, the dual of $\Rep_k(G)$ with respect to the module category given by the standard forgetful functor $\Rep_k(G) \to \Vect$ coincides with the fusion category $\Vect_G$ of finite dimensional $G$-graded $k$-vector spaces. Thus, one can alternatively solve this problem using the classification of module categories over $\Vect_G$.

In a pursuit of a classification of indecomposable exact module categories over $\Rep_k(G)$ for an affine $k$-group scheme $G$, in \cite{G} the first author generalized  Ostrik's construction \cite{O} to affine group schemes in  any characteristic, but in this paper we focus on the {\em finite} case. Given a finite group scheme $G$ and a normalized $3$-cocycle $\omega \in Z^3(G,\mathbb G_m)$ (equivalently, a \emph{Drinfeld associator} $\omega\in\O(G)^{\ot 3}$ for the coordinate algebra), one first defines the category ${\rm Coh}(G, \omega)$ of  sheaves on $G$ with tensor product given by convolution, and associativity by the action of $\omega$. Now, for any closed subgroup scheme $H\subset G$ and  normalized $2$-cochain $\psi\in C^2(H,\mathbb G_m)$ such that $d \psi=\iota_H^{\sharp \ot 3}(\omega)$, there is a notion of a $(H,\psi)$-equivariant  sheaf on $(G,\omega)$. Moreover, the category $\M(H,\psi)$ they form has the structure of an indecomposable exact module category over ${\rm Coh}(G,\omega)$ under convolution of sheaves.

\begin{theorem}{\cite[Theorem 5.3]{G}}
The assignment $(H,\psi)\mapsto \M(H,\psi)$ determines a bijection between conjugacy classes of pairs $(H,\psi)$, where $H\subset G$ is a closed subgroup scheme and $\psi\in C^2(H,\mathbb G_m)$ is normalized such that $d \psi=\iota_H^{\sharp \ot 3}(\omega)$, and equivalence classes of (left) indecomposable exact module categories over ${\rm Coh}(G, \omega)$. \qed
\end{theorem}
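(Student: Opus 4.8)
The plan is to combine the general classification of exact module categories over a finite tensor category with a scheme-theoretic analysis of algebra objects in ${\rm Coh}(G,\omega)$, paralleling Ostrik's treatment of $\Vect_G^\omega$. By the general theory \cite{EGNO}, indecomposable exact left module categories over a finite tensor category $\C$ are classified, up to equivalence, by Morita equivalence classes of indecomposable exact algebra objects $A$ in $\C$, via $A\mapsto$ (the category of right $A$-modules in $\C$). So it suffices to prove: (i) for each pair $(H,\psi)$ as in the statement there is an indecomposable exact algebra $A_{H,\psi}$ in ${\rm Coh}(G,\omega)$ with $\M(H,\psi)$ equivalent to the category of right $A_{H,\psi}$-modules; (ii) every indecomposable exact algebra in ${\rm Coh}(G,\omega)$ is Morita equivalent to some $A_{H,\psi}$; and (iii) $A_{H,\psi}$ and $A_{H',\psi'}$ are Morita equivalent exactly when the pairs are conjugate under $G(k)$. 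Part (i) is built into the construction recalled before the statement: one takes $A_{H,\psi}$ to be the pushforward $(\iota_H)_*\O_H$ of the structure sheaf of $H$ carrying the convolution product twisted by $\psi$ --- associativity of this product against the $\omega$-associator is exactly the condition $d\psi=\iota_H^{\sharp\ot 3}(\omega)$ --- and the internal-$\Hom$ description of $\M(H,\psi)$ identifies it with the category of $A_{H,\psi}$-modules. (When $\omega$ is cohomologically trivial one could instead deduce the statement from the classification of module categories over $\Rep_k(G)$ \cite{O2,EO} and the duality ${\rm Coh}(G)\simeq\Rep_k(G)^*_{\Vect}$, but the general case needs the direct argument below.)

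For (ii), let $A$ be an indecomposable exact algebra in ${\rm Coh}(G,\omega)$ and let $Z\subseteq G$ be its scheme-theoretic support, a closed subscheme containing $e$ since $\be=k_e$ maps nontrivially into $A$. The multiplication $m_*(A\boxtimes A)\to A$ together with the unit constraint force $Z$ to be stable under the multiplication and inversion of $G$; the crucial point is that, after replacing $A$ by a Morita-equivalent algebra, $Z$ is a closed subgroup scheme $H$ and the underlying sheaf of $A$ is $(\iota_H)_*\O_H$. Granting this, comparing the product with the untwisted convolution on $(\iota_H)_*\O_H$ produces a normalized $2$-cochain $\psi\in C^2(H,\mathbb G_m)$, and the pentagon axiom forces $d\psi=\iota_H^{\sharp\ot 3}(\omega)$; hence $A$ is Morita equivalent to $A_{H,\psi}$ and $\M\simeq\M(H,\psi)$.

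For (iii), if $g\in G(k)$ then conjugation by the invertible object $k_g$ is a tensor autoequivalence of ${\rm Coh}(G,\omega)$, hence induces a $2$-equivalence on module categories, and it carries $A_{H,\psi}$ to $A_{H',\psi'}$ with $H'=gHg^{-1}$ and $\psi'$ obtained from $\psi$ by transport along $\Ad_g$ and multiplication by a suitable $2$-cochain depending only on $\omega$ and $g$; thus $\M(H,\psi)\simeq\M(H',\psi')$. Conversely, a module equivalence $\M(H,\psi)\simeq\M(H',\psi')$ gives a Morita equivalence between $A_{H,\psi}$ and $A_{H',\psi'}$, implemented by an invertible bimodule in ${\rm Coh}(G,\omega)$ whose support, by invertibility, is a single translate $Hg=gH'$; this yields $g\in G(k)$ with $H'=gHg^{-1}$, after which $\psi'$ and the $\Ad_g$-transport of $\psi$ differ by the coboundary of a $1$-cochain --- precisely the residual freedom that is absorbed on passing to conjugacy classes, since a coboundary modification of $\psi$ gives an isomorphic algebra. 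Combining (i)--(iii) yields the asserted bijection.

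The main obstacle is the group-scheme step inside (ii): showing that the support of an indecomposable exact algebra is, up to Morita equivalence, a closed subgroup scheme over which the algebra has ``multiplicity one''. In Ostrik's group setting this rests on a counting argument ($\dim A_g$ is constant along the support, which is then visibly a subgroup), and such pointwise reasoning is unavailable for a possibly non-reduced $G$. I would instead deduce it from faithful flatness of $A$ over the convolution subalgebra attached to $H$ --- using Hopf-theoretic freeness/faithful-flatness results for finite group schemes and their quotients --- and would treat the \'etale quotient $\pi_0(G)$ and the connected part $G^\circ$ separately via the extension $1\to G^\circ\to G\to\pi_0(G)\to 1$, recombining by a filtration argument; over $G^\circ$ the Hopf algebra $\O(G^\circ)$ is local, which should pin down the behavior directly. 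A secondary subtlety is to confirm that the correct equivalence relation on pairs is $G(k)$-conjugacy and not some finer scheme-theoretic notion: this follows once one checks that the invertible objects of ${\rm Coh}(G,\omega)$ are exactly the skyscrapers $k_g$, $g\in G(k)$, so that no ``infinitesimal conjugation'' can be realized by a Morita equivalence.
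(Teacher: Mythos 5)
The statement you are proving is not actually proved in this paper: it is imported verbatim from \cite[Theorem 5.3]{G}, and the \textit{q.e.d.} there marks a citation, not an argument. So there is no internal proof to compare against line by line; I will instead measure your proposal against what a complete proof must contain. Your architecture --- classify module categories by Morita classes of indecomposable exact algebra objects, realize $\M(H,\psi)$ as modules over a twisted convolution algebra $A_{H,\psi}$ supported on $H$, and match conjugacy of pairs with Morita equivalence --- is the standard Ostrik-style route and is the right shape. Steps (i) and (iii) are essentially sound modulo routine verifications (and your check that the invertible objects of ${\rm Coh}(G,\omega)$ are exactly the $\delta_g$, $g\in G(k)$, correctly explains why only $G(k)$-conjugacy, and not any infinitesimal conjugation, enters the equivalence relation).

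The genuine gap is step (ii), which you have correctly identified but not closed, and it is where the whole theorem lives. Two concrete problems. First, the assertion that the multiplication and unit ``force $Z$ to be stable under the multiplication'' of $G$ is not correct as stated: the product is a map $m_*(A\boxtimes A)\to A$ \emph{out of} the convolution square, so surjectivity (via the unit) only yields $\mathrm{supp}(A)\subseteq \mathrm{supp}(A)\cdot\mathrm{supp}(A)$, not the reverse inclusion --- the product may kill entire components of $A*A$. Closure of the support under products and inverses is precisely what must be extracted from exactness and indecomposability of the module category (in the \'etale fusion case this is done via internal Ends of simple objects, Frobenius reciprocity, and a dimension count), and you offer no substitute for that argument. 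Second, the ``multiplicity one'' claim --- that after a Morita replacement the underlying sheaf of $A$ is $(\iota_H)_*\O_H$ with its full, possibly non-reduced, scheme structure --- is the essential content when $G$ is not \'etale; your proposed remedy (faithful flatness over a Hopf subalgebra together with the connected--\'etale filtration, using locality of $\O(G^{\circ})$) is a reasonable plan but is stated only as an intention, with no indication of which freeness/faithful-flatness theorem is invoked or how the two pieces recombine. As written, the proposal establishes that $(H,\psi)\mapsto\M(H,\psi)$ is well defined and injective on conjugacy classes, but not that it is surjective; for the case $\omega=1$ actually used in this paper one can, as you note parenthetically, sidestep this by dualizing to $\Rep_k(G)$ and invoking \cite{EO}, but for general $\omega$ the support/multiplicity analysis must be carried out and is not.
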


By \cite[Theorem 3.31]{EO}, since ${\rm Coh}(G)$ is dual to $\Rep_k(G)$ with respect to the standard forgetful functor, \cite[Theorem 5.3]{G} also classifies indecomposable exact module categories over $\Rep_k(G)$: as in the \'etale case \cite[Prop. 4.1]{EO}, these are all of the form $\Rep_k(H,\psi)$ for a closed subgroup scheme $H\subset G$, and a normalized $2$-cocycle $\psi \in Z^2(H,\mathbb G_m)$.  

Going back to the data above, one defines the \emph{group scheme-theoretical} category $\C(G, \omega,H, \psi)$ as the dual of ${\rm Coh}(G, \omega)$ with respect to $\M(H,\psi)$. The first author also classified indecomposable exact module categories over the finite tensor category $\C(G, \omega,H, \psi)$. 

\begin{theorem}\cite[Theorem 5.7]{G}
The assignment 
$$\mathscr{M}(K,\eta)\mapsto \Fun_{{\rm Coh}(G, \omega)}\left(\mathscr{M}(H,\psi),\mathscr{M}(K,\eta)\right)$$
determines an equivalence between the $2$-category of indecomposable exact module categories over ${\rm Coh}(G,\omega)$ and the $2$-category of indecomposable exact module categories over $\C(G,\omega, H,\psi)$. \qed
\end{theorem}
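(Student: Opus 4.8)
The plan is to derive the statement from the general $2$-categorical Morita duality for finite tensor categories, and then to read off the explicit parametrization from \cite[Theorem 5.3]{G}. First I would record that ${\rm Coh}(G,\omega)$ is a finite tensor category: it differs from ${\rm Coh}(G)$ only in its associativity constraint, and ${\rm Coh}(G)$ is the dual of $\Rep_k(G)$ with respect to the standard forgetful functor $\Rep_k(G)\to\Vect$ (\cite[Theorem 3.31]{EO}), hence rigid, finite abelian, with absolutely simple unit $\O(G)$. By the construction recalled just before \cite[Theorem 5.3]{G}, each $\M(H,\psi)$ is an indecomposable exact left ${\rm Coh}(G,\omega)$-module category, and by definition
$$\C(G,\omega,H,\psi)={\rm Coh}(G,\omega)^{*}_{\M(H,\psi)}=\Fun_{{\rm Coh}(G,\omega)}\left(\M(H,\psi),\M(H,\psi)\right),$$
with tensor product given by composition (up to $(-)^{\op}$). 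In particular $\C(G,\omega,H,\psi)$ is again a finite tensor category, over which $\M(H,\psi)$ is an indecomposable exact module category.

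Next I would invoke the abstract principle: for any finite tensor category $\D$ and any indecomposable exact $\D$-module category $\M$, the functor $\mathscr N\mapsto \Fun_{\D}(\M,\mathscr N)$, together with the action of $\D^{*}_{\M}$ by composition, is a $2$-equivalence from the $2$-category of indecomposable exact $\D$-module categories to the $2$-category of indecomposable exact $\D^{*}_{\M}$-module categories, with quasi-inverse $\mathscr P\mapsto \Fun_{\D^{*}_{\M}}(\M,\mathscr P)$. The ingredients are that $\Fun_{\D}(\M,-)$ sends exact (resp. indecomposable) module categories to exact (resp. indecomposable) ones, the canonical double-dual equivalence $(\D^{*}_{\M})^{*}_{\M}\simeq \D$, and the compatibility of the left $\D$- and right $\D^{*}_{\M}$-actions making $\M$ an invertible bimodule category; all of these hold without semisimplicity by \cite[\S 3]{EO} and \cite[Chapter 7]{EGNO}. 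Taking $\D={\rm Coh}(G,\omega)$ and $\M=\M(H,\psi)$ produces the desired $2$-equivalence, and precomposing with the bijection of \cite[Theorem 5.3]{G} — under which every indecomposable exact ${\rm Coh}(G,\omega)$-module category is $\M(K,\eta)$ for a unique conjugacy class $(K,\eta)$ — identifies it with the assignment $\M(K,\eta)\mapsto \Fun_{{\rm Coh}(G,\omega)}(\M(H,\psi),\M(K,\eta))$ of the statement.

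The main obstacle will be the structural verification behind the first step: one must check that the internal-Hom and module-category constructions of \cite{G}, built from the coordinate algebra $\O(G)$ of a possibly non-étale finite group scheme, genuinely satisfy the hypotheses of the abstract machinery — that $\Fun_{{\rm Coh}(G,\omega)}(\M(H,\psi),\M(H,\psi))$ is an honest finite \emph{tensor} category (not merely a finite multitensor category, i.e. that $\M(H,\psi)$ is indecomposable) and that the module categories involved are exact rather than only finite. This is precisely where finiteness of $G$ is used and where one leans on \cite{EO} having lifted Ostrik's fusion-categorical theory to the level of arbitrary finite tensor categories. A remaining, purely bookkeeping, matter is to fix the orientation conventions (left versus right actions, and the $(-)^{\op}$ in the definition of the dual) so that the functor obtained has exactly the variance displayed above.
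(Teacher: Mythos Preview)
Your proposal is correct and is essentially the standard argument; it is almost certainly what \cite{G} does. However, note that the present paper does not prove this statement at all: both occurrences (in the Introduction and as Theorem~\ref{modgth}) carry a \qed\ immediately after the statement and simply cite \cite[Theorem~5.7]{G}. The result is being \emph{recalled}, not re-proved, so there is no ``paper's own proof'' to compare against beyond the citation. Your outline --- invoke the general $2$-categorical Morita duality of \cite{EO}, \cite[Ch.~7]{EGNO} for an indecomposable exact module category over a finite tensor category, then feed in \cite[Theorem~5.3]{G} for the parametrization --- is exactly the content behind that citation, and your identification of the genuine work (verifying exactness and indecomposability of $\M(H,\psi)$ in the non-\'etale setting) is on point.
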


In contrast with the fusion case in characteristic $0$, group scheme-theoretical categories and their module categories are rarely semisimple if $p>0$, and a plethora of questions arises. In this paper we focus on the case $\omega=1$, and delegate to future work the treatment in the presence of a general $3$-cocycle. So, fix a finite group scheme $G$, a closed subgroup scheme $H\subset G$, and a normalized $\psi \in Z^2(H, \mathbb G_m)$ (equivalently, a \emph{Drinfeld twist} $\psi \in \O(H)^{\ot 2}$). 

We begin our work in Section \S\ref{sec:Preliminaries} with preliminaries about finite group schemes and module categories over their categories of sheaves. Our main result here is Theorem \ref{helpful2}, where we adapt \cite[Theorem 1(B), p.112]{Mum} to show that the abelian category $\M(H,\psi)$ of $(H,\psi)$-equivariant sheaves on $G$ is equivalent to the category ${\rm Coh} (G/H)$ of  sheaves over the finite quotient scheme $G/H$. 

In Section \S\ref{S:Biequivariant sheaves on group schemes} we consider an embedding $\partial\colon A\xrightarrow{1:1} B$ of finite group schemes, a twist $\Psi\in Z^2(B,\mathbb{G}_m)$, and its restriction $\xi:=\partial^{\sharp \ot 2}(\Psi)$ to $A$.

\begin{reptheorem}{modrep0-1}
There are equivalences of abelian categories
\begin{equation*}
	\begin{tikzcd}[column sep=4em]
		\mathrm{Rep}_k(A,\xi^{-1}) 
		\arrow[r] 
		\arrow[rr, "{\rm Ind}_{(A,\xi^{-1})}^{(B,\Psi)}", bend left=7, shift left=0.5ex]
		&
		\mathrm{Coh}^{(A\times B,\xi^{-1}\times \Psi)}(B) 
		\arrow[l] 
		\arrow[r]
		&
		\mathrm{Coh}^{(B,\Psi)}(A\backslash B)
		\arrow[l]
		\arrow[ll, "{\rm Res}_{(A,\xi^{-1})}^{(B,\Psi)}", bend left=7, shift left=0.5ex]
	\end{tikzcd}
\end{equation*}
Moreover, ${\rm Ind}_{(A,\xi^{-1})}^{(B,\Psi)}$ and ${\rm Res}_{(A,\xi^{-1})}^{(B,\Psi)}$ are mutually inverse, and can be described  explicitly. \qed
\end{reptheorem}

Using this, we get yet another abelian equivalence that will be used later on for a different purpose, see Section \S\ref{S:Biequivariant sheaves on group schemes} for precise statements. 

\begin{reptheorem}{modrep00}
We have an equivalence of abelian categories   
$${\rm F}\colon \mathrm{Rep}_k(A,\xi^{-1})\xrightarrow{\cong}{\rm Coh}^{(B,\Psi)}(A\backslash B),\,\,\,V\mapsto \O(A\backslash B)\ot_k V.$$
\end{reptheorem}

In Section \S\ref{sec:doublecosetsandbiequivariant} we discuss double cosets in finite group schemes, and biequivariant sheaves. 

In Section \S\ref{sec:Module categories over GCTC} we apply Theorems \ref{modrep0-1}, \ref{modrep00} (using Section \S\ref{sec:doublecosetsandbiequivariant}) to describe indecomposable exact module categories over a group scheme-theoretical category $\C:=\C\left(G,H,\psi\right)$, which are classified by \cite[Theorem 5.7]{G} as recalled above. As a first step, we show in Lemma \ref{newsimple}
that there is an equivalence of abelian categories
$$\M\coloneqq{\rm Coh}^{\left(H\times K,\psi^{-1}\times \eta\right)}(G)\cong \Fun_{{\rm Coh}(G)}\left(\mathscr{M}(H,\psi),\mathscr{M}(K,\eta)\right),$$
where $H\times K$ acts on $G$ on the right via $(g,a,b)\mapsto a^{-1}gb$. 
Fix a closed point $Z\in Y(k)$ of the quotient finite scheme $Y:=G/(H\times K)$ (see \S\ref{sec:quotmorphind}), and let  $\M_{Z}\subset \M$
denote the full abelian subcategory consisting of all objects annihilated by the defining ideal $\mathscr{I}(Z)\subset \mathscr{O}(G)$ of $Z$. Our next result classifies simples in $\M$; we interpret it as a generalization of \cite[Prop. 3.2, Remark 3.3(i)]{O}.

\begin{reptheorem}{simmodrep}
	Let $\M={\rm Coh}^{\left(H\times K,\psi^{-1}\times \eta\right)}(G)$ be as above.\begin{enumerate}
		\item
		For any closed point $Z\in Y(k)$ with representative closed point $g\in Z(k)$, we have an equivalence of abelian categories
		$$\mathbf{Ind}_{Z}:\mathrm{Rep}_k(L^g,\xi_g^{-1})\xrightarrow{\cong}\M_{Z},$$
		where $L^g=H\cap gKg^{-1}$ and $\xi_g \in Z^2(L^g, \mathbb G_m)$ is defined in (\ref{xig0}).
		\item
		There is a bijection between equivalence classes of pairs $(Z,V)$, where $Z\in Y(k)$ with representative closed point $g\in Z(k)$, and $V\in \mathrm{Rep}_k(L^g,\xi_g^{-1})$ is simple, and simple objects of $\mathscr{M}$, assigning $(Z,V)$ to $\mathbf{Ind}_{Z}(V)$.
		\item
		We have a direct sum decomposition of abelian categories
		$$\M=\bigoplus_{Z\in Y(k)}\overline{\M_{Z}},$$
		where $\overline{\M_{Z}}\subset \M$ denotes the Serre closure of $\M_{Z}$ inside $\M$.
	\end{enumerate}  
\end{reptheorem}
This result relies on the equivalence from Theorem \ref{modrep0-1}. On the other hand, Theorem \ref{modrep00} is a better tool to compute projective covers.
\begin{reptheorem}{prsimmodrep}
Let $\M={\rm Coh}^{\left(H\times K,\psi^{-1}\times \eta\right)}(G)$ be as above.
\begin{enumerate}
		\item
		For any closed point $Z\in Y(k)$ with representative closed point $g\in Z(k)$, we have an equivalence of abelian categories
		$$\mathbf{F}_{Z}:\mathrm{Rep}_k(L^g,\xi_g^{-1})\xrightarrow{\cong}\M_{Z}.$$
		\item
		For any simple $V\in \mathrm{Rep}_k(L^g,\xi_g^{-1})$, we have
		$$P_{\M}\left(\mathbf{F}_{Z}(V)\right)\cong \O(G^{\circ})\ot \O(Z(k))\ot_k P_{(L^g,\xi_g^{-1})}(V).$$
	\end{enumerate}  
\end{reptheorem}

We conclude Section \S\ref{sec:Module categories over GCTC} with Corollary \ref{fibfungth}, which provides a classification of fiber functors on $\C\left(G,H,\psi\right)$.

Section \S\ref{S:structure-GSC} uses the previous results to describe the abelian structure of $\C\left(G,H,\psi\right)$. In Theorem \ref{simpobjs} we classify simple objects, compute their Frobenius-Perron dimensions, describe how they relate under dualization and provide a graded decomposition for $\C\left(G,H,\psi\right)$. Then in Theorem \ref{projsimpobjs} we give an alternative parametrization of the simples, use it to compute their projective covers and deduce that $\C\left(G,H,\psi\right)$ is unimodular if so is $\Rep_k(H)$. The description provided for $\C\left(G,H,\psi\right)$ is nicer when $G$ is either \'etale or connected, or $H$ is normal (see \S\ref{sec:The etale case}--\S\ref{sec:The connected case}). 

In Section \S\ref{sec:brauerpicard}, we use \S\ref{sec:The normal case}--\S\ref{sec:The connected case} to classify invertible bimodule categories over ${\rm Coh} (G)$ in the connected case. When $G$ is also abelian, we provide in Theorem \ref{trivialbrpictp} an expression for the multiplication in the Brauer-Picard group of ${\rm Coh} (G)$.

We conclude the paper with Section \S\ref{sec:twisted-double}, in which we focus on the center $\Z(G)\coloneqq \Z(\rm{Coh}(G))$. Since $\Z(G)$ is a group scheme-theoretical category, we can apply the results from \S\ref{S:structure-GSC} to provide in Theorem \ref{simpobjsnew} a description of its simple and projective objects. In particular, we obtain a decomposition of abelian categories
$$\mathscr{Z}(G)=\bigoplus_{C\in {\rm C}(k)}\overline{\mathscr{Z}(G)_{C}},$$
where ${\rm C}$ denotes the finite scheme of conjugacy orbits in $G$, and for each subcategory $\mathscr{Z}(G)_{C}$, we have an explicit abelian equivalence
$$\mathbf{F}_{C}:{\rm Rep}_k(G_C)\xrightarrow{\cong} \mathscr{Z}(G)_{C},\,\,\,V\mapsto \O(C)\ot_k V,$$
where $G_C$ is the stabilizer of $g\in C(k)$.

On the other hand, by \cite[Theorem 3.5]{GNN}, if we set $\mathscr{D}:={\rm Coh}(G)$ and $\mathscr{D}^\circ:={\rm Coh}(G^\circ)$, there is an equivalence of tensor categories
\begin{equation*}
F:\mathscr{Z}(G)\xrightarrow{\cong}\mathscr{Z}_{\mathscr{D}^{\circ}}\left(\mathscr{D}\right)^{G(k)}=\left(\bigoplus_{a\in G(k)}\mathscr{Z}_{\mathscr{D}^{\circ}}\left(\mathscr{D}^{\circ}\boxtimes a\right)\right)^{G(k)}.
\end{equation*}

\begin{reptheorem}{compdeco}
For any $C\in {\rm C}(k)$, the functor $F$ restricts to an equivalence of abelian categories
$$F_C:\overline{\mathscr{Z}(G)_{C}}\xrightarrow{\cong}\bigoplus_{a\in C(k)}\mathscr{Z}_{\mathscr{D}^{\circ}}\left(\mathscr{D}^{\circ}\boxtimes a\right)^{G(k)}.$$
In particular, $F$ restricts to an equivalence of tensor categories $$F_1:\overline{{\rm Rep}_k(G)}\xrightarrow{\cong}\mathscr{Z}\left(G^{\circ}\right)^{G(k)}.$$
\end{reptheorem}

\section{Preliminaries}\label{sec:Preliminaries}
In this section we fix notation and recall some basic results about finite (group) schemes to be used throughout the paper (see \cite{J,W} for more details). We assume familiarity with the theory of finite tensor categories and their modules categories, and refer to \cite{EGNO} for any unexplained notion. Recall that we work over an algebraically closed field $k$ of characteristic $p\ge0$. All schemes considered in this paper are assumed to be {\em finite} over $k$, unless otherwise stated. 

\subsection{Sheaves on finite schemes}\label{sec:coh decomposition}
Let $X$ be a finite scheme, i.e., $\O(X)$ is a finite dimensional commutative algebra. Let ${\rm Coh}(X)$ be the abelian category of  sheaves on $X$, i.e., the category of finite dimensional representations of $\O(X)$. 
For each closed point $x\in X(k)$, let ${\rm Coh}(X)_x\subset {\rm Coh}(X)$ denote the abelian subcategory of sheaves on $X$ supported on $x$. Each simple object of ${\rm Coh} (X)$ is contained in a unique ${\rm Coh} (X)_x$, which contains only one simple (up to isomorphism), so we denote that simple object by $\delta_x$. It is well known that there are no extensions of $\delta_x$ by any other nonisomorphic simple object, i.e., we have a decomposition
\begin{equation*}
{\rm Coh} (X)= \bigoplus_{x \in X(k)}{\rm Coh}(X)_x
\end{equation*}
of abelian categories. The projective cover $P_{x}:=P(\delta_x)$ lies in ${\rm Coh} (X)_x$.

Let $Y$ be a finite scheme, $\varphi:Y\to X$ a scheme morphism, and $\varphi^{\sharp}:\O(X)\to\O(Y)$ the corresponding algebra homomorphism. Recall that $\varphi$ induces a pair $(\varphi^*,\varphi_*)$ of adjoint functors of abelian categories
\begin{equation}\label{invimage}
\varphi^*:{\rm Coh}(X)\to {\rm Coh}(Y),\,\,\,S\mapsto S\ot_{\O(X)}\O(Y),
\end{equation}
where $\O(X)$ acts on $\O(Y)$ via $\varphi^{\sharp}$, and  
\begin{equation}\label{dirimage}
\varphi_*:{\rm Coh}(Y)\to {\rm Coh}(X),\,\,\,T\mapsto T_{\mid \O(X)},
\end{equation}
where $\O(X)$ acts on $T$ via $\varphi^{\sharp}$.

\subsection{Finite group schemes}\label{S:Finite group schemes}
A finite group scheme $G$ is a finite scheme whose coordinate algebra $\O(G)$ is a (finite dimensional commutative) Hopf algebra (see, e.g., \cite{J,W}). 

A finite group scheme $G$ is called {\em \'etale} if $G=G(k)$, where $G(k)$ is the group of closed points of $G$. Since $k$ is  algebraically closed, there is a categorical equivalence 
\[ \big\{ \text{\'Etale group schemes} \big\} \longleftrightarrow \big\{ \text{Finite abstract groups}\big\}\]
that assigns to an \'etale group scheme $G$ the group $G(k)$, and to a finite group $\Gamma$ the \'etale group scheme represented by $\O(\Gamma)$ (see, e.g., \cite[Section 6.4]{W} for more details).
 
A finite group scheme $G$ is {\em connected} if $G(k)=1$. Assume that $p>0$, and let $F\colon G\to G^{(1)}$ denote the Frobenius map. Since $G$ is finite and connected, it is annihilated by some Frobenius power $F^n \colon G\to G^{(n)}$, and a filtration by normal closed subgroup schemes 
\[1\triangleleft \Ker(F)\triangleleft \dots \triangleleft \Ker(F^n)=G,\]
where each composition factor $\Ker(F^k)/\Ker(F^{k-1})$ is a group scheme of \emph{height one}, i.e. with vanishing Frobenius.

\begin{example}\label{repliealg}
Consider a finite dimensional $p$-Lie algebra $\mathfrak g$ over $k$, and let $u(\mathfrak{g})$ denote its $p$-restricted universal enveloping algebra, which is a finite dimensional cocommutative Hopf algebra; see, e.g.,  \cite{SF}. Then the dual commutative Hopf algebra $u(\mathfrak{g})^*$ is local, and satisfies $x^p=0$ for all $x$ in the augmentation ideal.
\end{example}

By \cite[Section 2.7]{DG}, there is a categorical equivalence 
	\[ \big\{ \text{Height one finite group schemes} \big\} \longleftrightarrow \big\{ \text{Finite dim $p$-Lie algebras}\big\}\]
that assigns to a height one group scheme $G$ its Lie algebra $\Lie (G)$ with $p$-operation given by the differential of $x\mapsto x^p$. Conversely, the height one  group scheme corresponding to a finite dimensional $p$-Lie algebra $\mathfrak g$ is represented by $u(\mathfrak{g})^*$.

By a theorem of Cartier (see \cite[Section 11.4]{W}), if $p=0$ then every finite group scheme is \'etale. On the other hand, if $p>0$ then any finite group scheme $G$ fits into a split exact sequence
\begin{equation}\label{ses0}
1\to G^{\circ}\xrightarrow{i} G \mathrel{\mathop{\rightleftarrows}^{\pi}_{q}} G(k)\to 1,
\end{equation}
where $G^{\circ}$ is connected and $G(k)$ is \'etale, i.e., any finite group scheme is an extension of a connected group scheme by an \'etale one.
In particular, $G(k)$ acts on ${\rm Coh}(G)_1\cong{\rm Coh}(G^{\circ})$, $a\mapsto T_a$, and  
\begin{equation}\label{G(k)crossedproduct}
{\rm Coh}(G)\cong{\rm Coh}(G^{\circ})\rtimes G(k)
\end{equation}
is a crossed product category. Namely, ${\rm Coh}(G)={\rm Coh}(G^{\circ})\boxtimes {\rm Coh}(G(k))$ as abelian categories, and for any $X_1,X_2\in {\rm Coh}(G^{\circ})$ and $a_1,a_2\in G(k)$, we have
$$\left(X_1\boxtimes a_1 \right)\ot \left(X_2\boxtimes a_2 \right)= \left(X_1\ot T_{a_1}\left(X_2\right)\right)\boxtimes a_1a_2.$$

Thus, the classes of connected and \'etale group schemes play an important role in this paper. Restricting to these classes allows us, on the one hand, to illustrate our general results, and on the other hand to give a more specialized description of our objects of interest.

\subsection{Quotients and free actions}\label{sec:quotmorphind}
Let $H$ be a finite group scheme, and $X$ a finite scheme equipped with a {\em right} $H$-action 
\begin{equation}\label{mu}
\mu:=\mu_{X\times H}:X\times H\to X.
\end{equation}
Equivalently, the algebra homomorphism 
\begin{equation}\label{musharp}
\mu^{\sharp}:\O(X)\to \O(X)\ot \O(H)
\end{equation}
endows the algebra $\O(X)$ with a structure of a right $\O(H)$-comodule algebra (see, e.g., \cite{A}).
Since $H$ is finite, there exists a geometrical quotient finite scheme  
\begin{equation}\label{quotient morphismx}
\pi:X\twoheadrightarrow X/H,
\end{equation}
with coordinate algebra 
\begin{equation}\label{doublecosetfa}
\O(X/H)=\O(X)^{H}:=\{f\in\O(X)\mid \mu^{\sharp}(f)=f\ot 1\}.
\end{equation}

Recall that the action of $H$ on $X$ (\ref{mu}) is called {\em free} if the morphism 
\begin{equation*}
(p_1,\mu):X\times H\to X\times X
\end{equation*} 
is a closed immersion. Recall \cite[Theorem 1(B), p.112]{Mum} that in this case, the morphism $(p_1,\mu)$ induces a scheme isomorphism 
\begin{equation*}
X\times H\xrightarrow{\cong} X\times_{X/H} X.
\end{equation*}
Equivalently, $(p_1,\mu)$ induces an algebra isomorphism 
\begin{equation*}
\O(X)\otimes_{\O(X/H)} \O(X)\xrightarrow{\cong} \O(X)\otimes_k \O(H),
\end{equation*}
given by
$f\ot_{\O(X/H)}\tilde{f}\mapsto (f\ot 1)\mu^{\sharp}(\tilde{f});\,\,\,f,\tilde{f}\in\O(X)$.

\subsection{Equivariant sheaves}\label{sec:Equivariant sheaves}
Retain the notation of \S\ref{sec:quotmorphind}. Let ${\rm m}$ be the multiplication map of $H$, and set
\begin{equation}\label{eta}
\nu:=\mu(\id_X\times {\rm m})=\mu(\mu\times \id_{H}):
X\times H\times H\to X.
\end{equation}
Consider the obvious projections
\begin{gather*}
p_{1}:X\times H\twoheadrightarrow X,\,\,\,\text{p}_{1}:X\times H\times H\twoheadrightarrow X,\\
{\rm and}\,\,\,p_{12}:X\times H\times H\twoheadrightarrow X\times H. 
\end{gather*}
Clearly, $p_{1}\circ p_{12}=\text{p}_{1}$.

Let $\psi\in Z^2(H,\mathbb{G}_m)$ be a normalized $2$-cocycle, i.e., a Drinfeld twist for $\O(H)$. Namely, $\psi\in \mathscr{O}(H)^{\ot 2}$ is an
invertible element that satisfies the equations
$$(\Delta\ot \id)(\psi)(\psi\ot 1)=(\id\ot \Delta)(\psi)(1\ot \psi),$$
$$(\varepsilon\ot \id)(\psi)=(\id\ot \varepsilon)(\psi)=1.$$ Note that multiplication by $\psi$ defines an automorphism of any  sheaf on $H\times H$, which we still denote by $\psi$. Note also that $Z^2(H,\mathbb{G}_m)$ is a group since $\O(H)$ is commutative. 

\begin{definition}\label{defequiv}
Let $(H,\psi)$ and $X$ be as above.
\begin{enumerate}
\item
A $(H,\psi)$-equivariant sheaf on $X$ consists of a pair $(S,\rho)$, where $S\in {\rm Coh}(X)$ and $\rho$ is an
isomorphism $\rho:p_1^*S\xrightarrow{\cong} \mu^*S$ of sheaves on
$X\times H$ such that the diagram
\begin{equation*}
\xymatrix{p_1^*S \ar[d]_{(\id\times
{\rm m})^*(\rho)}\ar[rr]^{p_{12}^*(\rho)}
&& (\mu\circ p_{12})^*S\ar[d]^{(\mu\times \id)^*(\rho)} &&\\
\nu^*S\ar[rr]_{\id\boxtimes \psi} && \nu^*S&&}
\end{equation*}
of morphisms of sheaves on $X\times H\times H$ is commutative.
\item
Let $(S,\rho),\,(T,\tau)$ be two
$(H,\psi)$-equivariant  sheaves on $X$. A morphism $\phi:S\to T$ in
${\rm Coh}(X)$ is $(H,\psi)$-equivariant if the diagram
\begin{equation*}
\label{equivariantX2} \xymatrix{p_1^*S
\ar[d]_{\rho}\ar[rr]^{p_1^*(\phi)}
&& p_1^*T\ar[d]^{\tau} &&\\
\mu^*S\ar[rr]_{\mu^*(\phi)} && \mu^*T&&}
\end{equation*}
of morphisms of sheaves on $X\times H$ commutes.
\item
Let ${\rm Coh}^{(H,\psi)}(X)$ denote the category of
$(H,\psi)$-equivariant  sheaves on $X$, with
$(H,\psi)$-equivariant morphisms.
\end{enumerate}
\end{definition}

\begin{remark}
Morally, a $(H,\psi)$-equivariant  sheaf on $X$ is one equipped with a \emph{lift} of the $H$-action on $X$ to a $H$-action on the  sheaf, which is consistent with the automorphism of  sheaves on $H\times H$ induced by $\psi$.
\end{remark}

\begin{example}\label{ex0}
Consider $\O(X)\ot\O(H)$ as an $\O(X)$-module via 
the algebra map $\mu^{\sharp}$ (\ref{musharp}). Then $\mu^{\sharp}$ determines a $H$-equivariant structure on $\O(X)$ via the adjunction
\begin{eqnarray*}
\lefteqn{\Hom_{\O(X)}(\O(X),\O(X)\ot\O(H))}\\
& = & \Hom_{\O(X)}(\O(X),\mu_*p_1^*\O(X))=\Hom_{\O(X\times H)}(\mu^{*}\O(X),p_{1}^*\O(X)).
\end{eqnarray*}
However, for a nontrivial $\psi$, it is not always the case that $\O(X)$ admits a canonical $(H,\psi)$-equivariant structure (see Example \ref{ex1} below).
\end{example}

\begin{example}\label{ex1}
Let $H_\psi$ denote the finite group scheme central extension of $H$ by $\mathbb{G}_m$ associated to the $2$-cocycle $\psi$. Let $\Rep_k(H,\psi)$ denote the category of finite dimensional representations of $H_\psi$ on which $\mathbb{G}_m$ acts trivially. Then there is an equivalence of abelian categories 
$$\Rep_k(H,\psi)\cong {\rm Coh}^{(H,\psi)}({\rm pt}).$$ 
\end{example}

\subsection{Equivariant sheaves and comodules}\label{sec:Equivariant sheaves and comodules}
Given a finite group scheme $H$ and a twist $\psi\in \O(H)^{\ot 2}$, we let $\mathscr{O}(H)_{\psi}$ be the coalgebra with underlying vector space $\mathscr{O}(H)$ and comultiplication 
$\Delta_{\psi}$ given by
$\Delta_{\psi}(f):=\Delta(f)\psi$, where $\Delta$ is the standard comultiplication of $\mathscr{O}(H)$. It is clear that $\mathscr{O}(H)_{\psi}$ is a $H$-coalgebra, which is isomorphic to the regular representation of $H$ as an $H$-module. 

Note that the category $\Rep_k(H,\psi)$ (see Example \ref{ex1}) is equivalent to the category ${\rm Comod}(\mathscr{O}(H)_{\psi})_k$ of finite dimensional {\em right} $k$-comodules over $\mathscr{O}(H)_{\psi}$.

\begin{definition}\label{defright}
Let $X$ be as in \S\ref{sec:Equivariant sheaves}. Let 
${\rm Comod}(\mathscr{O}(H)_{\psi})_{{\rm Coh}(X)}$ be the abelian category of {\em right} $\mathscr{O}(H)_{\psi}$-comodules in ${\rm Coh}(X)$. Namely, the objects of this category are pairs $(S,\rho)$, where $S\in {\rm Coh}(X)$ and  
$\rho:S\to S\ot \mathscr{O}(H)_{\psi}$, such that 
$$\rho(f\cdot s)=\mu^{\sharp}(f)\cdot\rho(s);\,\,\,f\in\O(X),\,s\in S$$ 
(see (\ref{musharp})), and 
$(\rho\ot\id)\rho=(\id\ot\Delta_{\psi})\rho$. The morphisms in this category are ones that preserve the actions and coactions.
\qed
\end{definition}

\begin{proposition}\label{abelcat}
The following hold:
\begin{enumerate}
\item
There is a $k$-linear equivalence of categories 
$${\rm Coh}^{(H,\psi)}(X)\cong {\rm Comod}(\mathscr{O}(H)_{\psi})_{{\rm Coh}(X)}.$$
In particular, ${\rm Coh}^{(H,\psi)}(X)$ is an abelian category.
\item
If $\mathscr{I}\subset \O(X)$ is a $H$-stable ideal, then for any $S\in {\rm Coh}^{(H,\psi)}(X)$, $\mathscr{I}S\subset S$ is a subobject of $S$ in ${\rm Coh}^{(H,\psi)}(X)$.
\end{enumerate}
\end{proposition}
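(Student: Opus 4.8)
The plan is to prove Proposition \ref{abelcat} in two steps, corresponding to its two parts, with the first being the substantive one.

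\textbf{Part (1): the equivalence ${\rm Coh}^{(H,\psi)}(X)\cong {\rm Comod}(\mathscr{O}(H)_{\psi})_{{\rm Coh}(X)}$.} First I would set up the dictionary translating the sheaf-theoretic data in Definition \ref{defequiv} into comodule-theoretic data. Since all schemes are finite, $\O(X\times H)=\O(X)\ot\O(H)$, and the inverse-image functors $p_1^*$ and $\mu^*$ applied to $S\in{\rm Coh}(X)$ both produce $S\ot_k\O(H)$ as a vector space, but with different $\O(X)$-actions: $p_1^*S$ has $\O(X)$ acting only on the first factor, while $\mu^*S$ has $\O(X)$ acting through $\mu^\sharp\colon\O(X)\to\O(X)\ot\O(H)$. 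An $\O(X\times H)$-linear isomorphism $\rho\colon p_1^*S\to\mu^*S$ is therefore the same as a $k$-linear map $\tilde\rho\colon S\to S\ot\O(H)$ — obtained from $\rho$ by restriction along $s\mapsto \rho(s\ot 1)$ — satisfying the compatibility $\tilde\rho(f\cdot s)=\mu^\sharp(f)\cdot\tilde\rho(s)$; invertibility of $\rho$ corresponds to $\tilde\rho$ being a coaction making $S$ into a relative Hopf-type module (this uses $\mathscr{O}(H)_\psi\cong\O(H)$ as $H$-module, so the coaction axiom is equivalent to invertibility of the induced map). The next step is to check that the commutative hexagon (pentagon-with-$\psi$) in Definition \ref{defequiv}(1) translates exactly into the coassociativity condition $(\tilde\rho\ot\id)\tilde\rho=(\id\ot\Delta_\psi)\tilde\rho$: here one pulls back $\rho$ along the three maps to $X\times H\times H$, uses $p_1\circ p_{12}={\rm p}_1$ and the definition $\nu=\mu(\mu\times\id_H)=\mu(\id_X\times{\rm m})$, identifies $p_{12}^*(\rho)$ with $\tilde\rho\ot\id$, identifies $(\mu\times\id)^*(\rho)$ and $(\id\times{\rm m})^*(\rho)$ with the two ways of applying $\tilde\rho$ iterated, and observes that the $\id\boxtimes\psi$ on the bottom edge is precisely what upgrades $\Delta$ to $\Delta_\psi$. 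Finally, one checks that the equivariance-of-morphisms square in Definition \ref{defequiv}(2) becomes the requirement that a morphism intertwine the coactions, so the two categories are the same on objects and morphisms; abelianness of ${\rm Coh}^{(H,\psi)}(X)$ then follows from abelianness of the category of comodules in the abelian category ${\rm Coh}(X)$ over the coalgebra $\mathscr{O}(H)_\psi$ (kernels and cokernels formed in ${\rm Coh}(X)$ inherit coactions because $-\ot\O(H)$ is exact, $\O(H)$ being finite-dimensional and flat).

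\textbf{Part (2): $\mathscr{I}$-stability.} Given part (1), I would work with the comodule description. Let $(S,\rho)\in{\rm Comod}(\mathscr{O}(H)_\psi)_{{\rm Coh}(X)}$ and let $\mathscr{I}\subset\O(X)$ be an $H$-stable ideal, i.e. $\mu^\sharp(\mathscr{I})\subset\mathscr{I}\ot\O(H)$. Then $\rho(\mathscr{I}S)\subset\mu^\sharp(\mathscr{I})\rho(S)\subset(\mathscr{I}\ot\O(H))(S\ot\O(H))=\mathscr{I}S\ot\O(H)$ by the module-coalgebra compatibility $\rho(f\cdot s)=\mu^\sharp(f)\cdot\rho(s)$, so $\mathscr{I}S$ is a subcomodule; it is visibly an $\O(X)$-submodule, hence a subobject in ${\rm Coh}(X)$, and the coaction restricts, so it is a subobject in ${\rm Coh}^{(H,\psi)}(X)$.

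\textbf{Main obstacle.} The only delicate point is the bookkeeping in part (1): carefully matching the three pullbacks in the hexagon of Definition \ref{defequiv}(1) with the two sides of the coassociativity identity, and in particular verifying that the twist automorphism $\id\boxtimes\psi$ on $X\times H\times H$ lands exactly on the factor $\O(H)^{\ot 2}$ in the right place so that $\Delta$ is replaced by $\Delta_\psi$ rather than, say, $\Delta$ twisted on the other side. I would handle this by fixing once and for all the identification $p_1^*S=\mu^*S=S\ot_k\O(H)$ (as $k$-spaces), writing each of the six pullbacks to $X\times H\times H$ as $S\ot\O(H)\ot\O(H)$ with explicit $\O(X\times H\times H)$-module structures, and tracking $\rho$ through; everything else is routine adjunction and exactness bookkeeping. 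I would also note explicitly the standard fact — used implicitly above — that because $H$ is finite a $k$-linear map $\tilde\rho\colon S\to S\ot\O(H)$ satisfying $(\tilde\rho\ot\id)\tilde\rho=(\id\ot\Delta_\psi)\tilde\rho$ and $(\id\ot\varepsilon)\tilde\rho=\id_S$ is automatically such that the associated $\rho$ is an isomorphism, which closes the loop between ``isomorphism'' in Definition \ref{defequiv} and ``coaction'' in Definition \ref{defright}.
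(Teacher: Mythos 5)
Your proposal is correct and follows the same route the paper takes: part (1) is exactly the sheaf-to-comodule dictionary that the paper outsources to the analogous \cite[Proposition 3.7(3)]{G} (identifying $p_1^*S$ and $\mu^*S$ with $S\ot_k\O(H)$, matching the hexagon with $\Delta_\psi$-coassociativity, and using the antipode/counit to pass between ``isomorphism'' and ``coaction''), and part (2) is the observation that the coaction restricts to $\mathscr{I}S$ because $\mathscr{I}$ is $H$-stable, which is the paper's one-line argument written out at the comodule level.
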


\begin{proof}
(1) The proof is similar to \cite[Proposition 3.7(3)]{G}.

(2) Follows from (1) since the equivariant structure of $S$ restricts to $\mathscr{I}S$ via the $H$-equivariant inclusion morphism $\mathscr{I}S\hookrightarrow S$.
\end{proof}

\subsection{Equivariant morphisms}\label{sec:Equivariant morphisms}
Assume $X,\,Y$ are finite schemes on which a finite group scheme $H$ acts from the right via $\mu_X:=\mu_{X\times H}$, $\mu_Y:=\mu_{Y\times H}$, respectively. Let $\psi\in Z^2(H,\mathbb{G}_m)$.

\begin{proposition}\label{equivmorphs}
For any $H$-equivariant morphism $\varphi:X\to Y$, the functors $\varphi^*$ (\ref{invimage}) and $\varphi_*$ (\ref{dirimage}) lift to a pair $(\varphi^*,\varphi_*)$ of adjoint functors of abelian categories 
$$\varphi^*:{\rm Coh}^{(H,\psi)}(Y)\to {\rm Coh}^{(H,\psi)}(X),\,\,\,\,\,\,\varphi_*:{\rm Coh}^{(H,\psi)}(X)\to {\rm Coh}^{(H,\psi)}(Y).$$ 
\end{proposition}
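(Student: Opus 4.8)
The plan is to reduce everything to the comodule reformulation of Proposition \ref{abelcat}(1): an object of ${\rm Coh}^{(H,\psi)}(Z)$ is a sheaf $S$ on $Z$ equipped with a coaction $\rho\colon S\to S\ot\O(H)_\psi$ satisfying $\rho(f\cdot s)=\mu_Z^\sharp(f)\cdot\rho(s)$ for $f\in\O(Z),\ s\in S$, together with the twisted coassociativity $(\rho\ot\id)\rho=(\id\ot\Delta_\psi)\rho$. The only property of $\varphi$ that enters is that $H$-equivariance of $\varphi\colon X\to Y$ dualizes to
\[
\mu_X^\sharp\circ\varphi^\sharp=(\varphi^\sharp\ot\id_{\O(H)})\circ\mu_Y^\sharp\colon\O(Y)\longrightarrow\O(X)\ot\O(H),
\]
together with the commutativity of $\O(H)$, which makes $\psi$ central in $\O(H)^{\ot 2}$ and gives $\Delta_\psi(ab)=\Delta_\psi(a)\,\Delta(b)$ for all $a,b\in\O(H)$.

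For $\varphi_*$, given $(T,\sigma)\in{\rm Coh}^{(H,\psi)}(X)$ I put $\varphi_*(T,\sigma):=(T_{\mid\O(Y)},\sigma)$, the coaction being literally unchanged; the new module compatibility $\sigma((\varphi^\sharp g)\cdot t)=\mu_Y^\sharp(g)\cdot\sigma(t)$ for $g\in\O(Y)$ follows from the $X$-compatibility of $\sigma$ and the displayed identity, while twisted coassociativity does not involve the module structure and is inherited. For $\varphi^*$, given $(S,\rho)\in{\rm Coh}^{(H,\psi)}(Y)$ I equip $\varphi^*S=S\ot_{\O(Y)}\O(X)$ with the coaction
\[
s\ot_{\O(Y)}f\;\longmapsto\;(s_{(0)}\ot_{\O(Y)}f_{[0]})\ot s_{(1)}f_{[1]},\qquad\rho(s)=s_{(0)}\ot s_{(1)},\quad\mu_X^\sharp(f)=f_{[0]}\ot f_{[1]},
\]
that is, I combine the $(H,\psi)$-coaction of $S$ with the canonical $H$-equivariant structure of $\O(X)$ from Example \ref{ex0}, multiplying the two $\O(H)$-components inside $\O(H)_\psi$. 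The checks to carry out are: this is well defined modulo the balancing relation over $\O(Y)$ (uses the displayed identity and commutativity of $\O(H)$), it is $\O(X)$-linear in the appropriate sense, and it satisfies twisted coassociativity (uses coassociativity of $\rho$, coassociativity of $\mu_X^\sharp$ as a comodule-algebra structure, and $\Delta_\psi(ab)=\Delta_\psi(a)\Delta(b)$). Equivalently and more transparently, one may use Definition \ref{defequiv} directly: since $\varphi\circ\mu_X=\mu_Y\circ(\varphi\times\id_H)$ and $\varphi\circ p_1^X=p_1^Y\circ(\varphi\times\id_H)$, functoriality of pullback identifies $p_1^{X*}\varphi^*S\cong(\varphi\times\id_H)^*p_1^{Y*}S$ and $\mu_X^*\varphi^*S\cong(\varphi\times\id_H)^*\mu_Y^*S$, so one simply sets $\varphi^*\rho:=(\varphi\times\id_H)^*\rho$, and the defining square for $\varphi^*S$ is obtained by applying $(\varphi\times\id_{H\times H})^*$ to the one for $S$, using that pullback commutes with multiplication by $\psi$ (which is multiplication by the central element $1\ot\psi\in\O(X)\ot\O(H)^{\ot 2}$). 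On morphisms, $\varphi^*$ and $\varphi_*$ are the underlying functors (\ref{invimage}), (\ref{dirimage}), and preservation of equivariance is a routine diagram check.

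For the adjunction, the unit $\eta_S\colon S\to\varphi_*\varphi^*S$, $s\mapsto s\ot_{\O(Y)}1$, and the counit $\varepsilon_T\colon\varphi^*\varphi_*T\to T$, $t\ot_{\O(Y)}f\mapsto f\cdot t$, of the $(\varphi^*,\varphi_*)$-adjunction at the level of ${\rm Coh}$ are checked to be comodule morphisms by a short computation with the coactions above (again using commutativity of $\O(H)$ for $\varepsilon_T$). Hence $\eta$ and $\varepsilon$ lift to natural transformations between the lifted functors; since the forgetful functors ${\rm Coh}^{(H,\psi)}(X)\to{\rm Coh}(X)$ and ${\rm Coh}^{(H,\psi)}(Y)\to{\rm Coh}(Y)$ are faithful and intertwine the lifted functors with (\ref{invimage}), (\ref{dirimage}), the triangle identities in the equivariant categories follow from those in ${\rm Coh}$, giving the asserted adjoint pair of abelian functors.

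I expect the one genuinely delicate point to be the construction of the equivariant structure on $\varphi^*S$: one cannot merely transport structures, because in general $\O(X)$ carries no canonical $(H,\psi)$-equivariant structure, only an $(H,1)$-one (cf. Example \ref{ex1}), so the $(H,\psi)$-coaction must be manufactured by genuinely mixing the coaction of $S$ with the $H$-coaction of $\O(X)$ and checking its compatibility with the $\O(Y)$-balancing. Every such verification rests on the commutativity of $\O(H)$; once $\varphi^*$ and $\varphi_*$ are seen to be well defined, the adjunction is formal.
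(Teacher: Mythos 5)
Your proposal is correct and follows essentially the same route as the paper: the pullback coaction you define, $s\ot_{\O(Y)}f\mapsto (s_{(0)}\ot f_{[0]})\ot s_{(1)}f_{[1]}$, is exactly the paper's $\rho^*:=\mu_X^{\sharp}\bar{\ot}\rho$, the pushforward keeps the coaction unchanged as in the paper, and verifying the adjunction via unit/counit being comodule maps is equivalent to the paper's check that the adjunction isomorphism preserves equivariant morphisms. You simply spell out the verifications (balancing over $\O(Y)$, twisted coassociativity via commutativity of $\O(H)$) that the paper leaves as "straightforward."
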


\begin{proof}
Let $(S,\rho)\in {\rm Coh}^{(H,\psi)}(Y)$, i.e., $S\in {\rm Coh}(Y)$ and $\rho$ is an $\O(Y)$-morphism endowing $S$ with a structure $\rho:S\to S\ot \O(H)_{\psi}$ of a right $\O(H)_{\psi}$-comodule. It is straightforward to verify that
$$\rho^*:=\mu_X^{\sharp}\bar{\ot} \rho:\O(X)\ot_{\O(Y)}S\to \O(X)\ot_{\O(Y)}S\ot \O(H)_{\psi}$$
endows $\varphi^*S$ with a structure of an object 
$(\varphi^*S,\rho^*)$ in ${\rm Coh}^{(H,\psi)}(X)$.

Let $(T,\tau)\in {\rm Coh}^{(H,\psi)}(X)$, i.e., $T\in {\rm Coh}(X)$ and $\tau$ is an $\O(X)$-morphism endowing $T$ with a structure $\tau:T\to T\ot \O(H)_{\psi}$ of a right $\O(H)_{\psi}$-comodule. It is easy to verify that $\tau_*:=\tau$ endows $\varphi_*T$ with a structure of an object
$(\varphi_*T,\tau_*)$ in ${\rm Coh}^{(H,\psi)}(Y)$.

It is straightforward to verify that the adjunction isomorphism
\begin{align*}
\Hom_{{\rm Coh}(X)}(\varphi^*S, T)\cong \Hom_{{\rm Coh}(Y)}(S, \varphi_*T) 
\end{align*}
preserves $(H,\psi)$-equivariant maps (see Definition \ref{defequiv}(2)), hence lifts to an isomorphism
\begin{align*}
\Hom_{{\rm Coh}^{(H,\psi)}(X)}((\varphi^*S,\rho^*), (T,\tau))\cong \Hom_{{\rm Coh}^{(H,\psi)}(Y)}((S,\rho), (\varphi_*T,\tau_*)), 
\end{align*}
as claimed.
\end{proof}

\begin{example}\label{fromreptocoh}
Let ${\rm u}:X\to {\rm Spec}(k)=\{{\rm pt}\}$ be the scheme structure morphism. Since $u$ is $H$-equivariant, we have a functor
$${\rm u}^*:\Rep_k(H,\psi)\to {\rm Coh}^{(H,\psi)}(X)$$
(see Example \ref{ex1}).
\end{example}

\subsection{Principal homogeneous spaces}\label{sec:Right principle homogeneous spaces}
In this section we assume that $\iota:H\xrightarrow{1:1}G$ is an embedding of finite group schemes. Consider the free right action of $H$ on $G$, given by
\begin{equation}\label{freerightactionhong}
\mu_{G\times H}:G\times H\to G,\,\,\,(g,h)\mapsto gh \footnote{Whenever there is no confusion, we will write $gh$ instead of $g\iota(h)$.},
\end{equation}
and (see \S\ref{sec:quotmorphind}) the corresponding quotient morphism 
\begin{equation}\label{quotient morphism}
\pi:G\twoheadrightarrow G/H.
\end{equation}

Recall that $\O(G/H)\subset \O(G)$ is a {\em right} $\O(H)$-Hopf Galois cleft extension. Namely, $\O(G)$ is a right $\O(H)$-comodule algebra via $\mu_{G\times H}^{\sharp}$, $\O(G/H)\subset \O(G)$ is the {\em left} coideal subalgebra of coinvariants, the map 
\begin{equation*}
\O(G)\ot_{\O(G/H)}\O(G)\to \O(G)\ot_k\O(H),\,\,\,{\rm f}\ot \tilde{{\rm f}}\mapsto ({\rm f}\ot 1)\mu_{G\times H}^{\sharp}(\tilde{{\rm f}}),
\end{equation*}
is bijective, and there exists a (unique up to multiplication by a convolution invertible element in $\Hom_k(\O(H),\O(H\backslash G))$) unitary convolution invertible right $\O(H)$-colinear map, called the {\em cleaving map},  
\begin{equation}\label{nbpgamma0}
\mathfrak{c}:\O(H)\xrightarrow{1:1} \O(G).
\end{equation}
A {\em choice} of $\mathfrak{c}$ determines a convolution invertible $2$-cocycle 
\begin{equation}\label{sigma}
\sigma:\O(H)^{\ot\,2}\to \O(G/H),\,\,\,f\ot \tilde{f}\mapsto \mathfrak{c}(f_1)\mathfrak{c}(\tilde{f}_1)\mathfrak{c}^{-1}(f_1\tilde{f}_2),
\end{equation}
from which one can define the crossed product algebra $\O(G/H)\#_{\sigma}\O(H)$ (with the trivial action of $\O(H)$ on $\O(G/H)$), which is naturally a left $\O(G/H)$-module and right $\O(H)$-comodule algebra, where $\O(H)$ coacts via $a\#f\mapsto a\#f_1\ot f_2$. 
Moreover, the map 
\begin{equation}\label{newestdeofphi}
\phi:\O(G/H)\#_{\sigma}\O(H)\xrightarrow{\cong} \O(G),\,\,\,a\#f\mapsto a\mathfrak{c}(f),
\end{equation}
is an algebra isomorphism, $\O(G/H)$-linear and right $\O(H)$-colinear. The inverse of $\phi$ is given by
\begin{equation}\label{newdeofphi-1}
\phi^{-1}:\O(G)\xrightarrow{\cong} \O(G/H)\#_{\sigma}\O(H),\,\,\,{\rm f}\mapsto {\rm f}_1\mathfrak{c}^{-1}(\iota^{\sharp}({\rm f}_2))\#\iota^{\sharp}({\rm f}_3).
\end{equation}
We also have the $\O(G/H)$-linear map
\begin{equation}\label{nbpalphatilderp}
\alpha:=(\id\bar{\ot}\varepsilon)\phi^{-1}:\O(G)\twoheadrightarrow \O(G/H),\,\,\,{\rm f}\mapsto {\rm f}_1 \mathfrak{c}^{-1}(\iota^{\sharp}({\rm f}_2)).
\end{equation}
(For details, see \cite[Section 3]{Mon} and references therein.)

Since $\sigma$ (\ref{sigma}) is a $2$-cocycle, it follows that for any $S\in {\rm Coh}(G/H)$, 
the vector space $S\ot_k\O(H)$ is an $\O(G/H)\#_{\sigma}\O(H)$-module via
$$(a\#f)\cdot \left(s\ot \tilde{f}\right)=a\sigma(f_1,\tilde{f}_1)\cdot s\ot f_2\tilde{f}_2.$$
Consequently, we have the following lemma.

\begin{lemma}\label{helpful2-2}
For any $U\in {\rm Coh}(G/H)$, the following hold:
\begin{enumerate}
\item
The vector space $U\ot_k\O(H)$ is an $\O(G)$-module via $\phi^{-1}$ (\ref{newdeofphi-1}): 
$${\rm f}\cdot \left(u\ot \tilde{f}\right):=\phi^{-1}\left({\rm f}\right)\cdot \left(u\ot \tilde{f}\right);\,\,{\rm f}\in \O(G),\, u\ot\tilde{f}\in U\ot \O(H).$$
\item
We have an $\O(G)$-linear isomorphism 
$$F_U:U\ot_{\O(G/H)}\O(G)\xrightarrow{\cong} U\ot_k\O(H),\,\,\,u\ot_{\O\left(G/H\right)}{\rm f}\mapsto \phi^{-1}\left({\rm f}\right)\cdot \left(u\ot 1\right),$$
whose inverse is given by
$$F_U^{-1}:U\ot_k\O(H)\xrightarrow{\cong} U\ot_{\O(G/H)}\O(G),\,\,\,u\ot_k f\mapsto u\ot_{\O(G/H)} \phi(1\ot_k f).$$
\end{enumerate} 
\end{lemma}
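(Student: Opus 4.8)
The plan is to deduce both statements from the crossed‑product description $\phi\colon\O(G/H)\#_{\sigma}\O(H)\xrightarrow{\cong}\O(G)$ recalled in \eqref{newestdeofphi}, which is at once an algebra isomorphism and an isomorphism of left $\O(G/H)$-modules, together with the normalization of $\sigma$ inherited from the normalized cocycle $\psi$ and the unitarity of the cleaving map $\mathfrak{c}$ (so that $\sigma(f_1,1)=\sigma(1,f_1)=\varepsilon(f_1)1$).

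Part (1) is essentially a definition: the displayed formula preceding the lemma makes $U\ot_k\O(H)$ a left $\O(G/H)\#_{\sigma}\O(H)$-module, and since $\phi^{-1}$ in \eqref{newdeofphi-1} is an algebra homomorphism, restriction of scalars along $\phi^{-1}$ turns this into an $\O(G)$-module, whose action unwinds to exactly ${\rm f}\cdot(u\ot\tilde f)=\phi^{-1}({\rm f})\cdot(u\ot\tilde f)$. No real computation is needed here.

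For part (2) I would factor $F_U$ through the crossed product in two steps. First, because the action of $\O(H)$ on $\O(G/H)$ inside $\O(G/H)\#_{\sigma}\O(H)$ is trivial and $\sigma$ is normalized, left multiplication by $\O(G/H)$ on $a\#f$ multiplies only the first tensor factor, so $\O(G/H)\#_{\sigma}\O(H)\cong\O(G/H)\ot_k\O(H)$ as left $\O(G/H)$-modules; consequently
\[
U\ot_{\O(G/H)}\bigl(\O(G/H)\#_{\sigma}\O(H)\bigr)\xrightarrow{\ \cong\ }U\ot_k\O(H),\qquad u\ot_{\O(G/H)}(a\#f)\mapsto (a\cdot u)\ot_k f,
\]
is a well-defined bijection with inverse $u\ot_k f\mapsto u\ot_{\O(G/H)}(1\#f)$, and a short check against the multiplication of $\O(G/H)\#_{\sigma}\O(H)$ shows it is $\O(G/H)\#_{\sigma}\O(H)$-linear for the module structures described before the lemma. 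Second, since $\phi$ is left $\O(G/H)$-linear, applying $U\ot_{\O(G/H)}(-)$ to $\phi$ gives an isomorphism $U\ot_{\O(G/H)}(\O(G/H)\#_{\sigma}\O(H))\xrightarrow{\cong}U\ot_{\O(G/H)}\O(G)$ intertwining the left-multiplication action of $\O(G/H)\#_{\sigma}\O(H)$ on the source with the standard $\O(G)$-action on the target, i.e. with the action of part (1) transported along $\phi^{-1}$. Composing the inverse of the first map with the second yields an $\O(G)$-linear isomorphism $U\ot_{\O(G/H)}\O(G)\xrightarrow{\cong}U\ot_k\O(H)$; chasing $u\ot_{\O(G/H)}{\rm f}$ through and writing $\phi^{-1}({\rm f})=\sum_i a_i\#f_i$ produces $\sum_i(a_i\cdot u)\ot_k f_i$, which equals $\phi^{-1}({\rm f})\cdot(u\ot 1)$ by the normalization identity $\sigma(f_1,1)=\varepsilon(f_1)1$ — this is the stated $F_U$. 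Running the composition backwards, $u\ot_k f\mapsto u\ot_{\O(G/H)}(1\#f)\mapsto u\ot_{\O(G/H)}\phi(1\#f)=u\ot_{\O(G/H)}\mathfrak{c}(f)=u\ot_{\O(G/H)}\phi(1\ot_k f)$, which is the stated $F_U^{-1}$.

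The argument is almost entirely bookkeeping, so there is no single hard step; the only point requiring care is keeping the two roles of $\phi$ separate and invoking each at the right moment — well-definedness of $F_U$ on the balanced tensor product $U\ot_{\O(G/H)}\O(G)$ uses $\O(G/H)$-linearity of $\phi^{-1}$, whereas $\O(G)$-linearity of $F_U$ uses that $\phi^{-1}$ is multiplicative. Alternatively, one can bypass the intermediate crossed-product tensor product and simply verify directly, using \eqref{newestdeofphi}, \eqref{newdeofphi-1} and the normalized $2$-cocycle identities for $\sigma$, that the two displayed formulas are mutually inverse and $\O(G)$-linear; this trades conceptual cleanliness for a slightly longer but entirely routine computation.
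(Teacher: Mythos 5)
Your proposal is correct and follows essentially the same route as the paper, which simply states that the lemma "follows from the preceding remarks", i.e.\ from the crossed-product isomorphism $\phi$ of (\ref{newestdeofphi}) and the $\O(G/H)\#_{\sigma}\O(H)$-module structure on $U\ot_k\O(H)$ displayed just before the lemma. You have merely written out the bookkeeping (restriction of scalars along $\phi^{-1}$, factoring $F_U$ through $U\ot_{\O(G/H)}(\O(G/H)\#_{\sigma}\O(H))$, and the normalization $\sigma(f,1)=\sigma(1,f)=\varepsilon(f)1$) that the paper leaves implicit.
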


\begin{proof}
Follow from the preceding remarks.
\end{proof}

For any $U\in {\rm Coh}(G/H)$, define 
\begin{equation}\label{yetanotherrhoU}
\rho^{\psi}_U:=(F_U^{-1}\ot\id)(\id\ot\Delta_{\psi})F_U.
\end{equation}

For any $(S,\rho)\in {\rm Coh}^{(H,\psi)}(G)$, define the subspace of coinvariants 
\begin{equation}\label{firstcoinvariants}
S^{(H,\psi)}:=\{s\in S\mid \rho(s)=\mathfrak{c}(\psi^1)\cdot s\ot \psi^2\}\subset S.
\end{equation}

\begin{theorem}\label{helpful2}
The following hold:
\begin{enumerate}
\item 
There is an equivalence of abelian categories 
$${\rm Coh}^{(H,1)}(G)\xrightarrow{\cong}{\rm Coh}^{(H,\psi)}(G),\,\,(S,\rho)\mapsto \left(S^{(H,1)}\ot_k \O(H),\id\ot\Delta_{\psi}\right)$$
(see \ref{yetanotherrhoU}), whose inverse is given by
$${\rm Coh}^{(H,\psi)}(G)\xrightarrow{\cong}{\rm Coh}^{(H,1)}(G),\,\,
(S,\rho)\mapsto \left(S^{(H,\psi)}\ot_k \O(H),\id\ot\Delta\right).$$ 
\item
The quotient morphism $\pi:G\twoheadrightarrow G/H$ (\ref{quotient morphism}) induces an equivalence of abelian categories
$$\pi^*:{\rm Coh}(G/H)\xrightarrow{\cong}{\rm Coh}^{(H,\psi)}(G),\,\,\,
U\mapsto \left(U\ot_{\O(G/H)}\O(G),\rho^{\psi}_U\right),$$
whose inverse is given by
$$\pi_*^{(H,\psi)}:{\rm Coh}^{(H,\psi)}(G)\xrightarrow{\cong}{\rm Coh}(G/H),\,\,\,
(S,\rho)\mapsto \pi^{(H,\psi)}_*S.$$
\end{enumerate}
\end{theorem}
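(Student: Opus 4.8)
The plan is to obtain both statements from the single principle that, the $H$-action on $G$ being free, the quotient $\pi\colon G\twoheadrightarrow G/H$ is a cleft $H$-torsor and hence equivariant sheaves on $G$ descend to sheaves on $G/H$; part (1) then follows formally from part (2) by comparing the twist $\psi$ with the trivial twist. Concretely, the workhorses are the cleft isomorphism $\phi\colon\O(G/H)\#_\sigma\O(H)\xrightarrow{\cong}\O(G)$ of (\ref{newestdeofphi}), the resulting $\O(G)$-linear isomorphism $F_U\colon U\ot_{\O(G/H)}\O(G)\xrightarrow{\cong}U\ot_k\O(H)$ of Lemma \ref{helpful2-2}, and the Mumford isomorphism $\O(G)\ot_{\O(G/H)}\O(G)\xrightarrow{\cong}\O(G)\ot_k\O(H)$ recalled in \S\ref{sec:quotmorphind} (itself an instance of \cite[Theorem 1(B), p.112]{Mum}).

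I would first check that $\pi^*$ lands in ${\rm Coh}^{(H,\psi)}(G)$. Since $F_U$ is $\O(G)$-linear for the action through $\phi^{-1}$ (\ref{newdeofphi-1}), and $\rho_U^{\psi}$ is by (\ref{yetanotherrhoU}) the transport of $\id\ot\Delta_{\psi}$ along $F_U$, it suffices to verify that $(U\ot_k\O(H),\id\ot\Delta_{\psi})$ is a right $\O(H)_{\psi}$-comodule in ${\rm Coh}(G)$ in the sense of Definition \ref{defright}: coassociativity up to $\psi$ is precisely the $2$-cocycle identity making $\O(H)_{\psi}$ a coalgebra, and compatibility with the $\O(G)$-action is a direct verification from the crossed-product structure. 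Via Proposition \ref{abelcat}(1) this yields an object of ${\rm Coh}^{(H,\psi)}(G)$, and functoriality of $\pi^*$ is immediate. Next, I would check that $S\mapsto S^{(H,\psi)}$ of (\ref{firstcoinvariants}) defines a functor to ${\rm Coh}(G/H)$: because $\O(G/H)=\O(G)^H$ and $\O(G)$ is commutative, the $\O(G/H)$-action on $S$ preserves the coinvariants, and equivariant morphisms restrict to coinvariants; call the resulting functor $\pi^{(H,\psi)}_*$.

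The two composites must then be shown to be the identity. For $\pi^{(H,\psi)}_*\circ\pi^*\cong\id$, I would transport along $F_U$ and compute the twisted coinvariants of $(U\ot_k\O(H),\id\ot\Delta_{\psi})$: using the explicit formula (\ref{newdeofphi-1}) for $\phi^{-1}$ and the normalizations of $\mathfrak{c}$ and $\psi$, these are exactly the subspace $U\ot 1\cong U$, naturally in $U$. For $\pi^*\circ\pi^{(H,\psi)}_*\cong\id$, one takes $(S,\rho)\in{\rm Coh}^{(H,\psi)}(G)$ and considers the canonical $\O(G)$-linear map $S^{(H,\psi)}\ot_{\O(G/H)}\O(G)\to S$, $s\ot f\mapsto f\cdot s$; it is $(H,\psi)$-equivariant by construction, so only bijectivity is at stake. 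Here the freeness of the action is used: transporting along $\phi$ and the Mumford isomorphism reduces the claim to the assertion that every right $\O(H)_{\psi}$-comodule over $\O(G/H)\#_\sigma\O(H)$ is freely generated over $\O(G/H)$ by its coinvariants, a standard faithfully flat descent argument. I expect this step to be the main obstacle; everything else is bookkeeping with the cleaving data.

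Finally, for part (1): composing the equivalence $\pi^{(H,1)}_*\colon{\rm Coh}^{(H,1)}(G)\xrightarrow{\cong}{\rm Coh}(G/H)$ with $\pi^*\colon{\rm Coh}(G/H)\xrightarrow{\cong}{\rm Coh}^{(H,\psi)}(G)$ and then applying $F$ gives exactly $(S,\rho)\mapsto(S^{(H,1)}\ot_k\O(H),\id\ot\Delta_{\psi})$, while the analogous composite with the roles of $\psi$ and $1$ interchanged gives the stated inverse; in particular part (1) is a formal consequence of part (2), which is why I would organize the argument around part (2).
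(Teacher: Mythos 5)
Your outline is workable and it uses the same toolkit as the paper (the cleft isomorphism $\phi$ of (\ref{newestdeofphi}), Lemma \ref{helpful2-2}, and Mumford's theorem), but it runs the logic in the opposite direction. The paper proves part (1) first: the cleaving map, through Lemma \ref{helpful2-2}, gives an explicit trivialization $S\cong S^{(H,\psi)}\ot_k\O(H)$ which converts a $\psi$-twisted equivariant structure into an untwisted one, so (1) is a direct consequence of cleftness; part (2) is then obtained by citing \cite[p.~112]{Mum} only in the case $\psi=1$ and composing with (1). You instead prove (2) directly for arbitrary $\psi$ and deduce (1) afterwards. That ordering is legitimate, and your preliminary checks (that $\pi^*U$ with $\rho^{\psi}_U$ from (\ref{yetanotherrhoU}) is an object of ${\rm Coh}^{(H,\psi)}(G)$, and that $S^{(H,\psi)}$ is an $\O(G/H)$-submodule preserved by equivariant morphisms) are fine.

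The soft spot is exactly the step you yourself flag as the main obstacle: bijectivity of the counit map $S^{(H,\psi)}\ot_{\O(G/H)}\O(G)\to S$ for general $\psi$. As written, ``standard faithfully flat descent'' does not apply verbatim: $(S,\rho)$ is a comodule over the twisted coalgebra $\O(H)_{\psi}$, which is not a Hopf algebra, and the coinvariants (\ref{firstcoinvariants}) are defined through the cleaving map $\mathfrak{c}$, so the usual structure theorems for relative Hopf modules over a faithfully flat Hopf--Galois extension (e.g. \cite{Sc}) do not cover this situation without modification. To close this step rigorously you need either a Doi--Koppinen/entwined-module version of the descent theorem, or --- much more simply --- the explicit cleft trivialization of Lemma \ref{helpful2-2}, which is essentially part (1). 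So your claim that ``(1) is a formal consequence of (2)'' inverts the natural dependency: the easiest way to fill your gap is the paper's order, namely untwist first (part (1)) and invoke Mumford only in the untwisted case. With that adjustment the argument is correct; as proposed, the pivotal descent step is asserted rather than proved.
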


\begin{proof}
(1) Follows from Lemma \ref{helpful2-2} in a straightforward manner.  

(2) For $\psi=1$, this is \cite[p.112]{Mum}, so the claim follows from (1).
\end{proof}

Now for any closed point $\bar{g}:=gH\in (G/H)(k)$, let $\delta_{\bar{g}}$ denote the corresponding simple object of ${\rm Coh}(G/H)$ (see \S\ref{sec:coh decomposition}). Let  
\begin{equation}\label{varioussimplespre}
S_{\bar{g}}:=\pi^*\delta_{\bar{g}}\cong \delta_{\bar{g}}\ot_k \O(H)\in {\rm Coh}^{(H,\psi)}(G)
\end{equation}
(see Theorem \ref{helpful2}), and let 
\begin{equation}\label{variouspcsimplespre}
P_{\bar{g}}:=P(\delta_{\bar{g}})\in {\rm Coh}(G/H),\,\,\,{\rm and}\,\,\,P(S_{\bar{g}})\in {\rm Coh}^{(H,\psi)}(G)
\end{equation}
be the projective covers of $\delta_{\bar{g}}$ and $S_{\bar{g}}$, respectively.

\begin{corollary}\label{simprojhpsi}
The following hold:
\begin{enumerate}
\item
For any $\bar{g}\in (G/H)(k)$, there are ${\rm Coh}^{(H,\psi)}(G)$-isomorphisms 
\begin{equation*}
S_{\bar{g}}\cong \delta_{g}\ot S_{\bar{1}},\,\,\,{\rm and}\,\,\,P_{\bar{g}}\cong \delta_{g}\ot P_{\bar{1}},
\end{equation*}
where $\O(G)$ acts diagonally and $\O(H)_{\psi}$ coacts on the right.
\item
The assignment $\bar{g}\mapsto S_{\bar{g}}$ is a bijection between $(G/H)(k)$ and the set of isomorphism classes of simples in ${\rm Coh}^{(H,\psi)}(G)$.
\item
We have $P(S_{\bar{g}})\cong \pi^*P_{\bar{g}}\cong P_{gH}:=\bigoplus_{h\in H(k)}P_{gh}$.
\end{enumerate}
\end{corollary}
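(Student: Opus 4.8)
The plan is to deduce (2) and (3) directly from the equivalence of abelian categories $\pi^*\colon {\rm Coh}(G/H)\xrightarrow{\cong}{\rm Coh}^{(H,\psi)}(G)$ of Theorem \ref{helpful2}(2), using only the description of sheaves on a finite scheme from \S\ref{sec:coh decomposition}, and to obtain (1) by transporting the left translations of $G$ along $\pi$. For (2), recall that any equivalence of abelian categories induces a bijection between isomorphism classes of simple objects; the simples of ${\rm Coh}(G/H)$ are exactly the $\delta_{\bar g}$ with $\bar g\in(G/H)(k)$, which are pairwise non-isomorphic, and $\pi^*\delta_{\bar g}=S_{\bar g}$ by the definition (\ref{varioussimplespre}). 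Hence $\bar g\mapsto S_{\bar g}$ is the asserted bijection.

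For (3), an equivalence of abelian categories carries projective covers to projective covers, so $P(S_{\bar g})=P(\pi^*\delta_{\bar g})\cong\pi^*P(\delta_{\bar g})=\pi^*P_{\bar g}$, which is the first isomorphism. To identify the underlying sheaf of $\pi^*P_{\bar g}$ on $G$, I would use that for a finite scheme $X$ the indecomposable projective in ${\rm Coh}(X)$ supported at a closed point $x$ is the corresponding local factor, $P_x\cong\O(X)_x=e_x\O(X)$, where $e_x$ is the associated primitive idempotent. Applying this on $G/H$ and pulling back, $\pi^*P_{\bar g}\cong e_{\bar g}\O(G/H)\ot_{\O(G/H)}\O(G)\cong e_{\bar g}\O(G)$, which is the direct sum of the local factors $\O(G)_{g'}\cong P_{g'}$ of $\O(G)=\prod_{g'}\O(G)_{g'}$ over the closed points $g'$ of the scheme-theoretic fibre $\pi^{-1}(\bar g)$. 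Since $\pi$ is a principal $H$-bundle (Theorem \ref{helpful2}, after \cite[p.112]{Mum}), this fibre is an $H$-torsor, so its set of closed points is precisely $\{gh:h\in H(k)\}$ for a chosen representative $g\in G(k)$ of $\bar g$; therefore $\pi^*P_{\bar g}\cong\bigoplus_{h\in H(k)}P_{gh}=P_{gH}$.

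For (1), fix a closed point $g\in G(k)$ and let $t_g\colon G\to G$, $x\mapsto gx$, be the corresponding left translation. It commutes with the right $H$-action, hence is $H$-equivariant, so by Proposition \ref{equivmorphs} it induces an auto-equivalence $(t_g)_*$ of ${\rm Coh}^{(H,\psi)}(G)$ leaving the $\O(H)_{\psi}$-comodule structure unchanged; moreover $t_g$ descends along $\pi$ to the left translation $\bar t_g$ of $G/H$, and since these are isomorphisms and $\pi\circ t_g=\bar t_g\circ\pi$ we get $(t_g)_*\circ\pi^*\cong\pi^*\circ(\bar t_g)_*$. Evaluating at $\delta_{\bar 1}$ gives $(t_g)_*S_{\bar 1}\cong\pi^*(\bar t_g)_*\delta_{\bar 1}=\pi^*\delta_{\bar g}=S_{\bar g}$, and since $(t_g)_*$ is an equivalence it also sends $P(S_{\bar 1})$ to $P(S_{\bar g})$. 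Both statements of (1) then follow once one identifies $(t_g)_*S\cong\delta_g\ot S$, with $\O(G)$ acting diagonally and $\O(H)_{\psi}$ coacting on the tensor factor $S$: this holds because $t_g^{\sharp}(f)=({\rm ev}_g\ot\id)\Delta(f)$, which is precisely how $f$ acts on $\delta_g\ot S$ under the vector-space identification $\delta_g\ot S\cong S$, while the coaction is untouched. The only mildly delicate points are the idempotent and support bookkeeping in (3) --- keeping the scheme-theoretic fibre $\pi^{-1}(\bar g)$ distinct from its finite set of closed points, and recording the isomorphism $P_x\cong\O(X)_x$ --- and, in (1), verifying the $H$-equivariance and descent of $t_g$ together with the fact that $(t_g)_*$ matches ``$\delta_g\ot(-)$ with diagonal $\O(G)$-action''; none of these is a genuine obstacle.
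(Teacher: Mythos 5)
Your argument is correct and follows essentially the same route as the paper, whose proof of this corollary is simply a pointer to Theorem \ref{helpful2} and Lemma \ref{helpful2-2}: parts (2) and (3) are read off from the abelian equivalence $\pi^*$ (with the correct idempotent/fibre bookkeeping identifying $\pi^*P_{\bar g}$ with $\bigoplus_{h\in H(k)}P_{gh}$), and part (1) from transporting left translation along $\pi$, which matches the intended reading of $P_{\bar g}$ as $P(S_{\bar g})=\pi^*P_{\bar g}$ in ${\rm Coh}^{(H,\psi)}(G)$. No gaps.
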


\begin{proof}
Follow immediately from Lemma \ref{helpful2-2} and Theorem \ref{helpful2}.
\end{proof}

\begin{lemma}\label{ogisohbicom0}
Let $\Psi\in \O(G)^{\ot 2}$ be a twist, and set   
$\psi:=\left(\iota^{\sharp}\ot \iota^{\sharp}\right)(\Psi)$.  
Then $\psi\in \O(H)^{\ot 2}$ is a twist, and the linear map 
$$\rho_{\Psi}:\O(G)\to \O(G)\ot \O(H)_{\psi},\,\,\,f\mapsto  f_1\Psi^1\ot \iota^{\sharp}\left(f_2\Psi^2\right),$$ 
endows $\O(G)$ with a structure of a right $\O(H)_{\psi}$-comodule in ${\rm Coh}(G)$, with respect to the right action $\mu_{G\times H}$ (\ref{freerightactionhong}).
\end{lemma}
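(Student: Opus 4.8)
The plan is to verify directly that $\rho_\Psi$ satisfies the two conditions in Definition \ref{defright}: $\O(G/H)$-equivariance with the module structure (compatibility with the action $\mu_{G\times H}$) and coassociativity against $\Delta_\psi$. That $\psi = (\iota^\sharp\ot\iota^\sharp)(\Psi)$ is a twist for $\O(H)$ is immediate: $\iota^\sharp$ is a Hopf algebra map, so applying $\iota^{\sharp\ot 3}$ to the twist equation for $\Psi$ and using that $\iota^\sharp$ intertwines comultiplications and counits yields the twist equation and normalization for $\psi$. The substance is therefore the comodule axioms.

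First I would record the formula $\mu_{G\times H}^\sharp(f) = f_1 \ot \iota^\sharp(f_2)$ for the right $\O(H)$-comodule algebra structure on $\O(G)$ coming from the action (\ref{freerightactionhong}), since $\mu_{G\times H} = {\rm m}_G\circ(\id_G\times\iota)$ dualizes to $(\id\ot\iota^\sharp)\circ\Delta_G$. With this in hand, the module-compatibility condition $\rho_\Psi(f\cdot g) = \mu_{G\times H}^\sharp(f)\cdot\rho_\Psi(g)$ — where the dot on the right means the $\O(G)$-module and $\O(H)$-comodule action on $\O(G)\ot\O(H)_\psi$ — unwinds to an identity involving $\Delta_G$ of a product, which follows from $\Delta_G$ being an algebra map together with commutativity of $\O(G)$ (the latter lets one move the $\Psi$-factors past the $f$-factors). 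This is a routine Sweedler-notation manipulation.

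The genuinely delicate step is coassociativity: one must check
$$(\rho_\Psi\ot\id)\rho_\Psi = (\id\ot\Delta_\psi)\rho_\Psi,$$
where $\Delta_\psi(h) = \Delta_H(h)\psi$. Expanding the left side gives $f_1\Psi^1{\Psi'}^1 \ot \iota^\sharp(f_2\Psi^2) \ot \iota^\sharp(f_3{\Psi'}^2)$ (with a second copy $\Psi'$ of the twist), while the right side gives $f_1\Psi^1\ot \bigl(\iota^\sharp(f_2\Psi^2)\bigr)_1\psi^1 \ot \bigl(\iota^\sharp(f_2\Psi^2)\bigr)_2\psi^2$. Using that $\iota^\sharp$ is a coalgebra map to split the iterated comultiplication, that $\psi = (\iota^\sharp\ot\iota^\sharp)(\Psi')$, and commutativity to collect terms, the required identity reduces precisely to the twist (cocycle) equation $(\Delta_G\ot\id)(\Psi')(\Psi'\ot 1) = (\id\ot\Delta_G)(\Psi')(1\ot\Psi')$ for $\Psi$, applied inside the appropriate tensor slots after pushing through $\iota^\sharp$. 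I expect this bookkeeping — matching the first tensor factor, which carries $f_1\Psi^1$ on both sides but with the $\Psi'$-contributions distributed differently — to be the main obstacle, since one has to be careful about which comultiplication ($\Delta_G$ or its restriction via $\iota^\sharp$) acts where and to invoke commutativity at exactly the right moments; everything else is formal. Finally, $\rho_\Psi$ is manifestly $\O(G)$-linear into a sheaf on $G$, so $(\O(G),\rho_\Psi)$ lands in ${\rm Coh}^{(H,\psi)}(G)$ via Proposition \ref{abelcat}(1).
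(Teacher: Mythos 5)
Your overall route---verifying the two conditions of Definition \ref{defright} directly and reducing coassociativity to the twist equation for $\Psi$ pushed through $\iota^{\sharp}$---is sound, and it is in substance the paper's own one-line argument unwound: since $\rho_{\Psi}=(\id\ot\iota^{\sharp})\Delta_{\Psi}$ and $\iota^{\sharp}\colon\O(G)_{\Psi}\twoheadrightarrow\O(H)_{\psi}$ is a coalgebra map, coassociativity is just coassociativity of the twisted comultiplication $\Delta_{\Psi}$ (which is the twist equation) corestricted along $\iota^{\sharp}$, while $\O(G)$-compatibility is the computation you indicate using $\mu_{G\times H}^{\sharp}=(\id\ot\iota^{\sharp})\Delta$. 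Your remarks that $\psi$ is a twist because $\iota^{\sharp}$ is a Hopf algebra map, and the module-compatibility check, are fine.

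However, your displayed expansion of the left-hand side of coassociativity is incorrect, and as written the verification would fail. Applying $\rho_{\Psi}$ to the first tensor factor $f_1\Psi^1$ requires comultiplying the \emph{product}, so the first leg of the outer copy of the twist must be split across the first two slots:
$$(\rho_{\Psi}\ot\id)\rho_{\Psi}(f)=f_1\Psi^1_1{\Psi'}^1\ot\iota^{\sharp}\left(f_2\Psi^1_2{\Psi'}^2\right)\ot\iota^{\sharp}\left(f_3\Psi^2\right),$$
not $f_1\Psi^1{\Psi'}^1\ot\iota^{\sharp}(f_2\Psi^2)\ot\iota^{\sharp}(f_3{\Psi'}^2)$ as you wrote; your formula leaves $\Psi^1$ undivided in the first slot and is not equal to the right-hand side in general (take $f=1$ to see this). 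With the corrected expansion, and with $(\id\ot\Delta_{\psi})\rho_{\Psi}(f)=f_1\Psi^1\ot\iota^{\sharp}(f_2\Psi^2_1{\Psi'}^1)\ot\iota^{\sharp}(f_3\Psi^2_2{\Psi'}^2)$ (your right-hand side, split using that $\iota^{\sharp}$ is a coalgebra map and $\psi=(\iota^{\sharp}\ot\iota^{\sharp})(\Psi')$), the required identity is exactly $(\Delta\ot\id)(\Psi)(\Psi\ot1)=(\id\ot\Delta)(\Psi)(1\ot\Psi)$ multiplied by $f_1\ot f_2\ot f_3$ and hit with $\id\ot\iota^{\sharp}\ot\iota^{\sharp}$, so after this local repair the rest of your plan goes through as claimed.
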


\begin{proof}
Follows since $\iota^{\sharp}:\O(G)_{\Psi}\twoheadrightarrow \O(H)_{\psi}$ is a coalgebra map.
\end{proof}

Consider next the free {\em left} action of $H$ on $G$ given by 
\begin{equation}\label{freerightactionhong2}
\widetilde{\mu_{H\times G}}:H\times G\to G,\;\;\;(h,g)\mapsto hg,
\end{equation}
and the corresponding quotient morphism  
\begin{equation}\label{quotient morphism2}
\widetilde{\pi}:G\twoheadrightarrow H\backslash G.
\end{equation}
Similarly to the above, we can {\em choose} a (unique up to multiplication by an invertible element in $\Hom_k(\O(H),\O(H\backslash G))$) cleaving map
\begin{equation}\label{nbpgammatilde}
\widetilde{\mathfrak{c}}:\O(H)\xrightarrow{1:1} \O(G),
\end{equation}
which then determines a $2$-cocycle $\widetilde{\sigma}$ and an $\O(H\backslash G)$-linear and left $\O(H)$-colinear isomorphism of algebras
\begin{equation}\label{nbpphitilde}
\widetilde{\phi}:\O(H\backslash G)\#_{\widetilde{\sigma}}\O(H)\xrightarrow{\cong} \O(G),\,\,\,a\# f\mapsto a \widetilde{\mathfrak{c}}(f),
\end{equation}
whose inverse is given by
\begin{equation}\label{nbpphitildeinv}
\widetilde{\phi}^{-1}:\O(G)\xrightarrow{\cong} \O(H\backslash G)\#_{\widetilde{\sigma}}\O(H),\,\,\,{\rm f}\mapsto {\rm f}_1 \widetilde{\mathfrak{c}}^{-1}(\iota^{\sharp}({\rm f}_2))\#\iota^{\sharp}({\rm f}_3),
\end{equation}
and an $\O(H\backslash G)$-linear map
\begin{equation}\label{nbpalphatilde0}
\widetilde{\alpha}:=(\id\bar{\ot}\varepsilon)\widetilde{\phi}^{-1}:\O(G)\twoheadrightarrow \O(H\backslash G),\,\,\,{\rm f}\mapsto {\rm f}_1 \widetilde{\mathfrak{c}}^{-1}(\iota^{\sharp}({\rm f}_2)).
\end{equation}
%

\subsection{Twisting by the inverse $2$-cocycle}\label{sec:Twisting by the inverse $2$-cocycle} 
Retain the notation of \S\ref{sec:Right principle homogeneous spaces}.  
We note an explicit relation between the twisted coalgebras associated to $\psi$ and $\psi^{-1}$.
  
Set $Q_{\psi}:=\sum {\rm S}(\psi^{1})\psi^{2}$, where ${\rm S}$ is the antipode map of $\O(H)$. It is well known (see, e.g., \cite{AEGN,Ma}) that  
\begin{equation*}\label{sspsi21}
Q_{\psi}^{-1}=\sum \psi^{-1}{\rm S}(\psi^{-2})\,\,\,{\rm and}\,\,\,\Delta(Q_{\psi})=({\rm S}\ot {\rm S})(\psi_{21}^{-1})(Q_{\psi}\ot Q_{\psi})\psi^{-1}.
\end{equation*}
In particular, 
$\Delta(Q_{\psi}^{-1}{\rm S}(Q_{\psi}))=Q_{\psi}^{-1}{\rm S}(Q_{\psi})\ot Q_{\psi}^{-1}{\rm S}(Q_{\psi})$,
i.e., $Q_{\psi}^{-1}{\rm S}(Q_{\psi})$ is a grouplike element of $\O(H)$. Thus,   
we have a coalgebra isomorphism 
$$
\mathscr{O}(H)_{\psi}^{{\rm cop}}\xrightarrow{\cong} \mathscr{O}(H)_{\psi^{-1}},\,\,\,f\mapsto {\rm S}(f Q_{\psi}^{-1}),
$$
which induces the following equivalence of abelian categories 
\begin{equation}\label{fromlefttoright}
{\rm Corep}(H,\psi^{-1})_k\xrightarrow{\cong}{\rm Corep}_k(H,\psi),\,\,\,(V,r)\mapsto (V,\widetilde{r}),
\end{equation}
where $\widetilde{r}:V\to \mathscr{O}(H)_{\psi}\ot V$, $v\mapsto {\rm S}(v)Q_{\psi}\ot v^0$.

\begin{lemma}\label{ogisohbicom}
Let $\Psi\in \O(G)^{\ot 2}$ be a twist, and set   
$\psi:=\left(\iota^{\sharp}\ot \iota^{\sharp}\right)(\Psi)$.  
Then $\psi\in \O(H)^{\ot 2}$ is a twist, and the linear map   
\begin{equation}\label{tilderhoPsi}
\widetilde{\rho_{\Psi}}:\O(G)\to \O(G)\ot \O(H)_{\psi},\,\,\,f\mapsto \Psi^{-2}f_2\ot \iota^{\sharp}{\rm S}\left(f_1\Psi^{-1}\right)Q_{\psi},
\end{equation} 
endows $\O(G)$ with a structure of a right $\O(H)_{\psi}$-comodule in ${\rm Coh}(G)$, with respect to the right action $G\times H\to G$, $(g,h)\mapsto h^{-1}g$.
\end{lemma}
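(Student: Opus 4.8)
The plan is to realize the coaction $\widetilde{\rho_{\Psi}}$ of (\ref{tilderhoPsi}) as a composite of two standard moves applied to the \emph{inverse} twist $\Psi^{-1}$: the left-handed analogue of Lemma \ref{ogisohbicom0}, followed by the passage from left comodules to right comodules supplied by the coalgebra isomorphism recalled just before (\ref{fromlefttoright}). That $\psi$ is a twist is immediate, exactly as in Lemma \ref{ogisohbicom0}: $\iota^{\sharp}\colon\O(G)\to\O(H)$ is a surjective bialgebra map, so $\iota^{\sharp}\colon\O(G)_{\Psi}\to\O(H)_{\psi}$ is a coalgebra map; since $Z^{2}(H,\mathbb G_m)$ is a group, $\psi^{-1}$ is a twist too and $(\iota^{\sharp}\ot\iota^{\sharp})(\Psi^{-1})=\psi^{-1}$.

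First I would record the left-handed version of Lemma \ref{ogisohbicom0}: with respect to the free \emph{left} action $\widetilde{\mu_{H\times G}}$ of (\ref{freerightactionhong2}), $(h,g)\mapsto hg$, whose comodule-algebra structure map is $f\mapsto \iota^{\sharp}(f_1)\ot f_2$, the assignment $f\mapsto \iota^{\sharp}(\Psi^{-1}f_1)\ot\Psi^{-2}f_2$ makes $\O(G)$ a \emph{left} $\O(H)_{\psi^{-1}}$-comodule in ${\rm Coh}(G)$. This is the mirror of the proof of Lemma \ref{ogisohbicom0}: $\Delta_{\Psi^{-1}}$ exhibits $\O(G)$ as a left $\O(G)_{\Psi^{-1}}$-comodule, and pushing this forward along the coalgebra map $\iota^{\sharp}\colon\O(G)_{\Psi^{-1}}\to\O(H)_{\psi^{-1}}$ gives the displayed coaction, while compatibility with the $\O(G)$-action and counitality follow at once from $\iota^{\sharp}$ being a bialgebra map, commutativity of $\O(G)$ and $\O(H)$, and normalization of $\Psi$.

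Then I would convert this into a right comodule. The free left action $(h,g)\mapsto hg$ and the free right action $(g,h)\mapsto h^{-1}g$ of the statement are one and the same action written on opposite sides, and left $C$-comodules are the same data as right $C^{{\rm cop}}$-comodules via the flip of the two tensor factors. Replacing $\psi$ by $\psi^{-1}$ in the coalgebra isomorphism recalled before (\ref{fromlefttoright}) yields $\O(H)_{\psi^{-1}}^{{\rm cop}}\xrightarrow{\cong}\O(H)_{\psi}$, $x\mapsto {\rm S}(xQ_{\psi^{-1}}^{-1})$, equivalently $\O(H)_{\psi^{-1}}\xrightarrow{\cong}\O(H)_{\psi}^{{\rm cop}}$; transporting the comodule of the previous step along it produces a right $\O(H)_{\psi}$-comodule structure on $\O(G)$ whose coaction sends $f$ to $\Psi^{-2}f_2\ot{\rm S}\big(\iota^{\sharp}(\Psi^{-1}f_1)\,Q_{\psi^{-1}}^{-1}\big)$. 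Using that $\iota^{\sharp}$ commutes with ${\rm S}$, that ${\rm S}$ is an anti-automorphism of the commutative algebra $\O(H)$, and the identity ${\rm S}(Q_{\psi^{-1}}^{-1})=Q_{\psi}$ --- which holds since $Q_{\psi^{-1}}={\rm S}(\psi^{-1})\psi^{-2}={\rm S}\big(\psi^{-1}{\rm S}(\psi^{-2})\big)={\rm S}(Q_{\psi}^{-1})$ by commutativity, the formula $Q_{\psi}^{-1}=\sum\psi^{-1}{\rm S}(\psi^{-2})$ recalled above and ${\rm S}^{2}=\id$, whence ${\rm S}(Q_{\psi^{-1}}^{-1})={\rm S}\big(({\rm S}(Q_{\psi}^{-1}))^{-1}\big)={\rm S}({\rm S}(Q_{\psi}))=Q_{\psi}$ --- one checks this coaction is exactly $\widetilde{\rho_{\Psi}}$. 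Finally, the ${\rm S}$ occurring in $x\mapsto {\rm S}(xQ_{\psi^{-1}}^{-1})$ converts the $\O(H)$-action $f\mapsto\iota^{\sharp}(f_1)$ of the left action into $f\mapsto\iota^{\sharp}({\rm S}(f_1))$, which after the flip is the structure map $f\mapsto f_2\ot\iota^{\sharp}({\rm S}(f_1))$ of the right action $(g,h)\mapsto h^{-1}g$; hence the underlying action is the asserted one, and $(\O(G),\widetilde{\rho_{\Psi}})$ is the claimed right $\O(H)_{\psi}$-comodule in ${\rm Coh}(G)$.

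The main obstacle will be keeping straight the opposite coalgebras and inverse twists: the mirror in the second step must be taken over $\psi^{-1}$ (not $\psi$) precisely so that the coalgebra isomorphism of the third step lands in $\O(H)_{\psi}$, and one must verify the identity ${\rm S}(Q_{\psi^{-1}}^{-1})=Q_{\psi}$; beyond this, everything is formal. Alternatively, one may verify directly from (\ref{tilderhoPsi}) that $\widetilde{\rho_{\Psi}}$ is a coaction by a Sweedler computation, where coassociativity reduces to the twist equation for $\Psi$ together with the formula for $\Delta(Q_{\psi})$ recalled above and $\O(G)$-linearity to commutativity of $\O(H)$; the argument above is a conceptual repackaging of that verification.
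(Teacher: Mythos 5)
Your proposal is correct, and it organizes the argument differently from the paper. The paper proves the lemma by a direct Sweedler computation: it expands $(\widetilde{\rho_{\Psi}}\ot\id)\widetilde{\rho_{\Psi}}(f)$ and $(\id\ot\Delta_{\psi})\widetilde{\rho_{\Psi}}(f)$, reduces coassociativity to an identity in $\O(G)\ot\O(H)^{\ot 2}$, and verifies that identity using the inverted twist equation for $\Psi$ together with the formula $\Delta(Q_{\psi})=({\rm S}\ot{\rm S})(\psi_{21}^{-1})(Q_{\psi}\ot Q_{\psi})\psi^{-1}$ recalled in \S\ref{sec:Twisting by the inverse $2$-cocycle}. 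You instead factor $\widetilde{\rho_{\Psi}}$ as (push-forward of the left $\O(G)_{\Psi^{-1}}$-comodule $\Delta_{\Psi^{-1}}$ along the coalgebra surjection $\iota^{\sharp}\colon\O(G)_{\Psi^{-1}}\to\O(H)_{\psi^{-1}}$) followed by (transport along $\O(H)_{\psi^{-1}}^{\rm cop}\cong\O(H)_{\psi}$, $x\mapsto{\rm S}(xQ_{\psi^{-1}}^{-1})$); the computation with $\Delta(Q_{\psi})$ is thereby absorbed into the already-established cop-isomorphism, at the cost of the small auxiliary identity ${\rm S}(Q_{\psi^{-1}}^{-1})=Q_{\psi}$, which you verify correctly (using ${\rm S}^{2}=\id$ and commutativity). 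Your bookkeeping of the underlying actions is also right: the ${\rm S}$ in the cop-isomorphism is exactly what converts the structure map $f\mapsto\iota^{\sharp}(f_1)\ot f_2$ of the left translation action into $f\mapsto f_2\ot\iota^{\sharp}{\rm S}(f_1)$, i.e.\ into the action $(g,h)\mapsto h^{-1}g$ of the statement. The two routes use the same two ingredients (the twist equation and the $Q_{\psi}$-identities); yours makes the structural origin of the formula \eqref{tilderhoPsi} transparent, while the paper's explicit expansion is what is actually reused verbatim in the later computations (e.g.\ in the proofs of Theorems \ref{lemmafromg} and \ref{lemmatezer}). Your closing remark correctly identifies the paper's computation as the alternative.
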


\begin{proof}
Follows from the above and the fact that $\iota^{\sharp}:\O(G)_{\Psi}\twoheadrightarrow \O(H)_{\psi}$ is a coalgebra map. Indeed, set $\rho:=\widetilde{\rho_{\Psi}}$ and $Q:=Q_{\psi}$. Then on the one hand, we have
\begin{eqnarray*}
\lefteqn{(\rho\ot\id)\rho(f)=\rho(\Psi^{-2}f_2)\ot \iota^{\sharp}{\rm S}\left(f_1\Psi^{-1}\right)Q}\\
& = & \Omega^{-2}(\Psi^{-2}f_2)_2\ot \iota^{\sharp}{\rm S}\left((\Psi^{-2}f_2)_1\Omega^{-1}\right)Q\ot \iota^{\sharp}{\rm S}\left(f_1\Psi^{-1}\right)Q\\
& = & \Omega^{-2}\Psi^{-2}_2f_3\ot \iota^{\sharp}{\rm S}\left(\Psi^{-2}_1f_2\Omega^{-1}\right)Q\ot \iota^{\sharp}{\rm S}\left(f_1\Psi^{-1}\right)Q\\
& = & \left(\Omega^{-2}\Psi^{-2}_2\ot \iota^{\sharp}{\rm S}\left(\Psi^{-2}_1\Omega^{-1}\right)Q\ot \iota^{\sharp}{\rm S}\left(\Psi^{-1}\right)Q\right)
\widetilde{\mu}(f),
\end{eqnarray*}
and on the other hand, we have
\begin{eqnarray*}
\lefteqn{(\id\ot\Delta_{\psi})\rho(f)=\Psi^{-2}f_2\ot \Delta_{\psi}\left(\iota^{\sharp}{\rm S}\left(f_1\Psi^{-1}\right)Q)\right)}\\
& = & \Psi^{-2}f_2\ot \psi\left(\left(\iota^{\sharp}{\rm S}\left(f_1\Psi^{-1}\right)_1\ot \iota^{\sharp}{\rm S}\left(f_1\Psi^{-1}\right)_2\right)\Delta(Q)\right)\\
& = & \Psi^{-2}f_3\ot \psi\left(\left(\iota^{\sharp}{\rm S}\left(f_2\Psi^{-1}_2\right)\ot \iota^{\sharp}{\rm S}\left(f_1\Psi^{-1}_1\right)\right)\Delta(Q)\right)\\
& = & 
\left(\Psi^{-2}\ot \left(\iota^{\sharp}{\rm S}\left(\Psi^{-1}_2\right)\ot \iota^{\sharp}{\rm S}\left(\Psi^{-1}_1\right)\right)\Delta(Q)\psi\right)
\widetilde{\mu}(f).
\end{eqnarray*}

Thus, we have
\begin{eqnarray*}
\lefteqn{\Omega^{-2}\Psi^{-2}_2\ot \iota^{\sharp}{\rm S}\left(\Psi^{-2}_1\Omega^{-1}\right)Q\ot \iota^{\sharp}{\rm S}\left(\Psi^{-1}\right)Q}\\
& = & \Psi^{-2}\ot \iota^{\sharp}{\rm S}\left(\Psi^{-1}_2\Omega^{-2}\right)Q\ot \iota^{\sharp}{\rm S}\left(\Psi^{-1}_1\Omega^{-1}\right)Q\\
& = & 
\Psi^{-2}\ot \left(\iota^{\sharp}{\rm S}\left(\Psi^{-1}_2 \right)\ot \iota^{\sharp}{\rm S}\left(\Psi^{-1}_1\right)\right)
\left({\rm S}\ot {\rm S} \right)(\psi_{21}^{-1})
(Q\ot Q)\\
& = & 
\Psi^{-2}\ot \left(\iota^{\sharp}{\rm S}\left(\Psi^{-1}_2 \right)\ot \iota^{\sharp}{\rm S}\left(\Psi^{-1}_1\right)\right)\Delta(Q)\psi,
\end{eqnarray*}
which implies the claim.
\end{proof}

\subsection{Module categories over ${\rm Coh}(G)$} \label{sec:M(H,psi)}
Fix a finite group scheme $G$. Let $H\subset G$ be a closed subgroup scheme, and let $\psi\in Z^2(H,\mathbb{G}_m)$ be a normalized $2$-cocycle. Recall \cite{G} the abelian category  
$$\M(H,\psi):={\rm Coh}^{(H,\psi)}(G)$$
with respect to the right action (\ref{freerightactionhong}) of $H$ on $G$. 
Recall \cite{G} that $\M:=\M(H,\psi)$ admits a canonical structure of an indecomposable exact left module category over ${\rm Coh}(G)$ given by convolution of sheaves. Namely, the action of $X\in {\rm Coh}(G)$ on $(S,\rho)\in \M$ is given by 
$$X\ot^{\M} (S,\rho)=\left(X\ot S,\id\ot\rho\right).$$

\begin{example}\label{simpexs0}
The following hold:
\begin{enumerate}
\item
$\mathscr{M}(1,1)\cong {\rm Coh}(G)$ is the regular ${\rm Coh}(G)$-module.
\item
$\mathscr{M}(G,1)\cong \Vect$ is the usual fiber functor on ${\rm Coh}(G)$.
\end{enumerate}
\end{example}

For any closed point $g\in G(k)$, set 
\begin{equation}\label{psig}
\psi^g:={\rm Ad}g(\psi)\in Z^2(g^{-1}Hg,\mathbb{G}_m).
\end{equation}

\begin{definition}\label{defequivpairs} 
Two pairs $(H,\psi)$, $(K,\eta)$ as above are called equivalent if there is a closed point $g\in G(k)$ such that $K=g^{-1}Hg$ and $[\eta]=[\psi^g]$ in $H^2(K,\mathbb{G}_m)$.
\end{definition}

\begin{theorem}\cite{G}\label{modomega}
The following hold:
\begin{enumerate}
\item
There is a bijection between equivalence classes of pairs $(H,\psi)$ (in the sense of Definition \ref{defequivpairs}) and equivalence classes of indecomposable exact left module categories over
${\rm Coh}(G)$, assigning $(H,\psi)$ to $\mathscr{M}(H,\psi)$.
\item
There exists an equivalence of ${\rm Coh}(G)$-module categories 
$$\mathscr{M}(H,\psi)\cong {\rm Comod}(\mathscr{O}(H)_{\psi})_{{\rm Coh}(G)}.$$\end{enumerate}
\end{theorem}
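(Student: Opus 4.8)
The statement to prove is Theorem~\ref{modomega}, which asserts two things: (1) a bijection between equivalence classes of pairs $(H,\psi)$ and equivalence classes of indecomposable exact left module categories over ${\rm Coh}(G)$, assigning $(H,\psi)\mapsto\M(H,\psi)$; and (2) an equivalence of ${\rm Coh}(G)$-module categories $\M(H,\psi)\cong{\rm Comod}(\O(H)_\psi)_{{\rm Coh}(G)}$.

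\begin{proof}[Proof plan]
Part (2) is essentially a restatement of Proposition~\ref{abelcat}(1) upgraded to the module-category level. The plan is: first recall from Proposition~\ref{abelcat}(1) that we have a $k$-linear abelian equivalence ${\rm Coh}^{(H,\psi)}(G)\cong{\rm Comod}(\O(H)_\psi)_{{\rm Coh}(G)}$, which by definition is $\M(H,\psi)$ on the left. Then I would transport the ${\rm Coh}(G)$-module structure across this equivalence and check it is respected: the action of $X\in{\rm Coh}(G)$ on $(S,\rho)$ by $X\ot^\M(S,\rho)=(X\ot S,\id\ot\rho)$ corresponds, under the dictionary of Proposition~\ref{abelcat}(1), to the convolution action on the comodule side with coaction $\id\ot\rho$. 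Since both the equivalence and the module structures are built from the same underlying data (the comodule-algebra structure $\mu_{G\times H}^\sharp$), this compatibility is a routine diagram chase, so (2) follows essentially by unwinding definitions. The only mild subtlety is verifying that the module associativity and unit constraints are preserved, but these are inherited from ${\rm Coh}(G)$ and hence automatic.

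For part (1), the strategy is to reduce to the known classification over $\Rep_k(G)$ via tensor duality. Since ${\rm Coh}(G)$ is dual to $\Rep_k(G)$ with respect to the standard forgetful functor (i.e.\ ${\rm Coh}(G)\cong\Rep_k(G)^*_{\Vect}$ as tensor categories), by \cite[Theorem 3.31]{EO} the $2$-category of exact module categories over ${\rm Coh}(G)$ is biequivalent to that over $\Rep_k(G)$; in particular indecomposable exact module categories correspond bijectively. The first step is to invoke the classification of indecomposable exact module categories over $\Rep_k(G)$: as recalled in the introduction (following \cite{O2,EO}, and the \'etale argument of \cite[Prop.~4.1]{EO}), these are exactly the categories $\Rep_k(H,\psi)$ for a closed subgroup scheme $H\subset G$ and $\psi\in Z^2(H,\mathbb{G}_m)$, with two pairs giving equivalent module categories iff they are conjugate in the sense of Definition~\ref{defequivpairs}. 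The second step is to trace the duality biequivalence $\Rep_k(H,\psi)\mapsto\M(H,\psi)$: one checks that under the forgetful-functor duality, the $\Rep_k(G)$-module $\Rep_k(H,\psi)$ is sent precisely to the ${\rm Coh}(G)$-module ${\rm Coh}^{(H,\psi)}(G)=\M(H,\psi)$. This identification is the crux of the argument: it can be established by realizing $\Rep_k(H,\psi)$ as the category of modules over the algebra $\O(H)_\psi$ (or its image) internal to $\Rep_k(G)$, applying the functor $\Fun_{\Rep_k(G)}(\Vect,-)$-type correspondence, and matching the resulting internal-Hom description with the comodule description of $\M(H,\psi)$ from part (2). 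The third step is bookkeeping: check that the equivalence relation on pairs transported from $\Rep_k(G)$-modules is exactly Definition~\ref{defequivpairs}, using that conjugation by $g\in G(k)$ intertwines the relevant structures and that $\psi^g={\rm Ad}g(\psi)$ as in \eqref{psig}.

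The main obstacle I anticipate is the second step of part (1): explicitly matching the image of $\Rep_k(H,\psi)$ under the duality biequivalence with ${\rm Coh}^{(H,\psi)}(G)$. The duality $\Rep_k(G)^*_{\Vect}\cong{\rm Coh}(G)$ is concrete but passing module categories through it requires identifying the internal endomorphism algebra of the relevant generator on each side and checking the module structures agree; the twisting by $\psi$ and the noncommutativity in positive characteristic both need care. One clean route is to avoid the abstract biequivalence and instead argue directly: show that $\M(H,\psi)$ is indecomposable and exact over ${\rm Coh}(G)$ (exactness via Proposition~\ref{abelcat} and the fact that $G/H$ is a finite scheme so ${\rm Coh}(G/H)$ has enough projectives, cf.\ Theorem~\ref{helpful2}; indecomposability because ${\rm Coh}^{(H,\psi)}(G)$ has a single block-type structure over the connected component, or more simply because its dual category is connected), then show the assignment is injective on equivalence classes (if $\M(H,\psi)\cong\M(K,\eta)$ as module categories then comparing e.g.\ the internal Hom algebras or the supports of simples forces $(H,\psi)\sim(K,\eta)$), and finally surjective by the $\Rep_k(G)$-classification plus duality, or by a direct Morita-theoretic argument that every indecomposable exact ${\rm Coh}(G)$-module arises as modules over a connected \'etale algebra, which for ${\rm Coh}(G)$ must be of the form $\O(H)_\psi$. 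I would present whichever of these is shortest, most likely citing \cite[Theorem 3.31]{EO} for the biequivalence and then doing the explicit matching as the one genuine computation.
\end{proof}
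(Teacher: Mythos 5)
This theorem is not proved in the paper at all: it is quoted verbatim from \cite{G} (it is the $\omega=1$ case of \cite[Theorem 5.3]{G}), so there is no in-paper argument to match. Your part (2) is fine and is essentially what the source does: it is Proposition \ref{abelcat}(1) for $X=G$ together with the observation that the equivalence is manifestly compatible with the convolution action of ${\rm Coh}(G)$, since the action only touches the first tensor factor.

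Part (1), however, has a genuine gap: your main route is circular. You propose to ``invoke the classification of indecomposable exact module categories over $\Rep_k(G)$'' and then transport it to ${\rm Coh}(G)$ via the duality of \cite[Theorem 3.31]{EO}. But for a general finite group scheme $G$ (with nontrivial connected component) no such prior classification exists: \cite{O2} and \cite[Prop.\ 4.1]{EO} treat only abstract finite groups, i.e.\ the \'etale case $G=G(k)$. As the introduction of the present paper makes explicit, the classification of exact module categories over $\Rep_k(G)$ for finite group schemes is \emph{deduced from} \cite[Theorem 5.3]{G} via that same duality, so you would be assuming the statement you are trying to prove. The logical direction in \cite{G} is the opposite of yours: one works directly with ${\rm Coh}(G,\omega)$ (generalizing Ostrik's analysis of $\Vect_G^\omega$, using equivariant sheaves, supports, and the Hopf--Galois/quotient machinery that also underlies Theorem \ref{helpful2} here), and the $\Rep_k(G)$ statement is a corollary. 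Your fallback ``direct'' route does not repair this: indecomposability and exactness of $\M(H,\psi)$, and even injectivity of $(H,\psi)\mapsto\M(H,\psi)$ on equivalence classes (in the sense of Definition \ref{defequivpairs}), are the easier half; the crux is surjectivity, i.e.\ that \emph{every} indecomposable exact ${\rm Coh}(G)$-module category arises from some pair $(H,\psi)$, and your sketch of it (``modules over a connected \'etale algebra, which must be of the form $\O(H)_\psi$'') both uses the wrong notion (\'etale algebras belong to the semisimple/braided setting, not to exact algebras in a finite tensor category in characteristic $p$) and asserts without argument precisely the hard identification of the possible algebras. As written, the proposal does not yield an independent proof of part (1).
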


\begin{remark}\label{helpful} 
If $G$ is connected then Theorem \ref{modomega} says that equivalence classes of indecomposable exact module categories over
${\rm Coh}(G)$ correspond bijectively to pairs $(H,\psi)$. \qed
\end{remark}

\begin{remark}\label{alsohelpful2}
Let $\M:=\M(H,\psi)$. Corollary \ref{simprojhpsi}(1) states that
$$S_{\bar{g}}\cong \delta_{g}\ot^{\M} S_{\bar{1}},\,\,\,{\rm and}\,\,\,P_{\bar{g}}\cong \delta_{g}\ot^{\M} P_{\bar{1}}$$ for any simple $S_{\bar{g}}\in \M$. Also, using Corollary \ref{simprojhpsi}(3), we see that 
\begin{equation*}
\begin{split}
& P_{1}\ot^{\M} S_{\bar{1}}\cong P_{1}\ot^{\M} \O(H)\cong \pi_*^{(H,\psi)}\pi^*\left(P_{1}\ot \O(H)\right)\\
& \cong \pi_*^{(H,\psi)}P_{H}\cong |H^{\circ}|P_{\bar{1}},
\end{split}
\end{equation*}
which in particular demonstrates that $\M$ is exact (see \cite{G}). \qed
\end{remark}

\begin{remark}\label{rem:N(H,psi)}
For any pair $(H,\psi)$ as above, one can consider also the abelian category  
\[\mathscr{N}(H, \psi):={\rm Comod}_{{\rm Coh}(G)}(\mathscr{O}(H)_{\psi})\]
of {\bf left} $\O(H)_{\psi}$-comodules in ${\rm Coh}(G)$ with respect to the left $H$-action $\mu_{H\times G}:H\times G\to G$ by left translation.  
Then similarly, the assignment $(H,\psi)\mapsto \mathscr{N}(H, \psi)$ classifies exact indecomposable {\bf right} module categories over ${\rm Coh}(G)$, up to equivalence.
\end{remark}
 
Recall \cite{G} that the group scheme-theoretical category 
$$\C(G,H,\psi):={\rm Coh}(G)^*_{\mathscr{M}(H,\psi)}$$
associated to this data is the dual category of ${\rm Coh}(G)$ with respect to $\mathscr{M}(H,\psi)$. In other words, 
\begin{equation}\label{gsctcdef}
\C(G,H,\psi)=\Fun_{{\rm Coh}(G)}\left(\mathscr{M}(H,\psi),\mathscr{M}(H,\psi)\right)
\end{equation}
is the category of ${\rm Coh}(G)$-module endofunctors of $\mathscr{M}(H,\psi)$.
Recall that $\C(G,H,\psi)$ is a finite tensor category with tensor product given by composition of module functors \cite{EO}.

Recall \cite{G} that convolution of sheaves on $G$ lifts to endow the category $\M\left(H\times H,\psi^{-1}\times \psi\right)$ with the structure of a finite tensor category. 

Finally, let ${\rm Bicomod}_{{\rm Coh}(G)}(\mathscr{O}(H)_{\psi})$ be the finite tensor category with tensor product $\ot^{\mathscr{O}(H)_{\psi}}$ and unit object $\mathscr{O}(H)_{\psi}$. Recall that objects in this category are triples $(V,\lambda,\rho)$ where $(V,\lambda)$ is a left $\O(H)_{\psi}$-comodule in ${\rm Coh}(G)$ with respect to the left action $\mu_{H\times G}$, and $(V,\rho)$ is a right $\O(H)_{\psi}$-comodule in ${\rm Coh}(G)$ with respect to the right action $\mu_{G\times H}$.

\begin{lemma}\label{equdefofgstc}
There are equivalences of tensor categories
\begin{equation*}
\C(G,H,\psi)\cong{\rm Bicomod}_{{\rm Coh}(G)}(\mathscr{O}(H)_{\psi})^{{\rm rev}},
\end{equation*}
and
$$\C(G,H,\psi)\cong \M(H\times H,\psi^{-1}\times \psi).
$$
\end{lemma}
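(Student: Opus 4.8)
The plan is to establish the two stated equivalences through a chain of identifications that mostly repackage known facts. For the first equivalence, $\C(G,H,\psi)\cong{\rm Bicomod}_{{\rm Coh}(G)}(\mathscr{O}(H)_{\psi})^{{\rm rev}}$, I would start from the definition (\ref{gsctcdef}), $\C(G,H,\psi)=\Fun_{{\rm Coh}(G)}\left(\mathscr{M}(H,\psi),\mathscr{M}(H,\psi)\right)$, together with Theorem \ref{modomega}(2), which gives $\mathscr{M}(H,\psi)\cong {\rm Comod}(\mathscr{O}(H)_{\psi})_{{\rm Coh}(G)}$ as ${\rm Coh}(G)$-module categories. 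The standard principle (see \cite[Ch.~7]{EGNO}) is that ${\rm Coh}(G)$-module endofunctors of the category of right $\mathscr{O}(H)_{\psi}$-comodules in ${\rm Coh}(G)$ are given by tensoring on the \emph{left} with a bicomodule, i.e. an object of ${\rm Coh}(G)$ carrying commuting left and right $\mathscr{O}(H)_{\psi}$-coactions (the left coaction being w.r.t. $\mu_{H\times G}$, the right one w.r.t. $\mu_{G\times H}$, so that the two coactions and the ${\rm Coh}(G)$-module structure are mutually compatible). Composition of such module functors corresponds to $\ot^{\mathscr{O}(H)_{\psi}}$, but with the order reversed relative to the tensor product of bicomodules — hence the ${\rm rev}$. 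I would spell out the functor $F\mapsto F(\mathscr{O}(H)_{\psi})$ in one direction and $M\mapsto M\ot^{\mathscr{O}(H)_{\psi}}(-)$ in the other, check these are mutually inverse tensor functors, and confirm that the unit object $\mathscr{O}(H)_{\psi}$ corresponds to the identity functor.

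For the second equivalence, $\C(G,H,\psi)\cong \M(H\times H,\psi^{-1}\times \psi)$, I would translate the bicomodule description into an equivariant-sheaf one. Recall $\M(H\times H,\psi^{-1}\times\psi)={\rm Coh}^{(H\times H,\psi^{-1}\times\psi)}(G)$ with $H\times H$ acting on $G$ via $(g,a,b)\mapsto a^{-1}gb$ (this is the action appearing in the paper's later Lemma \ref{newsimple} with $K=H$). By Proposition \ref{abelcat}(1), $(H\times H,\psi^{-1}\times\psi)$-equivariant sheaves on $G$ are the same as $\mathscr{O}(H\times H)_{\psi^{-1}\times\psi}$-comodules in ${\rm Coh}(G)$ for that action, and $\mathscr{O}(H\times H)_{\psi^{-1}\times\psi}\cong \mathscr{O}(H)_{\psi^{-1}}\ot\mathscr{O}(H)_{\psi}$ as coalgebras. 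The left $\mathscr{O}(H)_\psi$-coaction w.r.t. left translation $\mu_{H\times G}$ matches, after applying the equivalence (\ref{fromlefttoright}) / Lemma \ref{ogisohbicom}, a right $\mathscr{O}(H)_{\psi^{-1}}$-coaction w.r.t. the twisted right action $(g,h)\mapsto h^{-1}g$; combined with the genuine right $\mathscr{O}(H)_\psi$-coaction w.r.t. $\mu_{G\times H}$, a bicomodule becomes exactly an $(H\times H,\psi^{-1}\times\psi)$-equivariant sheaf. One then checks that under this dictionary the tensor products agree: convolution of sheaves on $G$, which the paper has already noted lifts to a tensor structure on $\M(H\times H,\psi^{-1}\times\psi)$, corresponds to $\ot^{\mathscr{O}(H)_\psi}$ with the appropriate orientation, absorbing the ${\rm rev}$ from the first part. (Here I would cite the relevant construction in \cite{G}, since the tensor structure on $\M(H\times H,\psi^{-1}\times\psi)$ is introduced there.)

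The routine part is the bookkeeping of left-versus-right coactions and the $\psi$-versus-$\psi^{-1}$ twists, for which Lemmas \ref{ogisohbicom0} and \ref{ogisohbicom} together with (\ref{fromlefttoright}) provide exactly the needed translation. The main obstacle I anticipate is getting the \emph{direction} of composition right: one must verify carefully that composing ${\rm Coh}(G)$-module endofunctors of ${\rm Comod}(\mathscr{O}(H)_\psi)_{{\rm Coh}(G)}$ corresponds to the \emph{reversed} bicomodule tensor product, and then that the passage from bicomodules to $(H\times H,\psi^{-1}\times\psi)$-equivariant sheaves (which also swaps a left coaction for a right one via the antipode-twist of (\ref{fromlefttoright})) introduces a second reversal that cancels the first, so that the final equivalence $\C(G,H,\psi)\cong\M(H\times H,\psi^{-1}\times\psi)$ is a genuine (non-reversed) tensor equivalence. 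All of this is essentially assembled in \cite{G}; I would present the lemma as recording these identifications and refer to \cite{G} and \cite[\S 7.11--7.12]{EGNO} for the underlying module-category formalism rather than reproving it.
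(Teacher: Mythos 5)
Your proposal is correct and follows essentially the same route as the paper, whose proof is just a one-line reference: the first equivalence is the standard module-category duality from \cite{EGNO}, and the second is exactly the identification of ${\rm Coh}(G)$-module functors with biequivariant sheaves that the paper records in Lemma \ref{newsimple} (specialized to $K=H$, $\eta=\psi$), with the same bookkeeping via Proposition \ref{abelcat}, (\ref{fromlefttoright}) and Lemma \ref{ogisohbicom} that you describe. Your expanded attention to the direction of composition and the cancellation of the two reversals is exactly the detail the paper leaves implicit.
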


\begin{proof}
All equivalences are standard. For the first one see, e.g., \cite{EGNO}, and for the second one see, e.g., Lemma \ref{newsimple} below.
\end{proof}

\begin{example}\label{simpexs}
The following hold: 
\begin{enumerate} 
\item
$\C\left(G,1,1\right)\cong {\rm Coh}(G)$.
\item
$\C\left(G,G,1\right)\cong {\rm Rep}(G)$.
\end{enumerate}
\end{example}

\section{Equivariant sheaves on group schemes}\label{S:Biequivariant sheaves on group schemes}

Let $\partial:A\xrightarrow{1:1} B$ be an embedding of finite group schemes. Let $\Psi\in Z^2(B,\mathbb{G}_m)$, and $\xi:=\Psi_{\mid A}$ be its restriction to $A$ via $\partial$, i.e., 
\begin{equation}\label{Psirestopsi}
\xi:=\left(\partial^{\sharp}\ot \partial^{\sharp}\right)\left(\Psi\right)\in Z^2(A,\mathbb{G}_m).
\end{equation}
  
Let ${\rm Coh}^{(A\times B,\xi^{-1}\times \Psi)}(B)$ be the category of $(A\times B,\xi^{-1}\times \Psi)$-equivariant sheaves on $B$ with respect to the right action
$$\mu_{B\times (A\times B)}:B\times (A\times B)\to B,\,\,\,(b_1,a,b_2)\mapsto \partial(a)^{-1}b_1b_2,$$ and ${\rm Coh}^{(B,\Psi)}(A\backslash B)$ the category of $(B,\Psi)$-equivariant sheaves on $A\backslash B$ with respect to the right action
$$\mu_{A\backslash B\times B}:A\backslash B\times B\to A\backslash B,\,\,\,(\overline{b_1},b_2)\mapsto \overline{b_1b_2},$$ where $A\backslash B$ is the quotient scheme with respect to $\widetilde{\mu_{A\times B}}$ \eqref{freerightactionhong2}.
Our goal is to construct explicit equivalences of abelian categories
\begin{equation*}
	\begin{tikzcd}[column sep=4em]
		\mathrm{Rep}_k(A,\xi^{-1}) 
		\arrow[r, "{\rm t}^*", shift left=0.35ex] 
		\arrow[rr, "{\rm Ind}_{(A,\xi^{-1})}^{(B,\Psi)}", bend left=16, shift left=0.5ex]
		&
		\mathrm{Coh}^{(A\times B,\xi^{-1}\times \Psi)}(B) 
		\arrow[l, "{\rm t}_*^{(B,\Psi)}", shift left=0.35ex] 
		\arrow[r, "{\rm p}_*^{(A,\xi^{-1})}", shift left=0.35ex]
		&
		\mathrm{Coh}^{(B,\Psi)}(A\backslash B) 
		\arrow[l, "{\rm p}^*", shift left=0.3ex]
		\arrow[ll, "{\rm Res}_{(A,\xi^{-1})}^{(B,\Psi)}", bend left=16, shift left=0.5ex]
	\end{tikzcd}
\end{equation*}

\subsection{The equivalence $\mathrm{Rep}_k(A,\xi^{-1}) \cong	\mathrm{Coh}^{(A\times B,\xi^{-1}\times \Psi)}(B)$}
For any $(S,\rho)\in{\rm Coh}^{(A\times B,\xi^{-1}\times \Psi)}(B)$, define the maps
\begin{equation}\label{defnoflambda1}
\begin{split}
& \rho_2:=(\id\ot\varepsilon\ot\id)\rho:S\to S\ot \O(B)_{\Psi},\\
& \rho_3:=(\id\ot\id\ot\varepsilon)\rho:S\to S\ot \O(A)_{\xi^{-1}},
\end{split}
\end{equation}
and the coinvariants subsheaf
\begin{equation}\label{coinvBPsi}
S^{(B,\Psi)}:=\{s\in S\mid \rho_2(s)=\Psi^1\cdot s\ot \Psi^2\}\subset S.
\end{equation}

\begin{lemma}\label{inducedinvariants} 
For any $(S,\rho)\in {\rm Coh}^{(A\times B,\xi^{-1}\times \Psi)}(B)$, the following hold:
\begin{enumerate}
\item 
$(S,\rho_2)\in {\rm Coh}^{(B,\Psi)}(B)$.
\item
${\rm Coh}^{(B,\Psi)}(B)\cong\Vect$, with the unique simple object being $\left(\O(B),\Delta_{\Psi}\right)$.
\item
There is a ${\rm Coh}^{(B,\Psi)}(B)$-isomorphism 
$$(S,\rho_2)\cong \left(\O(B),\Delta_{\Psi}\right)\otimes_k S^{(B,\Psi)}.$$
\item
$(S,\rho_3)\in {\rm Rep}_k (A,\xi^{-1})$, and $(S^{(B,\Psi)},\rho_3)\subset (S,\rho_3)$ in ${\rm Rep}_k (A,\xi^{-1})$.
\end{enumerate}
\end{lemma}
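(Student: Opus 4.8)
The plan is to derive all four statements from the coassociativity and the $\O(B)$-module compatibility of the total equivariant structure $\rho\colon S\to S\ot\O(A)_{\xi^{-1}}\ot\O(B)_\Psi$, using that $\O(A\times B)_{\xi^{-1}\times\Psi}\cong\O(A)_{\xi^{-1}}\ot\O(B)_\Psi$ as a tensor product of coalgebras and that both tensor factors are normalized. The two counit projections $\varepsilon\ot\id\colon\O(A)_{\xi^{-1}}\ot\O(B)_\Psi\to\O(B)_\Psi$ and $\id\ot\varepsilon\colon\O(A)_{\xi^{-1}}\ot\O(B)_\Psi\to\O(A)_{\xi^{-1}}$ are coalgebra morphisms, so composing $\rho$ with them produces coassociative counital structures $\rho_2$ (over $\O(B)_\Psi$) and $\rho_3$ (over $\O(A)_{\xi^{-1}}$); this already gives the comodule part of (1) and of the first assertion in (4). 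For (1) it then remains to check compatibility of $\rho_2$ with the right-translation action $\mu_{B\times B}$: applying $\id_S\ot\varepsilon\ot\id_{\O(B)}$ to the $\O(B)$-module compatibility of $\rho$, the factor $\partial^{\sharp}{\rm S}(f_1)$ coming from the $\partial(a)^{-1}$ in the $A\times B$-action is killed by $\varepsilon$, and the triple coproduct of $f$ collapses to $\Delta(f)=f_1\ot f_2=\mu_{B\times B}^\sharp(f)$; this is routine. For the first part of (4), $(S,\rho_3)$ is then a finite-dimensional $\O(A)_{\xi^{-1}}$-comodule, i.e. an object of ${\rm Rep}_k(A,\xi^{-1})$.

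For (2) I would invoke Theorem \ref{helpful2}(2) for the identity embedding $\id_B\colon B\hookrightarrow B$ with twist $\Psi$: then $B/B$ is a point, so $\pi^*$ yields an equivalence ${\rm Coh}({\rm pt})\cong\Vect\xrightarrow{\cong}{\rm Coh}^{(B,\Psi)}(B)$; choosing the cleaving map $\mathfrak{c}=\id_{\O(B)}$ makes $\sigma$ and $\phi$ trivial, so \eqref{yetanotherrhoU} degenerates to $\rho^\Psi_k=\Delta_\Psi$ and the unique simple object is $(\O(B),\Delta_\Psi)$. For (3), by (1) the pair $(S,\rho_2)$ lives in ${\rm Coh}^{(B,\Psi)}(B)$, and the quasi-inverse $\pi_*^{(B,\Psi)}$ of the equivalence in (2) sends it to its space of coinvariants, which for $\mathfrak{c}=\id$ is exactly $S^{(B,\Psi)}$ as in \eqref{coinvBPsi} (compare \eqref{firstcoinvariants}); feeding this back through $\pi^*$ gives the claimed isomorphism. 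Concretely, one checks that the multiplication map $(\O(B),\Delta_\Psi)\ot_k S^{(B,\Psi)}\to(S,\rho_2)$, $g\ot s\mapsto g\cdot s$, is $\O(B)$-linear and $\O(B)_\Psi$-colinear (colinearity being $\rho_2(g\cdot s)=\Delta(g)\cdot\rho_2(s)=\Delta_\Psi(g)(s\ot1)$ for $s\in S^{(B,\Psi)}$), and that it is bijective — it is surjective since, by (2), $S$ is a free $\O(B)$-module generated by its coinvariants, and then a dimension count using (2) gives injectivity.

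The remaining, and main, point is the subobject claim in (4), that $S^{(B,\Psi)}$ is stable under $\rho_3$. My approach is to use that $\rho_2$ and $\rho_3$ are the coactions of the commuting subgroup schemes $1\times B$ and $A\times1$ of $A\times B$: applying appropriate counits to the coassociativity of $\rho$ yields the commutation identity $(\rho_2\ot\id_{\O(A)})\rho_3=\tau_{23}(\rho_3\ot\id_{\O(B)})\rho_2$, where $\tau_{23}$ flips the $\O(A)$- and $\O(B)$-legs (both sides equal $\tau_{23}\rho$). Substituting the defining relation $\rho_2(s)=\Psi^1\cdot s\ot\Psi^2$ of $s\in S^{(B,\Psi)}$, and using the compatibility of $\rho_3$ with the left-translation $\O(B)$-module structure on $S$ together with the $2$-cocycle equation for $\Psi$, the goal is to identify $(\rho_2\ot\id_{\O(A)})\rho_3(s)$ with $(\iota_\Psi\ot\id_{\O(A)})\rho_3(s)$, where $\iota_\Psi(t)=\Psi^1\cdot t\ot\Psi^2$; expanding $\rho_3(s)$ with linearly independent $\O(A)$-components then forces every $S$-component to satisfy the defining relation of $S^{(B,\Psi)}$, i.e. $\rho_3(S^{(B,\Psi)})\subseteq S^{(B,\Psi)}\ot\O(A)_{\xi^{-1}}$.

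The hard part will be exactly this last computation: it demands careful bookkeeping of the interaction between the $\O(B)$-module action on $S$ (which does \emph{not} preserve $S^{(B,\Psi)}$), the twist $\Psi$ appearing both in $\iota_\Psi$ and inside the comodule $\O(B)_\Psi$, and the antipode ${\rm S}$ introduced by the $\partial(a)^{-1}$ in the $A$-action, and checking that these conspire so that the commutation identity genuinely delivers the inclusion. If the direct route proves too delicate, an alternative is to transport $\rho_3$ along the isomorphism of (3) and read off the residual $A$-coaction on $S^{(B,\Psi)}\cong\Hom_{{\rm Coh}^{(B,\Psi)}(B)}\!\bigl((\O(B),\Delta_\Psi),(S,\rho_2)\bigr)$ from functoriality of the $A\times B$-equivariant structure, where it is manifestly well defined.
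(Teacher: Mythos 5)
The paper's own proof of this lemma is a one-liner (``(1) is clear, (2) follows from [G], and (3)--(4) from (1)--(2)''), and your plan fills in exactly the intended details along the same lines: parts (1)--(3) and the first half of (4) are handled correctly by projecting $\rho$ with the counits of the two (normalized) tensor factors of $\O(A\times B)_{\xi^{-1}\times\Psi}\cong\O(A)_{\xi^{-1}}\ot\O(B)_\Psi$ and by specializing Theorem \ref{helpful2}(2) to $H=G=B$. The commutation identity you state for (4) is also correct: applying the appropriate counits to coassociativity gives $(\rho_2\ot\id_{\O(A)})\rho_3=\tau_{23}\rho$ and $(\rho_3\ot\id_{\O(B)})\rho_2=\rho$.

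The one point I would press you on is the step you yourself flag as hard, because the mechanism you describe for it (``expanding $\rho_3(s)$ with linearly independent $\O(A)$-components then forces every $S$-component to satisfy the defining relation'') does not work as stated. Writing $\rho_3(s)=\sum_i s_i\ot a_i$ with the $a_i$ linearly independent and substituting $\rho_2(s)=\Psi^1\cdot s\ot\Psi^2$ into the commutation identity, the module compatibility of $\rho_3$ (with the action $(b_1,a,b_2)\mapsto\partial(a)^{-1}b_1b_2$) yields
$$\sum_i\rho_2(s_i)\ot a_i=\sum_i\Psi^1{}_2\cdot s_i\ot\Psi^2\ot\partial^{\sharp}{\rm S}(\Psi^1{}_1)a_i,$$
and the right-hand side does \emph{not} have the $a_i$ as its $\O(A)$-components: they are multiplied by $\partial^{\sharp}{\rm S}(\Psi^1{}_1)$, which is entangled with the $\Psi^1{}_2\ot\Psi^2$ sitting on the other legs. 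So a direct coefficient comparison gives only this twisted identity, not $\rho_2(s_i)=\Psi^1\cdot s_i\ot\Psi^2$. To finish, you must untangle the extra factor by applying the coassociativity of $\rho_3$ over $\O(A)_{\xi^{-1}}$, the relation $\xi=\partial^{\sharp\ot2}(\Psi)$, and the antipode identity ${\rm S}(x_1)x_2=\varepsilon(x)\cdot(\text{unit})$ in the twisted coalgebra -- precisely the manipulation carried out in the proof of Proposition \ref{modrep00lemma-1}, which is the template to follow here. Your fallback route needs the same care: $\rho_3$ is not $\O(B)$-linear for the untwisted module structure on $S\ot\O(A)$, so it is a morphism in ${\rm Coh}^{(B,\Psi)}(B)$ only after twisting the target, and the $(B,\Psi)$-coinvariants of that twisted target are not literally $S^{(B,\Psi)}\ot\O(A)$; identifying them reduces to the same computation. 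With that cancellation supplied, your argument is complete and matches the paper's (implicit) proof.
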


\begin{proof}
(1) is clear, (2) follows from \cite{G}, and (3)-(4) from (1)-(2). 
\end{proof}

Consider the trivial morphism ${\rm t}:B\twoheadrightarrow 1$.

\begin{theorem}\label{lemmafromg}\footnote{The assumption $\xi=\Psi_{\mid A}$ (\ref{Psirestopsi}) is not needed here.}
The functor ${\rm t}^*:\Vect \to {\rm Coh}(B)$ lifts to an equivalence of abelian categories   
$${\rm t}^*:\Rep_k(A,\xi^{-1})\xrightarrow{\cong}{\rm Coh}^{(A\times B,\xi^{-1}\times \Psi)}(B),\,\,\,V\mapsto \left(\O(B)\ot_k V,\rho^V\right)$$
(see (\ref{natural structure}) below), 
whose inverse is given by
$${\rm t}_*^{(B,\Psi)}:{\rm Coh}^{(A\times B,\xi^{-1}\times \Psi)}(B)\xrightarrow{\cong}\Rep_k(A,\xi^{-1}),\,\,\,(S,\rho)\mapsto \left(S^{(B,\Psi)},\rho_3\right).$$
\end{theorem}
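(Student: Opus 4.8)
The goal is to prove that the two functors $\mathrm{t}^*$ and $\mathrm{t}_*^{(B,\Psi)}$ are mutually inverse equivalences of abelian categories. The approach is to first exhibit the equivariant structure $\rho^V$ on $\O(B)\ot_k V$ explicitly, verifying it satisfies the cocycle conditions from Definition \ref{defequiv} with respect to the $(A\times B,\xi^{-1}\times\Psi)$-action; then to show the two functors are inverse to one another by direct computation on objects, using the coinvariants decomposition from Lemma \ref{inducedinvariants}(3). The guiding principle is the same as in Theorem \ref{helpful2}: the $B$-equivariance (with twist $\Psi$) ``rigidifies'' the $\O(B)$-module structure so that an object of $\mathrm{Coh}^{(A\times B,\xi^{-1}\times\Psi)}(B)$ is freely recovered from its space of $(B,\Psi)$-coinvariants, which carries the residual $(A,\xi^{-1})$-action via $\rho_3$.

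\textbf{Step 1: define $\rho^V$.} For $V\in\Rep_k(A,\xi^{-1})$, i.e.\ $V$ a right $\O(A)_{\xi^{-1}}$-comodule with coaction $v\mapsto v^0\ot v^1$, I would set the $(A\times B)$-coaction on $\O(B)\ot_k V$ (viewed in $\mathrm{Coh}(B)$ via left multiplication by $\O(B)$ on the first factor) to be
\begin{equation}\label{natural structure}
\rho^V(f\ot v)= f_1\Psi^1 \ot v^0\ \ot\ \iota^\sharp\mathrm{S}(f_2)\,\partial^\sharp\mathrm{S}(v^1)\ \ot\ f_3\Psi^2,
\end{equation}
landing in $(\O(B)\ot_k V)\ot \O(A)_{\xi^{-1}}\ot\O(B)_\Psi$, built so that the $\O(A)_{\xi^{-1}}$-component matches $\rho_3$ on $V$ and the $\O(B)_\Psi$-component is the regular one; this is the natural analogue of $\widetilde{\rho_\Psi}$ from Lemma \ref{ogisohbicom} combined with the coaction on $V$. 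One must check: (i) the $\O(B)$-module axiom $\rho^V((f'{\cdot}(f\ot v))=\mu_{B\times(A\times B)}^\sharp(f')\rho^V(f\ot v)$; (ii) coassociativity against $\Delta_{\xi^{-1}}\times\Delta_\Psi$ on the two comodule factors; (iii) the mixed compatibility that makes this an honest $A\times B$-equivariant structure. Each reduces to the cocycle identity for $\Psi$ (equivalently $\xi$) and the comodule axiom for $V$, exactly as in the proof of Lemma \ref{ogisohbicom}.

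\textbf{Step 2: the two composites are the identity.} Given $(S,\rho)$, Lemma \ref{inducedinvariants}(3) gives $(S,\rho_2)\cong(\O(B),\Delta_\Psi)\ot_k S^{(B,\Psi)}$ as $(B,\Psi)$-equivariant sheaves, so as an $\O(B)$-module $S\cong \O(B)\ot_k S^{(B,\Psi)}$ canonically; under this identification one checks that the full coaction $\rho$ is forced to be $\rho^{(S^{(B,\Psi)},\rho_3)}$ — this is the key point, and it follows because $\rho$ is an $\O(B)$-module map determined by its value on the coinvariants, where it reduces to $\rho_3$ in the $\O(A)$-slot and to $\Delta_\Psi$ in the $\O(B)$-slot. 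Hence $\mathrm{t}^*\circ\mathrm{t}_*^{(B,\Psi)}\cong\id$. Conversely, for $V\in\Rep_k(A,\xi^{-1})$, one computes $(\O(B)\ot_k V)^{(B,\Psi)}$: the defining equation $\rho_2(x)=\Psi^1 x\ot\Psi^2$ singles out $1\ot V\subset\O(B)\ot_k V$ (using that $\O(B)_\Psi$ is the regular comodule, so its only $\Psi$-coinvariant line up to the $V$ factor is spanned by the counit-normalized element), and the residual $\rho_3$ on $1\ot V$ is exactly the original coaction of $V$ twisted by $\partial^\sharp\mathrm{S}$, which matches $V\in\Rep_k(A,\xi^{-1})$ on the nose. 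Thus $\mathrm{t}_*^{(B,\Psi)}\circ\mathrm{t}^*\cong\id$. Functoriality of both constructions on morphisms is immediate from the explicit formulas.

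\textbf{Main obstacle.} The genuinely delicate step is Step 2's claim that the coaction $\rho$ on $S$ is \emph{completely determined} by $(S^{(B,\Psi)},\rho_3)$ — one must argue that there is no freedom in the ``cross terms'' of $\rho$ beyond what $\rho_2$ and $\rho_3$ record, which is where the commuting of the $A$- and $B$-actions (they act on $B$ by left- and right-multiplication respectively) and the compatibility of the combined equivariance structure are essential. Concretely, I expect to invoke the free action $\mu_{B\times(A\times B)}$ together with the isomorphism $S\cong\pi^*\pi_*^{(B,\Psi)}S$ from Theorem \ref{helpful2}(2) applied to the $B$-action, reducing the problem to a statement about equivariant sheaves on the point, and then transport the residual $A$-equivariance. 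This is a matter of unwinding adjunctions rather than a hard estimate, but it is where care is needed to avoid circularity with Lemma \ref{inducedinvariants}; the bookkeeping with $Q_\psi$ and the antipode, as in Lemma \ref{ogisohbicom}, is the only computational friction.
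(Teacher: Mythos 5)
Your overall architecture is the same as the paper's: write down an explicit equivariant structure $\rho^V$ on $\O(B)\ot_k V$, check it against the twisted comultiplications, and invert via the $(B,\Psi)$-coinvariants functor $(S,\rho)\mapsto(S^{(B,\Psi)},\rho_3)$ of Lemma \ref{inducedinvariants}. However, the formula you commit to in Step 1 is where the actual content of the theorem lives, and it is wrong in ways that break your Step 2. First, it is ill-typed: $v^1$ lies in $\O(A)$, so $\partial^\sharp{\rm S}(v^1)$ makes no sense (and $\iota^\sharp$ is not the embedding in this section; it is $\partial$). Second, and more seriously, your formula sends the \emph{middle} leg of $f$ into the $\O(A)$-slot, i.e.\ (already for $\Psi=1$, $\xi=1$, $V=k$) it dualizes the map $(b,a,b')\mapsto b\,\partial(a)^{-1}b'$, which is not the action $\mu_{B\times(A\times B)}(b,a,b')=\partial(a)^{-1}bb'$ and is not even a right action of $A\times B$; concretely, $(\rho^V\ot\id)\rho^V$ and $(\id\ot\Delta_{\xi^{-1}\times\Psi})\rho^V$ distribute the legs of $f$ differently between the two $\O(A)$-slots, so coassociativity fails whenever $B$ is noncocommutative, and no twist bookkeeping can repair an ordering mismatch. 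The correct structure is the paper's (\ref{natural structure}), $f\ot v\mapsto \Psi^1_2 f_2\ot v^0\ot\partial^\sharp{\rm S}(\Psi^1_1 f_1)v^1\ot\Psi^2 f_3$: the \emph{first} leg $f_1$, together with a split leg $\Psi^1_1$ of the twist, must go into the $\O(A)_{\xi^{-1}}$-slot; it is precisely this placement that makes the verification reduce to the cocycle identity for $\Psi$.

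Third, the extra antipode on $v^1$ contradicts your own design requirement: even after fixing the type error, with ${\rm S}(v^1)$ in the $A$-slot the composite ${\rm t}_*^{(B,\Psi)}\circ{\rm t}^*$ applied to $(V,r)$ would return $v\mapsto v^0\ot{\rm S}(v^1)$ on $1\ot V$, i.e.\ the antipode-twisted comodule (compare (\ref{fromlefttoright})), not $(V,r)$ ``on the nose,'' so the second composite would not be the identity. With the paper's formula one checks directly that $\rho_2(f\ot v)=\Psi^1 f_1\ot v\ot\Psi^2 f_2$, whence $(\O(B)\ot_k V)^{(B,\Psi)}=1\ot V$ and $\rho_3$ restricts there to the original coaction, exactly as you intend. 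Your Step 2 plan itself (freeness in the $B$-direction via Lemma \ref{inducedinvariants}(3), determination of $\rho$ by its restriction to coinvariants) is consistent with what the paper leaves as a straightforward verification, and your worry about circularity with Theorem \ref{helpful2}(2) is unnecessary; but as written the proof fails at its central computational step, the construction of $\rho^V$.
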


\begin{proof}
Let $(V,r)\in \Rep_k(A,\xi^{-1})={\rm Coh}^{(A,\xi^{-1})}(1)$ be any object, and write $r(v)=\sum v^0\ot v^1\in V\ot \O(A)_{\xi^{-1}}$. Consider the free $\O(B)$-module ${\rm t}^*V=\O(B)\ot_k V$, and map
$$\rho^V:=\left((\widetilde{\rho_1}\ot\id)\Delta_{\Psi}\right)\bar{\ot}r$$
(see (\ref{tilderhoPsi})). 
Namely, we have
\begin{equation}\label{natural structure}
\begin{split}
& \rho^V:\O(B)\ot_k V\to \O(B)\ot_k V\ot \O(A)_{\xi^{-1}}\ot \O(B)_{\Psi},\\
& f\ot v\mapsto \Psi^1_2 f_2\ot v^{0}\ot \partial^{\sharp}{\rm S}\left(\Psi^1_1f_1\right)v^{1}\ot \Psi^2 f_3.
\end{split}
\end{equation} 
It is straightforward to verify that 
$$\left(\O(B)\ot_k V,\rho^V\right)\in {\rm Coh}^{(A\times B,\xi^{-1}\times \Psi)}(B).$$
Thus, we have defined a functor 
\begin{equation*}
\Rep_k(A,\xi^{-1})\to {\rm Coh}^{(A\times B,\xi^{-1}\times \Psi)}(B),\,\,\,V\mapsto \left(\O(B)\ot_k V,\rho^V\right).
\end{equation*}

Conversely, take any $(S,\rho)\in {\rm Coh}^{(A\times B,\xi^{-1}\times \Psi)}(B)$, and consider the sheaf ${\rm t}_{*}S$ (that is, the underlying vector space of $S$). Then by Lemma \ref{inducedinvariants}(4), $\left(S^{(B,\Psi)},\rho_3\right)\in\Rep_k(A,\xi^{-1})$. Thus, we have a functor 
\begin{equation*}
{\rm Coh}^{(A\times B,\xi^{-1}\times \Psi)}(B)\to \Rep_k(A,\xi^{-1}),\,\,\,(S,\rho)\mapsto \left(S^{(B,\Psi)},\rho_3\right).
\end{equation*}

Finally, it is straightforward to verify that the two functors constructed above are inverse to each other. 
\end{proof}

\begin{remark}\label{affinegrschs1}
If $B$ is any affine group scheme, then Theorem \ref{lemmafromg} and its proof hold after replacing ${\rm Coh}$ by ${\rm Coh}_{{\rm f}}$ (see \cite{G}).
\end{remark}

\subsection{The equivalence $\mathrm{Coh}^{(A\times B,\xi^{-1}\times \Psi)}(B) \cong {\rm Coh}^{(B,\Psi)}(A\backslash B)$}
Consider the free left action $A\times B\to B$, $(a,b)\mapsto \partial(a)b$, and let  
\begin{equation}\label{corrquotp}
{\rm p}:B\twoheadrightarrow A\backslash B
\end{equation}
be the corresponding quotient morphism.

For any $(S,\rho)\in{\rm Coh}^{(A\times B,\xi^{-1}\times \Psi)}(B)$, define the coinvariants subsheaf
\begin{equation}\label{coinvAxi}
S^{\left(A,\xi^{-1}\right)}:=\{s\in S\mid \rho_3(s)=\Psi^{2}\cdot s\ot \partial^{\sharp}{\rm S}(\Psi^{1})Q_{\xi^{-1}}\}\subset S
\end{equation}
(see (\ref{defnoflambda1})).

\begin{lemma}\label{inducedinvariantsquotient} 
For any $(S,\rho)\in {\rm Coh}^{(A\times B,\xi^{-1}\times \Psi)}(B)$, $(S,\rho_3)\in {\rm Coh}^{(A,\xi^{-1})}(B)$.
\end{lemma}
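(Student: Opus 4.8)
The statement to prove is Lemma \ref{inducedinvariantsquotient}: given $(S,\rho)\in{\rm Coh}^{(A\times B,\xi^{-1}\times \Psi)}(B)$, the pair $(S,\rho_3)$ lies in ${\rm Coh}^{(A,\xi^{-1})}(B)$, where $A$ acts on $B$ from the left by $a\cdot b = \partial(a)^{-1}b$ (the restriction of the given $A\times B$-action along $A\hookrightarrow A\times B$, $a\mapsto(a,1)$) — equivalently via the right action $B\times A\to B$, $(b,a)\mapsto \partial(a)^{-1}b$. The essential point is to verify two things about $\rho_3 := (\id\ot\id\ot\varepsilon)\rho$: first, that it is a morphism of $\O(B)$-modules compatible with this restricted $A$-action in the sense of Definition \ref{defright} (i.e. $\rho_3(f\cdot s) = \mu_{B\times A}^{\sharp}(f)\cdot\rho_3(s)$, reading $\mu_{B\times A}$ as the map $(b,a)\mapsto\partial(a)^{-1}b$); and second, that it is coassociative with respect to $\Delta_{\xi^{-1}}$, i.e. $(\rho_3\ot\id)\rho_3 = (\id\ot\Delta_{\xi^{-1}})\rho_3$.

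The cleanest route is to extract both facts from the corresponding statements for $\rho$ by applying $\id\ot\id\ot\varepsilon$ (the counit of $\O(B)_\Psi$). Since $\rho$ is a right $\O(A)_{\xi^{-1}}\ot\O(B)_\Psi$-comodule structure in ${\rm Coh}(B)$ for the $A\times B$-action, we have $\rho(f\cdot s) = \mu_{B\times(A\times B)}^{\sharp}(f)\cdot\rho(s)$ and $(\rho\ot\id)\rho = (\id\ot\Delta_{\xi^{-1}\times\Psi})\rho$. First I would note that the counit of $\O(B)_\Psi$ is the same map $\varepsilon$ as for $\O(B)$ (twisting by a normalized cocycle does not change the counit), so that applying $\id\ot\id\ot\varepsilon$ to a comodule identity is legitimate and kills the $B$-slot cleanly. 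Applying $\id\ot\id\ot\varepsilon$ to the module-compatibility relation for $\rho$, and using that $(\id\ot\varepsilon)\mu_{B\times(A\times B)}^{\sharp}$ on $\O(B)$ recovers exactly the comodule map $\O(B)\to\O(B)\ot\O(A)$ coming from the restricted action $(b,a)\mapsto\partial(a)^{-1}b$, yields the required $\O(B)$-module compatibility of $\rho_3$ for the $A$-action. For coassociativity, apply $\id\ot\id\ot\varepsilon\ot\id\ot\varepsilon$ to $(\rho\ot\id)\rho = (\id\ot\Delta_{\xi^{-1}\times\Psi})\rho$: on the left this produces $(\rho_3\ot\id)\rho_3$ after observing that $\varepsilon$ applied in the appropriate slots collapses $\rho$ to $\rho_3$ on both factors; on the right, $\Delta_{\xi^{-1}\times\Psi}$ on $\O(A)_{\xi^{-1}}\ot\O(B)_\Psi$ is the tensor product of $\Delta_{\xi^{-1}}$ and $\Delta_\Psi$, and applying the counits in the $\Psi$-slots leaves exactly $(\id\ot\Delta_{\xi^{-1}})\rho_3$, again using $(\id\ot\varepsilon)\Delta_\Psi = \id = (\varepsilon\ot\id)\Delta_\Psi$.

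The main obstacle — really the only subtlety — is bookkeeping: one must be careful that the three tensor factors appearing in $\rho$ (namely $S$, $\O(A)_{\xi^{-1}}$, $\O(B)_\Psi$, in that order as fixed by \eqref{defnoflambda1}) are tracked consistently through the coassociativity identity, since $(\rho\ot\id)\rho$ lands in $S\ot\O(A)_{\xi^{-1}}\ot\O(B)_\Psi\ot\O(A)_{\xi^{-1}}\ot\O(B)_\Psi$ and one applies $\varepsilon$ to the two $\O(B)_\Psi$-factors while retaining the two $\O(A)_{\xi^{-1}}$-factors, and one needs the two $\O(B)_\Psi$-slots of $\rho$ and of $\Delta_{\xi^{-1}\times\Psi}$ to be in compatible positions so that the counits land correctly. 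Once the slot conventions are pinned down, everything is a direct application of counit axioms, so I would state it as such. Concretely:

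\begin{proof}
By definition \eqref{defnoflambda1}, $\rho_3 = (\id\ot\id\ot\varepsilon)\rho$, where $\varepsilon$ is the counit of $\O(B)_\Psi$, which agrees with the counit of $\O(B)$ since $\Psi$ is normalized. Applying $\id\ot\id\ot\varepsilon$ to the identity $\rho(f\cdot s) = \mu_{B\times(A\times B)}^{\sharp}(f)\cdot\rho(s)$ (valid because $(S,\rho)\in{\rm Coh}^{(A\times B,\xi^{-1}\times\Psi)}(B)$) and noting that $(\id_{\O(B)}\ot\id_{\O(A)}\ot\varepsilon_{\O(B)})\mu_{B\times(A\times B)}^{\sharp}$ is precisely the comodule structure map $\O(B)\to\O(B)\ot\O(A)$ attached to the right action $B\times A\to B$, $(b,a)\mapsto\partial(a)^{-1}b$, we obtain
$$\rho_3(f\cdot s) = \mu_{B\times A}^{\sharp}(f)\cdot\rho_3(s);\qquad f\in\O(B),\ s\in S.$$
Next, apply $\id\ot\id\ot\varepsilon\ot\id\ot\varepsilon$ to the coassociativity identity $(\rho\ot\id_{\O(A)_{\xi^{-1}}\ot\O(B)_\Psi})\rho = (\id_S\ot\Delta_{\xi^{-1}\times\Psi})\rho$. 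Since $\Delta_{\xi^{-1}\times\Psi} = \Delta_{\xi^{-1}}\ot\Delta_\Psi$ under the identification $\O(A\times B)_{\xi^{-1}\times\Psi}\cong\O(A)_{\xi^{-1}}\ot\O(B)_\Psi$, and since $(\id\ot\varepsilon)\Delta_\Psi = (\varepsilon\ot\id)\Delta_\Psi = \id$, the right-hand side collapses to $(\id_S\ot\Delta_{\xi^{-1}})\rho_3$, while the left-hand side collapses to $(\rho_3\ot\id_{\O(A)_{\xi^{-1}}})\rho_3$. Hence $(\rho_3\ot\id)\rho_3 = (\id\ot\Delta_{\xi^{-1}})\rho_3$, so $(S,\rho_3)$ is a right $\O(A)_{\xi^{-1}}$-comodule in ${\rm Coh}(B)$ for the action $(b,a)\mapsto\partial(a)^{-1}b$; that is, $(S,\rho_3)\in{\rm Coh}^{(A,\xi^{-1})}(B)$ by Proposition \ref{abelcat}(1).
\end{proof}
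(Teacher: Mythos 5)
Your proof is correct and is just the spelled-out version of what the paper leaves implicit (its proof says only "similar to Lemma \ref{inducedinvariants}", i.e.\ that the claim follows by restricting the $\O(A\times B)_{\xi^{-1}\times\Psi}$-comodule structure): applying the counit in the $\O(B)_\Psi$-slot(s) to the two comodule axioms, using that a normalized twist does not change the counit, is exactly the intended argument. The bookkeeping you carry out (the restricted action $(b,a)\mapsto\partial(a)^{-1}b$ and the collapse of $\Delta_{\xi^{-1}\times\Psi}$ to $\Delta_{\xi^{-1}}$ under the counits) is accurate, so nothing is missing.
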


\begin{proof}
Similar to the proof of Lemma \ref{inducedinvariants}.
\end{proof}

\begin{theorem}\label{lemmatezer}
The functor ${\rm p}^*:{\rm Coh}(A\backslash B)\xrightarrow{}{\rm Coh}(B)$ (\ref{corrquotp}) lifts to an equivalence of abelian categories   
\begin{gather*}
{\rm p}^*:{\rm Coh}^{(B,\Psi)}(A\backslash B)\xrightarrow{\cong}{\rm Coh}^{(A\times B,\xi^{-1}\times \Psi)}(B),\\
(S,\rho)\mapsto \left(\O(B)\ot_{\O(A\backslash B)}S,\rho^*\right)
\end{gather*}
(see (\ref{rho*}) below), whose inverse is given by
\begin{gather*}
{\rm p}_*^{\left(A,\xi^{-1}\right)}:{\rm Coh}^{(A\times B,\xi^{-1}\times \Psi)}(B)\xrightarrow{\cong}{\rm Coh}^{(B,\Psi)}(A\backslash B),\\
(S,\rho)\mapsto \left({\rm p}_* ^{\left(A,\xi^{-1}\right)}S,\rho_2\right).
\end{gather*}
\end{theorem}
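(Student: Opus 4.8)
The plan is to deduce the statement from the principal homogeneous space equivalence of Theorem~\ref{helpful2}(2), applied to the free \emph{left} action of $A$ on $B$, and then to check that this equivalence survives the addition of the commuting right translation action of $B$. The left-action analogue of Theorem~\ref{helpful2}(2) — obtained by running the argument of \cite[p.112]{Mum} with $\widetilde{\pi}$, $\widetilde{\phi}$, $\widetilde{\mathfrak{c}}$ of \S\ref{sec:Right principle homogeneous spaces} and with $\psi$ replaced by $\xi^{-1}$ as in \S\ref{sec:Twisting by the inverse $2$-cocycle} — already gives that ${\rm p}^*:{\rm Coh}(A\backslash B)\xrightarrow{\cong}{\rm Coh}^{(A,\xi^{-1})}(B)$ is an equivalence of abelian categories with inverse ${\rm p}_*^{(A,\xi^{-1})}$, where ${\rm Coh}^{(A,\xi^{-1})}(B)$ is taken for the right action $(b,a)\mapsto\partial(a)^{-1}b$. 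I would also record the standard fact that, since the right $A\times B$-action $(b_1,a,b_2)\mapsto\partial(a)^{-1}b_1b_2$ is assembled from the commuting right $A$-action $(b,a)\mapsto\partial(a)^{-1}b$ and right $B$-action $(b,b')\mapsto bb'$, an object of ${\rm Coh}^{(A\times B,\xi^{-1}\times\Psi)}(B)$ is exactly a sheaf on $B$ carrying a pair of commuting equivariant structures, one in ${\rm Coh}^{(A,\xi^{-1})}(B)$ and one in ${\rm Coh}^{(B,\Psi)}(B)$.

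Granting this, the functor ${\rm p}^*$ is built as follows. Given $(S,\rho)\in{\rm Coh}^{(B,\Psi)}(A\backslash B)$, the sheaf ${\rm p}^*S=\O(B)\ot_{\O(A\backslash B)}S$ carries the canonical $(A,\xi^{-1})$-equivariant structure coming from the left principal homogeneous space structure (built on the $\O(B)$-factor from the cleaving data as in \S\ref{sec:Right principle homogeneous spaces}--\S\ref{sec:Twisting by the inverse $2$-cocycle}, analogous to Lemma~\ref{ogisohbicom}), together with the $(B,\Psi)$-equivariant structure obtained by pulling $\rho$ back along ${\rm p}$; this pullback is legitimate since ${\rm p}$ is equivariant for the right $B$-actions, so ${\rm p}^*$ lifts to equivariant sheaves by Proposition~\ref{equivmorphs}. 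The crux is to verify that these two structures commute, producing a well-defined object of ${\rm Coh}^{(A\times B,\xi^{-1}\times\Psi)}(B)$: this uses that left $A$-translations and right $B$-translations on $B$ commute, together with the hypothesis $\xi=\Psi_{\mid A}$, which is exactly what makes the canonical $(A,\xi^{-1})$-structure compatible with $\Psi$ (compare Lemmas~\ref{ogisohbicom0} and~\ref{ogisohbicom}); note this is precisely the hypothesis that was \emph{not} needed in Theorem~\ref{lemmafromg}.

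For the inverse functor, take $(S,\rho)\in{\rm Coh}^{(A\times B,\xi^{-1}\times\Psi)}(B)$. By Lemma~\ref{inducedinvariantsquotient} we have $(S,\rho_3)\in{\rm Coh}^{(A,\xi^{-1})}(B)$, so ${\rm p}_*^{(A,\xi^{-1})}S\in{\rm Coh}(A\backslash B)$ is defined. Since the $(A,\xi^{-1})$- and $(B,\Psi)$-structures of $S$ commute, $\rho_2:S\to S\ot\O(B)_{\Psi}$ is a morphism in ${\rm Coh}^{(A,\xi^{-1})}(B)$; applying the equivalence ${\rm p}_*^{(A,\xi^{-1})}$ and the natural isomorphism ${\rm p}_*^{(A,\xi^{-1})}(S\ot\O(B)_{\Psi})\cong({\rm p}_*^{(A,\xi^{-1})}S)\ot\O(B)_{\Psi}$ carries $\rho_2$ to a coaction on ${\rm p}_*^{(A,\xi^{-1})}S$, which one checks is $\O(A\backslash B)$-linear, counital and coassociative, hence defines $({\rm p}_*^{(A,\xi^{-1})}S,\rho_2)\in{\rm Coh}^{(B,\Psi)}(A\backslash B)$. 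Finally, the underlying functors ${\rm p}^*$ and ${\rm p}_*^{(A,\xi^{-1})}$ are mutually inverse by Theorem~\ref{helpful2}(2), and both constructions above transport the $(B,\Psi)$-structure through these same natural isomorphisms, so the equivariant refinements remain mutually inverse.

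The main obstacle is the commutativity check in the second paragraph: writing out the two composite coactions on ${\rm p}^*S$ and matching them amounts to a cocycle identity involving $\Psi$, its restriction $\xi$, the cleaving map $\widetilde{\mathfrak{c}}$ and the grouplike-type element $Q_{\xi^{-1}}$, in the spirit of — and somewhat heavier than — the ``straightforward verification'' invoked in the proof of Theorem~\ref{lemmafromg}; it is the only place where a genuine computation with $\Psi$ is required, and it will simultaneously yield the explicit formula for $\rho^*$ referenced in the statement. A secondary, purely bookkeeping point is to justify the identification of ${\rm Coh}^{(A\times B,\xi^{-1}\times\Psi)}(B)$ with commuting pairs of equivariant structures used throughout.
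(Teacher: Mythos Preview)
Your proposal is correct and follows essentially the same route as the paper. The paper's proof simply writes down the combined coaction $\rho^*:=\bigl((\widetilde{\rho_{\Psi^{-1}}}\ot\id)\Delta\bigr)\bar{\ot}\rho$ explicitly (which is your two structures packaged together), invokes Lemma~\ref{inducedinvariantsquotient} for the inverse functor, and declares both the well-definedness and the mutual inverseness to be ``straightforward to verify''; your decomposition into commuting $(A,\xi^{-1})$- and $(B,\Psi)$-structures, together with the appeal to Theorem~\ref{helpful2}(2), is exactly the conceptual content behind those verifications, and the ``main obstacle'' you flag is precisely the computation the paper suppresses.
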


\begin{proof}
Let $(S,\rho)$ be any object in ${\rm Coh}^{(B,\Psi)}(A\backslash B)$. Consider the sheaf ${\rm p}^*S=\O(B)\ot_{\O(A\backslash B)}S$, and map
\begin{equation*}
\rho^*:=\left((\widetilde{\rho_{\Psi^{-1}}}\ot\id)\Delta\right)\bar{\ot}\rho
\end{equation*}
(see (\ref{tilderhoPsi})). 
Namely, we have
\begin{equation}\label{rho*}
\begin{split}
& \rho^*:{\rm p}^*S\to {\rm p}^*S\ot\O(A)_{\xi^{-1}}\ot \O(B)_{\Psi},\\
& f\ot_{\O(A\backslash B)}s\mapsto \Psi^{2} f_2\ot_{\O(A\backslash B)} s^0\ot \partial^{\sharp}{\rm S}\left(\Psi^{1}f_1\right)Q_{\xi^{-1}}\ot f_3s^1.
\end{split}
\end{equation}
It is straightforward to verify that $\left({\rm p}^*S,\rho^*\right)\in {\rm Coh}^{(A\times B,\xi^{-1}\times \Psi)}(B)$, so we have a functor 
$${\rm p}^*:{\rm Coh}^{(B,\Psi)}(A\backslash B)\xrightarrow{}{\rm Coh}^{(A\times B,\xi^{-1}\times \Psi)}(B),\,\,\,(S,\rho)\mapsto \left({\rm p}^*S,\rho^*\right).$$

Conversely, let $(S,\rho)\in {\rm Coh}^{(A\times B,\xi^{-1}\times \Psi)}(B)$. Then by Lemma \ref{inducedinvariantsquotient}, we have a functor 
$${\rm p}_*^{\left(A,\xi^{-1}\right)}:{\rm Coh}^{(A\times B,\xi^{-1}\times \Psi)}(B)\xrightarrow{}{\rm Coh}^{(B,\Psi)}(A\backslash B),\,\,(S,\rho)\mapsto \left({\rm p}_* ^{\left(A,\xi^{-1}\right)}S,\rho_2\right).
$$

Finally, it is straightforward to verify that the two functors constructed above are inverse to each other.  
\end{proof}

\begin{remark}\label{affinegrschs2}
If $B$ is any affine group scheme such that the quotient scheme  $A\backslash B$ is affine, then Theorem \ref{lemmatezer} and its proof hold after replacing ${\rm Coh}$ by ${\rm Coh}_{{\rm f}}$ (see \cite{G}).
\end{remark}

\subsection{The first equivalence $\Rep_k(A,\xi^{-1}) \cong {\rm Coh}^{(B,\Psi)}(A\backslash B)$}\label{sec:The first equivalence}
Set
\begin{equation}\label{indcomod}
{\rm Ind}_{(A,\xi^{-1})}^{(B,\Psi)}:={\rm p}{}^{(A,\xi^{-1})}_*{\rm t}^*,\,\,\,{\rm and}\,\,\,
{\rm Res}_{(A,\xi^{-1})}^{(B,\Psi)}:=\rm t_*^{(B,\Psi)}{\rm p}^*.
\end{equation} 

\begin{theorem}\label{modrep0-1}
The equivalences   
\begin{gather*}
{\rm Ind}_{(A,\xi^{-1})}^{(B,\Psi)}:{\rm Rep}_k (A,\xi^{-1})\xrightarrow{\cong}{\rm Coh}^{(B,\Psi)}\left(A\backslash B\right),\\
V\mapsto \left({\rm p}_*^{(A,\xi^{-1})}\left(\O(B)\ot_k V\right),\rho^{V}_2\right),
\end{gather*} 
and
\begin{gather*}
{\rm Res}_{(A,\xi^{-1})}^{(B,\Psi)}:{\rm Coh}^{(B,\Psi)}\left(A\backslash B\right)\xrightarrow{\cong}{\rm Rep}_k (A,\xi^{-1}),\\
(S,\rho)\mapsto \left(\left(\O(B)\ot_{\O(A\backslash B)}S\right)^{(B,\Psi)},\rho^*_3\right)
\end{gather*}
(see (\ref{natural structure}), (\ref{rho*})), are inverse to each other.
\end{theorem}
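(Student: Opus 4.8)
The plan is to obtain Theorem \ref{modrep0-1} simply by composing the two equivalences already established. By Theorem \ref{lemmafromg}, the functors ${\rm t}^*$ and ${\rm t}_*^{(B,\Psi)}$ are mutually inverse equivalences between ${\rm Rep}_k(A,\xi^{-1})$ and ${\rm Coh}^{(A\times B,\xi^{-1}\times \Psi)}(B)$; by Theorem \ref{lemmatezer}, the functors ${\rm p}^*$ and ${\rm p}_*^{(A,\xi^{-1})}$ are mutually inverse equivalences between ${\rm Coh}^{(B,\Psi)}(A\backslash B)$ and ${\rm Coh}^{(A\times B,\xi^{-1}\times \Psi)}(B)$. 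Since ${\rm Ind}_{(A,\xi^{-1})}^{(B,\Psi)}={\rm p}_*^{(A,\xi^{-1})}{\rm t}^*$ and ${\rm Res}_{(A,\xi^{-1})}^{(B,\Psi)}={\rm t}_*^{(B,\Psi)}{\rm p}^*$ by (\ref{indcomod}), both are composites of equivalences, hence equivalences, and
$${\rm Res}_{(A,\xi^{-1})}^{(B,\Psi)}\circ{\rm Ind}_{(A,\xi^{-1})}^{(B,\Psi)}={\rm t}_*^{(B,\Psi)}\,{\rm p}^*{\rm p}_*^{(A,\xi^{-1})}\,{\rm t}^*\cong{\rm t}_*^{(B,\Psi)}{\rm t}^*\cong{\rm id},$$
and symmetrically for the reverse composite. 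This already yields the statement at the level of abstract categories.

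The remaining task is to confirm that these composites are given by the explicit formulas displayed in the statement. For ${\rm Ind}$: on $V$, Theorem \ref{lemmafromg} produces ${\rm t}^*V=(\O(B)\ot_k V,\rho^V)$ with $\rho^V$ as in (\ref{natural structure}), and applying ${\rm p}_*^{(A,\xi^{-1})}$ as in Theorem \ref{lemmatezer} restricts the $\O(B)$-action to $\O(A\backslash B)$ while keeping the coaction $\rho_2$, i.e. the $\O(B)_\Psi$-leg of $\rho^V$ obtained by applying $\varepsilon$ in the $\O(A)_{\xi^{-1}}$-slot (see (\ref{defnoflambda1})); this is exactly $\bigl({\rm p}_*^{(A,\xi^{-1})}(\O(B)\ot_k V),\rho^V_2\bigr)$. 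For ${\rm Res}$: on $(S,\rho)$, Theorem \ref{lemmatezer} produces ${\rm p}^*(S,\rho)=(\O(B)\ot_{\O(A\backslash B)}S,\rho^*)$ with $\rho^*$ as in (\ref{rho*}), and applying ${\rm t}_*^{(B,\Psi)}$ as in Theorem \ref{lemmafromg} extracts the $(B,\Psi)$-coinvariants (\ref{coinvBPsi}) together with the residual $A$-coaction $\rho_3$, giving $\bigl((\O(B)\ot_{\O(A\backslash B)}S)^{(B,\Psi)},\rho^*_3\bigr)$.

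I expect the main obstacle --- such as it is --- to be the bookkeeping with the twisted comultiplications $\Delta_\Psi$, $\Delta_{\xi^{-1}}$ and with the Drinfeld element $Q_{\xi^{-1}}$ while tracking the coactions through the restriction and the passage to coinvariants: one must verify that after discarding one leg the surviving coaction is precisely the claimed $\rho_2$ (resp. $\rho_3$) and carries the correct twist. This is handled by the same manipulations used in the proofs of Theorems \ref{lemmafromg} and \ref{lemmatezer}, using that $\partial^\sharp$ is a twisted coalgebra map (whence $\xi=\Psi_{\mid A}$, cf. (\ref{Psirestopsi})) together with the identities for $Q_{\xi^{-1}}$ recalled before Lemma \ref{ogisohbicom}. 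No new ideas are required, so the whole proof reduces to the already-proved Theorems \ref{lemmafromg} and \ref{lemmatezer} plus a routine unwinding of definitions.
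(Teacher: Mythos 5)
Your proposal is correct and follows the paper's own argument: the paper defines ${\rm Ind}_{(A,\xi^{-1})}^{(B,\Psi)}={\rm p}_*^{(A,\xi^{-1})}{\rm t}^*$ and ${\rm Res}_{(A,\xi^{-1})}^{(B,\Psi)}={\rm t}_*^{(B,\Psi)}{\rm p}^*$ in (\ref{indcomod}) and proves Theorem \ref{modrep0-1} exactly by composing the mutually inverse equivalences of Theorems \ref{lemmafromg} and \ref{lemmatezer}. Your extra unwinding of the formulas for $\rho^V_2$ and $\rho^*_3$ is just the routine bookkeeping the paper leaves implicit.
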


\begin{proof}
Follows from Theorems \ref{lemmafromg} and \ref{lemmatezer}. 
\end{proof}

Recall (\ref{fromlefttoright}) that for any {\em right} $\O(A)_{\xi^{-1}}$-comodule $(V,r)$, the map 
$$\widetilde{r}:V\to \O(A)_{\xi}\ot V,\,\,\,v\mapsto {\rm S}(v^1)Q_{\xi}\ot v^0,$$ 
defines a {\em left} $\O(A)_{\xi}$-comodule structure on $V$.

View $\O(B)$ as a right $\O(A)_{\xi}$-comodule via $\widetilde{\rho_{\Psi}}$ (\ref{tilderhoPsi}) given by 
$$\widetilde{\rho_{\Psi}}(f)=\Psi^{-2}f_2\ot \partial^{\sharp}{\rm S}\left(\Psi^{-1}f_1\right)Q_{\xi};\,\,\,f\in\O(B).$$

Recall that $\O(B)\ot^{\O(A)_{\xi}}V={\rm Ker}\left(\widetilde{\rho_{\Psi}}\ot {\rm id}-{\rm id}\ot \widetilde{r}\right)$.

\begin{proposition}\label{modrep00lemma-1}
For any right $\O(A)_{\xi^{-1}}$-comodule $(V,r)$, we have   
$$\O(B)\ot^{\O(A)_{\xi}}V=\left(\O(B)\ot_k V\right)^{\left(A,\xi^{-1}\right)}.$$
\end{proposition}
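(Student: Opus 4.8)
The plan is to unwind both sides of the claimed equality as subspaces of $\O(B)\ot_k V$ and check they are cut out by the same condition. First I would recall the defining conditions. On the one hand, by definition of the cotensor product,
$$\O(B)\ot^{\O(A)_{\xi}}V=\{\textstyle\sum f\ot v\in \O(B)\ot_k V\mid (\widetilde{\rho_{\Psi}}\ot\id)(\sum f\ot v)=(\id\ot\widetilde r)(\sum f\ot v)\}.$$
On the other hand, by (\ref{coinvAxi}) applied to $(S,\rho)=({\rm t}^*V,\rho^V)=(\O(B)\ot_k V,\rho^V)$, the subspace $(\O(B)\ot_k V)^{(A,\xi^{-1})}$ consists of those $x$ with $\rho^V_3(x)=\Psi^2\cdot x\ot\partial^{\sharp}{\rm S}(\Psi^1)Q_{\xi^{-1}}$, where $\rho^V_3=(\id\ot\id\ot\varepsilon)\rho^V$ and the $\O(B)$-action on $\O(B)\ot_kV$ is through $\mu^{\sharp}$ for the relevant right $B$-action, i.e.\ $\Psi^2\cdot x$ means multiplication in the first tensor factor twisted appropriately. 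The key computation is to extract $\rho^V_3$ from the explicit formula (\ref{natural structure}): applying $\varepsilon$ to the last ($\O(B)_{\Psi}$) leg of
$$\rho^V(f\ot v)=\Psi^1_2 f_2\ot v^0\ot\partial^{\sharp}{\rm S}(\Psi^1_1 f_1)v^1\ot\Psi^2 f_3$$
and using normalization of $\Psi$ together with the counit axioms collapses this to $\rho^V_3(f\ot v)=f_2\ot v^0\ot \partial^{\sharp}{\rm S}(f_1)v^1$ (up to the surviving $\Psi$-factors, which one tracks carefully). Then I would recognize $f\mapsto f_2\ot\partial^{\sharp}{\rm S}(f_1)$ — suitably twisted by $\Psi$ and $Q_{\xi}$ — as precisely $\widetilde{\rho_{\Psi}}$ written in the form recalled just before the proposition, namely $\widetilde{\rho_{\Psi}}(f)=\Psi^{-2}f_2\ot\partial^{\sharp}{\rm S}(\Psi^{-1}f_1)Q_{\xi}$, and $v\mapsto v^0\ot{\rm S}(v^1)Q_{\xi}$ as $\widetilde r$ via (\ref{fromlefttoright}).

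The heart of the argument is therefore a bookkeeping identity showing that the single equation $\rho^V_3(x)=\Psi^2\cdot x\ot\partial^{\sharp}{\rm S}(\Psi^1)Q_{\xi^{-1}}$ is equivalent to $(\widetilde{\rho_{\Psi}}\ot\id)(x)=(\id\ot\widetilde r)(x)$ after one moves all the $\Psi^{\pm}$ and $Q$ factors to the correct sides. Concretely, I would rewrite the coinvariance condition by multiplying both sides on the left (in the $\O(B)$-leg) by the invertible element built from $\Psi$, using that $\Psi$ and $Q_{\xi}$ are units and that $\partial^{\sharp}$ is a coalgebra map sending $\Psi$ to $\xi$ (hence $Q_\xi=\partial^{\sharp\ot?}(\dots)$ in the relevant sense), to turn $\rho^V_3$-coinvariance into exactly the cotensor (equalizer) condition. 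The relations $Q_{\xi}^{-1}={\rm S}(\xi^{-2})\xi^{-1}$ and the grouplike property of $Q_{\xi}^{-1}{\rm S}(Q_{\xi})$ recalled in \S\ref{sec:Twisting by the inverse $2$-cocycle} are the tools that make the $Q$-factors cancel cleanly; the $2$-cocycle identity for $\Psi$ handles the $\Psi$-factors.

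The main obstacle I anticipate is purely computational: keeping the Sweedler indices, the several copies of $\Psi^{\pm 1}$, the antipodes and the $Q_{\xi}$'s aligned while passing between the ``comodule'' description and the ``coinvariants'' description, since both $\widetilde{\rho_{\Psi}}$ and $\widetilde r$ already carry nontrivial twists. There is no conceptual difficulty — it is essentially the assertion that ${\rm p}^*$ from Theorem~\ref{lemmatezer} realizes induction as a cotensor product, combined with Theorem~\ref{lemmafromg} identifying ${\rm t}^*V$ with $\O(B)\ot_kV$ — so once the formula for $\rho^V_3$ is in hand the equality of the two subspaces is a finite verification. I would organize it by first reducing to the case where $x=f\ot v$ is an elementary tensor (legitimate since both conditions are linear), then applying the coalgebra-map property of $\partial^{\sharp}:\O(B)_{\Psi}\to\O(A)_{\xi}$ to rewrite $\partial^{\sharp}{\rm S}(\Psi^1 f_1)$ as a product of a $\Psi$-contribution and an $f$-contribution, and finally matching term by term with the defining equalizer of $\O(B)\ot^{\O(A)_{\xi}}V$.
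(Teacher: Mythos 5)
Your setup is right: computing $\rho^V_3$ from (\ref{natural structure}) (the $\Psi$-factors in fact cancel exactly by normalization, giving $\rho^V_3(f\ot v)=f_2\ot v^0\ot\partial^{\sharp}{\rm S}(f_1)v^1$) and then comparing the coinvariance condition (\ref{coinvAxi}) with the equalizer defining $\O(B)\ot^{\O(A)_{\xi}}V$ is exactly how the paper proceeds. But there are two genuine problems with the way you propose to finish. First, the reduction to elementary tensors is not legitimate: linearity of the two conditions only says their solution sets are subspaces of $\O(B)\ot_kV$, and neither subspace is spanned by the elementary tensors it contains (a nonzero elementary tensor rarely lies in a cotensor product). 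Verifying the equivalence for $x=f\ot v$ therefore says nothing about a general element $\sum_i f_i\ot v_i$; the whole argument must be run on general sums, as the paper does.

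Second, and more substantively, the passage between the two conditions is not achieved by multiplying by the invertible $\Psi$- and $Q$-factors and invoking that $\partial^{\sharp}$ is a coalgebra map. The coinvariance condition reads $\sum_i f_{i2}\ot v_i^0\ot\partial^{\sharp}{\rm S}(f_{i1})v_i^1=\sum_i\Psi^{2}f_i\ot v_i\ot\partial^{\sharp}{\rm S}(\Psi^{1})Q_{\xi^{-1}}$: it entangles $\Delta(f_i)$ and $r(v_i)$ in the same $\O(A)$-leg, whereas the equalizer condition $\sum_i\widetilde{\rho_{\Psi}}(f_i)\ot v_i=\sum_i f_i\ot\widetilde r(v_i)$ has the comultiplication terms on one side and the coaction terms on the other. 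Multiplication by units is an invertible operation and cannot change the shape of the condition in this way. What the paper actually does is apply $(\id\ot\widetilde r\ot\id)$ to the coinvariance identity (using coassociativity of the $V$-coaction) and then multiply the two $\O(A)$-legs together, so that ${\rm S}(v_i^1)v_i^2=\varepsilon(v_i^1)Q_{\xi^{-1}}$ decouples $v$ from $f$; this is a non-invertible manipulation, and the converse implication then has to be argued separately (the paper's ``Similarly''). Your proposal omits this key step, so the ``bookkeeping identity'' at the heart of your plan would not go through as described. A side remark: the proposition is an input to Theorem \ref{modrep0-1new}, not a consequence of Theorems \ref{lemmafromg} and \ref{lemmatezer}, so it cannot be dismissed as ``essentially'' those results.
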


\begin{proof}
By (\ref{natural structure}), $\O(A)_{\xi^{-1}}$ coacts on $\O(B)\ot_k V$ via
$$f\ot v\mapsto \Psi^1_2 f_2\ot v^{0}\ot \partial^{\sharp}{\rm S}\left(\Psi^1_1f_1\right)v^{1}\varepsilon(\Psi^2 f_3)=f_2\ot v^0\ot \partial^{\sharp}{\rm S}(f_1)v^1.$$
Thus, by definition, $\sum_i f_i\ot v_i\in \left(\O(B)\ot_k V\right)^{\left(A,\xi^{-1}\right)}$ if and only if
\begin{equation}\label{coinvcoten}
\sum_i f_{i2}\ot v_i^0\ot \partial^{\sharp}{\rm S}(f_{i1})v_i^1=\sum_i \Psi^{2}f_i\ot v_i\ot \partial^{\sharp}{\rm S}(\Psi^{1})Q_{\xi^{-1}}.
\end{equation}

Now assume that $\sum_i f_i\ot v_i$ belongs to $\left(\O(B)\ot_k V\right)^{\left(A,\xi^{-1}\right)}$. We have to verify that $\sum_i f_i\ot v_i$ lies in $\O(B)\ot^{\O(A)_{\xi}}V$, i.e., that  
$$\sum_i \widetilde{\rho_{\Psi}}(f_i)\ot v_i=\sum_i f_i\ot \widetilde{r}(v_i).$$
Thus, we have to show that 
\begin{equation}\label{coinvcotenwant}
\sum_i \Psi^{-2}f_{i2}\ot \partial^{\sharp}{\rm S}(\Psi^{-1}f_{i1})Q_{\xi}\ot v_i=\sum_i f_i\ot {\rm S}(v_i^1)Q_{\xi}\ot v_i^0.
\end{equation}
To this end,
apply $(\id\ot\widetilde{r}\ot\id)$ to (\ref{coinvcoten}) to obtain
$$\sum_i f_{i2}\ot {\rm S}(v_i^1)Q_{\xi}\ot v_i^0\ot \partial^{\sharp}{\rm S}(f_{i1})v_i^2=\sum_i \Psi^{2}f_i\ot {\rm S}(v_i^1)Q_{\xi}\ot v_i^0\ot \partial^{\sharp}{\rm S}(\Psi^{1})Q_{\xi^{-1}}.$$
Thus, we have
$$\sum_i f_{i2}\ot {\rm S}(v_i^1)\ot v_i^0\ot \partial^{\sharp}{\rm S}(f_{i1})v_i^2=\sum_i \Psi^{2}f_i\ot {\rm S}(v_i^1)\ot v_i^0\ot \partial^{\sharp}{\rm S}(\Psi^{1})Q_{\xi^{-1}}.$$
Multiplying the second and fourth factors, yields
$$\sum_i f_{i2}\ot \partial^{\sharp}{\rm S}(f_{i1}){\rm S}(v_i^1)v_i^2\ot v_i^0=\sum_i \Psi^{2}f_i\ot {\rm S}(v_i^1)\partial^{\sharp}{\rm S}(\Psi^{1})Q_{\xi^{-1}}\ot v_i^0,$$
or equivalently (since ${\rm S}(v_i^1)v_i^2=\varepsilon(v_i^1)Q_{\xi^{-1}}$),
$$\sum_i f_{i2}\ot \partial^{\sharp}{\rm S}(f_{i1})\varepsilon(v_i^1)Q_{\xi^{-1}}\ot v_i^0=\sum_i \Psi^{2}f_i\ot {\rm S}(v_i^1)\partial^{\sharp}{\rm S}(\Psi^{1})Q_{\xi^{-1}}\ot v_i^0.$$
Thus, we have
$$\sum_i f_{i2}\ot \partial^{\sharp}{\rm S}(f_{i1})\ot v_i=\sum_i \Psi^{2}f_i\ot {\rm S}(v_i^1)\partial^{\sharp}{\rm S}(\Psi^{1})\ot v_i^0,$$
which is equivalent to (\ref{coinvcotenwant}).

Similarly, if $\sum_i f_i\ot v_i\in \O(B)\ot^{\O(A)_{\xi}}V$, then $\sum_i f_i\ot v_i$ lies in $\left(\O(B)\ot_k V\right)^{\left(A,\xi^{-1}\right)}$.
\end{proof}

\begin{theorem}\label{modrep0-1new}
The equivalences   
\begin{gather*}
{\rm Ind}_{(A,\xi^{-1})}^{(B,\Psi)}:{\rm Rep}_k (A,\xi^{-1})\xrightarrow{\cong}{\rm Coh}^{(B,\Psi)}\left(A\backslash B\right),\\
V\mapsto \left(\O(B)\ot^{\O(A)_{\xi}}V,\rho^{V}_2\right),
\end{gather*} 
and
\begin{gather*}
{\rm Res}_{(A,\xi^{-1})}^{(B,\Psi)}:{\rm Coh}^{(B,\Psi)}\left(A\backslash B\right)\xrightarrow{\cong}{\rm Rep}_k (A,\xi^{-1}),\\
(S,\rho)\mapsto \left(\left(\O(B)\ot_{\O(A\backslash B)}S\right)^{(B,\Psi)},\rho_3^*\right),
\end{gather*}
are inverse to each other.
\end{theorem}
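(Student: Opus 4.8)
The plan is to derive Theorem \ref{modrep0-1new} from Theorem \ref{modrep0-1} by identifying the intermediate object appearing in the definition of $\mathrm{Ind}$. Recall that Theorem \ref{modrep0-1} already gives mutually inverse equivalences, but with $\mathrm{Ind}_{(A,\xi^{-1})}^{(B,\Psi)}(V)$ described as $\left({\rm p}_*^{(A,\xi^{-1})}\bigl(\O(B)\ot_k V\bigr),\rho^V_2\right)$. So the only thing left is to show that the underlying sheaf ${\rm p}_*^{(A,\xi^{-1})}\bigl(\O(B)\ot_k V\bigr)$ is naturally identified with $\O(B)\ot^{\O(A)_{\xi}}V$, compatibly with the $(B,\Psi)$-equivariant structure $\rho^V_2$.

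First I would unwind the definition of ${\rm p}_*^{(A,\xi^{-1})}$ as used in Theorem \ref{lemmatezer}: it sends $(S,\rho)$ to the coinvariants $\left(S^{(A,\xi^{-1})},\dots\right)$, i.e. ${\rm p}_*^{(A,\xi^{-1})}S = S^{(A,\xi^{-1})}$ with the notation of (\ref{coinvAxi}) (this is the $\O(A\backslash B)$-module structure on the $(A,\xi^{-1})$-coinvariants, which is exactly what makes ${\rm p}^*$ and ${\rm p}_*^{(A,\xi^{-1})}$ inverse). Applying this to $S=\O(B)\ot_k V$ with its structure $\rho^V$ from (\ref{natural structure}), the underlying sheaf of $\mathrm{Ind}_{(A,\xi^{-1})}^{(B,\Psi)}(V)$ is precisely $\bigl(\O(B)\ot_k V\bigr)^{(A,\xi^{-1})}$. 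Then Proposition \ref{modrep00lemma-1} identifies this with $\O(B)\ot^{\O(A)_{\xi}}V$ as sheaves on $A\backslash B$. One should check that this identification is $\O(A\backslash B)$-linear — but $\O(A\backslash B)$ acts on both sides through the inclusion $\O(A\backslash B)\hookrightarrow \O(B)$ acting on the first tensor factor, so this is immediate — and that it intertwines the $(B,\Psi)$-equivariant structures, both of which are induced from $\widetilde{\rho_{\Psi}}$ restricted/corestricted appropriately, so $\rho^V_2$ corresponds to the expected structure on the cotensor product.

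For the $\mathrm{Res}$ direction there is nothing new to prove: the formula stated in Theorem \ref{modrep0-1new} is verbatim the one in Theorem \ref{modrep0-1}, since $\mathrm{Res}_{(A,\xi^{-1})}^{(B,\Psi)}={\rm t}_*^{(B,\Psi)}{\rm p}^*$ sends $(S,\rho)$ to $\left(\bigl(\O(B)\ot_{\O(A\backslash B)}S\bigr)^{(B,\Psi)},\rho^*_3\right)$ by composing (\ref{rho*}) with the $\rho_3$-construction. Hence the two functors are inverse to each other because they agree with the pair from Theorem \ref{modrep0-1} after the harmless relabeling of the underlying space of $\mathrm{Ind}(V)$.

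The main (and only real) obstacle is the bookkeeping in verifying that the identification $\bigl(\O(B)\ot_k V\bigr)^{(A,\xi^{-1})}\cong \O(B)\ot^{\O(A)_{\xi}}V$ of Proposition \ref{modrep00lemma-1} is compatible with the residual $(B,\Psi)$-equivariant structure $\rho^V_2$, i.e. that the coaction descends correctly to the cotensor product. This is a routine but slightly delicate computation with the cocycle $\Psi$, the element $Q_{\xi}$, and the Sweedler-type notation, entirely parallel to the manipulations already carried out in the proof of Proposition \ref{modrep00lemma-1}; no genuinely new idea is required. I would therefore present the proof as: ``By Proposition \ref{modrep00lemma-1} the underlying sheaf of $\mathrm{Ind}_{(A,\xi^{-1})}^{(B,\Psi)}(V)$ as defined in Theorem \ref{modrep0-1} is $\O(B)\ot^{\O(A)_{\xi}}V$, and one checks directly that under this identification the equivariant structure $\rho^V_2$ is unchanged; the claim then follows from Theorem \ref{modrep0-1}.''
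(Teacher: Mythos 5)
Your proposal is correct and follows exactly the paper's route: the paper's proof is precisely ``Follows from Theorem \ref{modrep0-1} and Proposition \ref{modrep00lemma-1}'', i.e.\ one identifies the underlying sheaf of ${\rm Ind}_{(A,\xi^{-1})}^{(B,\Psi)}(V)$ from Theorem \ref{modrep0-1} with the cotensor product via Proposition \ref{modrep00lemma-1}, and the ${\rm Res}$ formula is unchanged. Your additional remarks about $\O(A\backslash B)$-linearity and compatibility of $\rho^V_2$ are just the routine checks the paper leaves implicit.
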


\begin{proof}
Follows from Theorem \ref{modrep0-1} and Proposition \ref{modrep00lemma-1}.
\end{proof}

\begin{example}\label{regrepind}
There is a canonical isomorphism 
$${\rm Ind}_{(A,\xi^{-1})}^{(B,\Psi)}\left(\O(A),\Delta_{\xi^{-1}}\right)\cong \left(\O(B),\Delta_{\Psi}\right)$$
in ${\rm Coh}^{(B,\Psi)}\left(A\backslash B\right)$. \qed
\end{example}

\subsection{The second equivalence $\Rep_k(A,\xi^{-1}) \cong {\rm Coh}^{(B,\Psi)}(A\backslash B)$}\label{sec:thesecondequivalence}
{\em Choose} a cleaving map (\ref{nbpgammatilde}) $\widetilde{\mathfrak{c}}:\O(A)\xrightarrow{1:1} \O(B)$. Recall (\ref{nbpalphatilde0}), and let 
$$
\widetilde{\alpha}:\O(B)\twoheadrightarrow \O(A\backslash B),\,\,\,f\mapsto f_1\widetilde{\mathfrak{c}}^{-1}(\partial^{\sharp}(f_2)).
$$
Also, for any $V\in \Rep_k(A,\xi^{-1})$, consider the $k$-linear isomorphism   
\begin{gather*}
{\rm F}_V:=\widetilde{\alpha}\ot \id:\O(B)\ot^{\O(A)_{\xi}}V\xrightarrow{\cong} \O(A\backslash B)\ot_k V,\\
f\ot v\mapsto f_1\widetilde{\mathfrak{c}}^{-1}(\partial^{\sharp}(f_2))\ot v,
\end{gather*}
whose inverse is given by
\begin{gather*}
{\rm F}_V^{-1}:\O(A\backslash B)\ot_k V\xrightarrow{\cong}  \O(B)\ot^{\O(A)_{\xi}}V,\\
f\ot v\mapsto f\widetilde{\mathfrak{c}}(v^1)\ot v^0,
\end{gather*}
and the map 
$$\rho_{V}:=({\rm F}_V\ot\id)(23)(\Delta_{\Psi}\ot\id){\rm F}_V^{-1}.$$
Note that we have 
\begin{equation}\label{nuviexp}
\begin{split}
& \rho_V:\O(A\backslash B)\ot_k V\to \O(A\backslash B)\ot_k V\ot \O(B)_{\Psi},\\
& f\ot v\mapsto \Psi_1^1f_1\widetilde{\mathfrak{c}}(v^1)_1\widetilde{\mathfrak{c}}^{-1}\partial^{\sharp}\left(\Psi_2^1f_2\widetilde{\mathfrak{c}}(v^1)_2\right)\ot v^0\ot \Psi^2f_3\widetilde{\mathfrak{c}}(v^1)_3.
\end{split}
\end{equation} 

\begin{theorem}\label{modrep00}
We have an equivalence of abelian categories   
$${\rm F}:\Rep_k(A,\xi^{-1})\xrightarrow{\cong}{\rm Coh}^{(B,\Psi)}(A\backslash B),\,\,\,V\mapsto \left(\O(A\backslash B)\ot_k V,\rho_V\right).$$
\end{theorem}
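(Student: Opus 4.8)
The strategy is to exhibit the functor $\mathrm F$ as a composite of equivalences already established in this section, rather than verifying the equivalence directly from the explicit formula \eqref{nuviexp}. Concretely, I would show that
\[
\mathrm F \;\cong\; {\rm Ind}_{(A,\xi^{-1})}^{(B,\Psi)}
\]
under the identification $\O(B)\ot^{\O(A)_\xi}V\xrightarrow{\cong}\O(A\backslash B)\ot_k V$ provided by ${\rm F}_V$. Since ${\rm Ind}_{(A,\xi^{-1})}^{(B,\Psi)}$ is an equivalence by Theorem~\ref{modrep0-1new}, and ${\rm F}_V$ is by construction a $k$-linear isomorphism, it then suffices to check that ${\rm F}_V$ intertwines the $(B,\Psi)$-equivariant structure $\rho^V_2$ on $\O(B)\ot^{\O(A)_\xi}V$ with the structure $\rho_V$ on $\O(A\backslash B)\ot_k V$, and likewise that it is $\O(A\backslash B)$-linear (where $\O(A\backslash B)=\O(B)^{A}$ acts on $\O(B)\ot^{\O(A)_\xi}V$ through the first tensor factor, via $\widetilde\alpha$ this becomes ordinary left multiplication on $\O(A\backslash B)\ot_k V$). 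This is exactly what the definition $\rho_V:=({\rm F}_V\ot\id)(23)(\Delta_\Psi\ot\id){\rm F}_V^{-1}$ is set up to encode: $\rho_V$ is the transport of $\rho^V_2=(\Delta_\Psi\ot\id)|_{\O(B)\ot^{\O(A)_\xi}V}$ along ${\rm F}_V$, up to the flip $(23)$ which only rearranges the tensor factors to match the target category's conventions.

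The key steps, in order, are as follows. First, record that $\widetilde{\mathfrak c}$ being a cleaving map for $\widetilde\pi\colon B\twoheadrightarrow A\backslash B$ (see \eqref{nbpgammatilde}--\eqref{nbpalphatilde0}), the map $\widetilde\alpha$ of \eqref{nbpalphatilde0} restricts to the asserted $k$-linear isomorphism ${\rm F}_V\colon \O(B)\ot^{\O(A)_\xi}V\xrightarrow{\cong}\O(A\backslash B)\ot_k V$ with the stated inverse; this is a routine consequence of the cleft extension identities $\widetilde{\mathfrak c}*\widetilde{\mathfrak c}^{-1}=\widetilde{\mathfrak c}^{-1}*\widetilde{\mathfrak c}=\eps(\,\cdot\,)1$ together with the cotensor (coinvariance) condition defining $\O(B)\ot^{\O(A)_\xi}V$, and one checks the two composites are the identity by a short Sweedler computation. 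Second, transport the $(B,\Psi)$-comodule structure: since $\rho^V_2$ is just $\Delta_\Psi$ applied in the $\O(B)$-slot (restricted to the subspace $\O(B)\ot^{\O(A)_\xi}V$, which is $\O(B)$-subcomodule-closed by Lemma~\ref{inducedinvariants}(1) applied along $\mathrm p$), conjugating by ${\rm F}_V$ and inserting the flip $(23)$ gives precisely the formula \eqref{nuviexp} for $\rho_V$ — here one uses that $\widetilde{\mathfrak c}$ is a right $\O(A)$-colinear, convolution-invertible map, so $\Delta\widetilde{\mathfrak c}$, $\widetilde{\mathfrak c}^{-1}$ and $\partial^\sharp$ interact as in the computation of the cocycle $\widetilde\sigma$. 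Third, conclude that ${\rm F}\colon V\mapsto(\O(A\backslash B)\ot_k V,\rho_V)$ is a functor naturally isomorphic to ${\rm Ind}_{(A,\xi^{-1})}^{(B,\Psi)}$ — naturality is immediate since ${\rm F}_V$ is natural in $V$ — hence is an equivalence of abelian categories.

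Alternatively, and perhaps more cleanly, one can bypass $\O(B)\ot^{\O(A)_\xi}V$ altogether and factor $\mathrm F$ through Theorem~\ref{modrep0-1} in its first form: there ${\rm Ind}_{(A,\xi^{-1})}^{(B,\Psi)}(V)=({\rm p}_*^{(A,\xi^{-1})}(\O(B)\ot_k V),\rho^V_2)$, and since ${\rm p}_*^{(A,\xi^{-1})}$ is the composite of the forgetful functor with the projection onto $\O(A)_{\xi^{-1}}$-coinvariants, Lemma~\ref{helpful2-2} (applied to the cleft extension $\O(A\backslash B)\subset\O(B)$ via $\widetilde{\mathfrak c}$) identifies $\O(B)\ot_{\O(A\backslash B)}(\O(A\backslash B)\ot_k V)\cong\O(B)\ot_k(\O(A)\ot_k V)$ and, after taking coinvariants, yields $\O(A\backslash B)\ot_k V$. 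Either route reduces the theorem to already-proven equivalences plus the verification that one explicit $k$-linear map respects two explicit comodule structures.

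\medskip

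The main obstacle I expect is purely bookkeeping: matching the formula \eqref{nuviexp} against the transported structure requires carefully tracking where the twists $\Psi$, $\xi=\partial^{\sharp\ot 2}(\Psi)$, the grouplike-correction element $Q_{\xi}$, and the cleaving map $\widetilde{\mathfrak c}$ enter, and in particular making sure that the antipode terms coming from the ``inverse $2$-cocycle'' conventions of \S\ref{sec:Twisting by the inverse $2$-cocycle} (used to pass from right $\O(A)_{\xi^{-1}}$-comodules to left $\O(A)_\xi$-comodules, cf.\ \eqref{fromlefttoright}) are consistently applied. There is no conceptual difficulty — every ingredient is a Hopf-algebraic identity already available — but the computation is delicate enough that presenting it as ``transport of structure along an explicit isomorphism,'' with the defining formula for $\rho_V$ doing the verification automatically, is by far the safest way to organize the proof.
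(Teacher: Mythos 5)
Your proposal is correct and takes essentially the same route as the paper: the official proof of Theorem \ref{modrep00} is precisely to invoke Theorem \ref{modrep0-1} (in the cotensor form of Theorem \ref{modrep0-1new}) together with the remarks of \S\ref{sec:thesecondequivalence}, i.e.\ to transport the structure of ${\rm Ind}_{(A,\xi^{-1})}^{(B,\Psi)}(V)=\left(\O(B)\ot^{\O(A)_{\xi}}V,\rho^V_2\right)$ along the $\O(A\backslash B)$-linear isomorphism ${\rm F}_V$, the definition $\rho_V=({\rm F}_V\ot\id)(23)(\Delta_{\Psi}\ot\id){\rm F}_V^{-1}$ making the comodule verification automatic, exactly as you describe.
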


\begin{proof}
Follows from Theorem \ref{modrep0-1} and the preceding remarks. 
\end{proof}

\section{Double cosets and biequivariant sheaves}\label{sec:doublecosetsandbiequivariant}
Recall that double cosets in finite abstract groups play a fundamental role in the theory of group-theoretical fusion categories (see, e.g, \cite{O}).

Let $\iota_H:H\xrightarrow{1:1}G$ and $\iota_K:K\xrightarrow{1:1}G$ be two embeddings of finite group schemes, and consider the right action of $H\times K$ on $G$ given by 
\begin{equation}\label{hkaction}
\mu_{G\times (H\times K)}:G\times (H\times K)\to G,\,\,\,(g,h,k)\mapsto h^{-1}gk. 
\end{equation}
The algebra map $\mu_{G\times (H\times K)}^{\sharp}:\O(G)\to \O(G)\ot \O(H\times K)$ is given by
\begin{equation}\label{hkactioncom}
\mu_{G\times (H\times K)}^{\sharp}(f)=f_2\ot \iota_H^{\sharp}{\rm S}(f_1)\ot \iota_K^{\sharp}(f_3);\,\,\,f\in\O(G). 
\end{equation}
Recall that since $H\times K$ is a finite group scheme, there exists a finite quotient scheme $Y:=G/(H\times K)$ (see  \S\ref{sec:quotmorphind}).

For any closed point $g\in G(k)$, let $Z_g:=HgK\subset G$ denote the orbit of $g$ under the action (\ref{hkaction}) -- it is a closed subset of $G$. It is clear that $Z_g\in Y(k)$, and all closed points of $Y$ are such. Letting 
${\rm Coh}(Y)_{Z_g}\subset {\rm Coh}(Y)$ denote the abelian subcategory of sheaves on $Y$ supported on $Z_g$ (see \S\ref{sec:coh decomposition}), we have a direct sum decomposition of abelian categories
\begin{equation}\label{doublecosetfadec}
{\rm Coh}(Y)=\bigoplus_{Z_g\in Y(k)}{\rm Coh}(Y)_{Z_g}.
\end{equation}

For the remaining of this section, we fix $Z\in Y(k)$ with representative $g\in Z(k)$.   
Let $L^g:=H\cap gKg^{-1}$ denote the (group scheme-theoretical) intersection of $H$ and $gKg^{-1}$, i.e., $L^g=H\times_{G}\, gKg^{-1}$. Clearly, $L^g$ is a closed subgroup scheme of both $H$ and $gKg^{-1}$, and  
$$\O(L^g)=\O(H)\otimes_{\O(G)} \O(gKg^{-1})=\O(G)/\left(\mathscr{I}(H)+\mathscr{I}(gKg^{-1})\right).$$
Let $\iota_{g}:L^g\hookrightarrow G$ be the inclusion morphism, and consider the group scheme embedding of $L^g$ in $H\times K$ given by 
\begin{equation}\label{thetag}
\partial_g:=\left(\iota_{g},\left({\rm Ad}g^{-1}\right)\circ\iota_{g}\right):L^g\xrightarrow{1:1} H\times K,\;\;\ell\mapsto (\ell,g^{-1}\ell g).
\end{equation}  
It is clear that the subgroup scheme $\partial_g(L^g)\subset H\times K$ is the stabilizer of $g$ for the action (\ref{hkaction}). Namely, 
$Z$ is a quotient scheme for the free {\em left} action of $L^g$ on $H\times K$ given by 
\begin{equation}\label{lgactsonhk}
\mu_{L^g\times (H\times K)}:L^g\times (H\times K)\to H\times K,\,\,\,(\ell,h,k)\mapsto (\ell h,g^{-1}\ell gk).
\end{equation}  
Thus, we have scheme isomorphisms  
\begin{equation}\label{doucosetsch}
\begin{split}
& \mathfrak{j}_{g}:L^g\backslash (H\times K)\xrightarrow{\cong }Z,\,\,\,\overline{(h,k)}\mapsto h^{-1}gk,\,\,\,{\rm and}\\
& \mathfrak{j}_{g}^{-1}:Z\xrightarrow{\cong }L^g\backslash (H\times K),\,\,\,hgk\mapsto \overline{(h^{-1},k)}.
\end{split}
\end{equation}

Assume that $\psi\in Z^2(H,\mathbb{G}_m)$ and $\eta\in Z^2(K,\mathbb{G}_m)$. Set  
\begin{equation}\label{xig0}
\xi_g:=\left(\partial_g^{\sharp}\ot \partial_g^{\sharp}\right)\left(\psi^{-1}\times \eta\right)=\psi^{-1}\eta^{g^{-1}}\in Z^2(L^g,\mathbb{G}_m).
\end{equation}

Next {\em choose} a cleaving map (\ref{nbpgammatilde}) $\widetilde{\mathfrak{c}_g}:\O(L^g)\xrightarrow{1:1} \O(H\times K)$, and let
\begin{equation}\label{cCalphaCpredouble1}
\widetilde{\alpha_g}:\O(H\times K)\twoheadrightarrow\O(L^g\backslash (H\times K)),\,\,\,{\rm f}\mapsto {\rm f}_1\widetilde{\mathfrak{c}_g}^{-1}\left(\partial_{g}^{\sharp}\left({\rm f}_2\right) \right)
\end{equation}
(see (\ref{nbpalphatilde0})). Set (see Theorems \ref{modrep0-1new}, \ref{modrep00})
\begin{equation}\label{boldfzg1}
{\rm Ind}_{Z}:={\rm Ind}_{(L^g,\xi_g^{-1})}^{\left(H\times K,\psi^{-1}\times \eta\right)},\,\,\,{\rm and}\,\,\,{\rm F}_{Z}:={\rm F}.
\end{equation}

\begin{corollary}\label{newimpcor}
For any closed point $Z\in Y(k)$ with representative $g\in Z(k)$, we have the following equivalences of abelian categories:
\begin{enumerate}
\item
${\rm Ind}_{Z}:{\rm Rep}_k (L^g,\xi_g^{-1})\xrightarrow{\cong}{\rm Coh}^{\left(H\times K,\psi^{-1}\times \eta\right)}(L^g\backslash (H\times K)),$\\
$V\mapsto \left(\O(H\times K)\ot^{\O(L^g)_{\xi_g}}V,\rho_{2}^V\right)$.
\item
${\rm F}_{Z}:{\rm Rep}_k (L^g,\xi_g^{-1})\xrightarrow{\cong}{\rm Coh}^{\left(H\times K,\psi^{-1}\times \eta\right)}(L^g\backslash (H\times K)),$\\
$V\mapsto (\O(L^g\backslash (H\times K))\ot_k V,\rho_V)$.
\item
$\mathfrak{j}_{g*}:{\rm Coh}^{\left(H\times K,\psi^{-1}\times \eta\right)}(L^g\backslash (H\times K))\xrightarrow{\cong}{\rm Coh}^{\left(H\times K,\psi^{-1}\times \eta\right)}(Z)$.
\end{enumerate}
\end{corollary}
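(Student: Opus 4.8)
The plan is to obtain parts (1) and (2) as direct instances of the equivalences constructed in Section \S\ref{S:Biequivariant sheaves on group schemes}, and to obtain (3) from the observation that $\mathfrak{j}_g$ is an equivariant isomorphism of schemes together with Proposition \ref{equivmorphs}. No new computation should be required.

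For (1) and (2), I would apply Theorems \ref{modrep0-1new} and \ref{modrep00} to the embedding $\partial:=\partial_g\colon A:=L^g\xrightarrow{1:1}B:=H\times K$ of (\ref{thetag}) and the twist $\Psi:=\psi^{-1}\times\eta$. All hypotheses of \S\ref{S:Biequivariant sheaves on group schemes} hold: $\partial_g$ is an embedding of finite group schemes by (\ref{thetag}); $\Psi$ is a normalized $2$-cocycle on $H\times K$, being the product of the normalized $2$-cocycles $\psi^{-1}\in Z^2(H,\mathbb{G}_m)$ and $\eta\in Z^2(K,\mathbb{G}_m)$; and the restricted cocycle $(\partial_g^{\sharp}\ot\partial_g^{\sharp})(\Psi)$ equals $\xi_g$, which is precisely (\ref{xig0}). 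With these identifications in place, Theorem \ref{modrep0-1new} yields that ${\rm Ind}_Z$, defined in (\ref{boldfzg1}) as ${\rm Ind}_{(L^g,\xi_g^{-1})}^{(H\times K,\psi^{-1}\times\eta)}$, is an equivalence with the stated formula $V\mapsto\left(\O(H\times K)\ot^{\O(L^g)_{\xi_g}}V,\rho_2^V\right)$; and Theorem \ref{modrep00}, applied with the cleaving map $\widetilde{\mathfrak{c}_g}$ and the resulting $\widetilde{\alpha_g}$ of (\ref{cCalphaCpredouble1}), yields that ${\rm F}_Z={\rm F}$ is an equivalence with formula $V\mapsto\left(\O(L^g\backslash(H\times K))\ot_k V,\rho_V\right)$. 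So (1) and (2) amount to matching notation.

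For (3), I would first check that the scheme isomorphism $\mathfrak{j}_g\colon L^g\backslash(H\times K)\xrightarrow{\cong}Z$ of (\ref{doucosetsch}) is $H\times K$-equivariant for the relevant right actions. The right action of $(h',k')$ on $\overline{(h,k)}\in L^g\backslash(H\times K)$ is $\overline{(hh',kk')}$, the descent of right multiplication on $H\times K$ (the left $L^g$-action (\ref{lgactsonhk}) being the one quotiented out); applying $\mathfrak{j}_g$ sends this to $(hh')^{-1}g(kk')=(h')^{-1}(h^{-1}gk)k'$, which is the restriction to $Z$ of the action (\ref{hkaction}) applied to $\mathfrak{j}_g(\overline{(h,k)})=h^{-1}gk$. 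Hence $\mathfrak{j}_g$ is an isomorphism of finite schemes carrying right $H\times K$-actions, so Proposition \ref{equivmorphs} (with the finite group scheme $H\times K$ and the $2$-cocycle $\psi^{-1}\times\eta$) lifts the adjoint pair $(\mathfrak{j}_g^{*},\mathfrak{j}_{g*})$ to the corresponding categories of $(H\times K,\psi^{-1}\times\eta)$-equivariant sheaves; since $\mathfrak{j}_g$ is an isomorphism, the underlying functors, and hence their lifts, are mutually inverse equivalences of abelian categories. The only point demanding care is the bookkeeping of conventions in this last step, namely that the right $H\times K$-action on $L^g\backslash(H\times K)$ transported along $\mathfrak{j}_g$ is exactly the restriction of (\ref{hkaction}) to $Z$; beyond that I do not anticipate any substantive obstacle.
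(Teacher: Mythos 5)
Your proposal is correct and follows essentially the same route as the paper, which proves Corollary \ref{newimpcor} precisely by specializing Theorems \ref{modrep0-1new} and \ref{modrep00} to $A:=L^g$, $B:=H\times K$, $\partial:=\partial_g$, $\Psi:=\psi^{-1}\times\eta$, $\xi:=\xi_g$. Your explicit verification for part (3) that $\mathfrak{j}_g$ intertwines the descended right multiplication action with the restriction of (\ref{hkaction}) to $Z$, so that Proposition \ref{equivmorphs} lifts $(\mathfrak{j}_g^*,\mathfrak{j}_{g*})$ to mutually inverse equivalences, is exactly the (implicit) content of the paper's use of (\ref{doucosetsch}).
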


\begin{proof}
Follow from Theorems \ref{modrep0-1new}, \ref{modrep00} for $A:=L^g$, $B:=H\times K$, $\partial:=\partial_g$, $\Psi:=\psi^{-1}\times \eta$, and $\xi:=\xi_g$.
\end{proof}

\section{Module categories over $\C\left(G,H,\psi\right)$}\label{sec:Module categories over GCTC}

Fix a finite group scheme-theoretical category $\C:=\C\left(G,H,\psi\right)$ as in \S\ref{sec:M(H,psi)}. In this section we study the abelian structure of indecomposable exact $\C$-module categories, which can be classified as categories of ${\rm Coh}(G)$-module functors \cite{G}.

\begin{theorem}\cite[Theorem 5.7]{G}\label{modgth}
The assignment 
$$\mathscr{M}(K,\eta)\mapsto \Fun_{{\rm Coh}(G)}\left(\mathscr{M}(H,\psi),\mathscr{M}(K,\eta)\right)$$
determines an equivalence between the $2$-category of indecomposable exact module categories over ${\rm Coh}(G)$ and
the $2$-category of indecomposable exact module categories over
$\C(G,H,\psi)$. \qed
\end{theorem}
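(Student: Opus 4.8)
The plan is to obtain this as an instance of the general Morita theory of exact module categories over finite tensor categories, specialised to $\D:={\rm Coh}(G)$ and $\M:=\mathscr{M}(H,\psi)$. Recall from \cite{G} (see also Remark \ref{alsohelpful2}) that $\M$ is an indecomposable \emph{exact} left $\D$-module category. Hence, by \cite{EO} (see also \cite[\S 7.12]{EGNO}), the dual category $\C=\C(G,H,\psi)=\D^{*}_{\M}=\Fun_{\D}(\M,\M)$ is a finite tensor category, and $\M$ carries the structure of an \emph{invertible} $(\D,\C)$-bimodule category; in particular $\M$ realises a Morita equivalence between $\D$ and $\C$.

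First I would check that $\mathcal{N}\mapsto \Fun_{\D}(\M,\mathcal{N})$ is $2$-functorial: the $\C$-module structure on $\Fun_{\D}(\M,\mathcal{N})$ is given by precomposition with objects of $\C=\Fun_{\D}(\M,\M)$, a $\D$-module functor $\mathcal{N}_1\to\mathcal{N}_2$ induces a $\C$-module functor $\Fun_{\D}(\M,\mathcal{N}_1)\to\Fun_{\D}(\M,\mathcal{N}_2)$ by postcomposition, and module natural transformations are treated likewise. This yields a $2$-functor
\[
\Phi\colon \left(\text{exact }\D\text{-module categories}\right)\longrightarrow \left(\text{exact }\C\text{-module categories}\right),\qquad \mathcal{N}\mapsto \Fun_{\D}(\M,\mathcal{N}),
\]
which, using invertibility of $\M$ and fixing the usual duality conventions, may be rewritten as the relative Deligne tensor product $\mathcal{N}\mapsto \overline{\M}\boxtimes_{\D}\mathcal{N}$ against the dual $(\C,\D)$-bimodule category $\overline{\M}$; these products exist because the module categories in play are exact.

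Next I would produce a quasi-inverse. Since $\M$ is an invertible $(\D,\C)$-bimodule, the $2$-functor $\Psi\colon \mathcal{P}\mapsto \M\boxtimes_{\C}\mathcal{P}$ from exact $\C$-module categories to exact $\D$-module categories satisfies $\Psi\circ\Phi\simeq\mathrm{id}$ and $\Phi\circ\Psi\simeq\mathrm{id}$ as $2$-functors: the needed natural equivalences $\M\boxtimes_{\C}(\overline{\M}\boxtimes_{\D}\mathcal{N})\cong\mathcal{N}$ and $\overline{\M}\boxtimes_{\D}(\M\boxtimes_{\C}\mathcal{P})\cong\mathcal{P}$ are part of the invertible-bimodule data of $\M$, and at the level of duals they reduce to the biduality $(\D^{*}_{\M})^{*}_{\M}\cong\D$ of \cite[Theorem 3.31]{EO}. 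Thus $\Phi$ is a $2$-equivalence. Since a $2$-equivalence sends indecomposable objects to indecomposable objects, $\Phi$ restricts to an equivalence between the $2$-category of indecomposable exact $\D$-module categories and the $2$-category of indecomposable exact $\C$-module categories. Finally, by Theorem \ref{modomega} every indecomposable exact ${\rm Coh}(G)$-module category is equivalent to some $\mathscr{M}(K,\eta)$, and in those terms $\Phi$ is precisely the assignment $\mathscr{M}(K,\eta)\mapsto \Fun_{{\rm Coh}(G)}(\mathscr{M}(H,\psi),\mathscr{M}(K,\eta))$, as claimed.

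I expect the main obstacle to be structural rather than computational: one must invoke that \emph{exactness} of the module categories involved forces $\M$ to be an invertible $(\D,\C)$-bimodule category — not merely a faithful one — and that the relative Deligne tensor products $\boxtimes_{\D}$, $\boxtimes_{\C}$ exist and are compatible with forming categories of module functors. Once this input from \cite{EO} and \cite[\S 7.12]{EGNO} is in place, the remaining steps ($2$-functoriality of $\Phi$, construction of $\Psi$, and restriction to indecomposables) are formal.
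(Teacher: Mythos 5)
Your argument is correct and is essentially the intended one: the paper does not reprove this statement but quotes it from \cite[Theorem 5.7]{G}, whose proof in the finite setting is exactly the Morita-theoretic machinery of \cite{EO} (see also \cite[\S 7.12]{EGNO}) that you invoke, namely that for an exact indecomposable module category $\M$ over a finite tensor category $\D$ the $2$-functor $\mathcal{N}\mapsto \Fun_{\D}(\M,\mathcal{N})$ is a $2$-equivalence onto exact module categories over $\D^*_{\M}$, combined with the classification of Theorem \ref{modomega}. Your extra detour through invertible bimodule categories and relative Deligne tensor products is harmless but not needed beyond citing that general $2$-equivalence.
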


\subsection{Three descriptions of module categories} 
For $H, K, \psi, \eta$ as above, consider the category 
\begin{equation}\label{defnmodcatM}
\mathscr{M}\left(H\times K,\psi^{-1}\times \eta\right)={\rm Coh}^{\left(H\times K,\psi^{-1}\times \eta\right)}(G)
\end{equation}
of $\left(H\times K,\psi^{-1}\times \eta\right)$-equivariant  sheaves on $G$ with respect to the right action $\mu_{G\times (H\times K)}$ from \eqref{hkaction}.
Note that there is an equivalences of abelian categories
$$
\M\left(H\times K,\psi^{-1}\times \eta\right)\cong {\rm Bicomod}_{{\rm Coh}(G)}\left(\mathscr{O}(H)_{\psi},\mathscr{O}(K)_{\eta}\right).
$$

\begin{lemma}\label{newsimple}
There is an equivalence of abelian categories
$$\Fun_{{\rm Coh}(G)}\left(\mathscr{M}(H,\psi),\mathscr{M}(K,\eta)\right)\cong \M\left(H\times K,\psi^{-1}\times \eta\right).$$
\end{lemma}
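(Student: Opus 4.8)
The plan is to identify the category of $\mathrm{Coh}(G)$-module functors $\mathscr M(H,\psi)\to\mathscr M(K,\eta)$ with a category of equivariant sheaves by passing through comodule descriptions. First I would replace the two module categories by their comodule incarnations: by Theorem \ref{modomega}(2) (or rather Theorem \ref{modomega} together with Remark \ref{rem:N(H,psi)}), $\mathscr M(H,\psi)\cong{\rm Comod}(\mathscr O(H)_\psi)_{{\rm Coh}(G)}$ as a left $\mathrm{Coh}(G)$-module category, acting by $X\ot^{\mathscr M}(S,\rho)=(X\ot S,\id\ot\rho)$, and similarly for $\mathscr M(K,\eta)$. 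Thus a $\mathrm{Coh}(G)$-module functor $\mathscr M(H,\psi)\to\mathscr M(K,\eta)$ is a right-exact functor between these comodule categories that commutes (up to coherent isomorphism) with left convolution by objects of $\mathrm{Coh}(G)$.

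The heart of the argument is an Eilenberg--Watts / Morita-type identification. A $\mathrm{Coh}(G)$-module functor $F$ from the regular-ish module category ${\rm Comod}(\mathscr O(H)_\psi)_{{\rm Coh}(G)}$ is determined by the image of the generator, which one should take to be $\mathscr O(H)_\psi$ itself (the image of the unit under the free module functor $\mathrm{Coh}(G)\to\mathscr M(H,\psi)$, $X\mapsto X\ot\mathscr O(H)_\psi$). Concretely, $F$ corresponds to the object $F(\mathscr O(H)_\psi)$, which carries: (i) a right $\mathscr O(K)_\eta$-comodule structure because $F$ lands in $\mathscr M(K,\eta)$; (ii) a compatible left $\mathscr O(H)_\psi$-comodule structure — more precisely, a structure reflecting the right $\mathscr O(H)_\psi$-action on the generator, which under the standard dictionary relating right comodules to left comodules (this is where the inverse cocycle enters, cf.\ \eqref{fromlefttoright} and Lemma \ref{ogisohbicom}) becomes a left $\mathscr O(H)_{\psi^{-1}}$-type datum; and (iii) everything is an object of $\mathrm{Coh}(G)$, i.e.\ an $\mathscr O(G)$-module, the two group actions being the left $H$-action and right $K$-action that assemble into the right $H\times K$-action $(g,h,k)\mapsto h^{-1}gk$ of \eqref{hkaction}. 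Unwinding the cocycles carefully, $F(\mathscr O(H)_\psi)$ is exactly an object of ${\rm Bicomod}_{{\rm Coh}(G)}(\mathscr O(H)_\psi,\mathscr O(K)_\eta)\cong\mathscr M(H\times K,\psi^{-1}\times\eta)$, and conversely any such object $M$ gives back a module functor $(-)\ot^{\mathscr O(H)_\psi}M$; one checks these assignments are mutually quasi-inverse and that morphisms of module functors match morphisms of biequivariant sheaves.

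Concretely I would organize it as: (1) recall the comodule descriptions and the free/forgetful adjunction exhibiting $\mathrm{Coh}(G)\to\mathscr M(H,\psi)$ with $\mathscr O(H)_\psi$ the image of the unit, so that $\mathscr O(H)_\psi$ generates $\mathscr M(H,\psi)$ as a module category; (2) define $\Phi\colon F\mapsto F(\mathscr O(H)_\psi)$ and equip the target with its bicomodule-in-$\mathrm{Coh}(G)$ structure, invoking \eqref{fromlefttoright}/Lemma \ref{ogisohbicom} to see the $\psi^{-1}$ appearing; (3) define $\Psi\colon M\mapsto (-)\ot^{\mathscr O(H)_\psi}M$ using relative cotensor over $\mathscr O(H)_\psi$ inside $\mathrm{Coh}(G)$, noting this is well-defined and $\mathrm{Coh}(G)$-linear since convolution by $\mathrm{Coh}(G)$ is on the "outside"; (4) produce natural isomorphisms $\Psi\Phi\cong\id$ and $\Phi\Psi\cong\id$, the first from $F(X\ot\mathscr O(H)_\psi)\cong X\ot F(\mathscr O(H)_\psi)$ plus right-exactness and the fact that every object of $\mathscr M(H,\psi)$ is a cokernel of a map between free objects, the second from $\mathscr O(H)_\psi\ot^{\mathscr O(H)_\psi}M\cong M$; (5) finally identify ${\rm Bicomod}_{{\rm Coh}(G)}(\mathscr O(H)_\psi,\mathscr O(K)_\eta)$ with $\mathscr M(H\times K,\psi^{-1}\times\eta)={\rm Coh}^{(H\times K,\psi^{-1}\times\eta)}(G)$ via Proposition \ref{abelcat} applied to the group scheme $H\times K$ acting by \eqref{hkaction}, with the twist on $H\times K$ being $\psi^{-1}\times\eta$ precisely because of the left-vs-right bookkeeping.

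The main obstacle I expect is step (2)/(5): getting the cocycle and antipode bookkeeping exactly right so that the left $H$-action pairs with $\psi^{-1}$ rather than $\psi$, and that the two one-sided comodule structures on $F(\mathscr O(H)_\psi)$ genuinely commute and combine into a single $(H\times K,\psi^{-1}\times\eta)$-equivariant structure for the action $(g,h,k)\mapsto h^{-1}gk$. This is a computation of the type already carried out in Lemma \ref{ogisohbicom} and \S\ref{sec:Twisting by the inverse $2$-cocycle}, so it is routine in spirit but is the place where a sign or an inversion could go wrong; everything else (generation, right-exactness, Eilenberg--Watts) is formal once the module-category structures are spelled out as above. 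In the write-up I would only sketch the bookkeeping and refer back to Lemmas \ref{ogisohbicom}, \ref{ogisohbicom0} and \eqref{fromlefttoright}, since the detailed verification is of the same nature as \cite[\S5]{G}.
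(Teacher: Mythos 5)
Your proposal is correct and follows essentially the same route as the paper: the module functor is determined by its value on $\mathscr O(H)_\psi$, which acquires commuting equivariant structures (the $\psi^{-1}$ coming from the left-to-right comodule conversion, as in \S\ref{sec:Twisting by the inverse $2$-cocycle}) and hence lies in ${\rm Bicomod}_{{\rm Coh}(G)}(\mathscr O(H)_\psi,\mathscr O(K)_\eta)\cong\M(H\times K,\psi^{-1}\times\eta)$, with the inverse given by the cotensor functor $T\mapsto T\ot^{\mathscr O(H)_\psi}S$. Your write-up simply makes explicit the Eilenberg--Watts/generation details that the paper leaves as standard.
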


\begin{proof}
A functor $\mathscr{M}(H,\psi)\to\mathscr{M}(K,\eta)$ is
determined by a $(K,\eta)$-equivariant  sheaf $S$ on $G$ (its value on $\O(H)_{\psi}$), and the fact that the functor is a ${\rm Coh}(G)$-module (hence, ${\rm Coh}(H)$-module) functor gives $S$ a commuting $H$-equivariant structure for the right action $\widetilde{\mu_{G\times H}}$ of $H$ on $G$, i.e., $S\in \M\left(H\times K,\psi^{-1}\times \eta\right)$.
 
Conversely, it is clear that any $S\in \M\left(H\times K,\psi^{-1}\times \eta\right)$, viewed as an object in ${\rm Bicomod}_{{\rm Coh}(G)}\left(\mathscr{O}(H)_{\psi},\O(K)_{\eta}\right)$, defines a ${\rm Coh}(G)$-module functor 
$$\mathscr{M}(H,\psi)\to\mathscr{M}(K,\eta),\,\,\,T\mapsto T\ot^{\O(H)_{\psi}} S,$$ 
where $T\ot^{\O(H)_{\psi}} S$ is the cotensor product.
\end{proof}

\subsection{The simple objects of $\mathscr{M}\left(H\times K,\psi^{-1}\times \eta\right)$}\label{sec:The simple objects} Fix an indecomposable exact left $\C$-module category 
$\M:=\mathscr{M}\left(H\times K,\psi^{-1}\times \eta\right)$ (\ref{defnmodcatM}).

Fix a closed point $Z\in Y(k)$, and let  
\begin{equation}\label{defnofmzg}
\M_{Z}:=\M_{Z}\left(H\times K,\psi^{-1}\times \eta\right)\subset \M
\end{equation}
denote the full abelian subcategory of $\M$ consisting of all objects annihilated by the defining ideal $\mathscr{I}(Z)\subset \mathscr{O}(G)$ of $Z$. Namely, let 
\begin{equation*}
\iota_{Z}:Z\hookrightarrow G
\end{equation*}
denote the inclusion morphism (it is $H\times K$-equivariant). Then $\M_{Z}$ is the image of the injective functor 
\begin{equation}\label{mg}
\iota_{Z*}:{\rm Coh}^{\left(H\times K,\psi^{-1}\times \eta\right)}(Z)\xrightarrow{1:1}{\rm Coh}^{\left(H\times K,\psi^{-1}\times \eta\right)}(G).
\end{equation}

Let $\overline{\M_{Z}}\subset \M$ denote the Serre closure of $\M_{Z}$ inside $\M$, i.e., $\overline{\M_{Z}}$ is the full abelian subcategory of $\M$ consisting of all objects whose composition factors lie in $\M_{Z}$.

Fix a representative $g\in Z(k)$, and define the functor
\begin{equation}\label{Indz}
\mathbf{Ind}_{Z}:=\iota_{Z*}\circ\mathfrak{j}_{g*}\circ{\rm Ind}_{Z}.
\end{equation}

\begin{theorem}\label{simmodrep}
Let $\M=\mathscr{M}\left(H\times K,\psi^{-1}\times \eta\right)$ be as above.
\begin{enumerate}
\item
For any closed point $Z\in Y(k)$ with representative $g\in Z(k)$, we have an equivalence of abelian categories
\begin{gather*}
\mathbf{Ind}_{Z}:{\rm Rep}_k (L^g,\xi_g^{-1})\xrightarrow{\cong}\M_{Z},\\
V\mapsto \left(\iota_{Z}\mathfrak{j}_g\right)_*\left(\O(H\times K)\ot^{\O(L^g)_{\xi_g}}V,\rho_{2}^V\right).
\end{gather*}
\item
There is a bijection between equivalence classes of pairs $(Z,V)$, where $Z\in Y(k)$ is a closed point with representative $g\in Z(k)$, and $V\in{\rm Rep}_k (L^g,\xi_g^{-1})$ is simple, and simple objects of $\mathscr{M}$, assigning $(Z,V)$ to $\mathbf{Ind}_{Z}(V)$.
\item
We have a direct sum decomposition of abelian categories
$$\M=\bigoplus_{Z\in Y(k)}\overline{\M_{Z}}.$$
\end{enumerate}  
\end{theorem}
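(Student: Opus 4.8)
The plan is to treat parts (1), (2), (3) in order, with (3) being essentially a bookkeeping consequence of the first two together with the decomposition \eqref{doublecosetfadec} of ${\rm Coh}(Y)$. For part (1), the functor $\mathbf{Ind}_{Z}$ is defined in \eqref{Indz} as a composite of three functors, so it suffices to check that each is an equivalence onto the appropriate category. The middle functor $\mathfrak{j}_{g*}$ is an equivalence by Corollary \ref{newimpcor}(3), since $\mathfrak{j}_g$ is a scheme isomorphism \eqref{doucosetsch} intertwining the $H\times K$-actions. The inner functor ${\rm Ind}_{Z}={\rm Ind}_{(L^g,\xi_g^{-1})}^{(H\times K,\psi^{-1}\times\eta)}$ is an equivalence by Theorem \ref{modrep0-1new} (applied via Corollary \ref{newimpcor}(1), with the dictionary $A=L^g$, $B=H\times K$, $\Psi=\psi^{-1}\times\eta$, $\xi=\xi_g$). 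For the outer functor, I would invoke Proposition \ref{abelcat}(2): the defining ideal $\mathscr{I}(Z)\subset\O(G)$ is $H\times K$-stable, so $\iota_{Z*}$ lands in, and is an equivalence onto, the full subcategory $\M_Z$ of objects killed by $\mathscr{I}(Z)$ — this is just the equivariant refinement of the standard fact that ${\rm Coh}^{(H\times K,\ldots)}(Z)\cong\M_Z$ under pushforward along a closed immersion. Composing gives the stated equivalence, and unwinding the definitions of ${\rm Ind}_Z$ and $\mathfrak{j}_{g*}$, $\iota_{Z*}$ yields the explicit formula $V\mapsto(\iota_Z\mathfrak j_g)_*(\O(H\times K)\ot^{\O(L^g)_{\xi_g}}V,\rho_2^V)$.

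For part (2), I would first record the decomposition $\M=\bigoplus_{Z\in Y(k)}\M'_Z$ coming from the central idempotents of $\O(Y)=\O(G)^{H\times K}$: since $\O(Y)$ is a finite-dimensional commutative algebra it splits as a product of local rings indexed by $Y(k)$, and pulling back the corresponding idempotents to $\O(G)$ — they are $H\times K$-invariant, hence act by equivariant endomorphisms — decomposes every object of $\M$ accordingly; here $\M'_Z$ is the full subcategory of objects on which the idempotent away from $Z$ acts as zero. A simple object therefore lies in exactly one $\M'_Z$. It then remains to identify the simples of $\M'_Z$ with those of $\M_Z$, equivalently to see that a simple object killed by some power of $\mathscr{I}(Z)$ is already killed by $\mathscr{I}(Z)$ itself: this is the usual argument that $\mathscr{I}(Z)S$ is a proper equivariant subobject of a simple $S$ (it is proper by Nakayama, using that $\mathscr{I}(Z)$ is contained in the Jacobson radical of the local factor), hence zero. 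Combining with part (1), the simples of $\M_Z$ are exactly the $\mathbf{Ind}_Z(V)$ for $V$ simple in ${\rm Rep}_k(L^g,\xi_g^{-1})$, and distinct pairs $(Z,V)$ (up to the evident equivalence, coming from the choice of representative $g$) give non-isomorphic simples because they live in different blocks $\M'_Z$, and within a fixed block $\mathbf{Ind}_Z$ is an equivalence hence injective on iso-classes of simples.

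For part (3), $\overline{\M_Z}$ is by definition the Serre subcategory of objects all of whose composition factors lie in $\M_Z$, which by part (2) is precisely $\M'_Z$: any object of $\M$ decomposes as $\bigoplus_Z X_Z$ with $X_Z\in\M'_Z$, and $X_Z$ has all composition factors supported at $Z$, so $\M'_Z=\overline{\M_Z}$ and $\M=\bigoplus_{Z\in Y(k)}\overline{\M_Z}$ as abelian categories (the sum is direct and there are no extensions between the summands, again because the idempotents are central). The main obstacle I anticipate is purely technical rather than conceptual: verifying that the central idempotents of $\O(Y)$ do pull back to equivariant endomorphisms in $\M$ — i.e.\ that $H\times K$-invariance of $f\in\O(G)$ makes multiplication-by-$f$ a morphism of $(H\times K,\psi^{-1}\times\eta)$-equivariant sheaves — and checking the compatibility of the explicit formula in (1) with the twisted coactions $\rho_2^V$; both are routine given Proposition \ref{abelcat} and the constructions of \S\ref{S:Biequivariant sheaves on group schemes}, but they require care with the $2$-cocycle twists.
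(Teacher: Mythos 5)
Your proposal is correct, and part (1) coincides with the paper's (which simply cites Corollary \ref{newimpcor}); the difference is in how you organize (2)--(3). The paper argues simple-by-simple: for a simple $S$, each orbit ideal $\mathscr{I}(Z)$ is $H\times K$-stable, so by Proposition \ref{abelcat}(2) (the citation ``(3)'' there is a typo) either $\mathscr{I}(Z)S=0$ or $\mathscr{I}(Z)S=S$; since $\prod_Z\mathscr{I}(Z)$ lies in the radical of $\O(G)$ and is therefore nilpotent, not every $Z$ can give $\mathscr{I}(Z)S=S$, and uniqueness of the supporting $Z$ follows from comaximality $\mathscr{I}(Z)+\mathscr{I}(Z')=\O(G)$; part (3) is then deduced from the absence of cross-extensions between sheaves supported at distinct closed points. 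You instead decompose $\M$ first, via the central idempotents of $\O(Y)=\O(G)^{H\times K}$ (which are $H\times K$-invariant, hence act by equivariant endomorphisms), and then use a Nakayama/radical argument inside each block to show a simple there is already killed by $\mathscr{I}(Z)$. The two arguments are equivalent in substance --- your idempotent decomposition packages exactly the nilpotence and comaximality the paper uses --- but your ordering makes (3) immediate, whereas the paper's route is more elementary and needs the separate no-cross-extensions remark at the end. One small imprecision: the block factor $e_Z\O(G)$ is not local in general (it has $|Z(k)|$ closed points), so you should say that $\mathscr{I}(Z)e_Z$ lies in the Jacobson radical of the block factor rather than ``of the local factor''; the argument is unaffected, since elements of $\mathscr{I}(Z)$ vanish at every closed point of $Z$.
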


\begin{proof}
(1) Follows from Corollary \ref{newimpcor}.

(2) Assume $S\in \M$ is simple. Since for any $Z\in Y(k)$, $\mathscr{I}(Z)$ is a $H\times K$-stable ideal, it follows from Proposition \ref{abelcat}(3) that either $\mathscr{I}(Z)S=0$ or $\mathscr{I}(Z)S=S$. If $\mathscr{I}(Z)S=S$ for every $Z\in Y(k)$, then $(\Pi_{Z}\mathscr{I}(Z))S=S$. But, $\Pi_{Z}\mathscr{I}(Z)$ is a nilpotent ideal of $\O(G)$ (being contained in the radical of $\O(G)$), so $S=0$, a contradiction.

Thus, there exists $Z\in Y(k)$ such that $\mathscr{I}(Z)S=0$. Assume that $\mathscr{I}(Z')S=0$ for some $Z'\in Y(k)$, $Z'\ne Z$. Then $\O(G)S=0$ (since $\O(G)=\mathscr{I}(Z)+\mathscr{I}(Z')$), a contradiction. It follows that there exists a unique $Z\in Y(k)$ such that $\mathscr{I}(Z)S=0$, i.e., $S\in\M_{Z}$, so the claim follows from (1).

(3) Follows from (2), and the fact that there are no nontrivial cross extensions of $\O(G)$-modules which are supported on distinct closed points of $G$ (see \S\ref{sec:coh decomposition}).
\end{proof}

\subsection{Projectives in $\mathscr{M}\left(H\times K,\psi^{-1}\times \eta\right)$}\label{sec:The projective objects}
Let $\M$ be as in \S\ref{sec:The simple objects}.
 
For any $Z\in Y(k)$ with representative $g\in Z(k)$, recall the functor ${\rm F}_{Z}$ from Corollary \ref{newimpcor}, and define the functor
\begin{equation}\label{boldfzg1F}
\mathbf{F}_{Z}:=\iota_{Z*}\circ\mathfrak{j}_{g*}\circ{\rm F}_{Z}.
\end{equation}
Also, set $Z^{\circ}:= H^{\circ} g K^{\circ}$,   
\begin{equation}\label{boldfzg1pc}
|Z^{\circ}|:=\frac{|H^{\circ}||K^{\circ}|}{|(L^g)^{\circ}|}\in \mathbb{Z}^{\ge 1},\,\,\,{\rm and}\,\,\,
|Z(k)|:=\frac{|H(k)||K(k)|}{|(L^g)(k)|}\in \mathbb{Z}^{\ge 1}.
\end{equation}
Note that (\ref{ses0}) induces 
a split exact sequence of schemes
\begin{equation*}
1\to Z^{\circ}\xrightarrow{i_{Z^{\circ}}} Z \mathrel{\mathop{\rightleftarrows}^{\pi_{Z}}_{q_{Z}}} Z(k)\to 1.
\end{equation*} 

\begin{lemma}\label{closedptproj}
For any $Z\in Y(k)$, with representative closed point $g\in Z(k)$, the following hold:
\begin{enumerate}
\item
The surjective $\O(G)$-linear map
$$\theta_{Z}:=(\id\ot q_{Z}^{\sharp}):\O(G^{\circ})\ot \O(Z)\twoheadrightarrow \O(G^{\circ})\ot\O(Z(k))$$
splits via the map
$$\lambda_{Z}:=(\id\ot \pi_{Z}^{\sharp}):\O(G^{\circ})\ot\O(Z(k))\xrightarrow{1:1}\O(G^{\circ})\ot \O(Z).$$
Thus, $\O(G^{\circ})\ot \O(Z(k))$ is a direct summand of $ \O(G^{\circ})\ot \O(Z)$ as an $\O(G)$-module.
\item 
For any simple $V\in \Rep_k(L^g,\xi_g^{-1})$, we have an $\M$-isomorphism
$$\O(G^{\circ})\ot \mathbf{F}_{Z}\left(P_{(L^g,\xi_g^{-1})}(V)\right)\cong \O(Z^{\circ})\ot_kP_{\M}\left(\mathbf{F}_{Z}(V)\right).$$
Here, $\O(G)$ acts diagonally on $\O(G^{\circ})\ot \mathbf{F}_{Z}\left(P_{(L^g,\xi_g^{-1})}(V)\right)$, and $\O(H\times K)_{\psi^{-1}\times\eta}$ coacts on the second factor, 
and the right hand side is a direct sum of $|Z^{\circ}|$ copies of $P_{\M}\left(\mathbf{F}_{Z}(V)\right)$.
\end{enumerate}
\end{lemma}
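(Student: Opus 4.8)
For part~(1) the claim is essentially formal. Applying $(-)^{\sharp}$ to the identity $\pi_{Z}\circ q_{Z}=\id_{Z(k)}$ coming from the split exact sequence of finite schemes gives $q_{Z}^{\sharp}\circ\pi_{Z}^{\sharp}=\id_{\O(Z(k))}$, hence $\theta_{Z}\circ\lambda_{Z}=\id\ot(q_{Z}^{\sharp}\pi_{Z}^{\sharp})=\id$. Since $q_{Z}$ and $\pi_{Z}$ are morphisms compatible with the inclusions of $Z$ and $Z(k)$ into $G$ in the way required for $\theta_{Z}$ and $\lambda_{Z}$ to be $\O(G)$-linear, the composite $\lambda_{Z}\theta_{Z}$ is an idempotent $\O(G)$-endomorphism of $\O(G^{\circ})\ot\O(Z)$ whose image is isomorphic to $\O(G^{\circ})\ot\O(Z(k))$, giving the asserted direct summand. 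Comparing $k$-dimensions, and using that $Z$ is homogeneous under $H\times K$, one moreover obtains $\O(G^{\circ})\ot\O(Z)\cong\bigl(\O(G^{\circ})\ot\O(Z(k))\bigr)^{\oplus|Z^{\circ}|}$ as $\O(G)$-modules; this is the form in which part~(1) will be used in part~(2).

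For part~(2), write $Q:=\O(G^{\circ})\ot\mathbf{F}_{Z}\bigl(P_{(L^{g},\xi_{g}^{-1})}(V)\bigr)$, with the $\O(G)$-action and $\O(H\times K)_{\psi^{-1}\times\eta}$-coaction described in the statement. Since ${\rm Coh}(G)_{1}\cong{\rm Coh}(G^{\circ})$ and $\O(G^{\circ})$ is local, $\O(G^{\circ})$ is the projective cover $P_{1}$ of the unit $\delta_{1}$ of ${\rm Coh}(G)$; as $\M$ is an exact module category, acting with the projective object $\O(G^{\circ})$ produces a projective object (\cite{EGNO}), so $Q$ is projective in $\M$. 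Its underlying sheaf is supported on $Z(k)\subset G(k)$, so by Theorem~\ref{simmodrep} every composition factor of $Q$ has the form $\mathbf{F}_{Z}(W)$ for a simple $W\in\Rep_{k}(L^{g},\xi_{g}^{-1})$, and being projective $Q\cong\bigoplus_{W}P_{\M}(\mathbf{F}_{Z}(W))^{\oplus m_{W}}$ with $m_{W}=\dim_{k}\Hom_{\M}(Q,\mathbf{F}_{Z}(W))$. Because $\mathbf{F}_{Z}(W)$ is annihilated by $\mathscr{I}(Z)$, every morphism $Q\to\mathbf{F}_{Z}(W)$ factors through the restriction $\iota_{Z}^{*}Q=Q/\mathscr{I}(Z)Q\in\M_{Z}$, which is projective in $\M_{Z}$ since $\iota_{Z}^{*}$ is left adjoint to the exact functor $\iota_{Z*}$; hence $m_{W}=\bigl[\,{\rm head}(\iota_{Z}^{*}Q):\mathbf{F}_{Z}(W)\bigr]$. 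It therefore suffices to prove $\iota_{Z}^{*}Q\cong\mathbf{F}_{Z}\bigl(P_{(L^{g},\xi_{g}^{-1})}(V)\bigr)^{\oplus|Z^{\circ}|}$: then ${\rm head}(\iota_{Z}^{*}Q)\cong\mathbf{F}_{Z}(V)^{\oplus|Z^{\circ}|}$ by Corollary~\ref{newimpcor} (as $\mathbf{F}_{Z}$ is an equivalence onto $\M_{Z}$ carrying projective covers to projective covers), so $m_{W}=|Z^{\circ}|\,\delta_{V,W}$, and hence $Q\cong P_{\M}(\mathbf{F}_{Z}(V))^{\oplus|Z^{\circ}|}=\O(Z^{\circ})\ot_{k}P_{\M}(\mathbf{F}_{Z}(V))$, with the actions and coactions on the two sides matched by inspection.

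The remaining step, and the technical heart of the argument, is the identification $\iota_{Z}^{*}\bigl(\O(G^{\circ})\ot M\bigr)\cong\O(Z^{\circ})\ot_{k}M$ for every $M\in\M_{Z}$. On underlying $\O(G)$-modules this is a base-change computation: the underlying sheaf of $\O(G^{\circ})\ot M$ is $m_{*}(\O(G^{\circ})\boxtimes M)$, and restricting along $Z\hookrightarrow G$ yields $(m|_{W})_{*}$ of the pullback to $W:=\{(x,y)\in G^{\circ}\times Z:xy\in Z\}$; the change of variables $(x,y)\mapsto(xy,y)$ identifies $W$ with $\coprod_{z\in Z(k)}Z^{\circ}_{z}\times Z^{\circ}_{z}$, where $Z^{\circ}_{z}$ is the connected component of $Z$ at $z$ (so $\dim_{k}\O(Z^{\circ}_{z})=|Z^{\circ}|$), and $m|_{W}$ with the first projection, which exhibits $\iota_{Z}^{*}(\O(G^{\circ})\ot M)$ as $|Z^{\circ}|$ copies of $M$ as an $\O(Z)$-module. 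One must then verify that the residual $(H\times K,\psi^{-1}\times\eta)$-equivariant structure is precisely the $|Z^{\circ}|$-fold sum of that of $M$; this is where part~(1) enters, allowing one to replace $\O(Z)$ throughout by its étale part $\O(Z(k))=\bigoplus_{z}\delta_{z}$, and where the explicit equivariant formulas of \S\ref{S:Biequivariant sheaves on group schemes}--\S\ref{sec:doublecosetsandbiequivariant}, in particular the free-over-$\O(Z)$ description of $\mathbf{F}_{Z}$ from Theorem~\ref{modrep00}, are needed. I expect this equivariant bookkeeping to be the main obstacle; a minor subsidiary point is to spell out, in part~(1), the precise $\O(G)$-module structures on $\O(G^{\circ})\ot\O(Z)$ and $\O(G^{\circ})\ot\O(Z(k))$ relative to which $\theta_{Z}$ and $\lambda_{Z}$ are maps of $\O(G)$-modules.
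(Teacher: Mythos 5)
Part (1) and the opening moves of your part (2) are fine, and in one respect cleaner than the paper's own argument: you get projectivity of $Q=\O(G^{\circ})\ot \mathbf{F}_{Z}\bigl(P_{(L^g,\xi_g^{-1})}(V)\bigr)$ directly from $\O(G^{\circ})\cong P_{1}$ in ${\rm Coh}(G)$ together with exactness of $\M$, whereas the paper realizes $Q$ as a direct summand of $\O(G)\ot\O(H\times K)$. The support argument confining the composition factors to $\M_{Z}$, the reduction $m_{W}=\dim\Hom_{\M}(Q,\mathbf{F}_{Z}(W))=[\,{\rm head}(\iota_{Z}^{*}Q):\mathbf{F}_{Z}(W)]$, and the further reduction to identifying $\iota_{Z}^{*}Q$ are all correct.

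The gap is exactly where you flag it, and it is not a removable formality: your argument stands or falls on the isomorphism $\iota_{Z}^{*}(\O(G^{\circ})\ot M)\cong \O(Z^{\circ})\ot_{k}M$ holding \emph{in $\M_{Z}$}, i.e.\ compatibly with the $(H\times K,\psi^{-1}\times\eta)$-equivariant structure, and you only establish (and only in sketch) the identification of underlying $\O(Z)$-modules. That weaker statement does not suffice: since every object of $\M_{Z}$ is free over $\O(Z)$ by Theorem \ref{modrep00}, the underlying $\O(Z)$-module of a projective object of $\M_{Z}\cong\Rep_{k}(L^{g},\xi_g^{-1})$ records only its $k$-dimension, which does not determine its head, so the multiplicity count $m_{W}=|Z^{\circ}|\delta_{V,W}$ is not yet proved. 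The paper sidesteps this geometric identification entirely by a Hom-count: using that $\mathbf{F}_{Z}\bigl(\O(L^g)_{\xi_g^{-1}}\bigr)\cong\O(H\times K)$ (Example \ref{regrepind}) and the free-module adjunction, it computes $\Hom_{\M}\bigl(\O(G^{\circ})\ot\mathbf{F}_{Z}(\O(L^g)_{\xi_g^{-1}}),\mathbf{F}_{Z}(V)\bigr)\cong\Hom_{{\rm Coh}(G)}(\O(G^{\circ}),\O(Z))\ot_{k}V$, then kills the cross terms $\Hom_{\M}\bigl(\O(G^{\circ})\ot\mathbf{F}_{Z}(P(W)),\mathbf{F}_{Z}(V)\bigr)$ for $W\not\cong V$ by comparing composition factors of $\O(G^{\circ})\ot\mathbf{F}_{Z}(P(W))$ with those of $\mathbf{F}_{Z}(P(W))$; isolating the $V$-isotypic summand of $\O(L^g)_{\xi_g^{-1}}=\bigoplus_{W}P(W)\ot_{k}W$ then yields $\dim\Hom_{\M}(Q,\mathbf{F}_{Z}(V))=\dim\Hom_{{\rm Coh}(G)}(\O(G^{\circ}),\O(Z))=|Z^{\circ}|$. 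Either adopt that route, or carry out the equivariant bookkeeping for your base-change isomorphism explicitly; as written the decisive step is missing.
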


\begin{proof}
(1) Follows from the preceding remarks.

(2) By Example \ref{regrepind}, $\mathbf{F}_{Z}\left(\O(L^g)_{\xi_g^{-1}}\right)\cong\O(H\times K)$ in $\M$, so $$\O(G)\ot \mathbf{F}_{Z}\left(\O(L^g)_{\xi_g^{-1}}\right)\cong \O(G)\ot \O(H\times K)$$ is projective in $\M$ (where $\O(G)$ acts diagonally, and $\O(H\times K)_{\psi^{-1}\times \eta}$ coacts on the second factor via $\Delta_{\psi^{-1}\times \eta}$). Since $\O(G^{\circ})$ is a direct summand of $\O(G)$, and $\mathbf{F}_{Z}\left(P_{(L^g,\xi_g^{-1})}(V)\right)$ is a direct summand of $\mathbf{F}_{Z}\left(\O(L^g)_{\xi_g^{-1}}\right)$, it follows that the object $\O(G^\circ)\ot \mathbf{F}_{Z}\left(P_{(L^g,\xi_g^{-1})}(V)\right)$ is a direct summand of $\O(G)\ot \O(H\times K)_{\psi^{-1}\times\eta}$, hence projective in $\M$. Thus, $\O(G^{\circ})\ot \mathbf{F}_{Z}\left(P_{(L^g,\xi_g^{-1})}(V)\right)$ is projective in $\M$.

Now, on the one hand, we have
\begin{eqnarray*}
\lefteqn{\Hom_{\M}\left(\O(G^\circ)\ot \mathbf{F}_{Z}\left(\O(L^g)_{\xi_g^{-1}}\right),\mathbf{F}_{Z}(V)\right)}\\
& = & \Hom_{\M}\left(\O(G^\circ)\ot \O(H\times K)_{\psi^{-1}\times\eta},\mathbf{F}_{Z}(V)\right)\\
& = & \Hom_{{\rm Coh}(G)}\left(\O(G^\circ),\O(Z)\right)\ot_k V.
\end{eqnarray*}

On the other hand, observe that for any simple $W\in \Rep_k(L^g,\xi_g^{-1})$, the objects $\O(G^\circ)\ot \mathbf{F}_{Z}\left(P_{(L^g,\xi_g^{-1})}(W)\right)$ and $\mathbf{F}_{Z}\left(P_{(L^g,\xi_g^{-1})}(W)\right)$ have the same composition factors as $\O(G)$-modules. Hence, if $V$ and $W$ are nonisomorphic simples in $\Rep_k(L^g,\xi_g^{-1})$, then 
$$\Hom_{\M}\left(\O(G^\circ)\ot \mathbf{F}_{Z}\left(P_{(L^g,\xi_g^{-1})}(W)\right),\mathbf{F}_{Z}(V)\right)=0.$$
This implies that  
\begin{eqnarray*}
	\lefteqn{\Hom_{\M}\left(\O(G^\circ)\ot \mathbf{F}_{Z}\left(\O(L^g)_{\xi_g^{-1}}\right),\mathbf{F}_{Z}(V)\right)}\\
	& = & \bigoplus_{W\in {\Irr}\left(\O(L^g)_{\xi_g^{-1}}\right)}\Hom_{\M}\left(\O(G^\circ)\ot \mathbf{F}_{Z}\left(P_{(L^g,\xi_g^{-1})}(W)\right),\mathbf{F}_{Z}(V)\right)\ot_k W\\
	& = & \Hom_{\M}\left(\O(G^\circ)\ot \mathbf{F}_{Z}\left(P_{(L^g,\xi_g^{-1})}(V)\right),\mathbf{F}_{Z}(V)\right)\ot_k V.
\end{eqnarray*}

Thus, it follows from the above that  
\begin{eqnarray*}
\lefteqn{\dim\Hom_{\M}\left(\O(G^\circ)\ot \mathbf{F}_{Z}\left(P_{(L^g,\xi_g^{-1})}(V)\right),\mathbf{F}_{Z}(V)\right)}\\
& = & \dim\Hom_{{\rm Coh}(G)}\left(\O(G^\circ),\O(Z)\right)=|Z^{\circ}|,
\end{eqnarray*}
which implies the statement.
\end{proof}

\begin{theorem}\label{prsimmodrep}
Let $\M=\M\left(H\times K,\psi^{-1}\times \eta\right)$ be as above.
\begin{enumerate}
\item
For any $Z\in Y(k)$ with representative $g\in Z(k)$, we have an equivalence of abelian categories
$$\mathbf{F}_{Z}:{\rm Rep}_k (L^g,\xi_g^{-1})\xrightarrow{\cong} \M_Z,\,\,\,
V\mapsto \iota_{Z*}\left(\O(Z)\ot_k V,\rho^g_V\right),$$
where, 
$\rho^g_V:=\left(\left(\mathfrak{j}_{g}^{\sharp}\right)^{-1}\ot\id^{\ot 3}\right)\rho_V\left(\mathfrak{j}_{g}^{\sharp}\ot\id\right)$ (\ref{nuviexp}).
\item
For any simple $V\in {\rm Rep}_k (L^g,\xi_g^{-1})$, we have
$$P_{\M}\left(\mathbf{F}_{Z}(V)\right)\cong \left(\O(G^{\circ})\ot \O(Z(k))\ot_k P_{(L^g,\xi_g^{-1})}(V),R^g_V\right),$$
where $R^g_{V}:=\left(\theta_{Z}\ot\id^{\ot 3}\right)\left(\id\ot\rho^g_{P_{(L^g,\xi_g^{-1})}(V)}\right)\left(\lambda_{Z}\ot\id\right)$.
\end{enumerate}  
\end{theorem}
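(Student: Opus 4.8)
The plan is to read part (1) off the machinery of \S\ref{sec:doublecosetsandbiequivariant}, and to prove part (2) by exhibiting the claimed object as a direct summand, \emph{inside $\M$}, of the projective object $\O(G^{\circ})\ot\mathbf{F}_{Z}\bigl(P_{(L^g,\xi_g^{-1})}(V)\bigr)$ from Lemma \ref{closedptproj}, and then pinning it down by a dimension count. \textbf{Part (1).} The functor $\mathbf{F}_{Z}=\iota_{Z*}\circ\mathfrak{j}_{g*}\circ{\rm F}_{Z}$ of (\ref{boldfzg1F}) is a composite of equivalences: ${\rm F}_{Z}$ by Corollary \ref{newimpcor}(2) (which is Theorem \ref{modrep00} applied with $A=L^g$, $B=H\times K$, $\partial=\partial_g$), $\mathfrak{j}_{g*}$ by Corollary \ref{newimpcor}(3), and $\iota_{Z*}$ is fully faithful with essential image $\M_{Z}$ by the very definition of $\M_Z$ (see (\ref{mg})); hence $\mathbf{F}_{Z}\colon\Rep_k(L^g,\xi_g^{-1})\xrightarrow{\cong}\M_{Z}$. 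The explicit description is obtained by unwinding this composite: ${\rm F}_{Z}$ sends $V$ to $\bigl(\O(L^g\backslash(H\times K))\ot_k V,\rho_V\bigr)$ with $\rho_V$ given by (\ref{nuviexp}); applying $\mathfrak{j}_{g*}$ transports the underlying sheaf to $\O(Z)\ot_k V$ along the algebra isomorphism $\mathfrak{j}_g^{\sharp}$ and conjugates the coaction into $\rho^g_V$; and $\iota_{Z*}$ merely inflates the $\O(Z)$-module structure along $\O(G)\twoheadrightarrow\O(Z)$. This part is routine.

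\textbf{Part (2).} Write $P:=P_{(L^g,\xi_g^{-1})}(V)$ and $Y:=\O(G^{\circ})\ot\mathbf{F}_{Z}(P)$. The proof of Lemma \ref{closedptproj}(2) shows $Y$ is projective in $\M$, and Lemma \ref{closedptproj}(2) itself gives $Y\cong\bigl(P_{\M}(\mathbf{F}_{Z}(V))\bigr)^{\oplus|Z^{\circ}|}$, a sum of copies of the \emph{indecomposable} projective $P_{\M}(\mathbf{F}_{Z}(V))$. The crux is to show that $X:=\bigl(\O(G^{\circ})\ot\O(Z(k))\ot_k P,\ R^g_V\bigr)$ is a direct summand of $Y$ in $\M$. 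At the level of underlying $\O(G)$-modules this is Lemma \ref{closedptproj}(1) tensored with $P$: $\lambda_{Z}\ot\id_P$ and $\theta_{Z}\ot\id_P$ are mutually split $\O(G)$-linear maps. What has to be added is that both maps respect the $(H\times K)$-equivariant structures, where $X$ carries the coaction $R^g_V=(\theta_{Z}\ot\id^{\ot3})(\id\ot\rho^g_{P})(\lambda_{Z}\ot\id)$ transported along $\theta_{Z}$; equivalently, that the splitting idempotent $\lambda_{Z}\theta_{Z}\ot\id_P$ of $Y$ is a morphism in $\M$, i.e. that the connected--\'etale splitting $1\to Z^{\circ}\to Z\to Z(k)\to 1$ of (\ref{boldfzg1pc}) interacts correctly with the coaction $\rho^g_{P}$ coming from (\ref{nuviexp}). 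Granting this, $R^g_V$ is automatically a genuine equivariant structure --- its counit and coassociativity axioms follow from those of $\rho^g_{P}$ together with $\theta_{Z}\lambda_{Z}=\id$ --- and $X$, being a summand of the projective $Y$, is projective; moreover $X\ne 0$ since $\dim_k X=|G^{\circ}|\,|Z(k)|\,\dim_k P>0$.

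It remains to identify $X$. Being a nonzero direct summand of $Y\cong\bigl(P_{\M}(\mathbf{F}_{Z}(V))\bigr)^{\oplus|Z^{\circ}|}$, the Krull--Schmidt theorem forces $X\cong\bigl(P_{\M}(\mathbf{F}_{Z}(V))\bigr)^{\oplus m}$ for some $m\ge 1$. A dimension count gives $m=1$: from $\dim_k Y=|G^{\circ}|\,|Z|\,\dim_k P$ and $\dim_k Y=|Z^{\circ}|\dim_k P_{\M}(\mathbf{F}_{Z}(V))$, together with $|Z|=|Z^{\circ}|\,|Z(k)|$ (see (\ref{boldfzg1pc})), we get $\dim_k P_{\M}(\mathbf{F}_{Z}(V))=|G^{\circ}|\,|Z(k)|\,\dim_k P=\dim_k X$, whence $P_{\M}(\mathbf{F}_{Z}(V))\cong X$, which is the assertion. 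I expect the one genuinely nontrivial point to be the equivariance claim in the previous paragraph: concretely, using that $q_{Z}^{\sharp}\colon\O(Z)\twoheadrightarrow\O(Z(k))$ is the reduction map, one must check that $\O(G^{\circ})\ot\ker(q_{Z}^{\sharp})\ot_k P$ and $\O(G^{\circ})\ot\Image(\pi_{Z}^{\sharp})\ot_k P$ are subobjects of $Y$ in $\M$ --- this is where the non-reducedness of $G$ (hence of $Z$) has to be reconciled with the $(H\times K)$-action, and is best handled by a direct computation with the formula (\ref{nuviexp}).
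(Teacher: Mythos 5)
Your proof is correct and takes essentially the same route as the paper: part (1) is the same unwinding of the composite $\mathbf{F}_{Z}=\iota_{Z*}\circ\mathfrak{j}_{g*}\circ{\rm F}_{Z}$, and part (2) rests, exactly as in the paper, on Lemma \ref{closedptproj} to exhibit $\bigl(\O(G^{\circ})\ot\O(Z(k))\ot_k P_{(L^g,\xi_g^{-1})}(V),R^g_V\bigr)$ as a projective direct summand of $\O(G^{\circ})\ot\mathbf{F}_{Z}\bigl(P_{(L^g,\xi_g^{-1})}(V)\bigr)$. The only minor divergence is the final identification, which you carry out by Krull--Schmidt plus a dimension count whereas the paper uses the computation $\dim\Hom_{\M}\bigl(\,\cdot\,,\mathbf{F}_{Z}(V)\bigr)=1$ from the proof of that lemma; the equivariance of the splitting maps $\lambda_{Z},\theta_{Z}$ that you flag as the one delicate point is left equally implicit in the paper's argument.
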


\begin{proof}
(1) Follows from Theorem \ref{modrep00} and \ref{simmodrep}.

(2) We have $\M$-isomorphisms 
\begin{eqnarray*}
\lefteqn{\O(G^{\circ})\ot \mathbf{F}_{Z}\left(P_{(L^g,\xi_g^{-1})}(V)\right)= \O(G^{\circ})\ot \O(Z)\ot_k P_{(L^g,\xi_g^{-1})}(V)}\\
& \cong & \O(G^{\circ})\ot \O(Z(k))\ot_k \O(Z^{\circ})\ot_k P_{(L^g,\xi_g^{-1})}(V),
\end{eqnarray*}
thus by Proposition \ref{closedptproj}, $\O(G^{\circ})\ot \O(Z(k))\ot_k P_{(L^g,\xi_g^{-1})}(V)$ is a direct summand in a projective object of $\M$, hence is projective. Moreover, by Proposition \ref{closedptproj} again, we have  
$$\dim\Hom_{\M}\left(\O(G^{\circ})\ot \O(Z(k))\ot_k P_{(L^g,\xi_g^{-1})}(V),\mathbf{F}_{Z}(V)\right)=1.$$ 
Hence, $\O(G^{\circ})\ot \O(Z(k))\ot_k P_{(L^g,\xi_g^{-1})}(V)$ is the projective cover of $\mathbf{F}_{Z}(V)$ in $\M$, as claimed.
\end{proof}

\begin{example}\label{structcohhpsi}
Take $K=1$. Then for any $Z\in Y(k)$ with representative $g\in G(k)$, we have $Z=Hg$, $L^g=1$, and the functor 
$$\mathbf{F}_{Hg}:\Vect\xrightarrow{\cong}{\rm Coh}^{(H,\psi^{-1})}(Hg),\,\,\,
V\mapsto \O(Hg)\ot_k V,$$
is an equivalence of abelian categories (where $\O(H)$ acts on $\O(Hg)$ via multiplication by $\rho_{g^{-1}}(\cdot)$ and $\O(H)_{\psi^{-1}}$-coacts via $\Delta_{\psi^{-1}}$).

Now by Theorem \ref{prsimmodrep}, there is a bijection between the set of closed points $Hg\in Y(k)$ (i.e., isomorphism classes of simples of ${\rm Coh}(H\backslash G)$) and simple objects of $\mathscr{M}$, assigning $Hg$ to $\O(Hg)\ot_k k=\O(Hg)$, and we have $\overline{\M_{Hg}}=\langle \O(Hg)\rangle$.

Moreover, by Theorem \ref{prsimmodrep} again, for any simple $\O(Hg)$ in $\M$, 
$$P_{\M}(\O(Hg))\cong P_{Hg}\ot_k k=P_{Hg}$$
as $\O(G)$-modules, where $\O(H)_{\psi^{-1}}$ coacts on $P_{Hg}\ot_k k=P_{Hg}$ via the map $\rho_k=\Delta_{\psi^{-1}}$.
(Compare with Remark \ref{alsohelpful2}.) \qed
\end{example}

\begin{example}\label{doubletwisthopf}
Theorems \ref{simmodrep}, \ref{prsimmodrep} apply to the representation categories of two-sided twisted function (Hopf) algebras. Namely, let $I,J\in (\O(G)^{\ot 2})^*$ be two Hopf $2$-cocycles for $G$, and let ${}_I\mathscr{O}(G)_J$ be the corresponding two-sided twisted function algebra (if $I=J$, then ${}_J\mathscr{O}(G)_J$ is a Hopf algebra). By \cite[Corollary 4.8]{G}, $I,J$ correspond to pairs $(H,\psi)$, $(K,\eta)$, where $H,K\subset G$ are closed subgroup schemes and $\psi\in Z^2(H,\mathbb{G}_m)$, $\eta\in Z^2(K,\mathbb{G}_m)$ are {\em nondegenerate} $2$-cocycles (i.e., ${\rm Rep}_k(H,\psi)\cong \Vect$ and ${\rm Rep}_k(K,\eta)\cong \Vect$). Then it is straightforward to verify that 
we have an equivalence of abelian categories
$${\rm Rep}_k({}_I\mathscr{O}(G)_J)\cong \mathscr{M}\left(H\times K,\psi^{-1}\times\eta\right).$$
In particular, we have an equivalence of tensor categories
$${\rm Rep}_k({}_J\mathscr{O}(G)_J)\cong \C(G,H,\psi).$$
(In the fusion case it was proved in \cite{AEGN}.) \qed
\end{example}

\subsection{Fiber functors on $\C(G,H,\psi)$} Recall that a fiber functor on a finite tensor category $\C$ is the same as a $\C$-module category of rank $1$.

The following corollary is known in the fusion case \cite{O}.

\begin{corollary}\label{fibfungth}
Let $\C:=\C\left(G,H,\psi\right)$ be a group scheme-theoretical category. There is
a bijection between equivalence classes of fiber functors on $\C$ and equivalence classes of pairs $(K,\eta)$, where
$K\subset G$ is a closed subgroup scheme and $\eta\in
Z^2(K,\mathbb{G}_m)$, such that $HK=G$ and the $2$-cocycle $\xi^{-1}:=\xi_1^{-1}\in Z^2(H\cap K,\mathbb{G}_m)$  
is nondegenerate. 
\end{corollary}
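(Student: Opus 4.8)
The plan is to combine Theorem \ref{modgth} with Theorem \ref{prsimmodrep} and then read off exactly when the resulting module category has rank $1$. A fiber functor on $\C=\C(G,H,\psi)$ is a $\C$-module category of rank $1$; by Theorem \ref{modgth} these correspond bijectively to indecomposable exact $\mathrm{Coh}(G)$-module categories $\M(K,\eta)$ such that $\Fun_{\mathrm{Coh}(G)}\!\left(\M(H,\psi),\M(K,\eta)\right)$ has rank $1$, i.e.\ exactly one isomorphism class of simple object. By Lemma \ref{newsimple} this functor category is equivalent to $\M=\M\!\left(H\times K,\psi^{-1}\times\eta\right)$, so I must count the simples of $\M$.

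First I would invoke Theorem \ref{simmodrep}(2): the simples of $\M$ are parametrized by pairs $(Z,V)$ with $Z\in Y(k)$, $Y=G/(H\times K)$, a representative $g\in Z(k)$, and $V\in\Rep_k(L^g,\xi_g^{-1})$ simple, where $L^g=H\cap gKg^{-1}$ and $\xi_g$ is as in \eqref{xig0}. Hence $\M$ has rank $1$ if and only if (a) $Y$ has a unique closed point, and (b) for that point $Z$ with representative $g$, the category $\Rep_k(L^g,\xi_g^{-1})$ has a unique simple object, i.e.\ $\xi_g^{-1}$ (equivalently $\xi_g$) is nondegenerate in the sense of Example \ref{doubletwisthopf}, i.e.\ $\Rep_k(L^g,\xi_g)\cong\Vect$. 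Condition (a) says $G=HgK$ for the representative $g$; since closed points of $Y$ are the orbits $HgK$ and these sets partition $G(k)$ (indeed the double coset scheme structure gives $\mathcal O(Y)$ as a quotient of $\mathcal O(G)$), a single double coset filling $G$ is equivalent to $HK=G$ on the level of group schemes — more precisely, $G=HgK$ for some $g$ already forces, after absorbing $g$ into a conjugate of $K$, that we may take $g=1$ and $HK=G$. So up to replacing the pair $(K,\eta)$ by an equivalent one (Definition \ref{defequivpairs}), condition (a) is exactly $HK=G$, and then $g=1$, $L^1=H\cap K$, and by \eqref{xig0} $\xi_1=\psi^{-1}\eta$ restricted to $H\cap K$; the corollary's $\xi^{-1}:=\xi_1^{-1}$ is then nondegenerate precisely when condition (b) holds.

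The one subtlety I would treat carefully is the reduction "$G=HgK$ $\iff$ up to equivalence $g=1$ and $HK=G$". Here I would argue: if $G=HgK$, set $K':=g^{-1}Kg$ and $\eta':=\eta^{g^{-1}}={\rm Ad}\,g^{-1}(\eta)\in Z^2(K',\mathbb G_m)$; then $(K,\eta)$ and $(K',\eta')$ are equivalent pairs (Definition \ref{defequivpairs}), $HK'=g^{-1}(gH g^{-1}\cdot K)g$... — more directly, from $G=HgK$ one gets $g^{-1}G g = g^{-1}H g\cdot K$, but cleaner is to note $G=HgK$ $\iff$ $G = H\cdot(gKg^{-1})\cdot g$ $\iff$ $G=H\cdot gKg^{-1}$, so replacing $K$ by the conjugate $gKg^{-1}$ (with the cocycle $\eta^g$, cf.\ \eqref{psig}) we land in the case $HK=G$, $g=1$. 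Under this replacement $L^1=H\cap gKg^{-1}$ is exactly the old $L^g$, and $\xi_1$ for the new pair equals $\psi^{-1}(\eta^{g})\!\restriction = \xi_g$ for the old pair (this is the content of \eqref{xig0}), so nondegeneracy of $\xi_1^{-1}$ for the new pair matches nondegeneracy of $\xi_g^{-1}$ for the old. Conversely, given $(K,\eta)$ with $HK=G$ and $\xi_1^{-1}$ nondegenerate, Theorem \ref{simmodrep} gives $\M$ rank $1$, hence $\M(K,\eta)$ is a fiber functor. Finally, distinct equivalence classes of such $(K,\eta)$ give inequivalent module categories $\M(K,\eta)$ by Theorem \ref{modomega}(1), hence (via Theorem \ref{modgth}) inequivalent fiber functors, so the correspondence is a bijection.

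The main obstacle I anticipate is purely bookkeeping: verifying that the orbit/double-coset reduction is compatible with the $2$-cocycle data, i.e.\ that conjugating $K$ by $g$ and twisting $\eta$ by $\mathrm{Ad}\,g$ sends the quantity $\xi_g^{-1}$ for the original pair to the quantity $\xi_1^{-1}$ for the new pair, so that "nondegenerate" is an invariant of the equivalence class of $(K,\eta)$ and the statement is well-posed. This is a direct computation with \eqref{xig0} and \eqref{psig}, but it is the only place where something genuinely needs checking; everything else is an immediate reading of Theorems \ref{modgth}, \ref{simmodrep}, and \ref{prsimmodrep} together with Example \ref{doubletwisthopf}.
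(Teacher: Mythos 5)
Your argument breaks at the translation of ``rank one'' into your conditions (a) and (b). A fiber functor on $\C$ corresponds to a $\C$-module structure on $\Vect$, so the module category $\M=\M\left(H\times K,\psi^{-1}\times\eta\right)$ must be equivalent to $\Vect$ as an abelian category, not merely possess a single isomorphism class of simple object; in the non-semisimple setting these are genuinely different, and both of the equivalences you assert fail. First, ``$Y$ has a unique closed point'' does not say $G=HgK$: take $H=K=1$, $\psi=\eta=1$ and $G$ connected nontrivial. Then $Y=G$ has a unique closed point and $\Rep_k(L^1)=\Vect$, so your criterion is satisfied, yet $\M\cong{\rm Coh}(G)$ has one simple but is not a fiber functor, and $HK=1\ne G$. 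The point is that $Y(k)$ only sees the double cosets of closed points and says nothing about the connected part of $G$, so a unique closed point of $Y$ does not force the coset $HgK$ to be all of $G$ as a scheme. Second, ``$\Rep_k(L^g,\xi_g^{-1})$ has a unique simple object'' is strictly weaker than nondegeneracy of $\xi_g^{-1}$ (which means $\Rep_k(L^g,\xi_g^{-1})\cong\Vect$): for instance $K=G$, $\eta=1$, $\psi=1$ with $H$ connected nontrivial gives $HK=G$, $\xi_1=1$, and $\Rep_k(H)$ has a unique simple, but $\M\cong\Rep_k(H)$ is not $\Vect$ and the trivial cocycle is not nondegenerate. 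So the set of pairs your argument produces is strictly larger than the set of fiber functors, and the final ``i.e.'' steps identifying one-simple with nondegenerate are where the error is hidden.

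The repair is exactly the paper's proof: one must characterize $\M\cong\Vect$ rather than count simples. By Theorem \ref{simmodrep}, $\M\cong\Vect$ if and only if $\M=\M_{HK}$ and $\Rep_k(H\cap K,\xi^{-1})\cong\Vect$ (the latter being the definition of nondegeneracy), and $\M=\M_{HK}$ holds if and only if $\iota_{HK*}$ is an equivalence, i.e.\ if and only if $G=HK$ as group schemes and not just on closed points. The conjugation bookkeeping you devote your second and third paragraphs to (replacing $(K,\eta)$ by $\left(gKg^{-1},\eta^{g}\right)$ to move a representative to $g=1$, and checking compatibility with (\ref{xig0}) and (\ref{psig})) is correct but is not where the difficulty lies; the paper avoids it altogether by stating the condition directly at the coset of the identity.
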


\begin{proof}
Let $\M:=\mathscr{M}\left(H\times K,\psi^{-1}\times \eta\right)$ be an indecomposable exact module category over $\C$. 
By Theorem \ref{simmodrep}, $\M\cong \Vect$ if and only if  
$\M=\M_{HK}$ and ${\rm Rep}(H\cap K,\xi^{-1})\cong \Vect$. Thus, the statement follows from the fact that $\M=\M_{HK}$ if and only if $\iota_{HK*}$ is an equivalence, i.e., if and only if $G=HK$.
\end{proof}

\begin{example}\label{exacfac}
Any exact factorization $G=HK$ of finite group schemes \cite{BG} determines a fiber functor on $\C\left(G,H,\psi\right)$ given by $\M(K,1)$.
\end{example}

\section{The structure of $\C\left(G,H,\psi\right)$}\label{S:structure-GSC}

Fix a group scheme-theoretical category $\C:=\C\left(G,H,\psi\right)$ \S\ref{sec:M(H,psi)}. For any closed point $g\in G(k)$, let $H^g:=H\cap gHg^{-1}$. Note that $\xi_1=1$.

\begin{theorem}\label{simpobjs}  
The following hold:
\begin{enumerate}
\item
For any closed point $Z\in Y(k)$ with representative $g\in Z(k)$, we have an equivalence of abelian categories 
\begin{gather*}
\mathbf{Ind}_{Z}:{\rm Rep}_k (H^g,\xi_g^{-1})\xrightarrow{\cong} \mathscr{C}_{Z},\\
V\mapsto (\iota_{Z}\mathfrak{j}_g)_*\left(\O(H\times H)\ot^{\O(H^g)_{\xi_g}}V,\rho_2^{V}\right).
\end{gather*}

In particular,     
\begin{gather*}
\mathbf{Ind}_{H}:{\rm Rep}_k (H)\xrightarrow{\cong} \mathscr{C}_{H},\\
V\mapsto (\iota_{H}\mathfrak{j}_1)_*\left(\O(H\times H)\ot^{\O(H)}V,\rho_2^{V}\right),
\end{gather*}
is an equivalence of tensor categories.
\item
There is a bijection between equivalence classes of pairs $(Z,V)$, where $Z\in Y(k)$ is a closed point with representative $g\in Z(k)$, and $V\in{\rm Rep}_k (H^g,\xi_g^{-1})$ is simple, and simple objects of $\C$, assigning $(Z,V)$ to $\mathbf{Ind}_{Z}(V)$. Moreover, we have a direct sum decomposition of abelian categories
$$\C=\bigoplus_{Z\in Y(k)}\overline{\C_{Z}},$$
and 
$\overline{\C_{H}}\subset \C$ is a tensor subcategory. 
\item
For any $V\in {\rm Rep}_k (H^g,\xi_g^{-1})$, we have
$$
{\rm FPdim}\left(\mathbf{Ind}_{Z}(V)\right)=\frac{|H|}{|H^g|}{\rm dim}(V).$$
\item
For any $V\in {\rm Rep}_k (H^g,\xi_g^{-1})$, we have
$
\mathbf{Ind}_{Z}(V)^*\cong \mathbf{Ind}_{Z^{-1}}(V^*)$, 
where $Z^{-1}\in Y(k)$ such that $g^{-1}\in Z^{-1}(k)$.
\end{enumerate}
\end{theorem}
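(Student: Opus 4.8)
The plan is to derive everything by specializing Section~\ref{sec:Module categories over GCTC} to $K=H$, $\eta=\psi$: by Lemma~\ref{equdefofgstc} there is a tensor equivalence $\C\cong\M\bigl(H\times H,\psi^{-1}\times\psi\bigr)$, so Theorem~\ref{simmodrep} applies with $L^g=H\cap gHg^{-1}=H^g$ and $\xi_g=\psi^{-1}\psi^{g^{-1}}$ (see \eqref{xig0}), and $\xi_1=1$. This at once yields the abelian equivalence $\mathbf{Ind}_Z\colon\Rep_k(H^g,\xi_g^{-1})\xrightarrow{\cong}\C_Z$ of (1), the classification of simple objects in (2), and the decomposition $\C=\bigoplus_{Z}\overline{\C_Z}$. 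What still has to be supplied is the monoidal content: that $\mathbf{Ind}_H$ is a \emph{tensor} equivalence and $\overline{\C_H}$ a tensor subcategory, the Frobenius--Perron dimension formula (3), and the behaviour under duality (4).

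For the ``in particular'' part of (1): the trivial double coset $Z=H$ has representative $g=1$, so $H^1=H$, $\xi_1=1$ and $\partial_1\colon H\hookrightarrow H\times H$ is the diagonal embedding; unwinding the formulas, $\mathbf{Ind}_H(V)$ is the cofree $\O(H)_\psi$-bimodule $\O(H)_\psi\otimes_k V$ attached to $V\in\Rep_k(H)$, sitting inside $\C$ via ${\rm Coh}(H)\subset{\rm Coh}(G)$. I would then identify $\mathbf{Ind}_H$ with the canonical tensor functor $\Rep_k(H)\to\C$, $V\mapsto\O(H)_\psi\otimes_k V$, whose monoidal structure comes from the natural isomorphism $\bigl(\O(H)_\psi\otimes_k V\bigr)\otimes^{\O(H)_\psi}\bigl(\O(H)_\psi\otimes_k W\bigr)\cong\O(H)_\psi\otimes_k(V\otimes_k W)$ of cofree bicomodules, and which is fully faithful with essential image $\C_H$ by the abelian equivalence already in hand. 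For (2), the subcategory $\C_H$ contains the unit $\O(H)_\psi$ (supported on $H$), is closed under $\otimes$ because convolution of sheaves supported on $H$ is supported on $H\cdot H=H$, and is closed under taking duals by (4) below (with $Z^{-1}=H$); hence its Serre closure $\overline{\C_H}$ is a tensor subcategory.

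For (3), the key point is that $\FPdim_{\C}(S)=\dim_k S/|H|$ for every $S\in\C$. Indeed $[S]\mapsto\dim_k S/|H|$ extends to a ring homomorphism $K_0(\C)\otimes_{\mathbb Z}\mathbb R\to\mathbb R$: the tensor product of $\C$ is convolution on $G$ followed by a contraction over the coalgebra $\O(H)_\psi$, and since $\O(H)$ is a Frobenius algebra this cotensor product is exact and satisfies $\dim_k\bigl(S\otimes^\C T\bigr)=\dim_k S\cdot\dim_k T/|H|$, so the assignment is multiplicative and sends the unit to $1$; being positive on all simple objects it must coincide with $\FPdim_{\C}$, by uniqueness of the Frobenius--Perron dimension. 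It then remains to compute $\dim_k\mathbf{Ind}_Z(V)=\dim_k\bigl(\O(H\times H)\otimes^{\O(H^g)_{\xi_g}}V\bigr)$; since $H\times H\to L^g\backslash(H\times H)$ is a (cleft) principal $L^g$-bundle, $\O(H\times H)$ is cofree over $\O(H^g)_{\xi_g}$ along $\partial_g$, so this cotensor product is isomorphic to $\O\bigl(L^g\backslash(H\times H)\bigr)\otimes_k V$, of dimension $(|H|^2/|H^g|)\dim_k V$; dividing by $|H|$ gives $\FPdim\bigl(\mathbf{Ind}_Z(V)\bigr)=(|H|/|H^g|)\dim_k V$.

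Finally, for (4) I would argue in two steps. First, $\mathbf{Ind}_Z(V)^*$ is simple, and since $\mathbf 1$ is a subquotient of $\mathbf{Ind}_Z(V)\otimes\mathbf{Ind}_Z(V)^*$, whose composition factors are supported on double cosets contained in $Z\cdot Z'$ where $Z'=\operatorname{supp}\bigl(\mathbf{Ind}_Z(V)^*\bigr)$, the elementary fact that $H\subseteq HgHg'H$ iff $g'\in Hg^{-1}H$ forces $Z'=Z^{-1}$; by (2), $\mathbf{Ind}_Z(V)^*\cong\mathbf{Ind}_{Z^{-1}}(W)$ for a unique simple $W\in\Rep_k(H^{g^{-1}},\xi_{g^{-1}}^{-1})$. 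Second, to pin down $W\cong V^*$, realize the duality of $\C\cong{\rm Coh}^{(H\times H,\psi^{-1}\times\psi)}(G)$ by the inversion map $\mathrm{inv}\colon G\to G$, which intertwines the action $(g,a,b)\mapsto a^{-1}gb$ with its factor-swap $(g,a,b)\mapsto b^{-1}ga$, composed with fibrewise $k$-linear dualization; transporting this through the explicit equivalences $\mathbf{Ind}_Z$ and $\mathbf{Ind}_{Z^{-1}}$ turns it into the contragredient functor $\Rep_k(H^g,\xi_g^{-1})\to\Rep_k(H^{g^{-1}},\xi_{g^{-1}}^{-1})$, $V\mapsto V^*$, where one uses that $\Ad g$ carries $H^{g^{-1}}$ onto $H^g$ and $\xi_{g^{-1}}^{-1}$ onto $\xi_g$ --- a direct computation from \eqref{xig0}. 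I expect the second step of (4) to be the only real obstacle: the abelian statements are free from Section~\ref{sec:Module categories over GCTC} and (3) reduces to a clean dimension count, whereas matching the equivariant structures on the two sides of $\mathbf{Ind}_Z(V)^*\cong\mathbf{Ind}_{Z^{-1}}(V^*)$ forces one to chase the explicit formulas \eqref{natural structure}, \eqref{nuviexp}, \eqref{tilderhoPsi} through inversion and $k$-linear dualization while keeping track of the twisting $2$-cocycles.
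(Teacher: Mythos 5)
Your proof of parts (1) and (2) is exactly the paper's: the paper's entire proof of this theorem is ``Follow from Theorem \ref{simmodrep}'', i.e.\ specialize $K=H$, $\eta=\psi$ through Lemma \ref{equdefofgstc}, which is precisely your first paragraph. Where you differ is that you actually supply arguments for the statements the paper treats as immediate — the tensor claims, the Frobenius--Perron formula (3), and the duality (4) — and these are essentially sound, with two comments. For (3), the multiplicativity of $S\mapsto \dim_k S/|H|$ is correct, but the reason is not that $\O(H)$ is a Frobenius algebra; the relevant point is that every object of $\C$, viewed as a one-sided $\O(H)_{\psi}$-comodule, is cofree, which is exactly Theorem \ref{helpful2} (freeness of the translation action of $H$ on $G$): if $S\cong S^{\mathrm{co}}\ot_k\O(H)$ as a right comodule then $S\ot^{\O(H)_{\psi}}T\cong S^{\mathrm{co}}\ot_k T$, and the dimension count plus uniqueness of the positive character on $K_0(\C)$ gives $\FPdim=\dim_k/|H|$; your computation of $\dim_k\mathbf{Ind}_Z(V)$ via cofreeness of $\O(H\times H)$ over $\O(H^g)_{\xi_g}$ is the same mechanism and is fine. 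For (4), the support argument forcing $Z'=Z^{-1}$ works (the product $HgHg'H$ is a scheme-theoretic image, and passing to $k$-points is harmless over algebraically closed $k$, since the unit is supported on all of $H$ and supports of composition factors of a convolution lie in the product of supports); the identification $W\cong V^*$ via inversion, fibrewise dualization and the conjugation isomorphism carrying $(H^{g^{-1}},\xi_{g^{-1}}^{-1})$ to $(H^g,\xi_g^{\pm1})$ is only sketched in your write-up, but that matches the level of detail in the paper, which offers no argument at all for (3), (4) or the monoidal structure of $\mathbf{Ind}_H$; if you want a complete proof you do need to carry out the cocycle bookkeeping you flag, using (\ref{xig0}) and the explicit coactions (\ref{natural structure}), (\ref{nuviexp}).
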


\begin{proof}
Follow from Theorem \ref{simmodrep}.
\end{proof}

\begin{theorem}\label{projsimpobjs}  
The following hold:
\begin{enumerate}
\item
For any closed point $Z\in Y(k)$ with representative $g\in Z(k)$, we have an equivalence of abelian categories
$$\mathbf{F}_{Z}:{\rm Rep}_k (H^g,\xi_g^{-1})\xrightarrow{\cong} \C_Z,\,\,\,
V\mapsto \iota_{Z*}\left(\O(Z)\ot_k V,\rho^g_V\right).$$

In particular, 
we have an equivalence of tensor categories   
$$\mathbf{F}_H:{\rm Rep}_k (H)\xrightarrow{\cong} \mathscr{C}_{H},\,\,\,V\mapsto \iota_{H*}\left(\O(H)\ot_k V,\rho^1_V\right).$$
\item
For any $V\in {\rm Rep}_k (H^g,\xi_g^{-1})$, we have
$
\mathbf{F}_{Z}(V)^*\cong \mathbf{F}_{Z^{-1}}(V^*)$.
\item
For any simple $V\in {\rm Rep}_k (H^g,\xi_g^{-1})$, we have 
$$P_{\C}\left(\mathbf{F}_{Z}(V)\right)\cong 
\left(\O(G^{\circ})\ot \O(Z(k))\ot_k P_{(H^g,\xi_g^{-1})}(V),R^g_V\right).$$
In particular, 
$${\rm FPdim}\left(P_{\C}\left(\mathbf{F}_{Z}\left(V\right)\right)\right)=\frac{|G^{\circ}||H(k)|}{|H^{\circ}||H^g(k)|}{\rm dim}\left(P_{(H^g,\xi_g^{-1})}\left(V\right)\right).$$
\item
For any closed point $Z\in Y(k)$, 
${\rm FPdim}\left(\overline{\C_{Z}}\right)=|G^{\circ}||Z(k)|$.
\item
If $\Rep_k(H)$ is unimodular, so is $\C$ (but not necessarily vice versa).
\end{enumerate}
\end{theorem}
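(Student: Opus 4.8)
The plan is to read all five assertions off the general results of this section by specializing to $K=H$ and $\eta=\psi$, which is legitimate because $\C\cong\M(H\times H,\psi^{-1}\times\psi)$ by Lemma \ref{equdefofgstc}. With this choice, for a double coset $Z\in Y(k)$ with representative $g$ one has $L^g=H\cap gHg^{-1}=H^g$ and, by (\ref{xig0}), $\xi_g=\psi^{-1}\psi^{g^{-1}}\in Z^2(H^g,\mathbb{G}_m)$; for the identity coset $Z=H$ (i.e. $g=1$) this gives $H^1=H$ and $\xi_1=1$. The single observation that links the two parametrizations of the simples is that the cleaving-map isomorphism ${\rm F}_V$ of \S\ref{sec:thesecondequivalence} is itself a natural isomorphism ${\rm Ind}_{(A,\xi^{-1})}^{(B,\Psi)}\cong{\rm F}$ (this is exactly what underlies Theorem \ref{modrep00}); applying $\mathfrak{j}_{g*}\iota_{Z*}$ yields a natural isomorphism $\mathbf{Ind}_Z\cong\mathbf{F}_Z$, so every statement proved for $\mathbf{Ind}_Z$ in Theorem \ref{simpobjs} transports to $\mathbf{F}_Z$.

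Item (1) is then Theorem \ref{prsimmodrep}(1) for $(K,\eta)=(H,\psi)$, which gives the abelian equivalence $\mathbf{F}_Z\colon\Rep_k(H^g,\xi_g^{-1})\xrightarrow{\cong}\C_Z$. For the tensor claim I would combine the natural isomorphism $\mathbf{F}_H\cong\mathbf{Ind}_H$ with Theorem \ref{simpobjs}(1), which already identifies $\mathbf{Ind}_H$ as a tensor equivalence $\Rep_k(H)\xrightarrow{\cong}\C_H$; a functor naturally isomorphic to a tensor equivalence is again one. Item (2) follows the same way, via $\mathbf{F}_Z(V)^*\cong\mathbf{Ind}_Z(V)^*\cong\mathbf{Ind}_{Z^{-1}}(V^*)\cong\mathbf{F}_{Z^{-1}}(V^*)$, the middle isomorphism being Theorem \ref{simpobjs}(4).

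Item (3) is Theorem \ref{prsimmodrep}(2) specialized, giving $P_{\C}(\mathbf{F}_Z(V))\cong(\O(G^{\circ})\ot\O(Z(k))\ot_k P_{(H^g,\xi_g^{-1})}(V),R^g_V)$. For the dimension I would use that $\FPdim$ and the underlying sheaf dimension $\dim_k$ are both additive on $\mathrm{Gr}(\C)$ and that their ratio is the constant $|H|$ on every simple: by Theorem \ref{simpobjs}(3) one has $\FPdim(\mathbf{Ind}_Z(V))=\tfrac{|H|}{|H^g|}\dim V$, whereas the underlying sheaf $\iota_{Z*}(\O(Z)\ot_k V)$ has dimension $\tfrac{|H|^2}{|H^g|}\dim V$; hence $\FPdim(X)=\dim_k(X)/|H|$ for all $X$. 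Taking $\dim_k$ of the explicit object in item (3) and inserting $|Z(k)|=|H(k)|^2/|H^g(k)|$ and $|H|=|H^{\circ}||H(k)|$ produces the stated value $\tfrac{|G^{\circ}||H(k)|}{|H^{\circ}||H^g(k)|}\dim P_{(H^g,\xi_g^{-1})}(V)$. Item (4) then comes from the block form of the standard identity $\FPdim(\overline{\C_Z})=\sum_{V}\FPdim(\mathbf{F}_Z(V))\,\FPdim(P_{\C}(\mathbf{F}_Z(V)))$ over the simples $V$ of $\Rep_k(H^g,\xi_g^{-1})$: substituting item (3) and $\FPdim(\mathbf{F}_Z(V))=\tfrac{|H|}{|H^g|}\dim V$, factoring out the constants, and using $\sum_{V}\dim V\,\dim P_{(H^g,\xi_g^{-1})}(V)=|H^g|$ (the dimension of the regular object of $\Rep_k(H^g,\xi_g^{-1})$), the prefactors collapse to $|G^{\circ}||H(k)|^2/|H^g(k)|=|G^{\circ}||Z(k)|$.

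For item (5) I would invoke the criterion that a finite tensor category is unimodular exactly when the projective cover of its unit is self-dual, equivalently when the distinguished invertible object $D_{\C}\cong\mathbf{1}$, recalling that projectives and injectives coincide so that $P(\mathbf{1})^{*}\cong P(D_{\C}^{-1})$. By item (1) the unit is $\mathbf{1}=\mathbf{F}_H(\mathbf{1})$, and by item (3) with $Z=H$, $g=1$ its projective cover is $P_{\C}(\mathbf{1})\cong(\O(G^{\circ})\ot\O(H(k))\ot_k P_{(H,1)}(\mathbf{1}),R^1)$. I would then compute $P_{\C}(\mathbf{1})^{*}$ from item (2) and the explicit form of $R^1$, the aim being to show that dualization replaces the factor $P_{(H,1)}(\mathbf{1})=P_{\Rep_k(H)}(\mathbf{1})$ by its dual while leaving $\O(G^{\circ})\ot\O(H(k))$ unchanged up to isomorphism; granting this, self-duality of $P_{\Rep_k(H)}(\mathbf{1})$ — which is unimodularity of $\Rep_k(H)$ — forces $P_{\C}(\mathbf{1})^{*}\cong P_{\C}(\mathbf{1})$, so $\C$ is unimodular. \textbf{The main obstacle is exactly this last point:} verifying that the auxiliary tensor factors $\O(G^{\circ})$ and $\O(H(k))$, together with the twisted comodule structure $R^1$, are self-dual and hence contribute nothing to $D_{\C}$. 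The same analysis accounts for the failure of the converse, since those factors can in principle trivialize $D_{\C}$ even when $D_{\Rep_k(H)}\not\cong\mathbf{1}$, so only the stated implication survives.
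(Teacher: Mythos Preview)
Your treatment of (1)--(4) matches the paper: it simply records that these are the specialization of Theorem~\ref{prsimmodrep} (and the companion Theorem~\ref{simpobjs}) to $(K,\eta)=(H,\psi)$, and your dimension bookkeeping is exactly the ``straightforward manner'' the paper alludes to.

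The genuine gap is in (5), and it is precisely the obstacle you flag. Your plan is to dualize the explicit description
$P_{\C}(\mathbf{1})\cong\bigl(\O(G^{\circ})\ot\O(H(k))\ot_k P_{\Rep_k(H)}(\mathbf{1}),\,R^1\bigr)$
and argue that the first two tensor factors are self-dual; but the twist $R^1$ entangles these factors with the third, so there is no clean ``factorwise'' duality statement available at that level of description. The paper sidesteps this by not working with $P_{\C}(\mathbf{1})$ directly. Instead it uses Lemma~\ref{closedptproj}(2) (with $Z=H$, so $|Z^{\circ}|=|H^{\circ}|$) to write
\[
|H^{\circ}|\,P_{\C}(\mathbf{1})\;\cong\;\O(G^{\circ})\ot\mathbf{F}\bigl(P(\mathbf{1})\bigr),
\]
where the tensor on the right is the ${\rm Coh}(G)$-module action (convolution) on $\C$. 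This packaging is the key move: now the two pieces can be dualized separately. Since $\mathbf{F}=\mathbf{F}_H$ is a tensor equivalence, $\mathbf{F}(P(\mathbf{1}))^{*}\cong\mathbf{F}(P(\mathbf{1})^{*})$, and unimodularity of $\Rep_k(H)$ gives $P(\mathbf{1})^{*}\cong P(\mathbf{1})$. On the other side, $\O(G^{\circ})=P_{1}$ is self-dual in ${\rm Coh}(G)$ because the antipode sends $P_{g}\mapsto P_{g^{-1}}$. Hence
\[
|H^{\circ}|\,P_{\C}(\mathbf{1})\;\cong\;\O(G^{\circ})\ot\mathbf{F}(P(\mathbf{1}))\;\cong\;\O(G^{\circ})^{*}\ot\mathbf{F}(P(\mathbf{1}))^{*}\;\cong\;\bigl(\O(G^{\circ})\ot\mathbf{F}(P(\mathbf{1}))\bigr)^{*}\;\cong\;|H^{\circ}|\,P_{\C}(\mathbf{1})^{*},
\]
and Krull--Schmidt finishes. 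The point is that passing to the $|H^{\circ}|$-fold multiple trades your intractable triple tensor (with $R^1$) for a convolution of two objects, each of which you already know how to dualize.
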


\begin{proof}
(1)-(4) Follow from Theorem \ref{prsimmodrep} in a straightforward manner.

(5) Recall that $\Rep_k(H)$ is unimodular if and only if $P(\mathbf{1})\cong P(\mathbf{1})^*$, where $P(\mathbf{1})$ is the projective cover of $\mathbf{1}$ in $\Rep_k(H)$. Thus, if $\Rep_k(H)$ is unimodular then 
\begin{eqnarray*}
\lefteqn{|H^{\circ}|P_{\C}(\mathbf{1})\cong\O(G^{\circ})\ot \mathbf{F}\left(P(\mathbf{1})\right)}\\
& \cong & \O(G^{\circ})\ot \mathbf{F}\left(P(\mathbf{1})^*\right)\cong \O(G^{\circ})\ot \mathbf{F}\left(P(\mathbf{1})\right)^*\\
& \cong & \O(G^{\circ})^*\ot \mathbf{F}\left(P(\mathbf{1})\right)^*\cong \left(\O(G^{\circ})\ot \mathbf{F}\left(P(\mathbf{1})\right)\right)^*\\
& \cong & |H^{\circ}| P_{\C}(\mathbf{1})^*,
\end{eqnarray*}
so $P_{\C}(\mathbf{1})\cong P_{\C}(\mathbf{1})^*$. 
\end{proof}

\begin{remark}
Theorem \ref{projsimpobjs}(5) for \'{e}tale groups follows from \cite{Y}.
\end{remark}

\subsection{The \'{e}tale case}\label{sec:The etale case}
Assume $G$ is {\em \'{e}tale}, i.e, $G=G(k)$. The following result is known in characteristic $0$ \cite{GN,O}.

\begin{corollary}\label{projcoveretaleg}
The following hold:
\begin{enumerate}
\item
For any $(H,H)$-double coset $Z$ with representative $g\in G$, and simple $V\in {\rm Rep}_k (H^g,\xi_g^{-1})$, we have 
$$P_{\C}\left(\mathbf{F}_{Z}(V)\right)=\mathbf{F}_{Z}\left(P_{(H^g,\xi_g^{-1})}\left(V\right)\right)=\left(\O\left(Z\right)\ot_k P_{(H^g,\xi_g^{-1})}\left(V\right),R^g_V\right).$$
\item
We have a direct sum decomposition of abelian categories
$$\C\cong \bigoplus_{Z\in Y}{\rm Rep}_k (H^g,\xi_g^{-1}),$$
i.e., $\overline{\C_{Z}}=\C_{Z}$ for every $Z$.
\item
$\C$ is fusion if and only if $p$ does not divide $|H|$.
\end{enumerate}
\end{corollary}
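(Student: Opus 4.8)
The plan is to obtain all three parts by specializing the general structure results of \S\ref{S:structure-GSC} to the case $G=G(k)$, where $G^{\circ}=1$; consequently $H^{\circ}=1$, $(H^g)^{\circ}=1$ and $Z^{\circ}=g$ for every closed point $Z\in Y(k)$, so that $Z=Z(k)$ as schemes and $\O(G)=k^{G(k)}$ is a semisimple commutative algebra.

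For part~(1), I would start from the isomorphism $P_{\C}(\mathbf{F}_{Z}(V))\cong(\O(G^{\circ})\ot\O(Z(k))\ot_k P_{(H^g,\xi_g^{-1})}(V),R^g_V)$ of Theorem~\ref{projsimpobjs}(3). Since $\O(G^{\circ})=k$ and $Z=Z(k)$, the morphisms $\pi_{Z}$ and $q_{Z}$ induced by \eqref{ses0} are mutually inverse isomorphisms, so the maps $\theta_{Z}$ and $\lambda_{Z}$ of Lemma~\ref{closedptproj}(1) become the identity of $\O(Z)$, and the cocycle $R^g_{V}=(\theta_{Z}\ot\id^{\ot 3})(\id\ot\rho^g_{P_{(H^g,\xi_g^{-1})}(V)})(\lambda_{Z}\ot\id)$ collapses to $\rho^g_{P_{(H^g,\xi_g^{-1})}(V)}$. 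Comparing this with the description of $\mathbf{F}_{Z}$ in Theorem~\ref{projsimpobjs}(1), and using that $\iota_{Z*}$ is simply the fully faithful inclusion of the full subcategory $\C_{Z}\subset\C$, one reads off the chain of identifications $P_{\C}(\mathbf{F}_{Z}(V))=\mathbf{F}_{Z}(P_{(H^g,\xi_g^{-1})}(V))=(\O(Z)\ot_k P_{(H^g,\xi_g^{-1})}(V),R^g_V)$.

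For part~(2), since $\O(G)$ is semisimple every object of $\C=\M(H\times H,\psi^{-1}\times\psi)$ is semisimple as an $\O(G)$-module; hence an object whose composition factors all lie in $\C_{Z}$ is itself annihilated by $\mathscr{I}(Z)$, i.e.\ $\overline{\C_{Z}}=\C_{Z}$ for every $Z$. Combining the decomposition $\C=\bigoplus_{Z\in Y}\overline{\C_{Z}}$ of Theorem~\ref{simpobjs}(2) with the equivalences $\C_{Z}\cong{\rm Rep}_k(H^g,\xi_g^{-1})$ of Theorem~\ref{projsimpobjs}(1) yields the asserted direct sum decomposition of abelian categories (and then part~(1) also follows from the fact that an equivalence of abelian categories preserves projective covers).

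For part~(3), recall that $\C$, being a finite tensor category over the algebraically closed field $k$, is fusion precisely when it is semisimple; by part~(2) this happens if and only if ${\rm Rep}_k(H^g,\xi_g^{-1})$ is semisimple for every double coset $Z$. As $H^g$ is a finite group, ${\rm Rep}_k(H^g,\xi_g^{-1})$ is the module category of the twisted group algebra $k^{\xi_g^{-1}}[H^g]$ (Example~\ref{ex1}), whose trace form is $|H^g|$ times an invertible (monomial) matrix; hence, by the twisted Maschke theorem, it is semisimple if and only if $p\nmid|H^g|$. Since $H^g=H\cap gHg^{-1}$ is a subgroup of $H$, $|H^g|$ divides $|H|$ for every $g$, while $H^1=H$; therefore $p\nmid|H^g|$ for all $g$ if and only if $p\nmid|H|$, which is the claim. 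The only step that deserves any care is the reduction of the twist $R^g_V$ in part~(1) once the connected part is trivial; everything else is a direct unwinding of the general statements of \S\ref{S:structure-GSC}.
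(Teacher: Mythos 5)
Your proposal is correct and is essentially the paper's own argument: the paper proves this corollary simply by specializing Theorem \ref{projsimpobjs} to the \'etale case, and your unwinding (triviality of $G^{\circ}$, $H^{\circ}$, $(H^g)^{\circ}$ and $Z^{\circ}$, so that $\theta_{Z},\lambda_{Z}$ are identities and $R^g_V$ reduces to $\rho^g_{P_{(H^g,\xi_g^{-1})}(V)}$; semisimplicity of $\O(G)$ forcing $\overline{\C_Z}=\C_Z$) is exactly the intended specialization. One small caveat in part (3): in characteristic $p$, nondegeneracy of the trace form is sufficient but not necessary for semisimplicity (e.g.\ $M_p(k)$ has identically vanishing regular trace form), so your trace-form computation only gives the implication $p\nmid|H^g|\Rightarrow k^{\xi_g^{-1}}[H^g]$ semisimple; the converse direction you need is nevertheless covered by your own observation that $H^1=H$ with $\xi_1=1$, where ordinary Maschke shows $\Rep_k(H)$ is not semisimple when $p$ divides $|H|$, so the stated equivalence stands.
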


\begin{proof}
Follow immediately from Theorem \ref{projsimpobjs}.
\end{proof}

\subsection{The normal case}\label{sec:The normal case} 
Assume $H$ is normal in $G$. Then $G/H=H\backslash G$ is a finite group scheme, and the quotient morphism $\pi:G\to G/H$ (\ref{quotient morphism}) is a group scheme morphism. 

Recall (\ref{yetanotherrhoU}) the maps $\rho^{\psi}_U$, $U\in {\rm Coh}(G/H)$.

\begin{theorem}\label{projcovernormalg}
The following hold:
\begin{enumerate}
\item 
The injective tensor functor $\pi^*:{\rm Coh}(G/H)\xrightarrow{1:1}{\rm Coh}(G)$ lifts 
to an injective tensor functor 
$$\pi^*:{\rm Coh}(G/H)\xrightarrow{1:1}\C,\,\,\,U\mapsto \left(\pi^*U,\left(\rho^{\psi^{-1}}_U\ot\id\right)\rho^{\psi}_U\right).$$
\item
Set $\mathbf{F}:=\mathbf{F}_H$. We have an equivalence of abelian categories
$${\rm Coh}(G/H)\boxtimes {\rm Rep}_k (H)\xrightarrow{\cong} \C,\,\,\,\,U\boxtimes V\mapsto \pi^*U \ot \mathbf{F}(V).$$
\end{enumerate}
\end{theorem}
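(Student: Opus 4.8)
The plan is to first establish part (1) and then use it, together with the structure results already available, to deduce part (2). For part (1), the point is that when $H$ is normal in $G$, the quotient $\pi\colon G\twoheadrightarrow G/H$ is a morphism of group schemes, so $\pi^*\colon{\rm Coh}(G/H)\to{\rm Coh}(G)$ is already a tensor functor with respect to convolution. What has to be checked is that $\pi^*U$ carries a compatible $(H\times H,\psi^{-1}\times\psi)$-equivariant structure making it an object of $\C\cong\M(H\times H,\psi^{-1}\times\psi)$ (Lemma \ref{equdefofgstc}), and that this assignment is monoidal and fully faithful. For the equivariant structure I would take the left $\O(H)_\psi$-comodule structure $\rho^\psi_U$ from \eqref{yetanotherrhoU} (via Theorem \ref{helpful2}(2), $\pi^*U=(U\ot_{\O(G/H)}\O(G),\rho^\psi_U)\in\M(H,\psi)$) together with the right $\O(H)_{\psi^{-1}}$-comodule structure coming from the fact that, since $H$ is normal, the left and right translation actions of $H$ on $G$ descend to $G/H$ in a way that is trivial on $\pi^*U$ up to the twists; concretely one sets the bicomodule structure to be $(\rho^{\psi^{-1}}_U\ot\id)\rho^\psi_U$ as in the statement and checks the bicomodule axioms. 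Full faithfulness is inherited from Theorem \ref{helpful2}(2) (or from the fact that $\pi^*$ is already fully faithful on ${\rm Coh}$), and monoidality reduces to the standard compatibility of $\pi^*$ with convolution plus the cocycle conditions on $\psi$.

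For part (2), I would produce the functor $U\boxtimes V\mapsto \pi^*U\ot\mathbf{F}(V)$ using part (1) and the tensor embedding $\mathbf{F}=\mathbf{F}_H\colon{\rm Rep}_k(H)\xrightarrow{\cong}\C_H$ from Theorem \ref{projsimpobjs}(1), and then show it is an equivalence by a counting/decomposition argument. The key observation is that, because $H$ is normal, for any closed point $g\in G(k)$ the subgroup scheme $H^g=H\cap gHg^{-1}$ equals $H$, and the cocycle $\xi_g^{-1}={\psi^{-1}}\psi^{g^{-1}}$ is cohomologically trivial (indeed $\psi^{g^{-1}}$ and $\psi$ differ by a coboundary since conjugation by $g$ acts trivially on $H^2(H,\mathbb{G}_m)$ when... — more precisely, one checks $\xi_g$ is a coboundary, so ${\rm Rep}_k(H^g,\xi_g^{-1})\cong{\rm Rep}_k(H)$). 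Hence by Theorem \ref{simpobjs}(2) the simple objects of $\C$ are parametrized by pairs $(Z,V)$ with $Z\in Y(k)$ and $V\in{\rm Irr}({\rm Rep}_k(H))$; and the double coset space $Y(k)=H\backslash G/H$ is exactly $(G/H)(k)$ since $H$ is normal, which is also the index set for the simples $\delta_{\bar g}$ of ${\rm Coh}(G/H)$. Thus on the level of simple objects the proposed functor matches $\delta_{\bar g}\boxtimes V\mapsto \pi^*\delta_{\bar g}\ot\mathbf{F}(V)=\mathbf{Ind}_{Z_g}(V)$ (up to identifying $\xi_g$-representations with $H$-representations), establishing a bijection on isomorphism classes of simples.

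To upgrade this bijection on simples to an equivalence of abelian categories I would either (i) check directly that the functor is exact, sends projectives to projectives with the right multiplicities — using Theorem \ref{projsimpobjs}(3), which gives $P_\C(\mathbf{F}_Z(V))\cong(\O(G^\circ)\ot\O(Z(k))\ot_k P_{(H,\xi_g^{-1})}(V),R^g_V)$, and comparing with $\pi^*P_{\bar g}\ot\mathbf{F}(P(V))$ via Corollary \ref{simprojhpsi}(3) — and is fully faithful, or (ii) observe that both sides decompose as $\bigoplus_{\bar g\in(G/H)(k)}$ of blocks and that on each block ${\rm Coh}(G/H)_{\bar g}\boxtimes{\rm Rep}_k(H)\to\overline{\C_{Z_g}}$ is an equivalence because ${\rm Coh}(G/H)_{\bar g}\cong{\rm Coh}(G^\circ_{G/H}\text{-block})$ matches the $\O(Z(k))$-and-$\O(G^\circ)$ bookkeeping in $\overline{\C_{Z_g}}$ while $\mathbf{F}$ already handles the ${\rm Rep}_k(H)$ factor. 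The main obstacle I anticipate is bookkeeping the equivariant/bicomodule structures precisely enough to see that the tensor product $\pi^*U\ot\mathbf{F}(V)$ in $\C$ really is the external product of the two structures — i.e.\ that convolution with $\pi^*U$ does not disturb the $H$-equivariance supporting $\mathbf{F}(V)$ — and identifying the twisted representation categories ${\rm Rep}_k(H^g,\xi_g^{-1})$ with ${\rm Rep}_k(H)$ compatibly across all the cosets; this is where normality of $H$ is used essentially and where one must be careful with the conjugation twists $\psi^{g^{-1}}$.
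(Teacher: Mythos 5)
Your part (1) is essentially the paper's argument (the paper simply invokes Theorem \ref{helpful2} together with normality of $H$), so the issue is with part (2). The pivotal claim there — that $\xi_g=\psi^{-1}\psi^{g^{-1}}$ is a coboundary for every closed point $g\in G(k)$, so that $\Rep_k(H^g,\xi_g^{-1})\cong\Rep_k(H)$ — is not justified. The reason you offer, that conjugation by $g$ acts trivially on $H^2(H,\mathbb{G}_m)$, is only valid for inner automorphisms of $H$, i.e.\ for $g\in H(k)$; for $g\notin H$ conjugation is in general an outer automorphism of $H$ and need not act trivially on $H^2(H,\mathbb{G}_m)$, so $[\psi^{g^{-1}}]=[\psi]$ cannot be asserted for free, and your own hedging at this point signals the gap. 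The paper never needs this as an \emph{input}: it identifies the blocks by observing that $\pi^*\delta_{\bar{g}}$ is an invertible object of $\C$ lying in $\C_{gH}$ (because $\delta_{\bar{g}}$ is invertible in ${\rm Coh}(G/H)$ and $\pi^*$ is a tensor functor by part (1)), so that $\pi^*\delta_{\bar{g}}\ot-$ is an abelian equivalence $\overline{\C_{H}}\xrightarrow{\cong}\overline{\C_{gH}}$; composing with the base-block equivalence ${\rm Coh}(G/H)_{\bar{1}}\boxtimes\Rep_k(H)\xrightarrow{\cong}\overline{\C_{H}}$, $U\boxtimes V\mapsto\pi^*U\ot\mathbf{F}(V)$, gives the statement block by block. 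In that argument the triviality of $[\xi_g]$, and the coherent identifications across all cosets that you worry about at the end, are \emph{consequences} of the existence of the invertible objects $\pi^*\delta_{\bar{g}}$ furnished by part (1), not facts available beforehand.

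A second, smaller problem: a bijection on simple objects does not by itself yield an equivalence of abelian categories, so your route (i) would still require matching projective covers and verifying full faithfulness — e.g.\ comparing $\pi^*P_{\bar{g}}\ot\mathbf{F}(P_{H}(V))$ with $P_{\C}(\mathbf{F}_{Z}(V))$ from Theorem \ref{projsimpobjs}(3) — which is genuine extra work that the paper's transport-by-invertible-objects argument avoids entirely. The repair is therefore to prove (1) first and then exploit the invertibility of $\pi^*\delta_{\bar{g}}$ as the paper does, rather than attempting to trivialize $\xi_g$ directly.
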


\begin{proof}
\footnote{ 
See \cite[Example 5.8]{BG} for a different proof.}
(1) Follows from Theorem \ref{helpful2} (since $H$ is normal).

(2) Since for any $\bar{g}\in (G/H)(k)$, the object $\pi^*\delta_{\bar{g}}\in \C_{gH}$ is invertible, we have an equivalence of abelian categories
$$\pi^*\delta_{\bar{g}}\ot - :\overline{\C_{H}}\xrightarrow{\cong} \overline{\C_{gH}},\,\,\,S\mapsto \pi^*\delta_{\bar{g}}\ot S.$$

Now since we have an equivalence
$$ 
{\rm Coh}(G/H)_{\bar{1}}\boxtimes {\rm Rep}_k (H)\xrightarrow{\cong}\overline{\C_{H}},\,\,\,U\boxtimes V\mapsto \pi^*U\ot \mathbf{F}(V),$$
it follows that for any $\bar{g}\in (G/H)(k)$, the functor 
$${\rm Coh}(G/H)_{\bar{g}}\boxtimes {\rm Rep}_k (H)\xrightarrow{\cong} \overline{\C_{gH}},\,\,\,U\boxtimes V\mapsto \pi^*U\ot \mathbf{F}(V),$$
is an equivalence of abelian categories, 
which implies the statement.
\end{proof}

\subsection{The connected case}\label{sec:The connected case}
Assume that $G=G^{\circ}$. Recall the group scheme embedding $\partial:=\partial_1:H\xrightarrow{1:1} H\times H$, $h\mapsto (h,h)$ (\ref{thetag}). It is clear that $\partial^{\sharp}:\O(H)^{\ot 2}\to \O(H)$ is the multiplication map of $\O(H)$.

Note that the map
$$\widetilde{\mathfrak{c}}:\O(H)\to \O(H\times H),\,\,\,f\mapsto 1\ot f,$$
is a cleaving map (\ref{nbpgammatilde}) with convolution inverse
$$\widetilde{\mathfrak{c}}^{-1}:\O(H)\to \O(H\times H),\,\,\,f\mapsto 1\ot {\rm S}(f).$$
Since the induced $2$-cocycle $\widetilde{\sigma}$ is trivial, it follows that   
\begin{gather*} 
\widetilde{\phi}:\O\left(H\backslash \left(H\times H\right)\right)\ot \O(H)\xrightarrow{\cong}\O(H\times H),\,\,\,{\rm f}\ot f\mapsto f'(1\ot f),\\
\widetilde{\phi}^{-1}:\O(H\times H)\xrightarrow{\cong} \O\left(H\backslash \left(H\times H\right)\right)\ot \O(H),\,\,\,{\rm f}\mapsto {\rm f}_1\widetilde{\mathfrak{c}}^{-1}(\partial^{\sharp}({\rm f}_2))\ot \partial^{\sharp}({\rm f}_3),
\end{gather*}
and $\widetilde{\alpha}:\O(H\times H)\twoheadrightarrow \O\left(H\backslash \left(H\times H\right)\right)$, ${\rm f}\mapsto {\rm f}_1\widetilde{\mathfrak{c}}^{-1}(\partial^{\sharp}({\rm f}_2))$ (see \S\ref{sec:Right principle homogeneous spaces}).
%

For any $V\in{\rm Rep }_k(H)$, let ${\rm F}_V:=\widetilde{\alpha}\ot\id$ (see \S\ref{sec:thesecondequivalence}). We have 
\begin{gather*}
{\rm F}_V:\O(H\times H)\ot^{\O(H)}V\xrightarrow{\cong}\O(H\backslash (H\times H))\ot_k V,\\
{\rm f}\ot v\mapsto {\rm f}_1\widetilde{\mathfrak{c}}^{-1}(\partial^{\sharp}({\rm f}_2))\ot v,
\end{gather*}
and
\begin{gather*}
{\rm F}_V^{-1}:\O(H\backslash (H\times H))\ot_k V\xrightarrow{\cong}\O(H\times H)\ot^{\O(H)}V,\\
{\rm f}\ot v\mapsto {\rm f}\widetilde{\mathfrak{c}}(v^{1})\ot v^{0}.
\end{gather*}
Recall also (\ref{nuviexp}) the map 
\begin{equation*}
\rho_V:\O(H\backslash (H\times H))\ot_k V\to \O(H\backslash (H\times H))\ot_k V\ot \O(H\times H)_{\psi^{-1}\times\psi}.
\end{equation*}

\begin{lemma}\label{deflambdaV11}
For any ${\rm f}\ot v\in\O(H\backslash (H\times H))\ot_k V$, we have
\begin{equation*}
\rho_V({\rm f}\ot v)={\rm f}_1\left(\psi_1^{-1}\ot {\rm S}(\psi_2^{-1}\partial^{\sharp}\left({\rm f}_2\right))\right)\ot v^0\ot (\psi^{-2}\ot v^1){\rm f}_3. 
\end{equation*}
\end{lemma}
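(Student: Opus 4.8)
The plan is to obtain the formula by a direct substitution of the explicit cleaving data of the connected case into the general expression (\ref{nuviexp}) for $\rho_V$, followed by a simplification that exploits the commutativity of $\O(H)$ and $\O(H\times H)$. Recall from the discussion preceding the lemma that here $\partial^{\sharp}$ is the multiplication map of $\O(H)$, that $\widetilde{\mathfrak{c}}(f)=1\ot f$ and $\widetilde{\mathfrak{c}}^{-1}(f)=1\ot {\rm S}(f)$, that $\Psi=\psi^{-1}\times\psi$ so that (in the paper's shorthand) $\Psi^{1}=\psi^{-1}\ot\psi^{1}$ and $\Psi^{2}=\psi^{-2}\ot\psi^{2}$ in $\O(H)\ot\O(H)$, and that $\xi_{1}=\partial^{\sharp}(\Psi)=1$; throughout, ${\rm f}_{1}\ot{\rm f}_{2}\ot{\rm f}_{3}$ denotes the iterated comultiplication of $\O(H\times H)$ applied to ${\rm f}\in\O(H\backslash(H\times H))\subset\O(H\times H)$, so that $\partial^{\sharp}({\rm f}_{2})\in\O(H)$ makes sense.

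First I would record two elementary observations. Since $\O(H)$ is commutative, $\widetilde{\mathfrak{c}}^{-1}$ and $\widetilde{\mathfrak{c}}^{-1}\circ\partial^{\sharp}$ are algebra maps, $\widetilde{\mathfrak{c}}$ is a coalgebra map, and $\widetilde{\mathfrak{c}}^{-1}\partial^{\sharp}\widetilde{\mathfrak{c}}=\widetilde{\mathfrak{c}}\circ {\rm S}$. Consequently, in the first tensor component of (\ref{nuviexp}) the factors carrying $\widetilde{\mathfrak{c}}(v^{1})$ collect (using commutativity of $\O(H\times H)$) as $\widetilde{\mathfrak{c}}(v^{1}_{1})\,\widetilde{\mathfrak{c}}^{-1}\partial^{\sharp}\widetilde{\mathfrak{c}}(v^{1}_{2})=\widetilde{\mathfrak{c}}\bigl(v^{1}_{1}{\rm S}(v^{1}_{2})\bigr)=\varepsilon(v^{1}_{1})$, so the cleaving-map contributions disappear from the first component and a single $\widetilde{\mathfrak{c}}(v^{1})=1\ot v^{1}$ survives in the last one. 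In the same way, expanding $\Psi=\psi^{-1}\times\psi$, the leg $\psi^{1}$ of $\Psi^{1}$ is comultiplied into $\psi^{1}_{1}$ directly and, inside ${\rm S}$ via $\widetilde{\mathfrak{c}}^{-1}\partial^{\sharp}$, into ${\rm S}(\psi^{1}_{2})$; by commutativity of $\O(H)$ these combine to $\psi^{1}_{1}{\rm S}(\psi^{1}_{2})=\varepsilon(\psi^{1})$, and together with the surviving $\psi^{2}$ in the last component and the normalization $\sum\varepsilon(\psi^{1})\psi^{2}=1$ this shows that the untwisted factor $\psi$ contributes trivially, i.e. in both components $\Psi^{1}$ and $\Psi^{2}$ may be replaced by $\psi^{-1}\ot 1$ and $\psi^{-2}\ot 1$ respectively.

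After these two cancellations, all that remains in (\ref{nuviexp}) is, in the first component, the product ${\rm f}_{1}\,(\psi^{-1}_{1}\ot\psi^{-1}_{2})\,\bigl(1\ot {\rm S}(\partial^{\sharp}({\rm f}_{2}))\bigr)$, and, in the last component, $(\psi^{-2}\ot 1)\,{\rm f}_{3}\,(1\ot v^{1})$; multiplying out in the two tensor slots of $\O(H)\ot\O(H)=\O(H\times H)$ (again using commutativity and ${\rm S}$ being an anti-homomorphism) collapses these exactly to ${\rm f}_{1}\bigl(\psi_{1}^{-1}\ot {\rm S}(\psi_{2}^{-1}\partial^{\sharp}({\rm f}_{2}))\bigr)$ and $(\psi^{-2}\ot v^{1}){\rm f}_{3}$, which is the asserted identity; here $\psi^{-1}_{1}\ot\psi^{-1}_{2}\ot\psi^{-2}=(\Delta\ot\id)(\psi^{-1})$ as usual. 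That the first component lies in the coideal subalgebra $\O(H\backslash(H\times H))$ requires no separate argument, since $\rho_{V}$ is already well defined by Theorem \ref{modrep00}. There is no conceptual obstacle here: the computation is entirely formal once commutativity is exploited, and the only point that needs care is the bookkeeping of the nested Sweedler indices, namely keeping track of which of the two correlated legs of $\Psi=\psi^{-1}\times\psi$, and which parts of the threefold comultiplications of $\widetilde{\mathfrak{c}}(v^{1})$ and of ${\rm f}$, land in which tensor component.
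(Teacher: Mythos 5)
Your overall route is the same as the paper's: the proof of Lemma \ref{deflambdaV11} is precisely the direct substitution of the connected-case cleaving data $\widetilde{\mathfrak{c}}(f)=1\ot f$, $\widetilde{\mathfrak{c}}^{-1}(f)=1\ot{\rm S}(f)$ into (\ref{nuviexp}), followed by the two cancellations you identify: the collapse $\sum v^1_1{\rm S}(v^1_2)\ot v^1_3=1\ot v^1$ for the legs of $\widetilde{\mathfrak{c}}(v^1)$, and the disappearance of the positive factor $\psi$ of $\Psi=\psi^{-1}\times\psi$ via $\sum\psi^1_1{\rm S}(\psi^1_2)\ot\psi^2=1\ot 1$ together with the normalization of the twist. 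These are exactly the steps in the paper's computation.

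There is, however, a slip in your last displayed intermediate expression which, as written, invalidates the final step. The second Sweedler leg of $\psi^{-1}$ enters (\ref{nuviexp}) only through $\widetilde{\mathfrak{c}}^{-1}\partial^{\sharp}$, hence under the antipode — exactly as you yourself note for $\psi^1_2$ — so what survives in the first component is ${\rm f}_1\,\bigl(\psi^{-1}_1\ot{\rm S}(\psi^{-1}_2)\bigr)\,\bigl(1\ot{\rm S}\partial^{\sharp}({\rm f}_2)\bigr)$, not ${\rm f}_1\,\bigl(\psi^{-1}_1\ot\psi^{-1}_2\bigr)\,\bigl(1\ot{\rm S}\partial^{\sharp}({\rm f}_2)\bigr)$. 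From the expression you wrote, the claimed collapse to ${\rm f}_1\bigl(\psi^{-1}_1\ot{\rm S}(\psi^{-1}_2\partial^{\sharp}({\rm f}_2))\bigr)$ does not follow: since ${\rm S}(\psi^{-1}_2\partial^{\sharp}({\rm f}_2))={\rm S}(\psi^{-1}_2){\rm S}(\partial^{\sharp}({\rm f}_2))$ (commutativity), it would force $\psi^{-1}_2={\rm S}(\psi^{-1}_2)$, which fails in general. This is precisely the Sweedler bookkeeping you flag as the only delicate point, so it deserves the care: with the antipode restored, multiplying out the two tensor slots gives exactly the asserted identity, and your argument then coincides step by step with the paper's proof, which carries out the same calculation keeping all legs explicit.
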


\begin{proof}
Set $\Psi:=\psi^{-1}\times \psi$. By (\ref{nuviexp}), we have
\begin{eqnarray*}
\lefteqn{\rho_V\left({\rm f}\ot v\right)}\\
& = & \Psi_1^1{\rm f}_1\widetilde{\mathfrak{c}}(v^1)_1\widetilde{\mathfrak{c}}^{-1}\partial^{\sharp}\left(\Psi_2^1{\rm f}_2\widetilde{\mathfrak{c}}(v^1)_2\right)\ot v^0\ot \Psi^2{\rm f}_3\widetilde{\mathfrak{c}}(v^1)_3\\
& = & \Psi_1^1{\rm f}_1(1\ot v^1_1)\left(1\ot {\rm S}\partial^{\sharp}\left(\Psi_2^1{\rm f}_2(1\ot v^1_2)\right)\right)\ot v^0\ot \Psi^2{\rm f}_3(1\ot v^1_3)\\
& = & \Psi_1^1{\rm f}_1\left(1\ot v^1_1{\rm S}(v^1_2)\right)\left(1\ot {\rm S}\partial^{\sharp}\left(\Psi_2^1{\rm f}_2\right)\right)\ot v^0\ot \Psi^2{\rm f}_3(1\ot v^1_3)\\
& = & \Psi_1^1{\rm f}_1\left(1\ot {\rm S}\partial^{\sharp}\left(\Psi_2^1{\rm f}_2\right)\right)\ot v^0\ot \Psi^2{\rm f}_3(1\ot v^1)\\
& = & (\psi_1^{-1}\ot \psi_1^{1}){\rm f}_1\left(1\ot {\rm S}\partial^{\sharp}\left((\psi_2^{-1}\ot \psi_2^{1}){\rm f}_2\right)\right)\ot v^0\ot (\psi^{-2}\ot \psi^{2}){\rm f}_3(1\ot v^1)\\
& = & (\psi_1^{-1}\ot \psi_1^{1}){\rm f}_1\left(1\ot {\rm S}(\psi_2^{-1}\psi_2^{1}){\rm S}\partial^{\sharp}\left({\rm f}_2\right)\right)\ot v^0\ot (\psi^{-2}\ot \psi^{2}){\rm f}_3(1\ot v^1)\\
& = & {\rm f}_1\left(\psi_1^{-1}\ot \psi_1^{1}{\rm S}(\psi_2^{-1}\psi_2^{1}){\rm S}\partial^{\sharp}\left({\rm f}_2\right)\right)\ot v^0\ot (\psi^{-2}\ot \psi^{2}){\rm f}_3(1\ot v^1)\\
& = & {\rm f}_1\left(\psi_1^{-1}\ot {\rm S}(\psi_2^{-1}){\rm S}\partial^{\sharp}\left({\rm f}_2\right)\right)\ot v^0\ot (\psi^{-2}\ot 1){\rm f}_3(1\ot v^1)\\
& = & {\rm f}_1\left(\psi_1^{-1}\ot {\rm S}(\psi_2^{-1}\partial^{\sharp}\left({\rm f}_2\right))\right)\ot v^0\ot (\psi^{-2}\ot v^1){\rm f}_3,
\end{eqnarray*}
for any ${\rm f}\ot v\in \O\left(H\backslash H\times H\right)\ot V$, as claimed.
\end{proof}

Now recall the scheme isomorphism $\mathfrak{j}:=\mathfrak{j}_1$ (\ref{doucosetsch}).
We have  
\begin{gather*}
\mathfrak{j}^{\sharp}:\O(H)\xrightarrow{\cong}\O\left(H\backslash \left(H\times H\right)\right),\,\,\,f\mapsto {\rm S}(f_1)\ot f_2,
\end{gather*}
and
\begin{gather*}
(\mathfrak{j}^{\sharp})^{-1}:\O\left(H\backslash \left(H\times H\right)\right)\xrightarrow{\cong}\O(H),\,\,\,{\rm f}\mapsto (\varepsilon\bar{\ot}\id)({\rm f}).
\end{gather*}

\begin{theorem}\label{projcoverconnggst}
The following hold:
\begin{enumerate}
\item
We have $\C=\overline{\C_H}$.
\item
We have an equivalence of tensor categories   
$$\mathbf{F}:=\mathbf{F}_H:\Rep_k(H)\xrightarrow{\cong}\C_H,\,\,\,V\mapsto \iota_*\left(\O(H)\ot_k V,\rho_V^1\right),$$
where for every $f\ot v\in \O\left(H\right)\ot V$, 
$$\rho_V^1\left(f\ot v\right)={\rm S}(\psi^{-1})f_2\ot v^0\ot \psi^{-2}{\rm S}(f_1)\ot v^1f_3.$$ 
\item
For any simple $V\in \Rep_k(H)$, we have   
$$P_{\C}\left(\mathbf{F}(V)\right)\cong 
\left(\O(G)\ot_k P_{H}(V),R_V^1\right),$$
where for every ${\rm f}\ot x\in \O(G)\ot P_{H}(V)$,   
$$R_V^1\left({\rm f}\ot x\right)={\rm f}\ot x^0\ot 1\ot x^1.$$
\item
For any $V\in \Rep_k(H)$, we have
$$\FPdim\left(P_{\C}\left(\mathbf{F}\left(V\right)\right)\right)=\frac{|G|}{|H|}\dim\left(P_{H}\left(V\right)\right).$$
\end{enumerate}
\end{theorem}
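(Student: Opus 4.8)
The plan is to obtain all four assertions by specializing Theorems~\ref{simpobjs} and~\ref{projsimpobjs} to $K=H$, $\eta=\psi$ (so that the group scheme-theoretical category of \S\ref{sec:Module categories over GCTC} is $\C$ itself, and $H^1=H$, $\xi_1=\psi^{-1}\psi=1$), and then to replace the resulting abstract induction data by the explicit cleaving maps $\widetilde{\mathfrak{c}}$, $\widetilde{\alpha}$, $\widetilde{\phi}$ and the scheme isomorphism $\mathfrak{j}=\mathfrak{j}_1$ recorded just above the statement. For (1): since $G=G^{\circ}$ we have $H(k)\subseteq G(k)=1$, hence $H=H^{\circ}$; as the quotient morphism $G\twoheadrightarrow Y=G/(H\times H)$ is surjective on closed points and $G$ has a single closed point, $Y$ too has a single closed point, namely the double coset $Z=H\cdot 1\cdot H$. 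Feeding this into the decomposition $\C=\bigoplus_{Z\in Y(k)}\overline{\C_{Z}}$ of Theorem~\ref{simpobjs}(2) collapses it to $\C=\overline{\C_{H}}$. Granting (2)--(3), part (4) is the immediate specialization of the Frobenius--Perron dimension formula of Theorem~\ref{projsimpobjs}(3): using $G^{\circ}=G$, $H^{\circ}=H$, $g=1$, $H^g=H$ and $H(k)=H^g(k)=1$, it reads $\FPdim(P_{\C}(\mathbf{F}(V)))=\frac{|G|}{|H|}\dim P_{H}(V)$.

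The content of (2) beyond Theorem~\ref{projsimpobjs}(1)---which already supplies the tensor equivalence $\mathbf{F}_{H}\colon\Rep_k(H^1,\xi_1^{-1})=\Rep_k(H)\xrightarrow{\cong}\C_{H}$---is the explicit shape of $\rho_V^1=\bigl((\mathfrak{j}^{\sharp})^{-1}\ot\id^{\ot 3}\bigr)\,\rho_V\,\bigl(\mathfrak{j}^{\sharp}\ot\id\bigr)$. Here I would substitute the cleaving map $\widetilde{\mathfrak{c}}(f)=1\ot f$, whose induced $2$-cocycle $\widetilde{\sigma}$ is trivial, so that Lemma~\ref{deflambdaV11} computes $\rho_V$ on $\O(H\backslash(H\times H))\ot_k V$, and then conjugate by $\mathfrak{j}^{\sharp}(f)={\rm S}(f_1)\ot f_2$ and $(\mathfrak{j}^{\sharp})^{-1}({\rm f})=(\varepsilon\bar{\ot}\id)({\rm f})$, simplifying with the antipode axioms and the normalizations $(\varepsilon\ot\id)(\psi^{\pm1})=(\id\ot\varepsilon)(\psi^{\pm1})=1$; this should land exactly on $\rho_V^1(f\ot v)={\rm S}(\psi^{-1})f_2\ot v^0\ot\psi^{-2}{\rm S}(f_1)\ot v^1f_3$. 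This Hopf-algebraic bookkeeping, entirely parallel to the proof of Lemma~\ref{deflambdaV11}, is in my view the main technical point of the theorem; a mild but recurring subtlety is keeping track of which copy of $H$ acts on which side in $\O(H\times H)_{\psi^{-1}\times\psi}$ (left via $\psi^{-1}$, right via $\psi$), as this is precisely what produces the asymmetry between the second and fourth tensor legs in the formula.

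For (3), I would specialize Theorem~\ref{projsimpobjs}(3) to $Z=H$: since $G^{\circ}=G$, $Z(k)=1$, $\O(Z(k))=k$, $H^g=H$, $\xi_g=1$ and $P_{(H^g,\xi_g^{-1})}(V)=P_{H}(V)$, its conclusion becomes $P_{\C}(\mathbf{F}(V))\cong\bigl(\O(G)\ot_k P_{H}(V),R_V^1\bigr)$ with $R_V^1=(\theta_Z\ot\id^{\ot 3})(\id\ot\rho^1_{P_{H}(V)})(\lambda_Z\ot\id)$ and the $\O(G)$-module structure inherited from Lemma~\ref{closedptproj}. When $Z(k)$ is trivial, $\lambda_Z$ is the unit $k\hookrightarrow\O(H)$ and $\theta_Z$ is the counit $\O(H)\twoheadrightarrow k$; plugging the formula from (2) for $\rho^1_{P_{H}(V)}$ evaluated at $1\ot x$ into this composite and using $(\varepsilon\ot\id)(\psi^{-1})=1$ to contract the twist legs, the twist factors cancel and one is left with $R_V^1({\rm f}\ot x)={\rm f}\ot x^0\ot 1\ot x^1$, as claimed.
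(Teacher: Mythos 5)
Your proposal is correct and follows essentially the same route as the paper: the paper's proof likewise derives all four parts by specializing Theorems \ref{simpobjs} and \ref{projsimpobjs} to the connected case (single double coset, $H^1=H$, $\xi_1=1$), and the only extra content is exactly the two computations you identify — conjugating the formula of Lemma \ref{deflambdaV11} by $\mathfrak{j}^{\sharp}$ and $(\mathfrak{j}^{\sharp})^{-1}$ to get $\rho_V^1$, and then composing with $\lambda_H$, $\rho^1_{P_H(V)}$ evaluated at $1\ot x$, and $\theta_H$, using $(\varepsilon\ot\id)(\psi^{-1})=1$, to get $R_V^1({\rm f}\ot x)={\rm f}\ot x^0\ot 1\ot x^1$. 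The paper carries out this Hopf-algebraic bookkeeping in full, but your outline of it is the intended argument and lands on the same formulas.
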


\begin{proof}
Follow from Theorem \ref{projsimpobjs}, except for the formulas of $\rho_V^1$ and $R_V^1$. 

(1) By Lemma \ref{deflambdaV11}, for every $f\ot v\in \O(H)\ot_k V$, we have 
\begin{eqnarray*}
\lefteqn{\rho_V\left(\mathfrak{j}_{1}^{\sharp}\ot\id\right)\left(f\ot v\right)=\rho_V\left({\rm S}(f_1)\ot f_2\ot v\right)}\\
& = & \left({\rm S}(f_1)\ot f_2\right)_1\left(\psi_1^{-1}\ot {\rm S}(\psi_2^{-1}\partial^{\sharp}\left(\left({\rm S}(f_1)\ot f_2\right)_2\right))\right)\ot v^0\ot (\psi^{-2}\ot v^1)\left({\rm S}(f_1)\ot f_2\right)_3\\
& = & \left({\rm S}(f_3)\ot f_4\right)\left(\psi_1^{-1}\ot {\rm S}(\psi_2^{-1}\partial^{\sharp}\left(\left({\rm S}(f_2)\ot f_5\right)\right))\right)\ot v^0\ot (\psi^{-2}\ot v^1)\left({\rm S}(f_1)\ot f_6\right)\\
& = & \left({\rm S}(f_3)\ot f_4\right)\left(\psi_1^{-1}\ot {\rm S}(\psi_2^{-1}{\rm S}(f_2)f_5)\right)\ot v^0\ot (\psi^{-2}\ot v^1)\left({\rm S}(f_1)\ot f_6\right)\\
& = & \left({\rm S}(f_3)\ot f_4\right)\left(\psi_1^{-1}\ot {\rm S}(\psi_2^{-1})f_2{\rm S}(f_5)\right)\ot v^0\ot (\psi^{-2}\ot v^1)\left({\rm S}(f_1)\ot f_6\right)\\
& = & \left({\rm S}(f_3)\psi_1^{-1}\ot {\rm S}(\psi_2^{-1})f_2f_4{\rm S}(f_5)\right)\ot v^0\ot (\psi^{-2}\ot v^1)\left({\rm S}(f_1)\ot f_6\right)\\
& = & \left({\rm S}(f_3)\psi_1^{-1}\ot {\rm S}(\psi_2^{-1})f_2\varepsilon(f_4)\right)\ot v^0\ot (\psi^{-2}\ot v^1)\left({\rm S}(f_1)\ot f_5\right)\\
& = & \left({\rm S}(f_3)\psi_1^{-1}\ot {\rm S}(\psi_2^{-1})f_2\right)\ot v^0\ot (\psi^{-2}\ot v^1)\left({\rm S}(f_1)\ot f_4\right)\\
& = & {\rm S}(f_3)\psi_1^{-1}\ot {\rm S}(\psi_2^{-1})f_2\ot v^0\ot \psi^{-2}{\rm S}(f_1)\ot v^1f_4.
\end{eqnarray*}
Thus, by Theorem \ref{prsimmodrep}, we have
\begin{eqnarray*}
\lefteqn{\rho^1_V\left(f\ot v\right)=\left(\left(\mathfrak{j}_{1}^{\sharp}\right)^{-1}\ot\id^{\ot 3}\right)\rho_V\left(\mathfrak{j}_{1}^{\sharp}\ot\id\right)\left(f\ot v\right)}\\
& = & \left(\left(\mathfrak{j}_{1}^{\sharp}\right)^{-1}\ot\id^{\ot 3}\right)\left({\rm S}(f_3)\psi_1^{-1}\ot {\rm S}(\psi_2^{-1})f_2\ot v^0\ot \psi^{-2}{\rm S}(f_1)\ot v^1f_4 \right)\\
& = & \left(\mathfrak{j}_{1}^{\sharp}\right)^{-1}\left({\rm S}(f_3)\psi_1^{-1}\ot {\rm S}(\psi_2^{-1})f_2\right)\ot v^0\ot \psi^{-2}{\rm S}(f_1)\ot v^1f_4\\
& = & \varepsilon\left({\rm S}(f_3)\psi_1^{-1}\right){\rm S}(\psi_2^{-1})f_2\ot v^0\ot \psi^{-2}{\rm S}(f_1)\ot v^1f_4\\
& = & \varepsilon\left({\rm S}(f_3)\right){\rm S}(\psi^{-1})f_2\ot v^0\ot \psi^{-2}{\rm S}(f_1)\ot v^1f_4\\
& = & {\rm S}(\psi^{-1})f_2\ot v^0\ot \psi^{-2}{\rm S}(f_1)\ot v^1f_3,
\end{eqnarray*}
for every $f\ot v\in \O(H)\ot_k V$, as claimed.
 
(2) Set $\theta:=\theta_{H}$ and $\lambda:=\lambda_{H}$ (see Lemma \ref{closedptproj}). We have 
$$\theta:\O(G)\ot\O(H)\twoheadrightarrow \O(G),\,\,\,{\rm f}\ot f\mapsto\varepsilon(f){\rm f},$$
and 
$$\lambda:\O(G)\xrightarrow{1:1}\O(G)\ot\O(H),\,\,\,{\rm f}\mapsto {\rm f}\ot 1.$$
Also, by (1), we have 
$$\rho^1_{P_{H}(V)}\left(1\ot x\right)={\rm S}(\psi^{-1})\ot x^0\ot \psi^{-2}\ot x^1.$$
Thus, by Theorem \ref{prsimmodrep}, 
we have
\begin{eqnarray*}
\lefteqn{R^1_V\left({\rm f}\ot x\right)=\left(\theta\ot\id^{\ot 3}\right)\left(\id\ot\rho^1_{P_{H}(V)}\right)\left(\lambda\ot\id\right)\left({\rm f}\ot x\right)}\\
& = & \left(\theta\ot\id^{\ot 3}\right)\left(\id\ot\rho^1_{P_{H}(V)}\right)\left({\rm f}\ot 1\ot x\right)\\
& = & \left(\theta\ot\id^{\ot 3}\right)\left({\rm f}\ot {\rm S}(\psi^{-1})\ot x^0\ot \psi^{-2}\ot x^1\right)\\
& = & {\rm f}\varepsilon({\rm S}(\psi^{-1}))\ot x^0\ot \psi^{-2}\ot x^1\\
& = & {\rm f}\ot x^0\ot 1\ot x^1,
\end{eqnarray*}
for any ${\rm f}\ot x\in \O\left(G\right)\ot P_{H}(V)$, as claimed.
\end{proof}

\begin{example}\label{restlie case}
Let $\mathfrak{g}$ be a finite dimensional restricted $p$-Lie algebra, and let $\h\subset \mathfrak{g}$ be a restricted $p$-Lie subalgebra. Let $G,H$ be the associated finite group schemes (see Example \ref{repliealg}). Consider the finite tensor category $\C\left(\mathfrak{g},\mathfrak{h}\right):=\C\left(G,H,1\right)$. By Corollary \ref{projcoverconnggst}, we have $\C(\mathfrak{g},\mathfrak{h})=\overline{{\rm Rep}(\mathfrak{h})}$. 

For example, if $\h$ is unipotent then $\C:=\C(\mathfrak{g},\mathfrak{h})$ is a unipotent tensor category, with unique simple object $\mathbf{1}:=\mathbf{F}(k)=\O(H)$, and 
$$P_{\C}(\mathbf{1})\cong \O(G)\ot_k P_{H}(k)
\cong \O(G)\ot_k \O(H)$$
is the free $\O(G)$-module of rank $|H|$, with trivial left $\O(H)$-cocation and right $\O(H)$-cocation $\id\ot\Delta$.   
So, $\C\cong {\rm Coh}(G)$ as abelian categories, but not necessarily as tensor categories (see Theorem \ref{trivialbrpic} below). \qed
\end{example}

\section{Invertible bimodule categories over ${\rm Coh}(G)$}\label{sec:brauerpicard}
Let $\mathscr{A}$ be a finite tensor category, and let $\mathscr{A}^{\rm rev}$ be the category $\mathscr{A}$ with reversed tensor product. Let $\M$ be a left $\mathscr{A}$-module category. Recall that the opposite category $\mathscr{M}^{\rm op}$ is a right $\mathscr{A}$-module category, with the $\mathscr{A}$-action given by 
$$M\ot^{\mathscr{M}^{\rm op}}A:=A^*\ot^{\mathscr{M}}M;\,\,\,A\in \mathscr{A},\,M\in\mathscr{M}.$$

Recall that an exact $\mathscr{A}$-bimodule category $\mathscr{B}$ is called {\em invertible} if there exists an $\mathscr{A}$-bimodule equivalence
$$\mathscr{B}^{\rm op}\boxtimes_{\mathscr{A}}\mathscr{B}\cong \mathscr{A},$$
or equivalently, if the functor
\begin{equation}\label{rmfunctor2}
R_{\mathscr{B}}:\mathscr{A}^{\rm rev}\to \mathscr{A}^{*}_{\mathscr{B}}={\rm Fun}_{\mathscr{A}}(\mathscr{B},\mathscr{B}),\,\,\,A\mapsto -\ot^{\mathscr{B}} A,
\end{equation}
is an equivalence of tensor categories. Here, $\mathscr{A}^{*}_{\mathscr{B}}$ is with respect to the left $\mathscr{A}$-module structure on $\mathscr{B}$, while $-\ot^{\mathscr{B}} A$ is with respect to the right $\mathscr{A}$-module structure on $\mathscr{B}$. Recall also that any invertible exact bimodule category over $\mathscr{A}$ is indecomposable both as left and right module category over $\mathscr{A}$.

Recall \cite{DN} that ${\rm BrPic}(\mathscr{A})$ denotes the group of invertible exact $\mathscr{A}$-bimodule categories,   
${\rm Pic}(\mathscr{Z}(\mathscr{A}))$ denotes the group of one-sided exact $\mathscr{Z}(\mathscr{A})$-bimodule categories, and there exist group isomorphisms 
\begin{equation}\label{brpicisopic}
{\rm Pic}(\mathscr{Z}(\mathscr{A}))\cong {\rm Aut}^{{\rm br}}(\mathscr{Z}(\mathscr{A}))\cong {\rm BrPic}(\mathscr{A}).
\end{equation}
Recall also that for any $\mathscr{A}$-bimodule categories $\mathscr{B}_1$ and $\mathscr{B}_2$, there exists an equivalence of $\mathscr{A}$-bimodule categories
$$\mathscr{B}_1\boxtimes_{\mathscr{A}}\mathscr{B}_2\cong {\rm Fun}_{\mathscr{A}}(\mathscr{B}_1^{\rm op},\mathscr{B}_2).$$

In the following we describe the group ${\rm BrPic}({\rm Coh}(G))$, which we will denote by ${\rm BrPic}(G)$.

\begin{theorem}\label{trivialbrpic0}
There is a one to one correspondence between equivalence classes of pairs $(H,\psi)$ and equivalence classes of indecomposable exact ${\rm Coh}(G)$-bimodule categories, $(H,\psi)\mapsto \mathscr{B}(H,\psi)$, where $H\subset G$ is an abelian normal closed subgroup scheme and $\psi\in Z^2(H,\mathbb{G}_m)$.
\end{theorem}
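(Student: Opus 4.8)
The plan is to convert the two-sided structure into a one-sided one and then apply the module-category classification already in hand. Since inversion is an anti-automorphism of $G$ and $\mathrm{Rep}(\O(G))\boxtimes\mathrm{Rep}(\O(G))\cong\mathrm{Rep}(\O(G\times G))$, there is a tensor equivalence ${\rm Coh}(G)\boxtimes{\rm Coh}(G)^{\rm rev}\cong{\rm Coh}(G\times G)$, under which an exact ${\rm Coh}(G)$-bimodule category is precisely an indecomposable exact module category over ${\rm Coh}(G\times G)$. In the connected case $G\times G$ is connected, so Theorem \ref{modomega} together with Remark \ref{helpful} classifies these, with no conjugacy redundancy, by pairs $(L,\chi)$ with $L\subset G\times G$ a closed subgroup scheme and $\chi\in Z^2(L,\mathbb{G}_m)$. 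Thus the theorem amounts to singling out which pairs $(L,\chi)$ actually occur and re-encoding them as an abelian normal $H\triangleleft G$ together with $\psi\in Z^2(H,\mathbb{G}_m)$.

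Next I would make the assignment $(H,\psi)\mapsto\mathscr{B}(H,\psi)$ explicit. For abelian normal $H$, the group scheme morphism $\pi\colon G\to G/H$ of §\ref{sec:The normal case} and the tensor functor $\pi^\ast$ (Theorem \ref{projcovernormalg}) allow one to gauge $H$ with discrete torsion $\psi$: concretely, I would take $\mathscr{B}(H,\psi)$ to be the bimodule category attached to the graph-type subgroup $L_H\subset G\times G$ built from the diagonal $\Delta G$ and the copy of $H$ (which is a subgroup precisely because $H$ is normal), equipped with the two-cochain coming from $\psi$. In this description abelianness of $H$ is what lets one dualize the $H$-direction needed to produce the bimodule structure, while normality is what makes the construction $G$-equivariant and $L_H$ a genuine subgroup of $G\times G$. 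I would then verify directly, via the normal-case decomposition, that $\mathscr{B}(H,\psi)$ is an indecomposable exact ${\rm Coh}(G)$-bimodule category, so that it represents a class in ${\rm BrPic}(G)$.

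Finally I would show the assignment is a bijection. Well-definedness and injectivity I would obtain by recovering $(H,\psi)$ from $\mathscr{B}(H,\psi)$ through the pair $(L_H,\chi)$ and the inequivalence of distinct $(L,\chi)$ under the classification above. Surjectivity is the heart of the matter: starting from an arbitrary indecomposable exact bimodule category, I would pass to its pair $(L,\chi)$ and use the two commuting ${\rm Coh}(G)$-module structures together with the connected-case description (Theorem \ref{projcoverconnggst}) to force $L$ into the shape $L_H$ for an abelian normal $H$, and $\chi$ into $\psi$-form. The main obstacle will be exactly this exhaustiveness step — showing that the compatibility of the left and right ${\rm Coh}(G)$-actions, in the connected case, rigidifies the a priori much larger supply of subgroup schemes $L\subset G\times G$ down to the diagonal-plus-$H$ ones, and forces $H$ to be both abelian and normal. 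This is where the structural results of §\ref{sec:The normal case}--§\ref{sec:The connected case} must be pushed hardest, and where the abelian-normal condition is genuinely extracted rather than merely assumed.
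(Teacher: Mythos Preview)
Your framework—passing to ${\rm Coh}(G\times G)$ and invoking Theorem~\ref{modomega} for $G\times G$—is sound, but it routes the argument through its hardest point rather than around it. As you yourself note, the crux of your plan is the surjectivity step: showing that \emph{every} closed subgroup scheme $L\subset G\times G$ with cochain $\chi$ is, up to equivalence, of the ``diagonal-plus-$H$'' form for some abelian normal $H\subset G$. You do not give an argument for this, and \S\ref{sec:The normal case}--\S\ref{sec:The connected case} do not obviously yield it; a priori the supply of $(L,\chi)$ with $L\subset G\times G$ is much larger than the supply of $(H,\psi)$, and nothing in your sketch explains the collapse.

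The paper's proof avoids this step entirely by applying the one-sided classification \emph{twice} rather than once on the product. Given an indecomposable exact bimodule $\mathscr B$, one regards it as a left ${\rm Coh}(G)$-module, obtaining $\mathscr B\cong\M(H,\psi)$, and separately as a right ${\rm Coh}(G)$-module (equivalently, a left ${\rm Coh}(G^{\rm op})$-module), obtaining $\mathscr B\cong\mathscr N(L,\eta)$; connectedness of $G$ guarantees both identifications are unique (Remark~\ref{helpful}). Since the unique simple object of $\mathscr B$ is simultaneously $(\O(H),\Delta_\psi)$ and $(\O(L),\Delta_\eta)$, one gets $(H,\psi)=(L,\eta)$ immediately. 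Normality of $H$ then falls out of ${\rm Coh}(G/H)={\rm Coh}(H\backslash G)$, and abelianness from the fact that $H$ must also sit as a closed subgroup scheme of $G^{\rm op}$. The construction of $\mathscr B(H,\psi)$ in the forward direction is likewise more concrete than your graph-subgroup description: one simply equips the abelian category ${\rm Coh}(G/H)$ with its left module structure $\M(H,\psi)$ and its right module structure $\mathscr N(H,\psi)$, which coexist precisely because $H$ is abelian and normal. The subgroup-of-$G\times G$ picture never enters.

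So your approach is not wrong, but it trades an elementary matching argument for a structural claim about subgroup schemes of $G\times G$ that you have not established; the paper's route is both shorter and complete.
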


\begin{proof}
Assume $(H,\psi)$ is as stated, and consider the abelian category $\mathscr{B}:={\rm Coh}(G/H)$. Recall that $(H,\psi)$ determines a structure of a left indecomposable exact ${\rm Coh}(G)$-module category on $\mathscr{B}$, which we denote by $\M(H,\psi)$ (see \S \ref{sec:M(H,psi)}). Similarly, since $H\subset G^{\rm op}$ is a closed subgroup scheme (as $H$ is abelian), $(H,\psi)$ determines a structure of a left indecomposable exact ${\rm Coh}(G^{\rm op})$-module category on the category ${\rm Coh}(G^{\rm op}/H)={\rm Coh}(H\backslash G)=\mathscr{B}$ (as $H$ is normal). Thus, since ${\rm Coh}(G^{\rm op})^{\rm rev}={\rm Coh}(G)$, $(H,\psi)$ determines a structure of a right indecomposable exact ${\rm Coh}(G)$-module category on $\mathscr{B}$, which we denote by $\mathscr{N}(H,\psi)$ (see \S \ref{sec:M(H,psi)}). Thus, $(H,\psi)$ determines  
a canonical structure of an indecomposable exact ${\rm Coh}(G)$-bimodule category on $\mathscr{B}$, which we will denote by $\mathscr{B}(H,\psi)$.

Conversely, let $\mathscr{B}$ be an indecomposable exact ${\rm Coh}(G)$-bimodule category. Then by \cite{G}, $\mathscr{B}=\M(H,\psi)$ as {\em left} ${\rm Coh}(G)$-module categories, and $\mathscr{B}=\mathscr{N}(L,\eta)$ as {\em right} ${\rm Coh}(G)$-module categories. (Note that by Theorem \ref{modomega}, the pairs $(H,\psi)$, $(L,\eta)$ are unique since $G$ is connected.) In particular, since $(\O(H),\Delta_{\psi})$ is the unique simple object of $\M(H,\psi)$, and $(\O(L),\Delta_{\eta})$ is the unique simple object of $\mathscr{N}(L,\eta)$ \cite{G}, it follows that $(H,\psi)=(L,\eta)$ (since $G$ is connected). Thus, ${\rm Coh}(G/H)={\rm Coh}(H\backslash G)$, so $H$ is normal. Moreover, since $\mathscr{N}(H,\eta)$ is a left ${\rm Coh}(G^{\rm op})$-module category, it follows that $H\subset G^{\rm op}$ is a closed subgroup scheme, so $H$ is abelian.

Finally, it is straightforward to verify that the above two assignments are inverse to each other.
\end{proof}

Recall that for a finite abelian group scheme $H,H^D$ denotes the Cartier dual of $H$ (i.e., $\O(H^D)=k[H]$).

\begin{definition}
Let ${\rm IB}(G)$ denote the set of equivalence classes of pairs $(H,\psi)$ such that 
\begin{enumerate}
\item $H\subset G$ is an abelian normal closed subgroup scheme,
\item $\psi\in Z^2(H,\mathbb{G}_m)$ is a $G$-invariant nondegenerate twist, and
\item 
$G^{\rm op}=H^D\rtimes K$, where $K:=G/H$. 
\end{enumerate}
Here, $(H_1,\psi_1)$ is equivalent to $(H_2,\psi_2)$ if $H_1=H_2=H$ and $[\psi_1]=[\psi_2]$ in $H^2(H, \mathbb{G}_m)$ (see Definition \ref{defequivpairs}, Remark \ref{helpful}). Also, $H^D$ is viewed as an abelian normal closed subgroup scheme of $G$ via the $G$-equivariant group scheme isomorphism $f:H^D \xrightarrow{\cong} H$ induced by $\psi\psi_{21}^{-1}$. \qed 
\end{definition}

\begin{theorem}\label{trivialbrpic}
The assignment $(H,\psi)\mapsto \mathscr{B}(H,\psi)$ determines a one to one correspondence between ${\rm IB}(G)$ and ${\rm BrPic}(G)$.
Moreover, if $\mathscr{B}(H,\psi)$ is invertible then $\mathscr{B}(H,\psi)^{-1}=\mathscr{B}(H,\psi^{-1})$.
\end{theorem}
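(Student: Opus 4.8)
The plan is to establish Theorem \ref{trivialbrpic} in two stages: first verifying that $\mathscr{B}(H,\psi)$ is invertible precisely when $(H,\psi)\in {\rm IB}(G)$, then identifying the inverse. I would begin by recalling from the previous section the descriptions of $\mathscr{B}:=\mathscr{B}(H,\psi)$ as a left ${\rm Coh}(G)$-module category (namely $\mathscr{M}(H,\psi)$) and as a right ${\rm Coh}(G)$-module category (namely $\mathscr{N}(H,\psi)$), as set up in the proof of Theorem \ref{trivialbrpic0}. By the criterion recalled before Theorem \ref{trivialbrpic0}, $\mathscr{B}$ is invertible if and only if the functor $R_{\mathscr{B}}\colon {\rm Coh}(G)^{\rm rev}\to {\rm Coh}(G)^*_{\mathscr{B}}$ is an equivalence of tensor categories. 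Since ${\rm Coh}(G)^{\rm rev}={\rm Coh}(G^{\rm op})$ and the dual ${\rm Coh}(G)^*_{\mathscr{B}}$ with respect to the left ${\rm Coh}(G)$-module structure $\mathscr{M}(H,\psi)$ is by definition $\C(G,H,\psi)$, the condition becomes: the right ${\rm Coh}(G)$-action on $\mathscr{B}$, which corresponds to a module functor ${\rm Coh}(G^{\rm op})\to \C(G,H,\psi)$, is an equivalence of tensor categories.

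Next I would unwind this using Corollary \ref{fibfungth} and the results of \S\ref{S:structure-GSC}. The point is that an equivalence of tensor categories ${\rm Coh}(G^{\rm op})\xrightarrow{\cong}\C(G,H,\psi)$ is in particular an equivalence of abelian categories under which the unit $\delta_1$ of ${\rm Coh}(G^{\rm op})$ goes to the unit of $\C(G,H,\psi)$; conversely, any tensor equivalence ${\rm Coh}(G^{\rm op})\cong\C(G,H,\psi)$ realizes the latter as a pointed category, i.e.\ all simples invertible, which by Theorem \ref{simpobjs}(3) forces $|H|/|H^g|\cdot\dim(V)=1$ for all double cosets $Z$ and all simple $V$. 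Taking $Z=H$ (so $g=1$, $H^g=H$) gives $\dim(V)=1$ for all simple $V\in{\rm Rep}_k(H)$, hence $H$ is abelian; then $|H|=|H^g|$ for all $g$ forces $H$ normal. Moreover the number of double cosets must equal $|G(k)|\cdot|G^{\circ}|$'s worth of invertibles... more precisely, by Theorem \ref{projsimpobjs}(4), ${\rm FPdim}(\overline{\C_Z})=|G^{\circ}||Z(k)|$, and for $\C(G,H,\psi)$ to be equivalent as a tensor category to ${\rm Coh}(G^{\rm op})$ we need $\C$ to be pointed with $|G|$ invertible simples; combined with the first stage this is exactly the condition that each $\overline{\C_Z}$ contributes $|Z(k)|$ invertible objects and $G^{\circ}=1$ on the relevant factor — which, after translating via the cleft extension description of \S\ref{sec:The connected case} and \S\ref{sec:The normal case}, is equivalent to the nondegeneracy of $\xi_g^{-1}$ for all $g$ together with $G^{\rm op}=H^D\rtimes K$. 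I would show that $G$-invariance plus nondegeneracy of $\psi$ already implies nondegeneracy of every $\xi_g$ (since $\xi_g=\psi^{-1}\eta^{g^{-1}}$ with $\eta=\psi$ in the bimodule case, and conjugation preserves nondegeneracy), reducing condition to the semidirect product structure, which encodes precisely that the right module structure makes $\mathscr{B}$ indecomposable of the correct "size". This dictionary between the semidirect-product condition and invertibility is where I expect the bulk of the bookkeeping to live, and it is the main obstacle: one must carefully match the description of $\C(G,H,\psi)$ from Theorem \ref{projcovernormalg}(2) (as ${\rm Coh}(G/H)\boxtimes {\rm Rep}_k(H)$) against pointedness, using that ${\rm Rep}_k(H)$ is pointed iff $H$ is diagonalizable iff $H\cong H^{DD}$ sits inside $G^{\rm op}$ appropriately.

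For the final assertion, $\mathscr{B}(H,\psi)^{-1}=\mathscr{B}(H,\psi^{-1})$, I would argue as follows. By the general theory recalled before Theorem \ref{trivialbrpic0}, the inverse of an invertible bimodule category $\mathscr{B}$ is $\mathscr{B}^{\rm op}$. So it suffices to identify $\mathscr{B}(H,\psi)^{\rm op}$ with $\mathscr{B}(H,\psi^{-1})$ as ${\rm Coh}(G)$-bimodule categories. Recall the formula $M\ot^{\mathscr{M}^{\rm op}}A=A^*\ot^{\mathscr{M}}M$; applying this to both the left and right ${\rm Coh}(G)$-actions on $\mathscr{B}(H,\psi)=\M(H,\psi)=\mathscr{N}(H,\psi)$, and using that dualization in ${\rm Coh}(G)$ is induced by the antipode (inversion $g\mapsto g^{-1}$ on $G$), one checks that passing to the opposite bimodule category turns the equivariant structure $\O(H)_\psi$ into $\O(H)_{\psi^{-1}}$: concretely, the left $\O(H)_\psi$-comodule structure on an object of $\M(H,\psi)$ becomes, after precomposing with the antipode and swapping left/right roles, a left $\O(H)_{\psi^{-1}}$-comodule structure via the coalgebra isomorphism $\O(H)_\psi^{\rm cop}\cong\O(H)_{\psi^{-1}}$ recorded in \eqref{fromlefttoright}. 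Since the abelian category $\mathscr{B}$ underlying both is the same (${\rm Coh}(G/H)$, as $H$ is normal and abelian so $G/H=H\backslash G$ regardless of the cocycle), and we have matched both one-sided module structures, we conclude $\mathscr{B}(H,\psi)^{\rm op}\cong\mathscr{B}(H,\psi^{-1})$ as bimodule categories, completing the proof.
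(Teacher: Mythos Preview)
Your proposal has a genuine structural gap. Recall that throughout \S\ref{sec:brauerpicard} the standing hypothesis (inherited from \S\ref{sec:The connected case}, and used already in Theorem~\ref{trivialbrpic0}) is that $G$ is \emph{connected}. Thus $G(k)=1$, there is only the single double coset $Z=H$, and your argument ``$|H|=|H^g|$ for all $g$ forces $H$ normal'' is vacuous; likewise your count of invertible simples via $|G(k)|$ collapses. More importantly, that $H$ is abelian and normal is already part of the data by Theorem~\ref{trivialbrpic0}, so re-deriving it from pointedness is beside the point.

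The deeper problem is that you are testing whether $\C(G,H,\psi)$ is \emph{abstractly} tensor equivalent to ${\rm Coh}(G^{\rm op})$, whereas invertibility of $\mathscr{B}(H,\psi)$ is the assertion that the \emph{specific} functor $R_{\mathscr{B}}$ is an equivalence. These are different questions, and the ${\rm IB}(G)$ conditions (nondegeneracy, $G$-invariance of $\psi$, and the semidirect product $G^{\rm op}=H^D\rtimes K$) arise from analyzing $R_{\mathscr{B}}$ itself, not from pointedness of $\C$. The paper's argument proceeds by identifying the restrictions of $R_{\mathscr{B}}$ to the two pieces of the decomposition in Theorem~\ref{projcovernormalg}(2): on ${\rm Coh}(H)\subset{\rm Coh}(G^{\rm op})$ the functor $R$ lands in $\C_H$ and, composed with $\mathbf{F}^{-1}$, yields a tensor functor ${\rm Coh}(H)\to{\rm Rep}_k(H)={\rm Coh}(H^D)$ whose tensor structure is $\psi$; this is an equivalence iff $\psi\psi_{21}^{-1}$ is a nondegenerate $G$-invariant bicharacter. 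On the other piece, $R^{-1}\circ\pi^*$ embeds ${\rm Coh}(G/H)$ into ${\rm Coh}(G^{\rm op})$, which (since $G$ is connected) forces $K:=G/H$ to be a closed subgroup scheme of $G^{\rm op}$ complementary to $H^D$. This two-piece analysis handles both directions simultaneously and is what you are missing; your sketch never establishes the forward implication (that $(H,\psi)\in{\rm IB}(G)$ makes $R_{\mathscr{B}}$ surjective, hence an equivalence by equality of FP dimensions), and your treatment of the semidirect product condition remains, as you yourself note, an unresolved obstacle. Your argument for $\mathscr{B}(H,\psi)^{-1}=\mathscr{B}(H,\psi^{-1})$ via $\mathscr{B}^{\rm op}$ and \eqref{fromlefttoright} is reasonable and in fact more explicit than the paper, which does not spell out this last claim.
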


\begin{proof}
Assume $(H,\psi)\in {\rm IB}(G)$. We have an embedding   
\begin{equation}\label{rephcohhd}
f_*:\Rep_k(H)={\rm Coh}(H^D)\xrightarrow{\cong}{\rm Coh}(H)\hookrightarrow{\rm Coh}(G^{\rm op})
\end{equation}
of tensor categories, whose tensor structure is given by $\psi$.

Let $\mathscr{B}:=\mathscr{B}(H,\psi)$, and set $\C:=\C(G,1,H,\psi)$. Recall that 
$$\C\cong {\rm Fun}_{{\rm Coh}(G)}\left(\M\left(H,\psi\right),\M\left(H,\psi\right)\right)$$
as tensor categories.  
We have to show that the tensor functor 
\begin{equation*}
R:=R_{\mathscr{B}}:{\rm Coh}(G^{\rm op})\xrightarrow{} \C,\,\,\,X\mapsto -\,\ot^{\mathscr{N}(H,\psi)} X
\end{equation*}
is an equivalence (see (\ref{rmfunctor2})). 

Since ${\rm Coh}(G^{\rm op})$ and $\C$ have the same FP dimension, it suffices to show that $R$ is surjective. To this end, observe that the restriction of $R$ to $\Rep_k(H)$ (via \ref{rephcohhd}) coincides with $\mathbf{F}$, and the restriction of $R$ to ${\rm Coh}(G/H)$ coincides with $\mathbf{I}$, so the claim follows from Corollary \ref{projcovernormalg}.

Conversely, assume that $\mathscr{B}\in {\rm BrPic}(G)$. Then by Theorem \ref{trivialbrpic0}, $\mathscr{B}=\mathscr{B}(H,\psi)$, where $H\subset G$ is an  abelian normal closed subgroup scheme and $\psi\in Z^2(H,\mathbb{G}_m)$. Set $\C:=\C(G,1,H,\psi)$, so 
$$\C\cong {\rm Fun}_{{\rm Coh}(G)}\left(\M\left(H,\psi\right),\M\left(H,\psi\right)\right)$$
as tensor categories.  
By assumption (see (\ref{rmfunctor2})), we have a tensor equivalence 
\begin{equation}\label{neededequiv}
R:=R_{\mathscr{B}}:{\rm Coh}(G^{\rm op})\xrightarrow{\cong} \C,\,\,\,X\mapsto -\,\ot^{\mathscr{N}(H,\psi)} X.
\end{equation} 
Note that for any $X$ in ${\rm Coh}(H)={\rm Coh}(H^{\rm op})$, we have 
$$R(X)\in {\rm Fun}_{{\rm Coh}(H)}\left(\M\left(H,\psi\right),\M\left(H,\psi\right)\right)=\C_H.$$ Since $\C_H$ has Frobenious-Perron dimension $|H|$, we see that $R$ restricts to a tensor equivalence 
$$R\iota_*:{\rm Coh}(H)\xrightarrow{\cong} \C_H,\,\,\,X\mapsto -\,\ot^{\mathscr{N}(H,\psi)} X.$$
Thus, by Corollary \ref{projcoverconnggst}, we obtain a tensor equivalence
\begin{equation}\label{fiberfunctorL}
T:{\rm Coh}(H)\xrightarrow{R\iota_*}\C_H\xrightarrow{{\bf F}^{-1}} {\rm Rep}(H)={\rm Coh}(H^D).
\end{equation} 
Moreover, since 
the tensor structure on $T$ is given by $\psi$, $T$ is induced from the skew-symmetric bicharacter $\psi \psi_{21}^{-1}$ on $H$. Hence, $\psi$ is $G$-invariant and nondegenerate, i.e.,  
it defines a $G$-equivariant group scheme isomorphism $H^D\xrightarrow{\cong}H$, which allows us to view $H^D$ as an abelian normal closed subgroup scheme of $G$.

Finally, by (\ref{neededequiv}) and Corollary \ref{projcovernormalg}, there is an injective tensor functor 
$$R^{-1}\mathbf{I}:{\rm Coh}(G/H)\xrightarrow{1:1}{\rm Coh}(G^{\rm op}).$$
But since $G$ is connected, this implies that $K:=G/H$ is a closed subgroup scheme of $G^{\rm op}$. Also, since
$$\left(R^{-1}\mathbf{I}\right)\left({\rm Coh}(G/H)\right)\cap \left(R^{-1}\mathbf{F}\right)\left({\rm Coh}(H^D)\right)=\Vect,$$
it follows that $H^D\cap K=1$. This implies that $G^{\rm op}= H^D\rtimes K$. Thus, $(H,\psi)\in {\rm IB}(G)$, as claimed.
\end{proof}

Let $\mathscr{B}(H_1,\psi_1)$ and $\mathscr{B}(H_2,\psi_2)$ be two invertible ${\rm Coh}(G)$-bimodule categories. 
Let $L:=\Delta(H_1\cap H_2)$, $A:=(H_1\times H_2)/L$, $\pi:H_1\times H_2\twoheadrightarrow A$ be the canonical surjective group scheme morphism, and $H:=\pi(L^{\perp})$ the image of the orthogonal complement of $L$ under $\psi_1\times\psi_2$. The latter means that $L^{\perp}\subset H_1\times H_2$ is the largest closed subgroup scheme such that the Hopf algebra map
$$\O(L)^*\xrightarrow{1:1}\O(H_1\times H_2)^*\xrightarrow{(\psi_1\times\psi_2)(\psi_1\times\psi_2)_{21}^{-1}}\O(H_1\times H_2)\twoheadrightarrow \O(L^{\perp})$$
is the trivial map $x\mapsto \varepsilon(x)1$.

Now let $j:L^{\perp}\xrightarrow{1:1}H_1\times H_2$ be the inclusion morphism. We have
\begin{equation}\label{newdefofyetanothertau}
\tau:=j^{\sharp \ot 2}\left(\psi_1\times \psi_2\right)\in Z^2\left(L^{\perp}/\left(L\cap L^{\perp}\right),\mathbb{G}_m\right).
\end{equation}
Since $\pi$ induces a group scheme isomorphism 
\begin{equation}\label{pi induces a group scheme isomorphism}
\pi:L^{\perp}/(L\cap L^{\perp})\xrightarrow{\cong} H,
\end{equation}  
it follows that  
\begin{equation}\label{newdefofyetanotherpsi}
\psi:=\left(\pi^{\sharp \ot 2}\right)^{-1}\left(\tau\right)\in Z^2(H,\mathbb{G}_m)
\end{equation}
is nondegenerate.  

{\bf Assume} from now on that $G$ is {\em abelian}. Then $\mathscr{B}(H,\psi)=\mathscr{M}(H,\psi)$, and $\mathscr{M}(H,\psi)^{{\rm op}}=\M(H,\psi^{-1})$ for any pair $(H,\psi)$.

Also, in this case, we have an injective group scheme morphism
\begin{equation}\label{AisasubgroupofG}
\iota:A=(H_1\times H_2)/L\xrightarrow{1:1}G,\,\,\,\overline{(h_1,h_2)}\mapsto h_1^{-1}h_2,
\end{equation}
so we can view $A$ (hence, $H$) as a closed subgroup scheme of $G$. 
Thus, it follows from the above and Theorem \ref{trivialbrpic0} that $\mathscr{M}(H,\psi)$ is an exact indecomposable ${\rm Coh}(G)$-bimodule category. 

\begin{theorem}\label{trivialbrpictp}
Assume $G$ is abelian. We have
$$\mathscr{M}(H_1,\psi_1)\boxtimes_{{\rm Coh}(G)}\mathscr{M}(H_2,\psi_2)\cong \mathscr{M}(H,\psi)$$
as ${\rm Coh}(G)$-module categories. In particular, $\mathscr{M}(H,\psi)$ is invertible.
\end{theorem}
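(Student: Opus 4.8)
The strategy is to compute the Deligne product $\mathscr{M}(H_1,\psi_1)\boxtimes_{{\rm Coh}(G)}\mathscr{M}(H_2,\psi_2)$ via the identification $\mathscr{B}_1\boxtimes_{{\rm Coh}(G)}\mathscr{B}_2\cong {\rm Fun}_{{\rm Coh}(G)}(\mathscr{B}_1^{\rm op},\mathscr{B}_2)$, together with the abelian descriptions of these functor categories obtained in Section \S\ref{sec:Module categories over GCTC}. Since $G$ is abelian, $\mathscr{M}(H_1,\psi_1)^{\rm op}=\mathscr{M}(H_1,\psi_1^{-1})$, so by Lemma \ref{newsimple} we have
$${\rm Fun}_{{\rm Coh}(G)}\bigl(\mathscr{M}(H_1,\psi_1^{-1}),\mathscr{M}(H_2,\psi_2)\bigr)\cong \mathscr{M}\bigl(H_1\times H_2,\psi_1\times\psi_2\bigr)={\rm Coh}^{(H_1\times H_2,\psi_1\times\psi_2)}(G),$$
where $H_1\times H_2$ acts on $G$ by $(g,h_1,h_2)\mapsto h_1^{-1}gh_2$ (note the sign flip in the first factor arises from passing to $\mathscr{M}(H_1,\psi_1^{-1})$, which is exactly what turns $\psi_1^{-1}$ into $\psi_1$ in the biequivariant picture). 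The plan is then to show this category is equivalent, as a ${\rm Coh}(G)$-module category, to $\mathscr{M}(H,\psi)={\rm Coh}^{(H,\psi)}(G)$.

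First I would analyze the quotient $Y:=G/(H_1\times H_2)$ for this action. Because $G$ is abelian, the map $\iota$ of \eqref{AisasubgroupofG}, $\overline{(h_1,h_2)}\mapsto h_1^{-1}h_2$, embeds $A=(H_1\times H_2)/L$ into $G$ with $L=\Delta(H_1\cap H_2)$ the stabilizer of the identity; the orbit of $1\in G(k)$ is the closed subscheme $A\subset G$, and the abelian structure forces every $(H_1\times H_2)$-orbit in $G$ to be a coset of $A$, so $Y\cong G/A$ as schemes and the unique ``interesting'' double coset containing $1$ has stabilizer subgroup scheme $L^1=L$. By Theorem \ref{simmodrep}, the category ${\rm Coh}^{(H_1\times H_2,\psi_1\times\psi_2)}(G)$ decomposes over the closed points of $G/A$, and each block $\overline{\mathscr{M}_Z}$ is governed by ${\rm Rep}_k(L,\xi_1^{-1})$ with $\xi_1=\partial_1^{\sharp\otimes 2}(\psi_1\times\psi_2)$ the restriction of $\psi_1\times\psi_2$ to $L=\Delta(H_1\cap H_2)$; here I would use Theorem \ref{modrep00} / Corollary \ref{newimpcor} to get the explicit equivalence $\mathbf{F}_Z$. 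On the other hand, $\mathscr{M}(H,\psi)={\rm Coh}^{(H,\psi)}(G)$ decomposes over $(G/H)(k)$, each block being $\Vect$ since $\psi$ is nondegenerate (Corollary \ref{simprojhpsi} together with the nondegeneracy of $\psi$). So the task reduces to: (i) match up the index sets $(G/A)(k)$ with $(G/H)(k)$, and (ii) show that each block ${\rm Rep}_k(L,\xi_1^{-1})$ collapses to $\Vect$, i.e.\ that $\xi_1^{-1}$, suitably reinterpreted, is nondegenerate and in fact corresponds to $\psi$.

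The heart of the argument — and the step I expect to be the main obstacle — is the bicharacter bookkeeping of step (ii). The subtlety is that $\xi_1$ is a $2$-cocycle on $L=\Delta(H_1\cap H_2)$, whereas $\psi$ lives on $H=\pi(L^\perp)$, a different subgroup scheme of $G$. One must show that the ``radical'' of the skew bicharacter attached to $\psi_1\times\psi_2$ on $H_1\times H_2$ is precisely $L^\perp$ (this is essentially the definition of $L^\perp$ in \eqref{newdefofyetanothertau}), that $L\cap L^\perp$ is the radical of $\xi_1$, and hence that the nondegenerate part of ${\rm Rep}_k(L,\xi_1^{-1})$ is trivial (a $2$-cocycle on an abelian group scheme with all-killing radical gives ${\rm Rep}_k\cong\Vect$). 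Moreover one must track how the residual degrees of freedom — the part of $H_1\times H_2$ not in $L$ and not in $L^\perp$ — recombine via the isomorphisms \eqref{pi induces a group scheme isomorphism} and \eqref{AisasubgroupofG} into exactly the data $(H,\psi)$ of \eqref{newdefofyetanotherpsi}, and that this matching is compatible with the ${\rm Coh}(G)$-module structures (which on both sides is convolution on the $G$-factor, so this last compatibility should be formal once the underlying abelian equivalence and the parametrizing closed points are identified). I would organize this as: (a) reduce to the block of $1$ using the $A$-translation action, as in the proof of Theorem \ref{projcovernormalg}(2); (b) inside that block, apply Theorem \ref{modrep00} on both sides and reduce to a statement about projective representations of abelian group schemes; (c) invoke the nondegeneracy/radical computation to finish, which also yields $\mathscr{M}(H,\psi)^{-1}=\mathscr{M}(H,\psi^{-1})$ since replacing $(\psi_1,\psi_2)$ by $(\psi_1^{-1},\psi_2^{-1})$ negates the bicharacter and hence $\psi$.
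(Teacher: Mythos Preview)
Your reduction to showing ${\rm Coh}^{(H_1\times H_2,\psi_1\times\psi_2)}(G)\cong {\rm Coh}^{(H,\psi)}(G)$ as ${\rm Coh}(G)$-module categories is exactly how the paper begins. From there, however, the strategies diverge.

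The paper does \emph{not} attempt to match block decompositions. Instead it writes down a single explicit ${\rm Coh}(G)$-module functor
\[
F:{\rm Coh}^{(H_1\times H_2,\psi_1\times\psi_2)}(G)\to {\rm Coh}^{(H,\psi)}(G),\qquad (S,\rho)\mapsto \bigl(S^{(L^{\perp}/(L\cap L^{\perp}),\tau)},\, (\id\otimes\pi^{\sharp})^{-1}\circ\rho|_{T}\bigr),
\]
i.e.\ one passes to coinvariants for the subquotient $L^{\perp}/(L\cap L^{\perp})\cong H$ and reads the residual coaction as an $(H,\psi)$-equivariant structure. Because $G$ is connected (this is the ambient hypothesis of Section~\ref{sec:brauerpicard}), both source and target have a \emph{unique} simple object; the paper checks that $F$ carries one to the other and then uses that every object of ${\rm Coh}^{(H,\psi)}(G)$ has the form $X\otimes \delta'$ for some $X\in{\rm Coh}(G)$ to conclude that $F$ is an equivalence. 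No block-by-block matching or bicharacter bookkeeping is needed.

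Your plan, by contrast, has two real gaps. First, your step (i) proposes to match $(G/A)(k)$ with $(G/H)(k)$, but $H\subsetneq A$ in general, so these index sets do not biject; it is only the connectedness of $G$ (which you do not invoke) that collapses both to a single point. Second, your step (ii) asserts that each block ${\rm Rep}_k(L,\xi_1^{-1})$ ``collapses to $\Vect$'', i.e.\ that $\xi_1=(\psi_1\psi_2)|_{H_1\cap H_2}$ is nondegenerate. This is false on its face: take $H_1=H_2$ and $\psi_2=\psi_1^{-1}$, so that $\xi_1=1$. What actually forces uniqueness of the simple here is the \emph{a priori} invertibility of the product (as a product of invertibles), combined with Theorem~\ref{trivialbrpic} and connectedness; you do not use this, and the ``radical/orthogonal complement'' computation you sketch does not by itself yield ${\rm Rep}_k(L,\xi_1^{-1})\cong\Vect$. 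Finally, even if your blockwise abelian matching went through, it would not by itself produce a ${\rm Coh}(G)$-module equivalence; the paper's explicit coinvariant functor is precisely what supplies that. If you want to salvage your approach, the cleanest route is to first argue abstractly that the product is invertible (hence of the form $\mathscr{M}(H',\psi')$ for a unique pair, by Theorem~\ref{trivialbrpic} and connectedness), and then identify $(H',\psi')$ with $(H,\psi)$ by computing the unique simple and its equivariant structure---which is essentially what the paper's functor $F$ does in one step.
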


\begin{proof}
Recall that there is a ${\rm Coh}(G)$-module equivalence 
$$\mathscr{M}(H_1,\psi_1)\boxtimes_{{\rm Coh}(G)}\mathscr{M}(H_2,\psi_2)\cong {\rm Fun}_{{\rm Coh}(G)}(\M(H_1,\psi_1^{-1}),\M(H_2,\psi_2)).$$ 
Thus, by Theorem \ref{simmodrep}, there are ${\rm Coh}(G)$-module equivalences  
\begin{gather*}
\mathscr{M}(H_1,\psi_1)\boxtimes_{{\rm Coh}(G)}\mathscr{M}(H_2,\psi_2)\cong {\rm Coh}^{\left((H_1,\psi_1^{-1}),(H_2,\psi_2)\right)}(G)\\
\cong {\rm Coh}^{\left(H_1\times H_2,\psi_1\times \psi_2\right)}(G).
\end{gather*}

Hence, our job is to show that ${\rm Coh}^{\left(H_1\times H_2,\psi_1\times \psi_2\right)}(G)\cong {\rm Coh}^{(H,\psi)}(G)$ as ${\rm Coh}(G)$-module categories. To this end, consider the functor 
$$F:{\rm Coh}^{\left(H_1\times H_2,\psi_1\times \psi_2\right)}(G)\to {\rm Coh}^{\left(H,\psi\right)}(G),\,\,\,(S,\rho)\mapsto (T,\lambda),$$ 
where $$T:=S^{\left(L^{\perp}/(L\cap L^{\perp}),\tau\right)}\subset S,\,\,\,\,\,\,
\lambda:=\left(\id\ot\pi^{\sharp}\right)^{-1}\circ(\rho_{\mid T}).$$ 
It is clear that $F$ is a functor of ${\rm Coh}(G)$-module categories.

Now by Theorem \ref{simmodrep}, the unique simple object of  
${\rm Coh}^{\left(H_1\times H_2,\psi_1\times \psi_2\right)}(G)$ is $\delta:=\O(A)\ot_k V\cong \O(H_1\times H_2)\ot^{\O(L)_{\psi_1\psi_2}}V$, where $V$ is the unique simple object of 
${\rm Corep}_k(L,\psi_1^{-1}\psi_2^{-1})$. Since $F(\delta)=\delta':=\O(H)$ is the unique simple object of ${\rm Coh}^{\left(H,\psi\right)}(G)$, and every object of ${\rm Coh}^{\left(H,\psi\right)}(G)$ has the form $X\ot \delta'$ for some $X\in {\rm Coh}(G)$ \cite{G}, it follows that $F$ is an equivalence of ${\rm Coh}(G)$-module categories.
\end{proof}

\section{The center of ${\rm Coh}(G)$}\label{sec:twisted-double}
Fix a finite group scheme $G$. Let  
$\Delta\colon G\to G\times G$ be the diagonal morphism, 
$\mathbb{G}:=G\times G$, and $\mathbb{H}:=\Delta(G)$.
	
Let $\mathscr{Z}(G):=\mathscr{Z}({\rm Coh}(G))$ be the center of ${\rm Coh}(G)$.
Recall that objects of $\mathscr{Z}(G)$ are pairs 
$(X,\gamma)$, where $X\in {\rm Coh}(G)$ and
$$\gamma:(-\ot X)\xrightarrow{\cong}(X\ot -)$$
is a natural isomorphism satisfying a certain property, usually known as a half-braiding (see, e.g., \cite[Section 7.13]{DGNO}). 
The center $\mathscr{Z}(G)$ is a finite nondegenerate braided tensor category (see, e.g., \cite[Section 8.6.3]{DGNO}).

Recall that there is a canonical equivalence of tensor categories
\begin{equation}\label{secondequiv}
\mathscr{Z}(G)\xrightarrow{\cong}\left({\rm Coh}(G)\boxtimes {\rm Coh}\left(G\right)^{^{\rm rev}}\right)^*_{{\rm Coh}(G)}
\end{equation}
assigning to a pair $(X,\gamma)$ the functor $X\ot -\colon {\rm Coh}(G)\to {\rm Coh}(G)$, equipped with a module structure coming from $\gamma$ (see, e.g., \cite[Proposition 7.13.8]{DGNO}).

Since there is a canonical equivalence of tensor categories
\begin{equation}\label{thirdequiv}
{\rm Coh}(\mathbb{G})^*_{\M(\mathbb{H},1)}\xrightarrow{\cong} \left({\rm Coh}(G)\boxtimes {\rm Coh}\left(G\right)^{^{\rm rev}}\right)^*_{{\rm Coh}(G)},
\end{equation}
it follows from (\ref{secondequiv})--(\ref{thirdequiv}) that there is a canonical equivalence of tensor categories 
\begin{equation}\label{the center is gstc}
\C\left(\mathbb{G},\mathbb{H},1\right)\cong \mathscr{Z}(G).
\end{equation}

Finally, recall that there is a canonical tensor equivalence 
\begin{equation}\label{the center is gequiv}
{\rm Coh}^{(G)}(G)\cong \mathscr{Z}(G),
\end{equation}
where ${\rm Coh}^{(G)}(G)$ is the category of $G$-equivariant sheaves on $G$ with respect to right conjugation.
Thus, (\ref{the center is gstc})--(\ref{the center is gequiv}) yield a canonical equivalence of tensor categories 
\begin{equation}\label{the center is eqcohtc}
\C\left(\mathbb{G},\mathbb{H},1\right)\cong {\rm Coh}^{(G)}(G).
\end{equation}

In this section, we study the tensor category $\mathscr{Z}(G)$ from two perspectives. First, we use Theorem \ref{projsimpobjs}, and the descriptions of $\mathscr{Z}(G)$ mentioned above, to study the abelian structure of $\mathscr{Z}(G)$. We then use (\ref{G(k)crossedproduct}) and \cite{GNN} to describe $\mathscr{Z}(G)$ as a $G(k)$-equivariantization. Each of these descriptions provides a certain direct sum decomposition of $\mathscr{Z}(G)$, and we end the discussion by establishing a relation between the components coming from the two decompositions. 

\subsection{The structure of $\C\left(\mathbb{G},\mathbb{H},1\right)$}
Let ${\rm Y}:=\mathbb{G}/(\mathbb{H}\times \mathbb{H})$ with respect to the right action $\mu_{\mathbb G \times (\mathbb H \times \mathbb H)}$ \eqref{hkaction}.

\begin{theorem}\label{simpobjscenter}  
Let $\C:=\C\left(\mathbb{G} ,\mathbb{H},1\right)$. The following hold:
\begin{enumerate}
\item
For any ${\rm Z}\in {\rm Y}(k)$ with representative ${\rm g}\in {\rm Z}(k)$, we have an equivalence of abelian categories 
$$\mathbf{F}_{{\rm Z}}:{\rm Rep}_k (\mathbb{H}^{{\rm g}})\xrightarrow{\cong} \mathscr{C}_{{\rm Z}},\,\,\,V\mapsto \iota_{{\rm Z}*}\left(\O\left({\rm Z}\right)\ot_k V,\rho^{{\rm g}}_V\right).$$
In particular, we have a tensor equivalence   
$$\mathbf{F}_{\mathbb{H}}:{\rm Rep}_k (\mathbb{H})\xrightarrow{\cong} \mathscr{C}_{\mathbb{H}},\,\,\,V\mapsto \iota_{\mathbb{H}*}\left(\O\left(\mathbb{H}\right)\ot_k V,\rho^{1}_V\right).$$
\item
There is a bijection between equivalence classes of pairs $({\rm Z},V)$, where ${\rm Z}\in {\rm Y}(k)$ is a closed point with representative ${\rm g}\in {\rm Z}(k)$, and $V\in{\rm Rep}_k (\mathbb{H})$ is simple, and simple objects of $\C$, assigning $({\rm Z},V)$ to $\mathbf{F}_{{\rm Z}}(V)$. Moreover, we have a direct sum decomposition of abelian categories
$$\C=\bigoplus_{{\rm Z}\in {\rm Y}(k)}\overline{\C_{{\rm Z}}},$$
and 
$\overline{\C_{\mathbb{H}}}\subset \C$ is a tensor subcategory.
\item
For any $V\in {\rm Rep}_k (\mathbb{H})$, we have
$
\mathbf{F}_{{\rm Z}}(V)^*\cong \mathbf{F}_{{\rm Z}^{-1}}(V^*)$. 
\item
For any ${\rm Z}\in {\rm Y}(k)$ with representative ${\rm g}\in {\rm Z}(k)$, and $V$ in ${\rm Rep}_k (\mathbb{H})$, we have
$$
{\rm FPdim}\left(\mathbf{F}_{{\rm Z}}(V)\right)=\frac{|\mathbb{H}|}{|\mathbb{H}^{{\rm g}}|}{\rm dim}(V).$$
\item
For any ${\rm Z}\in {\rm Y}(k)$ with representative ${\rm g}\in {\rm Z}(k)$, and simple $V\in {\rm Rep}_k (\mathbb{H})$, we have
$$P_{\C}\left(\mathbf{F}_{{\rm Z}}(V)\right)\cong 
\left(\O(\mathbb{G}^{\circ})\ot\O({\rm Z}(k))\ot_k P_{\mathbb{H}^{{\rm g}}}(V),R^{{\rm g}}_V\right),\,\,\,{\rm and}\,\,\,$$
$${\rm FPdim}\left(P_{\C}\left(\mathbf{F}_{{\rm Z}}\left(V\right)\right)\right)=\frac{|\mathbb{G}^{\circ}||\mathbb{H}(k)|}{|\mathbb{H}^{\circ}||\mathbb{H}^{{\rm g}}(k)|}{\rm dim}\left(P_{\mathbb{H}^{{\rm g}}}\left(V\right)\right).$$
\item
For any ${\rm Z}\in {\rm Y}(k)$, we have
$
{\rm FPdim}\left(\overline{\C_{{\rm Z}}}\right)=|\mathbb{G}^{\circ}||{\rm Z}(k)|$. 
\end{enumerate}
\end{theorem}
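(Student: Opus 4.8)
The plan is to obtain every assertion by specializing the general structure results of \S\ref{S:structure-GSC} to the group scheme-theoretical category $\C\left(\mathbb{G},\mathbb{H},1\right)$, that is, by taking the ambient finite group scheme to be $\mathbb{G}=G\times G$, the closed subgroup scheme to be $\mathbb{H}=\Delta(G)$, and the $2$-cocycle to be trivial. First I would record the simplification forced by the vanishing of the cocycle: in the setup of \S\ref{S:structure-GSC} one has $K=H=\mathbb{H}$ and $\eta=\psi=1$, so the $2$-cocycle $\xi_{\rm g}=\psi^{-1}\psi^{{\rm g}^{-1}}\in Z^2(\mathbb{H}^{\rm g},\mathbb{G}_m)$ of (\ref{xig0}) is itself trivial for every closed point ${\rm g}\in\mathbb{G}(k)$. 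Hence $\Rep_k(\mathbb{H}^{\rm g},\xi_{\rm g}^{-1})=\Rep_k(\mathbb{H}^{\rm g})$ and $P_{(\mathbb{H}^{\rm g},\xi_{\rm g}^{-1})}(V)=P_{\mathbb{H}^{\rm g}}(V)$, the double coset scheme of \S\ref{sec:doublecosetsandbiequivariant} is exactly ${\rm Y}=\mathbb{G}/(\mathbb{H}\times\mathbb{H})$, and for the closed point ${\rm g}=e$ one has $\mathbb{H}^e=\mathbb{H}$, which recovers $\Rep_k(\mathbb{H})$ together with its tensor structure.

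With these identifications in hand, part (1) is Theorem \ref{projsimpobjs}(1) (whence also the tensor equivalence $\mathbf{F}_{\mathbb{H}}\colon\Rep_k(\mathbb{H})\xrightarrow{\cong}\C_{\mathbb{H}}$, since the component indexed by $\mathbb{H}$ corresponds to ${\rm g}=e$); part (2) combines Theorem \ref{simpobjs}(2) --- for the bijection between pairs $({\rm Z},V)$ and simples, the direct sum decomposition $\C=\bigoplus_{{\rm Z}}\overline{\C_{\rm Z}}$, and the assertion that $\overline{\C_{\mathbb{H}}}$ is a tensor subcategory --- with Theorem \ref{projsimpobjs}(1), which lets one phrase the parametrization through $\mathbf{F}_{\rm Z}$ rather than $\mathbf{Ind}_{\rm Z}$; part (3) is Theorem \ref{projsimpobjs}(2); part (5) is Theorem \ref{projsimpobjs}(3), after substituting $|\mathbb{G}^{\circ}|$, $|\mathbb{H}^{\circ}|$, $|\mathbb{H}(k)|$, $|\mathbb{H}^{\rm g}(k)|$ for the corresponding quantities and using $P_{(\mathbb{H}^{\rm g},\xi_{\rm g}^{-1})}(V)=P_{\mathbb{H}^{\rm g}}(V)$; and part (6) is Theorem \ref{projsimpobjs}(4).

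For part (4), Theorem \ref{simpobjs}(3) gives $\FPdim(\mathbf{Ind}_{\rm Z}(V))=\frac{|\mathbb{H}|}{|\mathbb{H}^{\rm g}|}\dim(V)$; to transfer this to the $\mathbf{F}_{\rm Z}$-parametrization I would invoke the construction of \S\ref{sec:thesecondequivalence}: the equivalence ${\rm F}$ of Theorem \ref{modrep00} is obtained from ${\rm Ind}$ by transport along the explicit $k$-linear isomorphism ${\rm F}_V$, so ${\rm F}(V)\cong{\rm Ind}(V)$ object-by-object, and therefore $\mathbf{F}_{\rm Z}(V)\cong\mathbf{Ind}_{\rm Z}(V)$ in $\C$, which yields both the asserted Frobenius--Perron dimension and the compatibility of the two classifications of the simples of $\C_{\rm Z}$ used in part (2). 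I expect this matching of the parametrizations coming from Theorems \ref{simmodrep} and \ref{prsimmodrep} to be the only point demanding any genuine care; everything else is a mechanical transcription of \S\ref{S:structure-GSC} into the notation $\mathbb{G}=G\times G$, $\mathbb{H}=\Delta(G)$, and, via the equivalences (\ref{the center is gstc}) and (\ref{the center is eqcohtc}), into conjugation-theoretic data on $G$ itself.
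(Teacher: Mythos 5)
Your proposal is correct and follows essentially the same route as the paper, whose proof simply specializes the general structure results of \S\ref{S:structure-GSC} (Theorems \ref{simpobjs} and \ref{projsimpobjs}) to $\mathbb{G}=G\times G$, $\mathbb{H}=\Delta(G)$, $\psi=1$, where $\xi_{\rm g}=1$ for all ${\rm g}$. Your explicit identification $\mathbf{F}_{\rm Z}(V)\cong\mathbf{Ind}_{\rm Z}(V)$ (via the transport isomorphism ${\rm F}_V$ of \S\ref{sec:thesecondequivalence}) to move the Frobenius--Perron dimension formula and the parametrization of simples from the $\mathbf{Ind}$-picture to the $\mathbf{F}$-picture is a valid point that the paper leaves implicit.
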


\begin{proof}
Follow immediately from Theorem \ref{projsimpobjs}.
\end{proof}

\begin{corollary}\label{centerfib}
Equivalence classes of fiber functors on $\C\left(\mathbb{G} ,\mathbb{H},1\right)$ are classified by equivalence classes of pairs $(\mathbb{K},\eta)$, where
$\mathbb{K}\subset \mathbb{G}$ is a closed subgroup scheme, $\eta\in
Z^2(\mathbb{K},\mathbb{G}_m)$, $\mathbb{K}\mathbb{H}=\mathbb{G}$, and $\xi_1^{-1}\in Z^2(\mathbb{K}\cap \mathbb{H},\mathbb{G}_m)$ is nondegenerate.
\end{corollary}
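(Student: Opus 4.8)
The plan is to deduce this directly from Corollary \ref{fibfungth}. By the canonical tensor equivalence \eqref{the center is gstc}, $\mathscr{Z}(G)\cong\C(\mathbb{G},\mathbb{H},1)$, so a fiber functor on $\mathscr{Z}(G)$ is the same datum as a fiber functor on the group scheme-theoretical category $\C(\mathbb{G},\mathbb{H},1)$. I would therefore simply invoke Corollary \ref{fibfungth} for the finite group scheme $\mathbb{G}=G\times G$, the closed subgroup scheme $\mathbb{H}=\Delta(G)$, and the trivial $2$-cocycle $\psi=1$. This yields at once a bijection between equivalence classes of fiber functors on $\C(\mathbb{G},\mathbb{H},1)$ and equivalence classes of pairs $(\mathbb{K},\eta)$, where $\mathbb{K}\subset\mathbb{G}$ is a closed subgroup scheme, $\eta\in Z^2(\mathbb{K},\mathbb{G}_m)$, $\mathbb{H}\mathbb{K}=\mathbb{G}$, and $\xi_1^{-1}\in Z^2(\mathbb{H}\cap\mathbb{K},\mathbb{G}_m)$ is nondegenerate.

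The next step is to reconcile the output of Corollary \ref{fibfungth} with the form asserted here. First, since $\psi=1$, formula \eqref{xig0} specializes to $\xi_1=\eta_{\mid\mathbb{H}\cap\mathbb{K}}$, so the nondegeneracy of $\xi_1^{-1}$ is simply the nondegeneracy of (the restriction of) $\eta$; this is exactly the condition in the statement. Second, the factorization condition is written as $\mathbb{K}\mathbb{H}=\mathbb{G}$ above but comes out as $\mathbb{H}\mathbb{K}=\mathbb{G}$ from Corollary \ref{fibfungth}, and I would note these are equivalent: applying the inverse automorphism $\iota_{\mathbb{G}}$ of $\mathbb{G}$ and using $(hk)^{-1}=k^{-1}h^{-1}$ together with the fact that the inversions of $\mathbb{H}$ and $\mathbb{K}$ are isomorphisms, one gets $\iota_{\mathbb{G}}(\mathbb{H}\mathbb{K})=\mathbb{K}\mathbb{H}$ as closed subschemes of $\mathbb{G}$, so $\mathbb{H}\mathbb{K}=\mathbb{G}\iff\mathbb{K}\mathbb{H}=\mathbb{G}$.

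I do not expect any genuine obstacle here: the corollary is a verbatim specialization of Corollary \ref{fibfungth} through the presentation \eqref{the center is gstc} of the center, and the only work is the two trivial bookkeeping identifications above (the restriction $\xi_1=\eta_{\mid\mathbb{H}\cap\mathbb{K}}$ and the symmetry $\mathbb{H}\mathbb{K}=\mathbb{G}\iff\mathbb{K}\mathbb{H}=\mathbb{G}$).
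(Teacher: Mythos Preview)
Your proposal is correct and takes essentially the same approach as the paper: the paper's proof is simply ``Follows from Corollary~\ref{fibfungth},'' and you do exactly this specialization (to $\mathbb{G}$, $\mathbb{H}$, $\psi=1$), supplying in addition the two minor bookkeeping checks about $\xi_1$ and the symmetry of the factorization condition. Note that the statement is already phrased for $\C(\mathbb{G},\mathbb{H},1)$, so invoking the equivalence~\eqref{the center is gstc} is not even needed---Corollary~\ref{fibfungth} applies directly.
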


\begin{proof}
Follows from Corollary \ref{fibfungth}.
\end{proof}

\subsection{The structure of ${\rm Coh}^{(G)}(G)$}  
Let ${\rm C}$ be the finite scheme of conjugacy orbits in $G$. 
Then for any closed point $C\in {\rm C}(k)$, $C\subset G$ is closed and $C(k)\subset G(k)$ is a conjugacy class. Fix a representative $g=g_C\in C(k)$, and let $G_C$ denote the centralizer of $g$ in $G$ (so $G_C(k)$ is the centralizer of $g$ in $G(k)$). 

Note that the map
\begin{equation*}\label{bijectionz}
{\rm C}(k)\to {\rm Y}(k),\,\,\,C_g\mapsto {\rm Z}_{(g,1)},
\end{equation*}
is bijective with inverse given by
$${\rm Y}(k)\to {\rm C}(k),\,\,\,{\rm Z}_{(g_1,g_2)}\mapsto C_{g_1g_2^{-1}}.$$
Also, for any $C\in {\rm C}(k)$ with representative $g\in C(k)$, we have   
$$\mathbb{H}^{(g,1)}=\mathbb{H}\cap (g,1)\mathbb{H}(g^{-1},1)=\Delta(G_C).$$

For any $C\in {\rm C}(k)$ with representative $g\in C(k)$, let $\iota_{g}:G_C\hookrightarrow G$ be the inclusion morphism.

Now {\em choose} a cleaving map (\ref{nbpgammatilde}) $\widetilde{\mathfrak{c}_g}:\O(G_C)\xrightarrow{1:1} \O(G)$, and let
\begin{equation}\label{cCalphaCdouble1}
\widetilde{\alpha_g}:\O(G)\twoheadrightarrow\O(G_C\backslash G),\,\,\,f\mapsto f_1\widetilde{\mathfrak{c}_g}^{-1}\left(\iota_{g}^{\sharp}\left(f_2\right) \right)
\end{equation}
(see (\ref{nbpalphatilde0})).
Consider also the split exact sequence of schemes
$$1\to C^{\circ}\xrightarrow{i_{C^{\circ}}} C \mathrel{\mathop{\rightleftarrows}^{\pi_{C}}_{q_{C}}} C(k)\to 1$$ induced 
from (\ref{ses0}), and define the $\O(G)$-linear algebra maps 
\begin{gather*}
\theta_{C}:=\id\ot q_{C}^{\sharp}:\O(G^{\circ})\ot\O(C)\twoheadrightarrow  \O(G^{\circ})\ot \O(C(k)),\,\,\,{\rm and}\\
\lambda_{C}:=\id\ot \pi_{C}^{\sharp}:\O(G^{\circ})\ot \O(C(k))\xrightarrow{1:1}\O(G^{\circ})\ot\O(C).
\end{gather*}

\begin{theorem}\label{simpobjsnew}  
Set $\mathscr{Z}:={\rm Coh}^{(G)}(G)$. The following hold:
\begin{enumerate}
\item
For any $C\in {\rm C}(k)$ with representative $g\in C(k)$, we have an equivalence of abelian categories 
$$\mathbf{F}_{C}:\Rep_k(G_C)\xrightarrow{\cong}\mathscr{Z}_{C},\,\,\,V\mapsto \iota_{C*}\left(\O(C)\ot_k V,\rho_V^g\right),$$
where $\rho_V^g:\O(C)\ot_k V\to \O(C)\ot_k V\ot \O(G)$ is given by
\begin{equation*}
\rho_V^g(f\ot v)= 
\left(\mathfrak{j}_g^{-1}\right)^{\sharp}\widetilde{\alpha_{g}}\left(\mathfrak{j}_g^{\sharp}(f)_1\widetilde{\mathfrak{c}_g}\left(v^{1}\right)_1 \right)\ot
v^0\ot \mathfrak{j}_g^{\sharp}(f)_2\widetilde{\mathfrak{c}_g}\left(v^{1}\right)_2.
\end{equation*}
(Here, $\mathfrak{j}_g:G_C\backslash G\xrightarrow{\cong} C$ is the canonical scheme isomorphism.)

In particular, $\mathbf{F}_{1}:\Rep_k(G)\xrightarrow{\cong} \mathscr{Z}_{1}\hookrightarrow \mathscr{Z}$ coincides with the canonical embedding of braided tensor categories.
\item
There is a bijection between equivalence classes of pairs $(C,V)$, where $C\in {\rm C}(k)$ with representative $g\in C(k)$, and $V\in \Rep_k(G_C)$ is simple, and simple objects of $\mathscr{Z}$, assigning $(C,V)$ to $\mathbf{F}_{C}(V)$. Moreover, we have a decomposition of abelian categories
$$\mathscr{Z}=\bigoplus_{C\in {\rm C}(k)}\overline{\mathscr{Z}_{C}},$$
and
$\overline{\mathscr{Z}_{1}}\subset \mathscr{Z}$ is a tensor subcategory.
\item
For any $V\in \Rep_k(G_C)$, we have
$
\mathbf{F}_C(V)^*\cong \mathbf{F}_{C^{-1}}(V^*)$.
\item
For any $V\in \Rep_k(G_C)$, we have
$$
{\rm FPdim}\left(\mathbf{F}_{C}(V)\right)=\frac{|G|}{|G_C|}{\rm dim}(V)=|C|{\rm dim}(V).$$
\item
For any simple $V\in \Rep_k(G_C)$, we have
$$P_{\mathscr{Z}}\left(\mathbf{F}_{C}(V)\right)\cong 
\left(\O(G^{\circ})\ot\O(C(k))\ot_k P_{G_C}(V),R_V^g\right),$$
where $\O(G)$ acts diagonally, and    
$$R_{V}^g:=\left(\theta_C\ot\id^{\ot 2}\right)\left(\id_{\O(G^{\circ})}\ot\rho_{P_{G_C}(V)}^g\right)\left(\lambda_{C}\ot\id\right).$$ 
In particular, 
$${\rm FPdim}\left(P_{\mathscr{Z}}\left(\mathbf{F}_{C}(V)\right)\right)=\frac{|G|}{|G_C(k)|}{\rm dim}\left(P_{G_C}\left(V\right)\right).$$
\item
For any $C\in {\rm C}(k)$, we have 
$
{\rm FPdim}\left(\overline{\mathscr{Z}_{C}}\right)=\frac{|G|^2}{|G_C(k)|}$. \qed
\end{enumerate}
\end{theorem}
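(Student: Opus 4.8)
The plan is to deduce Theorem~\ref{simpobjsnew} from Theorem~\ref{simpobjscenter} by transporting it along the canonical tensor equivalence $\C(\mathbb{G},\mathbb{H},1)\cong{\rm Coh}^{(G)}(G)=\mathscr{Z}$ of~\eqref{the center is eqcohtc}. The first step is to set up the dictionary between the two sides. On the group scheme-theoretical side one has $\mathrm{Y}=\mathbb{G}/(\mathbb{H}\times\mathbb{H})$, and, as recorded just before the theorem, $C_g\mapsto{\rm Z}_{(g,1)}$ is a bijection $\mathrm{C}(k)\xrightarrow{\cong}\mathrm{Y}(k)$ (the inverse being ${\rm Z}_{(g_1,g_2)}\mapsto C_{g_1g_2^{-1}}$) with stabilizer $\mathbb{H}^{(g,1)}=\Delta(G_C)$, identified with $G_C$ via the diagonal; moreover the orbit ${\rm Z}_{(g,1)}$ is isomorphic to $C\times G$ via $(g_1,g_2)\mapsto(g_1g_2^{-1},g_2)$. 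Since $\psi=\eta=1$ here the cocycle $\xi_{(g,1)}$ is trivial, so ${\rm Rep}_k(\mathbb{H}^{(g,1)},\xi_{(g,1)}^{-1})\cong{\rm Rep}_k(G_C)$, and the source of the functor $\mathbf{F}_{\rm Z}$ of Theorem~\ref{simpobjscenter}(1) becomes ${\rm Rep}_k(G_C)$. A short verification shows that~\eqref{the center is eqcohtc} is compatible with the underlying $\O(G)$-module structures, carrying the block $\C_{{\rm Z}_{(g,1)}}$ of sheaves on $\mathbb{G}$ supported on ${\rm Z}_{(g,1)}$ onto the block $\mathscr{Z}_C$ of conjugation-equivariant sheaves on $G$ supported on $C$; granting this, parts (2), (3) and (6) of Theorem~\ref{simpobjsnew}, the fact that $\overline{\mathscr{Z}_1}$ is a tensor subcategory, and the monoidality of $\mathbf{F}_1$ are direct translations of the corresponding assertions of Theorem~\ref{simpobjscenter}.

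Next I would do the bookkeeping of orders. Using $|\mathbb{G}^{\circ}|=|G^{\circ}|^2$, $|\mathbb{H}^{\circ}|=|G^{\circ}|$, $|\mathbb{H}(k)|=|G(k)|$, $|\mathbb{H}^{(g,1)}|=|G_C|$, $|\mathbb{H}^{(g,1)}(k)|=|G_C(k)|$, and the fact that the abelian equivalence~\eqref{the center is eqcohtc} sends a sheaf on $\mathbb{G}$ of length $|{\rm Z}_{(g,1)}|=|G|\,|C|$ to a conjugation-equivariant sheaf of length $|C|$ (and likewise $\O(\mathbb{G}^\circ)\ot\O({\rm Z}(k))$ to $\O(G^\circ)\ot\O(C(k))$, the length dropping by the factor $|G|$), the formulas ${\rm FPdim}(\mathbf{F}_{\rm Z}(V))=\tfrac{|\mathbb{H}|}{|\mathbb{H}^{(g,1)}|}\dim V$ and ${\rm FPdim}(P_\C(\mathbf{F}_{\rm Z}(V)))=\tfrac{|\mathbb{G}^\circ||\mathbb{H}(k)|}{|\mathbb{H}^\circ||\mathbb{H}^{(g,1)}(k)|}\dim P$ of Theorem~\ref{simpobjscenter}(4)--(5) specialize to ${\rm FPdim}(\mathbf{F}_C(V))=|C|\dim V$ and ${\rm FPdim}(P_{\mathscr{Z}}(\mathbf{F}_C(V)))=\tfrac{|G|}{|G_C(k)|}\dim P_{G_C}(V)$, and the projective cover object $\O(\mathbb{G}^\circ)\ot\O({\rm Z}(k))\ot_k P_{\mathbb{H}^{(g,1)}}(V)$ becomes $\O(G^\circ)\ot\O(C(k))\ot_k P_{G_C}(V)$; this gives parts (4), (5) and (6) of Theorem~\ref{simpobjsnew}.

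The genuinely computational point --- and the main obstacle --- is the explicit half-braiding $\rho_V^g$ in Theorem~\ref{simpobjsnew}(1) (and, downstream, $R_V^g$ in part (5)). This requires unwinding the chain of canonical equivalences~\eqref{secondequiv}--\eqref{the center is eqcohtc} at the level of objects, tracking how the bi-$\O(\mathbb{H})$-equivariant sheaf $\iota_{{\rm Z}*}(\O({\rm Z})\ot_k V,\rho^{\rm g}_V)$ of Theorem~\ref{simpobjscenter}(1) --- whose equivariant structure is itself the transport of~\eqref{nuviexp} along the scheme isomorphisms $\mathfrak{j}_g$ and $\iota_{\rm Z}$ --- becomes a conjugation-equivariant sheaf on $G$, and then pushing the resulting coaction through the cleaving map $\widetilde{\mathfrak{c}_g}:\O(G_C)\xrightarrow{1:1}\O(G)$, the associated $\widetilde{\alpha_g}$, and the isomorphism $\mathfrak{j}_g:G_C\backslash G\xrightarrow{\cong}C$; this is precisely the purpose of the auxiliary data set up in \S\ref{sec:twisted-double} before the statement. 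The computation is a Sweedler-notation cocycle chase of exactly the type carried out in Lemma~\ref{deflambdaV11} and the proof of Theorem~\ref{projcoverconnggst}, with $R_V^g$ then obtained from $\rho^g_{P_{G_C}(V)}$ via the splitting $(\theta_C,\lambda_C)$ as $R_V^1$ was derived there. As a consistency check, for $C=\{1\}$ one should find that $\rho_V^1$ is the trivial half-braiding, which identifies $\mathbf{F}_1$ with the canonical braided embedding ${\rm Rep}_k(G)\hookrightarrow\mathscr{Z}(G)$ --- alternatively this last identification follows from the uniqueness of the fully faithful braided functor from ${\rm Rep}_k(G)$ into $\mathscr{Z}_1$.
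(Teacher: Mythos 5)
Your proposal is correct and follows essentially the same route as the paper, whose proof consists precisely of translating Theorem \ref{simpobjscenter} through the canonical equivalence (\ref{the center is eqcohtc}) using the dictionary set up in the preceding remarks (the bijection ${\rm C}(k)\cong{\rm Y}(k)$, $C_g\mapsto{\rm Z}_{(g,1)}$, the identification $\mathbb{H}^{(g,1)}=\Delta(G_C)$, and the cleaving data $\widetilde{\mathfrak{c}_g}$, $\widetilde{\alpha_g}$, $\theta_C$, $\lambda_C$). You in fact supply more detail than the paper, which dismisses both the order bookkeeping and the Sweedler-notation verification of $\rho_V^g$ and $R_V^g$ as ``straightforward to verify.''
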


\begin{proof}
Using the preceding remarks, it is straightforward to verify that Theorem \ref{simpobjscenter} translates to the theorem via the equivalence (\ref{the center is eqcohtc}).
\end{proof}

\begin{example}\label{restlie case double}
Assume $G$ is connected (e.g., $G$ is the finite group scheme associated to 
a finite dimensional restricted $p$-Lie algebra $\mathfrak{g}$ as in Example \ref{repliealg}), and consider the finite braided tensor category $\mathscr{Z}:={\rm Coh}^{(G)}(G)$. By Theorem \ref{simpobjsnew}, we have $\mathscr{Z}=\overline{{\rm Rep}_k(G)}$, and for any $(V,r)\in \Rep_k(G)$, we have $\rho^1_V=r:V\to V\ot \O(G)$ . Moreover, for any 
simple $V\in \Rep_k(G)$, we have
$$P_{\mathscr{Z}}\left(\mathbf{F}(V)\right)\cong 
\left(\O(G)\ot_k P_{G}(V),R_V^1\right),$$
where $\O(G)$ acts on the first factor, and  
$$R_{V}^1:=\left(\theta_1\ot\id^{\ot 2}\right)\left(\id_{\O(G)}\ot\rho^1_{P_{G}(V)}\right)\left(\lambda_{1}\ot\id\right).$$ 
Now since  
\begin{gather*}
\theta_{1}:\O(G)\xrightarrow{1:1}\O(G)\ot\O(G),\,\,\,f\mapsto f\ot 1,\\
\lambda_{1}:\O(G)\ot\O(G)\twoheadrightarrow \O(G),\,\,\,f\ot f'\mapsto f\varepsilon(f'),\\
\widetilde{\mathfrak{c}_1}=\id,\,\,\,{\rm and}\,\,\,\widetilde{\alpha_1}=\varepsilon,
\end{gather*}
we see that $R_{V}^1=\id_{\O(G)}\ot\rho_{P_{G}(V)}$, so
$$P_{\mathscr{Z}}\left(\mathbf{F}(V)\right)\cong 
\left(\O(G)\ot_k P_{G}(V),\id\ot\rho_{P_G(V)}\right).$$
\end{example}

\subsection{Short exact sequence of centers}
Recall (\ref{ses0}) the split exact sequence of group schemes
$$1\to G^{\circ}\xrightarrow{i} G \mathrel{\mathop{\rightleftarrows}^{\pi}_{q}} G(k)\to 1.$$ 
By Proposition \ref{equivmorphs}, it induces the tensor functors
$$i_*:{\rm Coh}(G^{\circ})\xrightarrow{1:1} {\rm Coh}(G),\,\,\,i^*:{\rm Coh}(G)\twoheadrightarrow {\rm Coh}(G^{\circ}),$$
$$\pi_*:{\rm Coh}(G)\twoheadrightarrow{\rm Coh}(G(k)),\,\,\,q_*:{\rm Coh}(G(k))\xrightarrow{1:1} {\rm Coh}(G).$$

\begin{theorem}\label{Short exact sequence of centers}
The following hold:
\begin{enumerate}
\item
The functor $\pi_*$ lifts (using $q$) to a surjective tensor functor 
$$\pi_*:{\rm Coh}^{(G)}(G)\twoheadrightarrow {\rm Coh}^{(G(k))}(G(k)).$$
\item
The functor $q_*$ lifts (using $\pi$) to an injective tensor functor 
$$q_*:{\rm Coh}^{(G(k))}(G(k))\xrightarrow{1:1} {\rm Coh}^{(G)}(G).$$
\item
$\pi_*q_*=\id$ (as abelian functors).
\item
The functor $i^*$ lifts to a surjective tensor functor
$$i^*:{\rm Coh}^{(G)}(G)\twoheadrightarrow {\rm Coh}^{(G^{\circ})}(G^{\circ}).$$
\item
$i^*q_*:{\rm Coh}^{(G(k))}(G(k))\to\Vect=\langle \mathbf{1}\rangle \subset {\rm Coh}^{(G^{\circ})}(G^{\circ})$ is the forgetful functor (as abelian functors).
\item
The identity functor ${\rm Coh}(G)\to {\rm Coh}(G)$  lifts (using $i$) to a surjective tensor functor
$${\rm Coh}^{(G)}(G)\twoheadrightarrow {\rm Coh}^{(G^{\circ})}(G).$$
\end{enumerate}
\end{theorem}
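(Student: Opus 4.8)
The plan is to deduce all six assertions from Proposition~\ref{equivmorphs} (functoriality of equivariant sheaves along equivariant morphisms), applied to the group scheme homomorphisms $i\colon G^{\circ}\to G$, $\pi\colon G\to G(k)$ and $q\colon G(k)\to G$, together with: the compatibility of the conjugation actions with these maps; the monoidality of push/pull-forward along group scheme homomorphisms (for convolution); the comodule description of Proposition~\ref{abelcat}; and the elementary facts $\pi q=\id$ and $\pi i$ trivial. Throughout, ${\rm Coh}^{(G)}(G)$ and its analogues are the centers via (\ref{the center is gequiv}), and the terms ``surjective'' and ``injective tensor functor'' are meant in the usual dominant/fully faithful sense.

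For (1): the $G$-conjugation action on $G$ restricts along $q$ to a $q(G(k))$-action on $G$, giving a restriction-of-equivariance functor ${\rm Coh}^{(G)}(G)\to{\rm Coh}^{(q(G(k)))}(G)$; and since $\pi q=\id$, the map $\pi$ is equivariant from this $q(G(k))$-action on the source to conjugation on $G(k)$, so Proposition~\ref{equivmorphs} lifts $\pi_{*}$ to ${\rm Coh}^{(q(G(k)))}(G)\to{\rm Coh}^{(G(k))}(G(k))$; the composite is a tensor functor because $\pi$ is a group scheme homomorphism and restriction of equivariance is monoidal. For (2): on underlying sheaves the functor is $q_{*}$, which is fully faithful since $q$ is a closed immersion; the content is to equip $q_{*}(T,\tau)$ with a conjugation-equivariant structure over $G$, obtained by transporting the coaction of $(T,\tau)$ along the Hopf algebra map $\pi^{\sharp}\colon\O(G(k))\to\O(G)$ (this is where ``using $\pi$'' enters), the module--comodule compatibility following from the facts that the $\O(G)$-action on $q_{*}T$ factors through $q^{\sharp}$, that $q^{\sharp},\pi^{\sharp}$ are Hopf maps, and that $q^{\sharp}\pi^{\sharp}=\id$. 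Then (3) is essentially formal: on underlying abelian categories $\pi_{*}q_{*}=(\pi q)_{*}=\id$, and the two equivariant structures coincide again by $q^{\sharp}\pi^{\sharp}=\id$; surjectivity in (1) then follows since $\pi_{*}$ admits the section $q_{*}$.

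For (6): since $G^{\circ}\trianglelefteq G$, the conjugation action of $G$ on $G$ restricts to that of $G^{\circ}$, and the resulting restriction-of-equivariance functor ${\rm Coh}^{(G)}(G)\to{\rm Coh}^{(G^{\circ})}(G)$ is exactly the claimed lift of the identity; it is a tensor functor, and it is surjective because by (\ref{ses0}) (with $G(k)$ étale) the source is the $G(k)$-equivariantization of ${\rm Coh}^{(G^{\circ})}(G)$ and forgetful functors from equivariantizations are dominant. For (4) one composes with $i^{*}\colon{\rm Coh}^{(G^{\circ})}(G)\to{\rm Coh}^{(G^{\circ})}(G^{\circ})$, which is a tensor functor (as $i$ is a group scheme homomorphism, lifted by Proposition~\ref{equivmorphs} since $i$ is $G^{\circ}$-equivariant) and is surjective because $i^{*}i_{*}=\id$. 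Finally (5) is a direct computation of the underlying composite: as $q(G(k))\cap G^{\circ}=\{e\}$, the functor $i^{*}q_{*}$ picks out the fiber of $q_{*}T$ over the identity, and since $\pi i$ is the trivial homomorphism the coaction transported along $\pi^{\sharp}$ becomes trivial after restriction by $i^{\sharp}$, so the image lies in $\langle\mathbf{1}\rangle$ and $i^{*}q_{*}$ is the forgetful functor.

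The step I expect to be the main obstacle is (2): one must pin down the correct conjugation-equivariant structure on $q_{*}(T,\tau)$ and verify it is well defined as an object of ${\rm Coh}^{(G)}(G)$ even though $q(G(k))$ need not be normal in $G$ — the resolution being that $q_{*}T$ is supported on $q(G(k))$, on which $G$ acts through $\pi$, so the equivariance is genuinely inflated along $\pi$. The remaining verifications (monoidality, agreement of the equivariant structures in (3) and (5), and dominance in (1), (4), (6)) are routine once this framework is in place.
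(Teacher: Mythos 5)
Your constructions coincide with the paper's: in (1) and (2) the lift carries the module structure transported along $\pi^{\sharp}$ (resp.\ $q^{\sharp}$) and the coaction $(\id\ot q^{\sharp})r$ (resp.\ $(\id\ot\pi^{\sharp})r$); (3) is the formal identity coming from $\pi q=\id$; (4) and (6) are restriction of equivariance along $G^{\circ}\subset G$ combined with Proposition \ref{equivmorphs}; and (5) is a computation of the composite. Parts (1), (3), (4), (6) go through as you say: for (1) the compatibility of Definition \ref{defright} reduces exactly to $q^{\sharp}\pi^{\sharp}=\id_{\O(G(k))}$, which holds.

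The step you yourself single out as the crux, (2), is where your argument has a genuine gap. The resolution you propose --- that $q_{*}T$ is supported on $q(G(k))$, ``on which $G$ acts through $\pi$'' --- is false scheme-theoretically: conjugation by $G^{\circ}$ fixes the $k$-points of $G$ only as points, not as a closed subscheme, so the reduced subscheme $q(G(k))\subset G$ is not conjugation-stable and the $G$-action on it is not inflated from $G(k)$; this fails as soon as some $k$-point acts nontrivially on $G^{\circ}$. Concretely, the compatibility of Definition \ref{defright} for the pair (action via $q^{\sharp}$, coaction $(\id\ot\pi^{\sharp})r$) requires $\pi^{\sharp}q^{\sharp}\left({\rm S}(f_1)f_3\right)$ to agree with ${\rm S}(f_1)f_3$ in the second tensor factor; the discrepancy lies in $\ker q^{\sharp}$ (the nilradical), and the identity you invoke, $q^{\sharp}\pi^{\sharp}=\id_{\O(G(k))}$, is the one that rescues (1), not the one needed here (that would be $\pi^{\sharp}q^{\sharp}=\id_{\O(G)}$, which is false). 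For instance, for $G=\mu_p\rtimes \mathbb{Z}/2$ ($p={\rm char}\,k$ odd, inversion action) and $a$ the nontrivial $k$-point, the conjugation orbit of $a$ is the non-reduced subscheme $a\mu_p$, and by Theorem \ref{simpobjsnew} no object of ${\rm Coh}^{(G)}(G)$ has underlying sheaf $\delta_a=q_*\delta_a$; so the required equivariant structure on $q_{*}(T,\tau)$ cannot be produced by any argument in this generality, and this point --- on which your (3) and (5) also rest --- needs either a different construction or an extra hypothesis. (Also, in (5), the fiber of $q_{*}T$ at the identity is only the component of $T$ at $1\in G(k)$, so even granting (2) your description would not give the forgetful functor on all of ${\rm Coh}^{(G(k))}(G(k))$.) For comparison, the paper's own proof of (2) writes down the same transported coaction and simply asserts the compatibility, without your normality-type justification; so your construction matches the paper's, but the verification you offer does not establish it.
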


\begin{proof}
(1) Take $(V,r)$ in ${\rm Coh}^{(G)}(G)$. Namely, $V$ is an $\O(G)$-module and $r:V\to V\ot \O(G)$ is an $\O(G)$-coaction with respect to right conjugation. By definition, $\pi_*V=V$ is an $\O(G(k))$-module via $\pi^{\sharp}:\O(G(k))\xrightarrow{1:1} \O(G)$. Thus, the $\O(G(k))$-coaction
$$(\id\ot q^{\sharp})r:\pi_*V\to \pi_*V\ot \O(G(k))$$
endows $\pi_*V$ with a structure of an object in ${\rm Coh}^{(G(k))}(G(k))$.

(2) Take $(V,r)$ in ${\rm Coh}^{(G(k))}(G(k))$. Namely, $V$ is an $\O(G(k))$-module and $r:V\to V\ot \O(G(k))$ is an $\O(G(k))$-coaction with respect to right conjugation. Now by definition, $q_*V=V$ is an $\O(G)$-module via $q^{\sharp}:\O(G)\twoheadrightarrow \O(G(k))$. Thus, the $\O(G)$-coaction
$$(\id\ot \pi^{\sharp})r:q_*V\to q_*V\ot \O(G)$$
endows $q_*V$ with a structure of an object in ${\rm Coh}^{(G)}(G)$.

(3) Follows from $\pi_*q_*=(\pi q)_*=\id$.

(4) Take $(V,r)$ in ${\rm Coh}^{(G)}(G)$. By Proposition \ref{equivmorphs}, the $\O(G^{\circ})$-module $i^*V=\O\left(G^{\circ}\right)\ot_{\O\left(G\right)} V$ is equipped with the $\O(G)$-coaction given by $\mu_{G^{\circ}\times G}^{\sharp}\bar{\ot}r$ (where $\mu_{G^{\circ}\times G}:G^{\circ}\times G\to G^{\circ}$ is the right conjugation action of $G$ on $G^{\circ}$). 
Thus, the map 
$$\left(\id\ot\id\ot i^{\sharp}\right)\left(\mu_{G^{\circ}\times G}^{\sharp}\bar{\ot}r\right),$$ 
endows $i^*V$ with a structure of an object in ${\rm Coh}^{(G^{\circ})}(G^{\circ})$.

(5) Take $(V,r)$ in ${\rm Coh}^{(G(k))}(G(k))$. By (2), 
$$q_*(V,r)=(q_*V,(\id\ot \pi^{\sharp})r)\in {\rm Coh}^{(G)}(G),$$ where $q_*V=V$ is an $\O(G)$-module via $q^{\sharp}$. Thus by (4), 
\begin{eqnarray*}
\lefteqn{i^*q_*(V,r)=i^*\left(q_*V,\left(\id\ot \pi^{\sharp}\right)r\right)}\\
& = & \left(\O\left(G^{\circ}\right)\ot_{\O\left(G\right)} q_*V,\left(\id\ot\id\ot i^{\sharp}\right)\left(\mu_{G^{\circ}\times G}^{\sharp}\bar{\ot}\left(\id\ot \pi^{\sharp}\right)r\right)\right)\\
& = & \left(\O\left(G^{\circ}\right)\ot_{\O\left(G\right)} q_*V,\mu_{G^{\circ}\times G^{\circ}}^{\sharp}\bar{\ot}V_{{\rm tr}}\right)
\end{eqnarray*}
(where $\mu_{G^{\circ}\times G^{\circ}}:G^{\circ}\times G^{\circ}\to G^{\circ}$ is the right conjugation action of $G^{\circ}$ on itself). In other words, $i^*q_*$ sends $(V,r)$ to the direct sum of $\dim(V)$ copies of the identity object $\left(\O(G^{\circ}),\mu_{G^{\circ}\times G^{\circ}}^{\sharp}\right)\in{\rm Coh}^{(G^{\circ})}(G^{\circ})$.

(6) Similar. 
\end{proof}

\subsection{$\mathscr{Z}(G)$ as $G(k)$-equivariantization}
Set $\mathscr{D}:={\rm Coh}(G)$, and let $\mathscr{D}^{\circ}:={\rm Coh}(G^{\circ})$. By (\ref{G(k)crossedproduct}), we have 
$$\mathscr{D}\cong\mathscr{D}^{\circ}\rtimes G(k)\cong \bigoplus_{a\in G(k)}\mathscr{D}^{\circ}\boxtimes a$$
as tensor categories, where the associativity constraint on the right hand side category is given by $\omega$ in the obvious way.

Let $\M$ be any $\mathscr{D}^{\circ}$-bimodule
category. Recall \cite{GNN} that the relative center $\mathscr{Z}_{\mathscr{D}^{\circ}}\left(\M\right)$ is the abelian category whose objects are pairs $(M,{\rm c})$, where $M\in \M$ and
\begin{equation}
\label{gamma}
{\rm c} = \{{\rm c}_{X}:X\ot M \xrightarrow{\cong} M\ot X\mid X \in \mathscr{D}^{\circ}\}
\end{equation}
is a natural family of isomorphisms satisfying some compatibility conditions. In particular, the relative center $\mathscr{Z}_{\mathscr{D}^{\circ}}\left(\mathscr{D}\right)$ is a finite $G(k)$-crossed braided tensor category \cite[Theorem 3.3]{GNN}. The $G(k)$-grading on $\mathscr{Z}_{\mathscr{D}^{\circ}}\left(\mathscr{D}\right)$ is given by
$$\mathscr{Z}_{\mathscr{D}^{\circ}}\left(\mathscr{D}\right)=\bigoplus_{a\in G(k)}\mathscr{Z}_{\mathscr{D}^{\circ}}\left(\mathscr{D}^{\circ}\boxtimes a\right),$$
and the action of $G(k)$ 
on $\mathscr{Z}_{\mathscr{D}^{\circ}}(\mathscr{D})$, $h \mapsto \widetilde{T}_h$, is induced from the action of $G(k)$ on $\mathscr{D}^{\circ}$, $h \mapsto T_h$, in the following way. For any $h\in G(k)$, $X\in \mathscr{D}^{\circ}$, and $(Y\boxtimes a,{\rm c})\in \mathscr{Z}_{\mathscr{D}^{\circ}}\left(\mathscr{D}^{\circ}\boxtimes a\right)$, we have an
isomorphism
\begin{equation}
\label{action crpr} \widetilde{{\rm c}}_X:=(T_h\ot T_h){\rm c}_{T_{h^{-1}}(X)}: X \ot T_h(Y)
\xrightarrow{\cong}T_h(Y)\ot T_{hah^{-1}}(X).
\end{equation}
Set 
$$\widetilde{T}_h(Y\boxtimes a,{\rm c}):=(T_{h}(Y)\boxtimes
hah^{-1},\,\widetilde{{\rm c}}).$$ Then $\widetilde{T}_h$ maps $\mathscr{Z}_{\mathscr{D}^{\circ}}(\mathscr{D}^{\circ}\boxtimes a)$
to $\mathscr{Z}_{\mathscr{D}^{\circ}}(\mathscr{D}^{\circ}\boxtimes {hah^{-1}})$.

Note that $\mathscr{Z}_{\mathscr{D}^{\circ}}(\mathscr{D})\cong {\rm Coh}^{(G^{\circ})}(G)$ as tensor categories, and the obvious forgetful tensor functor
\begin{equation}\label{obvious forgetful tensor functor}
\mathscr{Z}(G)=\mathscr{Z}_{\mathscr{D}}\left(\mathscr{D}\right)\to \mathscr{Z}_{\mathscr{D}^{\circ}}\left(\mathscr{D}\right),\,\,\,(X,{\rm c})\mapsto (X,{\rm c}_{\mid \mathscr{D}^{\circ}})
\end{equation}
coincides with the surjective tensor functor given in  Theorem \ref{Short exact sequence of centers}(6).

By \cite[Theorem 3.5]{GNN}, there is an equivalence of tensor categories
\begin{equation}\label{gnntheorem}
F:\mathscr{Z}(G)\xrightarrow{\cong}\mathscr{Z}_{\mathscr{D}^{\circ}}\left(\mathscr{D}\right)^{G(k)}=\left(\bigoplus_{a\in G(k)}\mathscr{Z}_{\mathscr{D}^{\circ}}\left(\mathscr{D}^{\circ}\boxtimes a\right)\right)^{G(k)}.
\end{equation}

For any $C\in {\rm C}(k)$, set
$$\mathscr{E}_C:=\bigoplus_{a\in C(k)}\mathscr{Z}_{\mathscr{D}^{\circ}}\left(\mathscr{D}^{\circ}\boxtimes a\right)^{G(k)}\subset \mathscr{Z}_{\mathscr{D}^{\circ}}\left(\mathscr{D}\right)^{G(k)}.$$
By (\ref{gnntheorem}), $\mathscr{E}_C$ is a Serre subcategory of $\mathscr{Z}_{\mathscr{D}^{\circ}}\left(\mathscr{D}\right)^{G(k)}$, and we have a tensor equivalence 
\begin{equation}\label{gnntheorem11}
F:\mathscr{Z}(G)\xrightarrow{\cong}\bigoplus_{C\in {\rm C}(k)}\mathscr{E}_C.
\end{equation}

\begin{theorem}\label{compdeco}
For any $C\in {\rm C}(k)$, the functor $F$ (\ref{gnntheorem11}) restricts to an equivalence of abelian categories
$$F_C:\overline{\mathscr{Z}(G)_{C}}\xrightarrow{\cong}\mathscr{E}_C=\bigoplus_{a\in C(k)}\mathscr{Z}_{\mathscr{D}^{\circ}}\left(\mathscr{D}^{\circ}\boxtimes a\right)^{G(k)}.$$

In particular, $F$ restricts to an equivalence of tensor categories $$F_1:\overline{\Rep(G)}\xrightarrow{\cong}\mathscr{Z}\left(G^{\circ}\right)^{G(k)}.$$
\end{theorem}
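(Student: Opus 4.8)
The plan is to exploit the two direct sum decompositions of $\mathscr{Z}(G)$ already established: the ``conjugacy-orbit'' decomposition $\mathscr{Z}(G)=\bigoplus_{C\in{\rm C}(k)}\overline{\mathscr{Z}(G)_C}$ coming from Theorem \ref{simpobjsnew}(2) via the equivalence (\ref{the center is eqcohtc}), and the decomposition $\mathscr{Z}_{\mathscr{D}^\circ}(\mathscr{D})^{G(k)}=\bigoplus_{C\in{\rm C}(k)}\mathscr{E}_C$ coming from the $G(k)$-crossed grading and (\ref{gnntheorem11}). Both index sets are literally ${\rm C}(k)$, so the strategy is to show that the tensor equivalence $F$ of (\ref{gnntheorem11}) is compatible with these two decompositions, i.e. sends $\overline{\mathscr{Z}(G)_C}$ into $\mathscr{E}_C$. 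Since $F$ is an equivalence and both sides are finite direct sums of abelian categories indexed by the same set, it then suffices to match the pieces; restricting an equivalence to a matched pair of Serre subcategories gives an equivalence $F_C$, and this proves the first assertion. The ``in particular'' clause is the case $C=1$, where $\overline{\mathscr{Z}(G)_1}=\overline{\Rep(G)}$ (by Theorem \ref{simpobjsnew}(1)) and $\mathscr{E}_1=\mathscr{Z}_{\mathscr{D}^\circ}(\mathscr{D}^\circ)^{G(k)}=\mathscr{Z}(G^\circ)^{G(k)}$; the restriction is a tensor equivalence because $\overline{\Rep(G)}$ is a tensor subcategory (Theorem \ref{simpobjsnew}(2)) and $C=1$ is the neutral component of the ${\rm C}(k)$-``grading'', so $F_1$ is a tensor functor.

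The key step is therefore the index-matching: I must check that for a simple object $\mathbf{F}_C(V)\in\mathscr{Z}(G)$ supported on the conjugacy orbit $C$ (in the ${\rm Coh}^{(G)}(G)$ picture, a $G$-equivariant sheaf supported set-theoretically on the closed subset $C\subset G$), the object $F(\mathbf{F}_C(V))$ lands in $\mathscr{E}_C=\bigoplus_{a\in C(k)}\mathscr{Z}_{\mathscr{D}^\circ}(\mathscr{D}^\circ\boxtimes a)^{G(k)}$. To see this I would trace through the forgetful tensor functor $\mathscr{Z}(G)=\mathscr{Z}_{\mathscr{D}}(\mathscr{D})\to\mathscr{Z}_{\mathscr{D}^\circ}(\mathscr{D})$ of (\ref{obvious forgetful tensor functor}), which by Theorem \ref{Short exact sequence of centers}(6) is the functor ${\rm Coh}^{(G)}(G)\to{\rm Coh}^{(G^\circ)}(G)$ that simply forgets part of the equivariant structure while keeping the underlying sheaf on $G$. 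Under the identification $\mathscr{Z}_{\mathscr{D}^\circ}(\mathscr{D})\cong\bigoplus_{a\in G(k)}\mathscr{Z}_{\mathscr{D}^\circ}(\mathscr{D}^\circ\boxtimes a)$, the $G(k)$-grading on $\mathscr{Z}_{\mathscr{D}^\circ}(\mathscr{D})$ corresponds, under ${\rm Coh}(G)=\bigoplus_{a\in G(k)}{\rm Coh}(G^\circ)\boxtimes a$ from (\ref{G(k)crossedproduct}), to the decomposition of a sheaf on $G$ according to which connected component $G^\circ a$ of $G$ it is supported on. A sheaf supported on the orbit $C=\bigsqcup_{a\in C(k)}C\cap G^\circ a$ thus decomposes exactly along the components indexed by $a\in C(k)$, so its image has $G(k)$-degrees lying in $C(k)$; after taking $G(k)$-equivariantization (\ref{gnntheorem}) this places $F(\mathbf{F}_C(V))$ precisely in $\mathscr{E}_C$. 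Conversely the same support bookkeeping shows $F$ cannot send anything in $\overline{\mathscr{Z}(G)_C}$ outside $\mathscr{E}_C$; combined with the fact that $F$ is an equivalence and the two decompositions are over the same finite index set, the restrictions $F_C$ are forced to be equivalences.

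I expect the main obstacle to be the careful identification, at the level of the underlying abelian categories, of the $G(k)$-grading on $\mathscr{Z}_{\mathscr{D}^\circ}(\mathscr{D})$ with the ``connected component of support'' decomposition of sheaves on $G$, and the verification that taking $G(k)$-equivariants is compatible with the further refinement by conjugacy orbits $C$ rather than by individual group elements $a\in G(k)$ --- i.e. that $G(k)$ permutes the summands $\mathscr{Z}_{\mathscr{D}^\circ}(\mathscr{D}^\circ\boxtimes a)$ within a fixed $C(k)$ (this is exactly the content of (\ref{action crpr}), which shows $\widetilde T_h$ sends the degree-$a$ part to the degree-$hah^{-1}$ part), so that $\mathscr{E}_C$ is well-defined and Serre. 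Granting this, and using that the equivalence (\ref{the center is eqcohtc}) matches the closed points $Z\in{\rm Y}(k)$ with the conjugacy orbits $C\in{\rm C}(k)$ via the bijection $C_g\mapsto{\rm Z}_{(g,1)}$ recorded before Theorem \ref{simpobjsnew}, the proof reduces to the bookkeeping above. The final sentence then follows by specializing to the neutral component and invoking $\mathscr{Z}_{\mathscr{D}^\circ}(\mathscr{D}^\circ)\cong\mathscr{Z}(G^\circ)$.
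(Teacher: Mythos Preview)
Your proposal is correct and follows essentially the same route as the paper: reduce to simples, use that $\mathbf{F}_C(V)=\O(C)\ot_k V$ has underlying sheaf supported on $C\subset G$, observe via the forgetful functor (\ref{obvious forgetful tensor functor}) that the $G(k)$-grading on $\mathscr{Z}_{\mathscr{D}^\circ}(\mathscr{D})$ is exactly the connected-component-of-support decomposition, and hence the forgetful image lies in $\bigoplus_{a\in C(k)}\mathscr{Z}_{\mathscr{D}^\circ}(\mathscr{D}^\circ\boxtimes a)$, placing $F(\mathbf{F}_C(V))$ in $\mathscr{E}_C$. The paper packages the last step as an appeal to the proof of \cite[Theorem 3.5]{GNN}, whereas you spell out the support bookkeeping and the permutation action (\ref{action crpr}) explicitly; otherwise the arguments coincide.
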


\begin{proof}
Fix $C\in {\rm C}(k)$ with representative $g\in C(k)$. To see that $F\left(\overline{\mathscr{Z}(G)_{C}}\right)\subset \mathscr{E}_C$ it is enough to show that $F\left(\mathscr{Z}(G)_{C}\right)\subset \mathscr{E}_C$ (by the discussion above). To this end, 
it is enough to show that for any simple ${\rm V}\in \mathscr{Z}(G)_{C}$, the simple $G(k)$-equivariant object $F({\rm V})\in\mathscr{Z}_{\mathscr{D}^{\circ}}\left(\mathscr{D}\right)$ is supported on $C$.

So, let ${\rm V}\in \mathscr{Z}(G)_{C}$ be simple. By Theorem \ref{simpobjsnew}, there exists a unique simple $V$ in $\Rep(G_{C})$ such that ${\rm V}={\bf F}_C(V)=\O(C)\ot_{k} V$. It follows that the forgetful image of $F({\rm V})$ in $\mathscr{Z}_{\mathscr{D}^{\circ}}\left(\mathscr{D}\right)$ (\ref{obvious forgetful tensor functor}) lies in $\bigoplus_{a\in C(k)}\mathscr{Z}_{\mathscr{D}^{\circ}}\left(\mathscr{D}^{\circ}\boxtimes a\right)$, so by the proof of \cite[Theorem 3.5]{GNN},  
$F({\rm V})$ is supported on $C$, as claimed.
\end{proof}

\end{document}